\numberwithin{equation}{section}
\newtheorem{theo}{Theorem}[section]
\newtheorem{theorem}{Theorem}[section]
\newtheorem{assumpintro}{Assumption}[section]
\newtheorem{lemma}[theo]{Lemma}
\newtheorem{prop}[theo]{Proposition}
\newtheorem{proposition}[theo]{Proposition}
\theoremstyle{definition}
\newtheorem{definition}[theo]{Definition}
\theoremstyle{remark}
\newtheorem{remark}[theo]{Remark}
\newcommand\tempskipped[1]{}
\newcommand\C{\mathbb{C}}
\newcommand\R{\mathbb{R}}
\newcommand\E{\mathbb{E}}
\newcommand\cst{\operatorname{cst}}
\renewcommand\Re{\operatorname{Re}}
\renewcommand\Im{\operatorname{Im}}
\newcommand\osc{\operatorname{osc}}
\newcommand\cE{\mathcal{E}}
\newcommand\cQ{\mathcal{Q}}
\newcommand\cS{\mathcal{S}}
\newcommand\cX{\mathcal{X}}
\newcommand\cT{\mathcal{T}}
\newcommand\cY{\mathcal{Y}}
\newcommand\cM{\mathcal{M}}
\newcommand\cA{\mathcal{A}}
\newcommand\dm{\diamond}
\newcommand\Dm{\diamondsuit}
\def\LipKd{{\mbox{\textsc{Lip(}$\kappa$\textsc{,}$\delta$\textsc{)}}}}
\def\ExpFat{{{\mbox{\textsc{Exp-Fat(}$\delta$\textsc{,}$\rho$\textsc{)}}}}}
\def\ExpFatt{{{\mbox{\textsc{Exp-Fat(}$\delta$\textsc{,}$\rho(\delta)$\textsc{)}}}}}
\newcommand\Unif{{\mbox{\textsc{Unif(}$\delta$\textsc{)}}}}
\newcommand\Uniff{{\mbox{\textsc{Unif(}$\delta,r_0,\theta_0$\textsc{)}}}}
\newcommand\Unifff{{\mbox{\textsc{Unif(}$\frac{1}{n},10,\frac{\pi}{10}$\textsc{)}}}}
\newcommand\Uniffff{{\mbox{\textsc{Unif(}$\delta,2r_0,\frac{\theta_0}{2}$\textsc{)}}}}
\newcommand\vcirc[1]{v^\circ_1,\ldots,v^\circ_{#1}}
\newcommand\svcirc[1]{\sigma_{v^\circ_1}\ldots \sigma_{v^\circ_{#1}}}
\newcommand\vbullet[1]{v^\bullet_1,\ldots,v^\bullet_{#1}}
\newcommand\muvbullet[1]{\mu_{v^\bullet_1}\ldots\mu_{v^\bullet_{#1}}}
\begin{document}

\title{The near critical random bond Ising model via embedding deformation}

\author[Rémy Mahfouf]{Rémy Mahfouf$^\mathrm{A}$}

\thanks{\textsc{${}^\mathrm{A}$ Université de Genève.}}

\thanks{\emph{E-mail:} \texttt{remy.mahfouf@unige.ch}}

\maketitle

\begin{abstract}
Using the formalism of differential equations, we introduce a new method to continuously deform the $s$-embeddings associated with a family of Ising models as their coupling constants vary. This provides a geometric interpretation of the critical scaling window $\asymp n^{-1}$ for the model on the $n \times n$ box. We then drive this deterministic deformation process by i.i.d.\ Brownian motions on each edge, centered at the critical model, thereby generating random $s$-embeddings as solutions to stochastic differential equations attached to near-critical random bond Ising models. In this setting, with high probability with respect to the random environment, the Ising model remains conformally invariant in the scaling limit, even when the standard deviation of the random variables (up to logarithmic corrections) is $n^{-\frac{1}{3}} \gg n^{-1}$, far exceeding the deterministic critical window. We also construct an Ising model with slightly correlated (in space) random coupling constants, whose critical window is $ \asymp \log(n)^{-1}$ on the $n \times n$ box. Our method, which can also be applied to the dimer context, naturally extends to a much broader class of graphs and opens a new approach to understanding the critical Ising model in random environments.
\end{abstract}

\section{Introduction}\label{sec:introduction}
\subsection{General context} The Ising model, introduced nearly a century ago by Ising and Lenz \cite{ising1925beitrag}, remains one of the most studied models in probability and statistical mechanics. This article focuses on its planar version with nearest-neighbour interactions and no external magnetic field—a case extensively explored by both physicists and mathematicians due to its integrable structure, which allows for explicit computations of local and global observables (see e.g.\ the monographs \cite{friedli-velenik-book,mccoy-wu-book,palmer2007planar} and references therein). We adopt a dual convention to the standard setup, assigning $\pm 1$ spins to the set $G^\circ$ of faces of a planar graph $G$. When $G$ is finite and connected, each edge $e \in E(G)$—which separates two faces $v^\circ_{\pm}(e) \in G^\circ$—is assigned a positive coupling constant $J_e$. This ferromagnetic model favors configurations in which neighboring spins align. For a fixed inverse temperature $\beta > 0$, one defines a probabilistic model on spin configurations $\sigma \in \{\pm 1\}^{G^\circ}$, with partition function given by\begin{equation}
\label{eq:intro-Zcirc}
\mathcal{Z}(G)\ :=\ \sum_{\sigma:G^\circ\to\{\pm 1\}}\exp\big[\,\beta\sum_{e\in E(G)}J_e\sigma_{v^\circ_-(e)}\sigma_{v^\circ_+(e)}\,\big].
\end{equation}

The above definition is purely combinatorial and was used by Chelkak in \cite{Ch-ICM18,Che20} to propose a practical construction of an embedding associated with a weighted planar Ising graph $(G,x)$, inspired by similar approaches in other statistical mechanics models (e.g.\ Tutte's barycentric embeddings for discrete harmonic functions). These so-called $s$-embeddings are well suited to generalize the study of Ising fermions following Smirnov's breakthrough results on the square lattice \cite{Smi-ICM06,Smirnov_Ising}, and to establish conformal invariance or covariance of the critical model \cite{Smirnov_Ising,CHI,ChSmi2,CheIzy13,HS-energy-Ising,Che20,Izy-phd}, as predicted by Conformal Field Theory (see e.g.\ \cite{Zam,Zam2}). This framework was later extended to near-critical models \cite{park2018massive,park-iso,CIM-universality}, confirming connections to solutions of the massive Dirac equation \cite{mccoy1977painleve,sato1979holonomic}. Remarkably, $s$-embeddings unify and go far beyond previous approaches, providing an additional link between the Ising model on highly irregular graphs to massive fermions in Minkowski space $\mathbb{R}^{(2,1)}$ and generalized solutions of conjugate Beltrami equations (see \cite{Che20,Mah23,MahPar25a,ChePar}). In particular, this framework now allows us to rigorously analyze criticality in the scaling limits of a broad class of Ising models on degenerate and irregular grids, far beyond the realm of symmetries and integrability.

The goal of this paper is to introduce a seemingly naive yet surprisingly effective approach to studying the planar Ising model. Given a weighted planar Ising graph $(G,x)$, an associated $s$-embedding $\cS$ allows to construct a graphical representation of the underlying \emph{weighted model}. This representation (not uniquely defined) is obtained by finding a vector in the kernel of a linear system determined by the Ising weights of $(G,x)$. Two challenges arise when one tries to use the associated formalism. First, solving this linear system is generally non-trivial unless the weights exhibit some integrable or symmetric structure. Second, one may wonder if there exists some continuity of the graphical representation with respect to the weights: it is tempting to think that if two sets of Ising weight $x$ and $\tilde{x}$ are close to each other (in a suitable sense), one should be able to construct corresponding $s$-embeddings which are also. However, this requires very careful treatment, as the kernel of a linear system is in general unstable under small perturbations of its coefficients. 

Fortunately, the $s$-embeddings construction is sufficiently rich to allow some explicit and controlled deformations of the graphical representation as the weights vary using fermions of the associated Ising model, as noticed for the first time in the present paper. In particular, one of the output of the idea developed here is some differential construction of $s$-embeddings when moving continuously the Ising weights, solving at least partially one of the bottlenecks of the theory which is finding some $s$-embedding attached to a given Ising model, and therefore apply all the available discrete complex analysis machinery \cite{Che20,CLR1,CLR2,Mah23,MahPar25a,ChePar,MahPHD}. We illustrate this with two applications of this embedding deformation idea. The first provides a geometric interpretation of the correlation length: in the near-critical regime, a sharp change in crossing probabilities corresponds to a change in the graphical representation. The second application enables a linearisation of the model near criticality, even when random perturbations of the weights have magnitudes highly  exceeding (by a cubic root power) the deterministic critical window. In particular, we show that a system with independently sampled near-supercritical and near-subcritical weights at each edge averages to an exactly critical model. 

In the present paper, we only apply the method in its simplest setting, starting from the critical square lattice and producing embeddings with bounded angles and edge lengths of comparable size. We hope that this work, whose philosophy also applies to the dimer model, introduces enough new ideas to pave the way toward a rigorous proof of conformal invariance for the Ising and dimer models in well suited random environments.

\subsection{Definition of the FK model and the general criticality condition}

In the present work we focus on statements related to the so-called FK representation (introduced by Fortuin and Kasteleyn in \cite{fortuin1972random}) of the nearest neighbour Ising model with partition function given by \eqref{eq:intro-Zcirc}. Start with a weighted planar graph $(G,x)$ embedded in the plane or in the sphere (in the finite case) up to homeomorphism preserving the cyclic ordering of edges. Denote its vertices by $G^{\bullet}$ and its faces by $G^{\circ}$. The bipartite graph $\Lambda(G) := G^{\bullet} \cup G^{\circ}$ has edges connecting each vertex to the faces it belongs to. Each quad $z_e = (v^{\bullet}_0 v^{\circ}_0 v^{\bullet}_1 v^{\circ}_1)$ of $\Lambda(G)$ corresponds to an edge $e$ of $G$, linking $v^{\bullet}_0$ and $v^{\bullet}_1$ and separating the faces $v^{\circ}_0$ and $v^{\circ}_1$. We denote $e^\star$ the dual edge linking $v^{\circ}_0$ and $v^{\circ}_1$ in the graph $G^{\circ}$, and have $(e^\star)^\star =e$. Under this identification, one can parametrise the coupling constant $x(e)$ using the abstract angle as
\begin{equation}
\label{eq:x=tan-theta} \theta_{z(e)}\ :=\ 2\arctan x(e)\ \in\ (0,\tfrac{1}{2}\pi), \quad x(e):=\exp[-2\beta J_e].
\end{equation}
Using the classical Kramers-Wannier duality, set the dual weight 
\begin{equation}\label{eq:Kramers-Wannier}
(x_{e})^\star:= \frac{1-x_e}{1+x_e}.
\end{equation}
When $G$ is a finite planar graph, the model with \emph{wired} boundary conditions can be embedded into the sphere, where a distinguished face $v_{\mathrm{out}}^{\circ}$ represents \emph{all} boundary vertices and carries a single fixed spin. The FK-Ising model on $G^\circ$ can then be interpreted as a probability measure on even subgraphs, such that for any subgraph $C$ of $G^\circ$ such that
\begin{equation}
	\mathbb{P}^{G^\circ}_{FK}(C):=\frac{1}{Z_{FK}(G^{\circ},(x_e)_{e\in G})} 2^{\#\textrm{clusters}(C)}\prod_{e^\star\in C}(x_{e^\star})^\star\prod_{e^\star\not \in C}(1-(x_{e^\star})^\star),
\end{equation}
where $e^\star$ denotes the dual edge $G$ linking the vertices $v^{\pm}_{e^\star} \in G^{\circ}$, $\#\mathrm{clusters}(C)$ is the number of clusters in the subgraph $C$, and $Z_{FK}(G^{\circ}, (x_e)_{e \in G})$ is a normalization constant. It is standard (see e.g.\ \cite{duminil-parafermions}) to pass to the infinite-volume limit, thereby defining a full-plane FK-Ising measure on $G^\circ$. In this paper, we only consider graphs satisfying the strong box-crossing property (recalled below in the context of $s$-embeddings), ensuring that the infinite-volume limit is unique and independent of the initial choice of wired boundary conditions on finite graphs. The (combinatorial) link between the Ising model with wired boundary conditions and the FK-Ising model with wired boundary conditions is known as the Edwards-Sokal coupling introduced in \cite{edwards1988generalization} reads as follows:

\begin{itemize}
	\item Ising model to FK-Ising model: start with a spin configuration $\sigma \in \{\pm 1 \}^{G^{\circ}}$ and sort independently for each pair of aligned neighbouring faces $v_{\pm}^\circ\in G^{\circ}$
	  some Bernoulli random variable of parameter $(x_{e^\star})^\star$. The faces $v_{\pm}^\circ$ are connected in the random cluster model if and only if the associated Bernoulli variable is $1$. This constructs a random graph in $G^{\circ}$. 
	\item FK-Ising model to Ising model: For each cluster $C$ in $G^\circ$, sort (independently from other clusters) some fair $\pm 1 $ random variable and assign as a spin the result to \emph{all} spins attached to $C$. 
\end{itemize}

In the present paper, we always work with $s$-embeddings that satisfy some property called  \Unif\, in \cite{Che20} and recalled below. In words, this means working with an $s$-embedding $\cS$ where all the angles remain bounded away from $0$ and $\pi$ while all the edge-lengths are comparable. In what follows,  $\cS$ is a proper $s$-embedding (see Section \ref{sub:semb-definition} for a precise definitions).
\begin{definition}[Assumption \Unif\,]
	We say that $\cS$ satisfies the assumption $\Unif\,=\Uniff\ $ for some parameters $\delta,r_0,\theta_0$ if all edge-lengths in $\cS$ are comparable to $\delta$, meaning that for any  neighbouring $v^{\bullet}\in G^\bullet$ and $v^{\circ}\in G^\circ$ one has
	\begin{equation}
		r_0^{-1}\cdot \delta \leq  |\cS(v^{\bullet})- \cS(v^{\circ})| \leq r_0\cdot \delta,
	\end{equation}
and all the geometric angles in the quads $\cS$ are bounded from below by $\theta_0$.
\end{definition}
In particular, it is easy to see that in the general formalism introduced in Section \ref{sub:notation} to define in full generality the scale of an $s$-embedding, there exist constants $\kappa<1$ and $C_0$, only depending on $r_0,\theta_0$ such that grids satisfying the assumption \Uniff\, have to satisfy the assumptions \LipKd\ and \ExpFat\ hold for some scale $\rho=C_0\cdot \delta $. 

We are now ready to state a simplified version of the main result of \cite{Mah23}, which states that $s$-embeddings satisfying \Unif\, are critical regarding the so-called strong box-crossing property. Given an $s$-embedding $\cS^{\delta}$ of a graph $(S, (x_e)_{e\in E})$ satisfying the assumption \Uniff\, and any $\rho>0$, fix a square $\Lambda^{\delta}_{\rho}$ of width $\rho$ drawn over $\cS^{\delta}$. Consider the FK-Ising model on $\Lambda^{\delta}_{\rho}$, where random cluster weights are given by the Edwards-Sokal coupling of the Ising weights on $\cS^{\delta} $ defined by \eqref{eq:x=tan-theta}. In the following statement, one then denotes by $\mathbb{P}^{\textnormal{free}}_{\textnormal{FK}}$ the measure with $\textnormal{free}$ boundary conditions on the annulus $\Lambda^{\delta}_{\rho}$. 
\begin{theo}[Theorem 1.2 in \cite{Mah23}]\label{thm:RSW-s-embeddings}
In the previous setup, there exist $c(r_0,\theta_0)>0$, only depending on $r_0,\theta_0$, such that
\begin{equation}
\mathbb{P}^{\textnormal{free}}_{\textnormal{FK}} \big( \textnormal{There exist an open circuit in } \mathrm{A}_{\frac{\rho}{2},\rho} \big) > c(r_0,\theta_0).		
\end{equation}
A similar estimate holds for the dual model.
\end{theo}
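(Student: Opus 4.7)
The plan is to reduce the circuit event in $\mathrm{A}_{\rho/2,\rho}$ to crossings of rectangles of bounded aspect ratio, and then run the classical Russo--Seymour--Welsh bootstrap, which for the FK-Ising model at self-dual weights requires only the FKG inequality, Kramers--Wannier duality \eqref{eq:Kramers-Wannier}, and one initial quantitative crossing estimate uniform in $\delta$. Of these, only the last item is nontrivial; the others follow combinatorially from the Edwards--Sokal coupling and the parametrisation \eqref{eq:x=tan-theta}.

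\textbf{Step 1: from circuits to rectangles.} I would first observe that an open circuit in $\mathrm{A}_{\rho/2,\rho}$ is implied by the simultaneous occurrence of four open crossings of long thin rectangles inscribed in the four ``sides'' of the annulus, glued at their endpoints. By the FKG inequality for the FK-Ising measure with $q=2$ on $(\cS^\delta, (x_e))$, the probability of this intersection is bounded below by the product of the individual rectangle crossing probabilities. Under free boundary conditions, monotonicity with respect to boundary conditions (another consequence of FKG) lets me replace the conditional measures on each sub-rectangle by their own free-boundary counterparts without loss.

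\textbf{Step 2: uniform crossing estimate for a square.} This is the analytic core. Using the framework set up in \cite{Che20, CLR1, CLR2}, I would introduce on $\cS^\delta$ the FK-fermionic observable $F^\delta$ associated with a discrete Dobrushin decomposition of a square $Q^\delta\subset\Lambda^\delta_\rho$ at scale $\asymp\rho$. The key properties are that $F^\delta$ is $s$-holomorphic on $\cS^\delta$ and that $|F^\delta|^2$ controls the probability of an FK-Ising crossing between two prescribed arcs. Since \Uniff\, implies \LipKd\ and \ExpFatt\ at scale $C_0\delta$, the a priori regularity theory for $s$-holomorphic functions gives equicontinuity of $F^\delta$ on interior compacts and precompactness in the sup-norm, with any subsequential limit being holomorphic on the continuum square. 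I would then identify the limit as the unique solution to the Riemann--Hilbert boundary value problem associated to the continuum FK-Ising crossing observable, and read off from its explicit form that the crossing probability of $Q^\delta$ is bounded away from $0$ and $1$ by constants depending only on $r_0,\theta_0$.

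\textbf{Step 3: RSW bootstrap and the dual statement.} Once the square crossing probability is uniformly bounded away from $0$ and $1$, I would invoke the Duminil-Copin--Hongler--Nolin style RSW construction for random-cluster models with $q=2$ at self-dual weights, which turns a square estimate into crossings of rectangles of arbitrary fixed aspect ratio using only FKG, self-duality and the pushforward of crossings by reflections across arcs of paths. Feeding this back into Step~1 yields the desired lower bound on the circuit probability. The dual statement follows verbatim after replacing $x_e$ by $x_e^\star$, since the dual weights are again of the form \eqref{eq:x=tan-theta} on the dual $s$-embedding and also satisfy \Unif\, with comparable constants.

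\textbf{Main obstacle.} The hard step is Step~2: producing a \emph{quantitative} crossing estimate on irregular grids without appealing to any lattice symmetry. The tightness of $F^\delta$ is delicate near the boundary of $Q^\delta$, because s-holomorphicity alone does not preclude the limit from degenerating to a constant; one must exploit the boundary modifications of the observable (spin-insertion at the endpoints of the Dobrushin arcs) together with the boundary regularity estimates for $s$-holomorphic functions developed in \cite{Che20, Mah23} to show that the limit carries a nontrivial imaginary part on one arc. This is precisely where the \Unif\, assumption is indispensable, since it is what feeds the \LipKd\ and \ExpFatt\ conditions underlying the whole discrete complex-analytic machinery.
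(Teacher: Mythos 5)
Note first that Theorem~\ref{thm:RSW-s-embeddings} is \emph{not} proved in the present paper: it is imported verbatim as Theorem~1.2 of \cite{Mah23}, and here it is only \emph{used} as a black box (for instance inside the proofs of Theorems~\ref{thm:near-critical-RSW} and~\ref{thm:unif-rsw}). There is therefore no internal proof to compare your sketch against. That said, your outline is broadly consistent with how the result is actually obtained in \cite{Che20,Mah23}, and you correctly locate the crux in your \textbf{Main obstacle} paragraph: ruling out degeneration of the observable's limit is precisely where the \Unif\ hypothesis, feeding \LipKd\ and \ExpFat, enters.

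Two points in the sketch would not survive a careful write-up. In Step~2 the claim that ``$|F^\delta|^2$ controls the probability of an FK-Ising crossing'' is off: the quantitative link to crossings goes through the primitive $H_F$ of Definition~\ref{def:HX-def}, whose boundary values are $0$ on the free arc and $1$ on the wired arc, and the crossing estimate is read off from $H_F$ being uniformly bounded away from $0$ and $1$ on interior compacts, itself a consequence of the regularity alternative of Theorem~\ref{thm:F-via-HF}; $|F^\delta|^2$ is only the increment of $H_F$ across a single edge and does not by itself encode a crossing. More seriously, Step~3 invokes a DCHN-style square-to-rectangle bootstrap that pushes crossings through reflections across arcs. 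On an irregular $s$-embedding there is no reflection symmetry available, so that bootstrap cannot be run as stated. The way \cite{Che20,Mah23} avoid it is to solve the Riemann--Hilbert problem directly in arbitrary discrete quads rather than only in squares: the convergence of $H_F$ to its continuous harmonic limit then produces uniform crossing estimates for rectangles of any bounded aspect ratio, for both the primal and the dual arcs simultaneously, after which only FKG and gluing remain and no symmetrization is needed. Your closing remark on duality is fine as stated, since the dual weights $x^\star_e$ live on the same embedding with the roles of $G^\bullet$ and $G^\circ$ exchanged and \Unif\ is manifestly preserved.
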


\subsection{Main results}

The first major breakthroughs in the study of the planar nearest-neighbour Ising model was the exact computation of the critical temperature by Onsager in \cite{onsager1944crystal} for the homogeneous square lattice, which corresponds in the notation of the present paper to $x_c = \sqrt{2} - 1 = \tan(\frac{\pi}{8})$. Over the past sixty years, the phase transition of the model has been extensively studied (see e.g.\ \cite{mccoy2013two} featuring explicit determinantal computations), with truncated spin correlations decaying exponentially fast in the off-critical regime, while decaying polynomially at criticality. In the late 2000s, significant progress was made on the sharpness of the phase transition for Potts and FK models (see \cite{beffara-duminil}). A landmark for the FK-Ising model result was the proof by Duminil-Copin, Hongler, and Nolin in \cite{DCHN} of the strong box-crossing property on the square lattice, as the first example where Theorem~\ref{thm:RSW-s-embeddings} was established for the Ising model. A central follow-up question was the identification of the so-called \emph{correlation length} (see \cite{FK_scaling_relations} for a precise formulation and applications), which quantifies how far the model can deviate from criticality while still resembling critical behaviour in terms of crossing probabilities. Roughly speaking, in the box $\Lambda_n = [-n;n]^2$, the model behaves critically as long as all coupling constants deviate by no more than $O(n^{-1})$ from the critical value. It was shown in \cite{DuGaPe-near-crit} (for the square lattice) and \cite{park-iso} (for Z-invariant isoradial grids) that the strong box-crossing property holds for both the primal and dual models under such deviations, while deviations of a larger order of magnitude yield off-critical models. The first contribution of the deformation procedure in the $s$-embedding framework is to provide a new proof of the strong box crossing property up to the critical window. In what follows, fix $m>0$ and a collection of masses $(m_e)_{e\in \mathbb{Z}^2}$, all bounded by $m$. Consider the FK-Ising model on $\Lambda_n$, where the coupling constant on the edge $e\in \mathbb{Z}^2$ is $x^{(n)}_e=x_c + \frac{m_e}{n}$, where $x_c=\sqrt{2}-1$ is the critical value for the homogeneous model. In the statement below, one denotes by $\mathbb{P}^{\textnormal{free}}_{\textnormal{FK}}$ the measure with $\textnormal{free}$ boundary conditions on the annulus $\mathrm{A}_{\frac{n}{2},n}$.
\begin{theo}[Duminil-Garban-Pete;Park]\label{thm:near-critical-RSW}
 In the previous setup, there exist $c(m)>0$, only depending on $m$, such that
\begin{equation}
\mathbb{P}^{\textnormal{free}}_{\textnormal{FK}} \big( \textnormal{There exist an open circuit in } \mathrm{A}_{\frac{n}{2},n} \big) > c(m).
\end{equation}
A similar estimate holds for the dual model.
\end{theo}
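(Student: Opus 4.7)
\emph{Setup and deformation.} The plan is to reduce Theorem~\ref{thm:near-critical-RSW} to Theorem~\ref{thm:RSW-s-embeddings} by exhibiting, for every $n$ and every near-critical mass profile $(m_e)$ with $|m_e|\leq m$, an $s$-embedding of the perturbed model on $\Lambda_n$ that still satisfies \Unif\, with parameters depending only on $m$. After rescaling space by $1/n$, the mesh becomes $\delta=1/n$ and the box $[-1,1]^2$; the homogeneous critical model $x_e \equiv x_c$ admits the regular grid itself as a natural $s$-embedding $\cS_0$, which trivially obeys \Unif\, with absolute constants. Interpolate linearly via $x_e(t) = x_c + t\cdot m_e/n$ for $t \in [0,1]$, and let $\cS_t$ be the continuous family of $s$-embeddings produced by the differential deformation procedure discussed in the introduction, started from $\cS_0$. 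By construction $\cS_t$ solves an ODE whose right-hand side is a bilinear expression in the perturbation rates $m_e/n$ and in the fermionic observables of the Ising model on $\cS_t$; the desired target is $\cS_1$.

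\emph{A priori bound along the flow.} The core analytic step is a bootstrap: assume that $\cS_t$ satisfies \Unif\, with some constants depending only on $m$ up to time $t_0$. Then the $s$-embedding complex analysis machinery of \cite{Che20,Mah23} combined with Theorem~\ref{thm:RSW-s-embeddings} provides uniform bounds on the fermionic observables driving the ODE, so each edge-vector velocity $\tfrac{d}{dt}(\cS_t(v^{\bullet}) - \cS_t(v^{\circ}))$ is of order $|m_e|/n = O(m\delta)$. Integrating over $t\in[0,t_0]$ produces an $O(m\delta)$ distortion of the edge vectors with respect to the initial regular grid, which preserves edge-length comparability and the bounded-angle property of the quads, with constants depending only on $m$. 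Since \Unif\, is an open property, a continuity-in-$t$ argument extends the control to the whole interval $[0,1]$; applying Theorem~\ref{thm:RSW-s-embeddings} to $\cS_1$ then yields the claimed uniform circuit estimate in $\mathrm{A}_{n/2,n}$.

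\emph{Main obstacle.} The hard part is closing this bootstrap with constants independent of $n$: the \Unif\, parameters at time $t$ enter the fermionic bounds only through Theorem~\ref{thm:RSW-s-embeddings}, which in turn bound the velocity of the flow, and only after integration do they feed back into the \Unif\, constants themselves. The scaling structure of the argument is tight: an $O(m\delta)$ edge-velocity budget integrated over unit time produces exactly an $O(m\delta)$ local distortion, precisely at the mesh scale, which is just enough to be absorbed into the \Unif\, constants. This fits the heuristic underlying the paper, that the critical window for the nearest-neighbour Ising model on $\Lambda_n$ is the largest scale at which its graphical representation deforms by $O(1)$ macroscopically; beyond it, \Unif\, breaks down and the model leaves the critical regime.
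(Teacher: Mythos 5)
Your sketch reproduces the first half of the paper's argument (Proposition~\ref{prop:effective-cost-deformation}) but has a genuine gap: it only works for masses $m$ that are \emph{small enough}, and does not address the passage to an arbitrary bound $m$, which the paper handles by an additional covering and gluing step.

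In detail, your bootstrap "$O(m\delta)$ edge-distortion stays inside the \Unif\ class" works only when $m$ is below some absolute threshold: if $m$ is large, the $O(m\delta)$ distortion of an edge of length $\asymp\delta$ is of the same order as the edge itself (or larger), the tangential quads can flatten or flip orientation, and \Unif\ degenerates strictly before $t=1$. The paper acknowledges this by proving the deformation estimate (Proposition~\ref{prop:effective-cost-deformation}) only for masses bounded by some small enough $m>0$. To upgrade to an arbitrary bound $m_0$, the paper then rescales: it covers $\mathrm{A}_{n/2,n}$ by $O(\varepsilon^{-2})$ translated annuli $x+\mathrm{A}_{\varepsilon n}$, chooses $\varepsilon$ so that the effective mass in each smaller annulus satisfies $m_0\varepsilon\le m$, runs the deformation ODE there to obtain local \Unif\ embeddings and local circuit estimates via Theorem~\ref{thm:RSW-s-embeddings}, and finally patches these overlapping crossing events into a macroscopic circuit using FKG and monotonicity (as in \cite[Theorem 2.1]{FK_scaling_relations}). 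Without this covering step, your argument proves a strictly weaker statement.

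A secondary imprecision: you attribute the control of the fermions driving the ODE to Theorem~\ref{thm:RSW-s-embeddings}. That theorem gives crossing estimates, not the quantitative decay you need. The input actually used in the bootstrap is the sharp two-point Kadanoff--Ceva bound \eqref{eq:bound-correlator-distance} (from \cite{MahPar25a}): without the $1/|\cS(\frak p)-\cS(\frak p')|$ decay the sum over $O(n^2)$ quads in \eqref{eq:ODE-embedding} is of order $n^2$ instead of $n$, and the velocity budget blows up. The paper's proof makes this explicit by bootstrapping in time increments of length $n^{-2}$: on each increment the trivial bound $|\langle\chi\chi\rangle|\le 1$ is enough to preserve \Unif\, which then licenses \eqref{eq:bound-correlator-distance}, which then gives the sharp Gronwall bound for the full increment.
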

A remarkable feature of this new proof is the appearance of the scaling window. In essence, the embedding of a near-critical model in the box $\Lambda_n$ can be constructed via a linear ODE of the form $\cY_n'(t) = A(t) \cY_n(t)$, where $\cY_n$ encodes the coordinates of the fermion generating the embedding of $\Lambda_n$, where the initial condition $\cY_n(0)$ corresponds to an $s$-embedding of the critical square lattice. Using computation of \cite{HS-energy-Ising} recalled in Section \ref{sec:massive-deformation}, one sees that $||A(0)|| \asymp n$. Therefore, when continuously moving the Ising weights at a bounded speed, standard ODE theory suggests that the embedding at time $t = O(n^{-1})$ should remain comparable to the critical one, resulting in a \Unif-like grid where Theorem~\ref{thm:RSW-s-embeddings} applies. This deformation strategy naturally extends to any $s$-embedding satisfying a \Unif-type assumption.
\begin{theo}\label{thm:unif-rsw}
Fix $m>0$, a proper $s$-embedding $\cS^{\delta}$ of a graph $(S, (x_e)_{e\in E})$ satisfying \Uniff\,, and a collection of masses $(m_{e})_{e\in E}$ all bounded by $m$. Fix a square $\Lambda^{\delta}_{\rho}$  of width $\rho>0$ drawn over $\cS^{\delta}$. Consider the FK-Ising model on $\Lambda^{\delta}_{\rho}$, where the coupling constant on the edge $e\in \Lambda^{\delta}_{\rho}$ is $x^{(\delta)}_e=x_e + \delta m_e$, and denote by $\mathbb{P}^{\textnormal{free}}_{\textnormal{FK}}$ the measure with $\textnormal{free}$ boundary conditions on the annulus $\Lambda^{\delta}_{\rho}$. Then, there exists $c(m,r_0,\theta_0)>0$, only depending on $m,r_0,\theta_0$, such that
\begin{equation}
\mathbb{P}^{\textnormal{free}}_{\textnormal{FK}} \big( \textnormal{There exist an open circuit in } \mathrm{A}_{\frac{\rho}{2},\rho} \big) > c(m,r_0,\theta_0).		
\end{equation}
A similar estimate holds for the dual model.
\end{theo}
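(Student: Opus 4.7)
The plan is to reduce Theorem~\ref{thm:unif-rsw} to Theorem~\ref{thm:RSW-s-embeddings} by producing an $s$-embedding of the perturbed weighted graph $(S,(x_e+\delta m_e)_{e\in E})$ which still satisfies the assumption \Unif\, with parameters depending only on $m,r_0,\theta_0$. Once such an embedding is in hand, Theorem~\ref{thm:RSW-s-embeddings} applied to it yields the required box-crossing lower bound on the annulus $\mathrm{A}_{\rho/2,\rho}$ for both the primal and the dual FK-Ising measures.

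To construct this embedding I would use the deformation mechanism developed in Section~\ref{sec:massive-deformation}. Introduce the linear interpolation of coupling constants $x_e(t):=x_e+t m_e$ for $t\in[0,\delta]$, and flow the initial $s$-embedding $\cS^\delta$ along the associated linear ODE $\cY_\delta'(t)=A_\delta(t)\cY_\delta(t)$, where $\cY_\delta(t)$ encodes the fermion generating $\cS^\delta(t)$ and $A_\delta(t)$ is the operator built from fermions of the intermediate Ising model with weights $(x_e(t))_e$. As recalled in Section~\ref{sec:massive-deformation} from the computation of \cite{HS-energy-Ising}, whenever $\cS^\delta(t)$ itself satisfies a \Unif-type condition the operator $A_\delta(t)$ has norm of order $\delta^{-1}$. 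Since we only integrate over the short interval of length $\delta$, a Grönwall estimate forces $\cY_\delta(\delta)$ to differ from $\cY_\delta(0)$ by a bounded multiplicative factor depending only on $m,r_0,\theta_0$, which translates into edge lengths and geometric angles of $\cS^\delta(\delta)$ that stay uniformly comparable to those of $\cS^\delta$. In particular there exist parameters $r_0'=r_0'(m,r_0,\theta_0)$ and $\theta_0'=\theta_0'(m,r_0,\theta_0)$ such that $\cS^\delta(\delta)$ is an $s$-embedding of the near-critical weights satisfying \Unif\, with constants $(\delta,r_0',\theta_0')$.

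The main obstacle is the bootstrap between the ODE bound and the \Unif\, property: the estimate $\|A_\delta(t)\|=O(\delta^{-1})$ a priori requires $\cS^\delta(t)$ to satisfy a \Unif-type condition, which is exactly what the deformation is supposed to produce at the endpoint. I would resolve this by a continuity argument: fix target parameters $(r_0',\theta_0')$ strictly worse than $(r_0,\theta_0)$ but still depending only on $m,r_0,\theta_0$, and define $t^\star\in[0,\delta]$ to be the first time at which $\cS^\delta(t)$ ceases to satisfy \Unif\, with the parameters $(\delta,r_0',\theta_0')$. On $[0,t^\star]$ the Grönwall bound applies and forces the Unif constants of $\cS^\delta(t^\star)$ to remain strictly better than $(r_0',\theta_0')$, contradicting minimality of $t^\star$ unless $t^\star=\delta$. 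A secondary technical point is handling boundary effects, since we only need to deform the weights in a neighbourhood of $\Lambda^\delta_\rho$; this is standard provided the deformation is performed on a slightly enlarged region so that the ODE machinery still applies near $\mathrm{A}_{\rho/2,\rho}$. Once $\cS^\delta(\delta)$ has been constructed with the claimed uniformity, Theorem~\ref{thm:RSW-s-embeddings} yields the conclusion of Theorem~\ref{thm:unif-rsw}, both for the primal and the dual model.
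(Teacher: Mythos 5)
Your proposal follows essentially the same route as the paper's sketch---run the deformation ODE \eqref{eq:ODE-embedding-general} from $\cS^\delta$ with the linear time-parametrisation $x_e(t)=x_e+t m_e$ for $t\in[0,\delta]$, apply a Gr\"onwall-plus-bootstrap argument to show that $\cS^\delta(\delta)$ remains within a \Unif\ class, and conclude by Theorem~\ref{thm:RSW-s-embeddings}. For \emph{small enough} $m$ this is exactly what Proposition~\ref{prop:effective-cost-deformation} and its adaptation provide, so that part of your argument is sound.

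The genuine gap is the quantifier on $m$. Theorem~\ref{thm:unif-rsw} allows \emph{any} fixed $m>0$, whereas the Gr\"onwall bound that controls the deformation only yields a \emph{small additive} deviation $|\cY^{(t)}(\frak p)-\cY^{(0)}(\frak p)|\le C\,m\,e^{Cm}\,\delta^{1/2}$, which is only an effective control if $m\,e^{Cm}$ is small in absolute terms, since $|\cY^{(0)}(\frak p)|\asymp\delta^{1/2}$. For a large $m$ the deviation is of the same order as, or larger than, $|\cY^{(0)}|$ itself, so the embedding could degenerate (some $|\cY^{(t)}(\frak p)|\to 0$, edge lengths collapse, angles vanish, properness fails), and there is no uniform \Unif\ constant at all. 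Your continuity argument with the stopping time $t^\star$ does not repair this: the fact that the deviation is bounded in terms of $m$ does not prevent $t^\star<\delta$, because the bound does not shrink as $(r_0',\theta_0')$ are moved further from $(r_0,\theta_0)$. The sentence ``a bounded multiplicative factor depending only on $m,r_0,\theta_0$'' is not what Gr\"onwall delivers here---it delivers an additive bound, which controls a multiplicative factor only when it is small.

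The paper closes this gap with an additional scale-reduction step that your proposal omits. Instead of deforming the whole window $\Lambda^\delta_\rho$ at once, one fixes a small $\varepsilon=\varepsilon(m)>0$ so that the effective mass becomes small when the deformation is restricted to a translate of the smaller annulus $\Lambda^\delta_{\varepsilon\rho}\setminus\Lambda^\delta_{\varepsilon\rho/2}$: the improved bound \eqref{eq:bound-correlator-distance} makes the operator norm scale with the size of the deformed region, and running the ODE over a region of size $\varepsilon\rho$ reduces the cumulative perturbation by a factor $\varepsilon$. Then Proposition~\ref{prop:effective-cost-deformation} applies verbatim on each small annulus and gives a uniform lower bound on the probability of an open circuit there. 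Finally, one covers $\mathrm{A}_{\rho/2,\rho}$ by $O(\varepsilon^{-2})$ overlapping translates of the smaller annulus and uses the FKG inequality and monotonicity of the FK-Ising model to glue the small circuits into a circuit in the large annulus (and the dual analogue). This scale-reduction plus FKG-gluing step is what makes the constant $c(m,r_0,\theta_0)$ well-defined for arbitrary $m>0$, and it is exactly the argument used for the analogous Theorem~\ref{thm:near-critical-RSW}. Your proposal needs that step to be complete.
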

In particular, this approach implies that the near-critical window is universal across (near-)critical Z-invariant isoradial grids with bounded angle conditions, as well as across critical doubly periodic graphs.
\smallskip

We now turn to the main results of the present paper, which focuses on the near-critical square lattice endowed with random weights. A key outcome is that the naively centred randomness around the critical point averages so effectively around the critical point that the random near-critical scaling window can be extended the \emph{cubic root} of the deterministic one. More precisely, with high probability with respect to the random environment, the near-critical window in a random environment has size at least $O(n^{-\frac{1}{3}})$ (up to logarithmic corrections). To formalise this statement, fix a family of i.i.d.\ standard gaussian variables $\omega \mapsto (\mathcal{N}_{e_k}(\omega))_{k \in E(\mathbb{Z}^2)}$, and let $\mathbf{P}$ denote the underlying probability measure. For a realisation $\omega$, define the \emph{$t$-weakly random Ising model} on $\mathbb{Z}^2$ by assigning coupling constants to the angles \eqref{eq:x=tan-theta} as
\[
\theta_{e}^{(t)}(\omega) := \frac{\pi}{4} + t \cdot \mathcal{N}_{e_k}(\omega).
\]
We denote by $\mathbb{P}_{\Lambda_n,\omega,t}$ the FK-Ising measure on $\Lambda_n$ associated with the coupling constants defined by $\omega$.
\begin{theo}\label{thm:near-critical-RSW-random}
Consider the $t$-weakly random Ising model on $\mathrm{A}_{\frac{n}{2},n} $, with free boundary conditions at the inner and the outer boundaries. There exist positive constants $c_{1,2,3}>0$ such that for any $0\leq t \leq c_3\cdot(n\log^{\frac{1}{2}}(n))^{-\frac13}$, one has
\begin{equation}
	\mathbf{P}\Bigg[ \mathbb{P}^{\textnormal{free}}_{\Lambda_n,\omega,t} \big( \textnormal{There exist an open circuit in } \mathrm{A}_{\frac{n}{2},n} \big) > c_1  \Bigg] \geq 1- \frac{c_2}{n^4}.
\end{equation}
\end{theo}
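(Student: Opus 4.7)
The plan is to interpret $\cS^t_\omega$ as the value at parameter $s=t$ of a one-parameter family of $s$-embeddings obtained from the critical square lattice via the deterministic differential construction of Section \ref{sec:massive-deformation}, now driven by the random direction $(\mathcal{N}_{e}(\omega))_{e\in E(\mathbb{Z}^2)}$. Concretely, I would set up the ODE
\[
\cY_n'(s) \;=\; A_\omega(s)\cY_n(s), \qquad \cY_n(0) \;=\; \cY_{\text{crit}},
\]
where $\cY_n$ parametrises the embedding and $A_\omega(s)=\sum_e \mathcal{N}_e(\omega)\,B_e(\cY_n(s))$ is linear in $\omega$ with operator coefficients $B_e$ built from the massive fermionic observables on the current embedding, as in \cite{HS-energy-Ising}. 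Feeding the realisation $\omega$ pathwise into Theorem \ref{thm:unif-rsw} would give only the worst-case window $s\lesssim n^{-1}/\max_e|\mathcal{N}_e(\omega)|\sim 1/(n\sqrt{\log n})$; the whole point of the theorem is that averaging over the independent Gaussians $\mathcal{N}_e$ yields the strictly larger window $n^{-1/3}$.

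The mechanism behind this gain is cancellation in the Taylor expansion
\[
\cY_n(s)-\cY_n(0)\;=\;s\,\mathcal{L}_1(\mathcal{N})+s^2\,\mathcal{L}_2(\mathcal{N})+s^3\,\mathcal{L}_3(\mathcal{N})+O(s^4)\,,
\]
in which each $\mathcal{L}_k$ is a $k$-linear form in $\mathcal{N}$ whose tensor coefficients are products of $k$ critical fermionic observables, which decay as inverse powers of the relevant distance on the critical square lattice by \cite{HS-energy-Ising}. The $\theta\mapsto\tfrac{\pi}{2}-\theta$ self-duality of the critical model—which, as emphasised in the introduction, makes a symmetric mixture of near-sub/near-super-critical weights exactly critical—forces the expected drift $\E[\mathcal{L}_2(\mathcal{N})]$ (and the leading even-order contributions in general) to be harmless, so that the genuine displacement of $\cY_n$ is driven only by the Wick-ordered parts of the $\mathcal{L}_k$.

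I would then control each $\mathcal{L}_k(\mathcal{N})$ in $L^\infty$ by Gaussian concentration (for $k=1$) or hypercontractivity for $k$-fold Wick products ($k\geq 2$), combined with a union bound over the $O(n^2)$ vertices of $\Lambda_n$. This is expected to yield $\|\mathcal{L}_1(\mathcal{N})\|_\infty\lesssim\sqrt{\log n}$ thanks to the square-summability of the massive fermion propagator, and $\|\mathcal{L}_3(\mathcal{N})\|_\infty\lesssim n\sqrt{\log n}$, where the extra factor $n$ comes from the sum over $O(n^2)$ independent cubic off-diagonal products. Requiring that the pointwise displacement of $\cY_n(s)$ stay below a threshold fixed by the target \Uniff\ parameters then yields
\[
s\sqrt{\log n}\;+\;s^3\,n\sqrt{\log n}\;\lesssim\;1,
\]
whose binding constraint is precisely $s\leq c_3(n\log^{1/2}n)^{-1/3}$, the window announced in the theorem. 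The failure probability $O(n^{-4})$ then comes from the polynomial Gaussian tail, and Theorem \ref{thm:RSW-s-embeddings} applied to the random embedding $\cS^t_\omega$ on the \Uniff-event delivers the crossing lower bound.

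The main obstacle I foresee is the self-consistent control of the coefficients $B_e$ along the entire trajectory $s'\in[0,s]$: each $\mathcal{L}_k$ involves fermion propagators at all intermediate times, when the embedding is itself a stochastic perturbation of the critical square lattice, so the decay estimates from \cite{HS-energy-Ising} must be propagated along the deformation rather than only asserted at $s'=0$. I would handle this by a standard stopping-time bootstrap, stopping the ODE at the first $s^\ast$ where the Unif parameters deteriorate by a fixed factor and using the moment estimates above to show $s^\ast\geq c_3(n\log^{1/2}n)^{-1/3}$ with $\mathbf{P}$-probability $1-O(n^{-4})$. The delicate part is the careful accounting of the $\pi/4$-symmetry inside the Wick expansion, since this is what ultimately separates the random window $n^{-1/3}$ from the worst-case deterministic window $n^{-1}$: any sloppiness there would reintroduce a deterministic drift of order $s\cdot n$ and collapse the gain.
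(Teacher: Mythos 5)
Your heuristic is essentially the right one — the self-duality identity $\cot(\pi/4)=\mathbb{E}[\varepsilon]/\sin(\pi/4)$ is indeed what kills the leading deterministic drift, and the target window $(n\log^{1/2}n)^{-1/3}$ and the polynomial failure probability are both what you should obtain. However, the technical route you propose is genuinely different from the paper's, and I think it has a real gap at its heart: you drive a \emph{deterministic} ODE $\cY_n'(s)=A_\omega(s)\cY_n(s)$ in the frozen random direction $(\mathcal{N}_e(\omega))_e$ and then Taylor-expand, whereas the paper drives a \emph{stochastic} differential equation (Lemma \ref{lem:SDE-embedding}) by i.i.d.\ Brownian motions $(B^{(e)}_t)_e$ and reads off the answer from the Doob decomposition $\cY^{(t)}=\cM^{(t)}+\cA^{(t)}$. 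These two constructions produce $s$-embeddings of the same terminal Ising model but are not pathwise the same object; only the SDE solution comes equipped with a local-martingale part whose bracket $[\cM]^{(t)}=O(t\log n/n)$ is directly controlled by the exponential inequality \eqref{eq:deviation-local-martingale}, together with an adapted drift $\cA^{(t)}$ whose integrand is \emph{exactly} the It\^o correction coefficient $\cot\theta^{(s)}_e-\mathbb{E}_{S(s)}[\varepsilon_e]/\sin\theta^{(s)}_e$. The whole mechanism of Proposition \ref{prop:bound-T_n-optimal} (and the crucial ``Step 0'' showing this coefficient grows like $\sqrt{s\log n}$ with high probability) lives inside this stochastic-calculus structure and has no counterpart in a deterministic ODE.

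Two concrete problems with the ODE/Taylor route. First, the Taylor expansion $\cY_n(s)-\cY_n(0)=s\mathcal{L}_1+s^2\mathcal{L}_2+s^3\mathcal{L}_3+O(s^4)$ has no control over its remainder beyond the worst-case scale: since $\|A_\omega(s)\|$ has $L^1$-norm of order $n\sqrt{\log n}$, the naive Taylor remainder at order $K$ is only small when $s\,n\sqrt{\log n}\lesssim 1$, i.e.\ precisely the deterministic critical window you are trying to beat. Your ``stopping-time bootstrap'' is left unspecified, and once you try to make it precise you are forced into something equivalent to the paper's stopping time $\mathbf{T}_{\Lambda_n}$ and the Gr\"onwall-plus-large-deviation scheme of Propositions \ref{prop:bound-T_n-non-optimal} and \ref{prop:bound-T_n-optimal}; the Taylor-chaos language does not do this work by itself. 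Second, the asserted sizes of the chaos components do not hold up: your claim $\|\mathcal{L}_3(\mathcal{N})\|_\infty\lesssim n\sqrt{\log n}$ (normalized) is what you would \emph{need} to land on the $s^3$ constraint, but a hypercontractivity estimate on the Wick-ordered degree-3 chaos gives a much smaller Hilbert--Schmidt-type bound (polylogarithmic, after normalizing by the scale $n^{-1/2}$), while your $\mathcal{L}_2$ contribution --- whose \emph{mean} vanishes by self-duality as you correctly note --- still has non-negligible fluctuations that you do not discuss. You cannot read off the exponent $-1/3$ from these chaos estimates alone; in the paper it comes from integrating the almost-sure growth $\sqrt{s\log n}$ of the drift coefficient, not from any single Gaussian chaos.

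What the two approaches share is the identification of the cancellation $\cot(\pi/4)-\mathbb{E}[\varepsilon]/\sin(\pi/4)=0$ and the final appeal to Theorem \ref{thm:RSW-s-embeddings}. What your write-up is missing is the step where Brownian driving enters essentially: it is the fact that the coupling constants themselves undergo a martingale dynamics that lets the deviation of the embedding be governed by the quadratic variation (hence $\sqrt{t}$) rather than by the worst-case operator norm (hence $t\cdot n\sqrt{\log n}$). Replacing the Brownian trajectory by the linear path $s\mapsto s\mathcal{N}_e$ throws this away and has to be re-earned by hand, and the argument you sketch does not do that yet.
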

Before passing to more refined statements (i.e.\ conformal invariance of the $t$-weakly random Ising model), let us make additional comments. It is explained in Section \ref{sub:optimality} that one could in principle see some optimality in this statement, at least from a self-duality perspective. This comes from an analogy with the case of Bernoulli percolation \cite{AveMah25}, where some moment and self-duality condition is enough to derive some unique criticality condition in random environment, including those with macroscopic deviations around the critical point that don't scale to $0$ as $n\to \infty$. 

Let us emphasise once again that, in any case, if one replaces $\mathcal{N}_{e_k}$ by $|\mathcal{N}_{e_k}|$, the model at time $t=(n\log^\frac12(n))^{-\frac13}\gg n^{-1}$ would be off-critical by a fair margin. The overall machinery extends (up to logarithmic corrections) to independent random variables (not necessarily identically distributed) with light enough tails centred around the critical points, using the previous result together with the Skorokhod embedding Theorem (see \cite[Section 3.2]{AveMah25} for a complete derivation). Let us also emphasise that starting from an $s$-embedding with no particular symmetries and only satisfying  \Unif\, kind of assumption, one can once again deform (with high $\mathbf{P}$-probability) the original model with random coupling constants centred around the original model with a standard deviation  $O(\delta^{\frac{1}{2}})$, while remaining in the class of (near)-critical Ising models.

When starting from a critical and conformally invariant model, the randomness under $ \mathbf{P}$ linearises so effectively that not only do the macroscopic box-to-box crossing properties remain bounded away from $0$ and $1$, but also some very sensitive and microscopic details of the model, including of the scaling limit of the law of the interface separating primal and dual FK clusters as well as the second order expansion of the energy density random variables. In particular, the scaling limit of the model remains conformally invariant in the limit. To lighten notations, denote $\delta_n=\frac{\sqrt{2}}{n}$ and let $\Omega \subset [-\frac{1}{2},\frac{1}{2}]^2$ be a simply connected domain with two marked boundary points $a, b \in \partial \Omega$ considered as prime ends. Let $(\Omega_{\delta_n}, a^{(\delta_n)}, b^{(\delta_n)})_{n \geq 1}$ be a sequence of discretisation (in the Carathéodory sense), converging in the Carathéodory sense to $(\Omega, a, b)$ on the isoradial lattice $\delta_n \mathbb{Z}^2$. Consider the FK-Ising model on $(\Omega_{\delta_n}, a^{(\delta_n)}, b^{(\delta_n)})$ with wired boundary conditions along the arc $(a^{(\delta_n)} b^{(\delta_n)})^{\circ}$ and free boundary conditions along the arc $(b^{(\delta_n)} a^{(\delta_n)})^{\bullet}$. For a realisation $\omega$ under $\mathbf{P}$ and parameter $t$, denote by $(\Omega_{\delta_n}, a^{(\delta_n)}, b^{(\delta_n)}, \omega, t)$ the $t$-weakly random FK-Ising model with coupling constants determined by $\omega$ on the domain $(\Omega_{(\delta_n)}, a^{(\delta_n)}, b^{(\delta_n)})\subset \delta_n \mathbb{Z}^2$. At the discrete level, define the (leftmost) discrete interface $\gamma^{(\delta_n)}_{\omega,t}$ separating primal and dual FK cluster and connecting $a^{(\delta_n)}$ to $b^{(\delta_n)}$, and \emph{drawn on} $\delta_n \mathbb{Z}^2$.
\begin{theo}\label{thm:random-SLE}
Fix $\alpha>0$. In the previous setup, for $t^{(\alpha)}_n= n^{-(\alpha+\frac{1}{3})}$, one has $\mathbf{P}$-almost surely
\begin{equation}
	\gamma^{(\delta_n)}_{\omega,t^{(\alpha)}_n} \underset{n\to \infty}{\overset{(d)}{\longrightarrow}}  \mathrm{SLE}_{16/3}(\Omega,a,b),
\end{equation}
where $\mathrm{SLE}_{16/3}(\Omega,a,b)$ is the standard chordal Schramm–Loewner-Evolution process in $\Omega$ that connects $a$ to $b$ in $\Omega $. 
\end{theo}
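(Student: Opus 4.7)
The plan is to combine the stochastic $s$-embedding deformation underlying Theorem~\ref{thm:near-critical-RSW-random} with the convergence theory of FK-Ising interfaces on $s$-embeddings satisfying \Unif, of the type developed in \cite{Che20,Mah23,ChePar}. Given a realisation $\omega$, the SDE construction of the paper produces a random proper $s$-embedding $\cS^{\delta_n}_{\omega,t}$ of the square lattice whose Ising coupling constants are exactly the $\theta_e^{(t)}(\omega)$, starting at $t=0$ from the critical isoradial embedding of mesh $\delta_n$ on which $(\Omega,a,b)$ is discretised. The argument splits into three ingredients: \textbf{(i)} a quantitative control of $\cS^{\delta_n}_{\omega,t^{(\alpha)}_n}$; \textbf{(ii)} a deterministic convergence to $\mathrm{SLE}_{16/3}$ for FK-Ising interfaces on a \Unif-like $s$-embedding whose limiting geometry is conformal; and \textbf{(iii)} a Borel--Cantelli transfer.

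For \textbf{(i)}, I would revisit the linear evolution equation for the fermionic coordinates $\cY_n(t)$ from Section~\ref{sec:massive-deformation}, which in the random setting becomes an SDE driven by the i.i.d.\ Brownian motions associated with $(\mathcal{N}_{e_k})_{k\in E(\mathbb{Z}^2)}$. The proof of Theorem~\ref{thm:near-critical-RSW-random} already shows that for $t\leq c_3(n\log^{\frac{1}{2}}n)^{-\frac{1}{3}}$ the deformation remains in \Uniff\ with universal parameters on a $\mathbf{P}$-event of probability $1-O(n^{-4})$. For the strictly smaller time $t^{(\alpha)}_n=n^{-(\alpha+\frac{1}{3})}$, the same martingale concentration estimates upgrade to a sup-norm bound: on a good event $\cA_n$ of probability $1-O(n^{-4})$, the map $\cS^{\delta_n}_{\omega,t^{(\alpha)}_n}\circ(\cS^{\delta_n}_{\omega,0})^{-1}$ is within $O(n^{-\alpha'})$ of the identity on compact subsets of $\Omega$, for some $\alpha'>0$ depending on $\alpha$. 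In particular the pulled-back discretisations $(\Omega^{\omega}_{\delta_n},a^{(\delta_n)},b^{(\delta_n)})$ still converge to $(\Omega,a,b)$ in the Carathéodory sense.

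On the event $\cA_n$, step \textbf{(ii)} applies the convergence of the leftmost FK-Ising interface for a \Unif-class $s$-embedding approaching a conformal chart, which follows from S-holomorphicity of the fermionic observable in the $\cS^{\delta_n}_{\omega,t^{(\alpha)}_n}$-metric, the identification of the Loewner driving martingale, and tightness from Theorem~\ref{thm:RSW-s-embeddings}. Viewed as a curve in $\C$ through $\cS^{\delta_n}_{\omega,t^{(\alpha)}_n}$, the interface converges in law to $\mathrm{SLE}_{16/3}(\Omega,a,b)$, and since the embedding is $o(1)$-close to the isoradial one in sup norm, the same convergence transfers to $\gamma^{(\delta_n)}_{\omega,t^{(\alpha)}_n}$ drawn on $\delta_n\mathbb{Z}^2$. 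For \textbf{(iii)}, summability of $\sum_n \mathbf{P}(\cA_n^c)$ gives, via Borel--Cantelli, that $\mathbf{P}$-almost every $\omega$ lies in $\cA_n$ for all $n$ large, which combined with \textbf{(ii)} yields the $\mathbf{P}$-almost sure distributional convergence claimed.

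The main obstacle is step \textbf{(i)}: within the deterministic critical window the $s$-embedding fluctuates around the isoradial one by $O(1)$, so one has to exploit the extra factor $n^{-\alpha}$ in $t^{(\alpha)}_n$ to turn these $O(1)$ fluctuations into $o(1)$ uniform ones. Making this rigorous requires an analysis of the drift $A(t)$ (which, via the computation of \cite{HS-energy-Ising}, moves the embedding along the manifold of critical $s$-embeddings when all $x_e=x_c$) together with uniform-in-$\omega$ martingale concentration for the stochastic part, all tracked at the scale of the $n\times n$ box. Once this is in place, the rest of the argument is a combination of the existing $s$-embedding SLE-convergence technology with a routine Carathéodory approximation.
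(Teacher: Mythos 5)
Your three-part structure (control of the deformed $s$-embedding, deterministic interface convergence on it, Borel--Cantelli transfer) matches the paper's proof, and you correctly identify both the role of the extra factor $n^{-\alpha}$ in forcing the embedding to be $o(1)$-close to $\delta_n\mathbb{Z}^2$ and the need for a final coupling argument to pass from curves drawn on $\cS^{\delta_n}_{\omega,t^{(\alpha)}_n}$ to curves drawn on $\delta_n\mathbb{Z}^2$. However, there is a genuine gap in your step~\textbf{(ii)}, and you misidentify where the real difficulty lies.

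You treat ``convergence of the leftmost FK-Ising interface for a \Unif-class $s$-embedding approaching a conformal chart'' as a ready-made theorem that one can simply apply. No such general result exists: for a generic $s$-embedding satisfying only \Unif, it is \emph{not} known that the FK observable converges to its continuum limit. Chelkak's framework \cite{Che20} reduces this convergence to constructing a discrete operator on the $s$-embedding that approximates the continuous Laplacian well enough, and verifying this approximation hypothesis is precisely the hard step; it is an open problem for generic \Unif\ grids. The paper proves it only for the specific class of embeddings produced by the SDE deformation, by exploiting that $\cS^{(t^{(\alpha)}_n)}_{\omega}$ is within a multiplicative factor $1+O(\delta_n^{\alpha})$ of the square lattice $\cS^{(0)}$ everywhere. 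Concretely, the paper first establishes that $s$-holomorphic functions on the deformed embedding remain Lipschitz (Lemma~\ref{lem:Lipchitzness-random-environement}, transferring the known Lipschitz property on $\delta_n\mathbb{Z}^2$ via a Gronwall argument for harmonic measures on the associated S-graph), then shows the $s$-Laplacian $\Delta_{\cS^{(t)}}$ approximates $\Delta$ with error $O(\delta^{1+\alpha})$ (Proposition~\ref{prop:Laplacian-approximation-local}, using the cancellations that hold exactly on the square lattice together with the $1+o(\delta^{\alpha})$ proximity), and finally deduces a quantitative bound $|H_{F^{(t)}_{\delta_n,\omega}}-h^{(t)}_{\mathrm{int}(\eta),\omega}| = O(\delta_n^{\gamma})$ (Theorem~\ref{thm:quantitative-convergence-h}, following the mollifier scheme of \cite{Che20}). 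This chain of lemmas constitutes the bulk of the paper's proof, whereas your step~\textbf{(i)} follows almost immediately from Proposition~\ref{prop:bound-T_n-optimal} and a Gronwall-type estimate. Your claim that ``the main obstacle is step~\textbf{(i)}'' therefore inverts the actual difficulty: the control of the embedding is routine once Proposition~\ref{prop:bound-T_n-optimal} is in place, and the real work is turning that control into a Laplacian approximation suitable for the convergence machinery.
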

In the above theorem, we prove conformal invariance of the FK interfaces requiring some polynomial correction to conditions of Theorem \ref{thm:near-critical-RSW-random}. As discussed in Remark \ref{rem:extension-logarithmic}, this polynomial correction can in principle be replaced by a polylogarithmic correction instead, with a slightly more technical proof. One can also extend conformal invariance of the near-critical model in a random environment to the so-called energy density of the model, whose second term correction in bounded domains approximated by the lattice is known to be conformally covariant. More precisely, set $(\Omega_{\delta_n})_{n \geq 1}$ be a sequence of approximation (in the Hausdorff sense) of $\Omega $ (which we assume for simplicity to have a smooth boundary). Consider the Ising model on $(\Omega_{\delta_n})$ with wired boundary conditions along the arc. For a realisation $\omega$ under $\mathbf{P}$ and parameter $t$, denote by $(\Omega_{\delta_n}, \omega, t)$ the $t$-weakly random FK-Ising model with coupling constants given by $\omega$ inside $\Omega_{\delta_n}$ and $(\delta_n \mathbb{Z}^2, \omega, t)$ the $t$-weakly random FK-Ising model with coupling constants given by $\omega$ inside $\Omega_{\delta_n}$ and uniform critical homogeneous outside of $\Omega_n$. Fix an inner point $a\in \Omega$ approximated by an edge $e^{\delta_n}_a$. We have the following theorem regarding the scaling limit of the energy density $\varepsilon_{a}^{\delta_n}$ encoding the product of the spins separated by $e_a^{\delta_n}$. 

\begin{theo}\label{thm:random-energy-density}
In the previous setup, one has $\mathbf{P}$-almost surely, 
\begin{equation}
	\frac{\mathbb{E}_{(\Omega_{\delta_n},\omega,t^{(\alpha)}_n)}[\varepsilon_{a}^{\delta_n}] - \mathbb{E}_{(\delta_n \mathbb{Z}^2,\omega,t^{(\alpha)}_n)}[\varepsilon_{a}^{\delta_n}]}{\frac{1}{n}} \underset{n\to \infty}{\longrightarrow} \frac{1}{2\pi}\ell_{\Omega}(a),
\end{equation}
where $\ell_{\Omega}(a)$ is the hyperbolic metric element of $\Omega$ seen from $a$ (i.e.\ $\ell_{\Omega}(a):=2|\phi_{a}'(a) |$, where $\phi_{a}$ is any uniformisation of $\Omega$ to the unit disk $\mathbb{D}$ that vanish at $a$). 
\end{theo}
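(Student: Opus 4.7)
The plan is to show that, after the random deformation, the $s$-embedding of the $t^{(\alpha)}_n$-weakly random Ising model is close enough to the critical square-lattice embedding that the Hongler--Smirnov energy-density asymptotics \cite{HS-energy-Ising} carry over with an error of smaller order than $\tfrac{1}{n}$. The first step is to apply the deformation construction underpinning Theorems~\ref{thm:unif-rsw} and~\ref{thm:near-critical-RSW-random} to the environment $\omega$: this produces, for each $n$, a proper $s$-embedding $\cS^{(n)}_\omega$ of the $t^{(\alpha)}_n$-weakly random Ising model on $\delta_n\mathbb{Z}^2$ (and on $\Omega_{\delta_n}$), obtained by integrating up to time $t^{(\alpha)}_n$ the SDE driving the embedding. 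Since $t^{(\alpha)}_n=n^{-(\alpha+1/3)}$ sits inside the random critical window of Theorem~\ref{thm:near-critical-RSW-random} by a polynomial factor $n^{-\alpha}$, the embedding satisfies \Unif\ (with parameters depending only on $\alpha$) with $\mathbf{P}$-probability at least $1-c/n^4$; a Borel--Cantelli argument upgrades this to $\mathbf{P}$-almost sure validity for all but finitely many $n$.

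Next, one exploits the polynomial margin $n^{-\alpha}$ to obtain quantitative closeness of $\cS^{(n)}_\omega$ to the critical square lattice: the displacement $\cS^{(n)}_\omega(v)-\delta_n v$ is of order $\delta_n\cdot n^{-\alpha}$ on microscopic scales, and the auxiliary function $Q$ attached to $\cS^{(n)}_\omega$ (controlling the Lorentz-metric carrier space, see \cite{Che20,Mah23,ChePar}) converges to the flat critical profile at the same rate. This follows from the linearisation argument driving the ODE/SDE deformation, together with a Brownian-scaling of the random input, and should really be an off-shoot of the proof of Theorem~\ref{thm:near-critical-RSW-random}.

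One then invokes the fermionic representation of the energy density on general $s$-embeddings \cite{HS-energy-Ising,Che20,CHI,ChePar}: there exists a discrete $s$-holomorphic observable $F^{(\delta_n)}_{\Omega,\omega}$ on $\cS^{(n)}_\omega$, with a prescribed singular contribution at the edge $e^{\delta_n}_a$, whose suitably normalized value at $a$ recovers $\mathbb{E}_{(\Omega_{\delta_n},\omega,t^{(\alpha)}_n)}[\varepsilon_a^{\delta_n}]$. The analogous full-plane observable $F^{(\delta_n)}_{\mathbb{Z}^2,\omega}$ has the \emph{same} singular contribution, so the difference $n\cdot(F^{(\delta_n)}_{\Omega,\omega}-F^{(\delta_n)}_{\mathbb{Z}^2,\omega})$ is a regular $s$-holomorphic function satisfying a discrete boundary-value problem with bounded data. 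Applying the convergence machinery for $s$-holomorphic functions on \Unif\ $s$-embeddings from \cite{Che20,Mah23,ChePar}, and using the closeness of $Q_{\cS^{(n)}_\omega}$ to the critical flat profile, this renormalized difference converges to the same continuum holomorphic function on $\Omega$ that appears in \cite{HS-energy-Ising}, whose evaluation at $a$ equals $\tfrac{1}{2\pi}\ell_\Omega(a)$.

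The hard part is the last convergence step: the energy density is a \emph{second-order} quantity, because the full-plane contribution cancels in the difference and the interesting information lies in a correction of order $\tfrac{1}{n}$. Consequently, the random perturbation of the embedding must be controlled at a finer scale than what suffices for Theorem~\ref{thm:near-critical-RSW-random} (which only asks for order-one crossing estimates). The polynomial gap $n^{-\alpha}$ between $t^{(\alpha)}_n$ and the random critical window is precisely what is needed to force the perturbation of $F^{(\delta_n)}_\Omega-F^{(\delta_n)}_{\mathbb{Z}^2}$ to be $o(\tfrac{1}{n})$; I expect that quantifying this will require sharp sub-Gaussian concentration estimates for the origami increments of $\cS^{(n)}_\omega$, obtained via a martingale decomposition of the SDE driving the deformation and a Borel--Cantelli upgrade from $\mathbf{P}$-in-probability to $\mathbf{P}$-almost sure convergence.
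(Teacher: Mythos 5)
Your plan matches the paper's own proof, which is in fact very terse: it simply observes that, once the random $s$-embedding $\cS^{(t^{(\alpha)}_n)}_\omega(\Lambda_n)$ is known (with high $\mathbf P$-probability, then $\mathbf P$-a.s.\ via Borel--Cantelli) to stay in the \Unifff\ class and to be polynomially close to the critical square-lattice embedding $\cS^{(0)}$, one can invoke verbatim the energy-density asymptotics for general \Unif\ $s$-embeddings established in \cite{MahPar25a}. Your step-by-step reconstruction (singular observable, cancellation of the singularity in $F_\Omega-F_{\mathbb Z^2}$, convergence of the regular part on \Unif\ grids) is exactly the content of that result, so you and the paper take the same route.

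Two smaller remarks. First, the difficulty you flag as ``second order'' is largely dissolved by the choice of normalization rather than by extra concentration arguments: the theorem subtracts the \emph{random full-plane} value $\mathbb{E}_{(\delta_n\mathbb{Z}^2,\omega,t^{(\alpha)}_n)}[\varepsilon^{\delta_n}_a]$ precisely because this quantity already deviates from the deterministic $\tfrac{\sqrt2}{2}$ by $\asymp\sqrt{t^{\mathrm{(SDE)}}_n}\gg n^{-1}$ (Step $0$ of Proposition~\ref{prop:bound-T_n-optimal}); since the two models share the same random weights inside $\Omega_{\delta_n}$, this large first-order random contribution cancels in the difference and what survives is the boundary effect, which is the regular $s$-holomorphic part you describe. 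Second, and as a consequence, the ``sharp sub-Gaussian concentration estimates for origami increments'' you anticipate are not actually needed: the \Unif\ estimate of Proposition~\ref{prop:bound-T_n-optimal} and its Borel--Cantelli upgrade already place the embedding in the hypotheses of \cite{MahPar25a}, and the polynomial margin $n^{-\alpha}$ is used only to guarantee that the discrete conformal structure converges to the flat one (so that the limit $\tfrac{1}{2\pi}\ell_\Omega(a)$ is taken in the standard Euclidean metric), not to squeeze the perturbation of the observable below $o(1/n)$ by brute force.
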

The previous theorems provide very strong indication that the randomness averages enough at each scale to keep exactly the same microscopic details in the scaling limit. It will be visible within the proof that for $t$ close enough to $n^{-\frac13}$, the energy density $\mathbb{E}_{(\delta_n,\mathbb{Z}^2,\omega ,t)}[\varepsilon_{a}^{\delta_n}]$  typically deviates by much more than $\frac{1}{n}$ from the homogeneous critical full-plane energy density $\mathbb{E}_{(\delta_n\mathbb{Z}^2,\omega,0)}[\varepsilon_{a}^{\delta_n}]=\frac{\sqrt{2}}{2}$. Therefore, in order to obtain some meaningful conformal covariance statement, one needs to normalise additively by the (random) full plane value and not by the critical homogeneous one. 

\bigskip
Let us now present another near-critical random near-critical Ising model, which instead features a \emph{logarithmic} near-critical window. This is \emph{not} in contradiction with the belief of optimality of the $O(n^{-\frac{1}{3}})$ near-critical scaling window in \emph{edge-independent} random environments, as the randomness we use here is \emph{not} made of i.i.d.\ coupling constants at each edge. Instead, the random coupling constants we use slightly depend on each other. Their leading order of magnitude are formed by independent processes at each edge while the associated correction depends on the randomness in the entire box $\Lambda_n$. More precisely, consider a family $((B^{(e)}_t)_{t\geq 0})_{e\in \Lambda_n}$ of i.i.d.\ standard Brownian motions defined under some probability measure $\mathbf{P}$. One can then construct, $\mathbf{P}$-almost surely and for $t \geq 0$ small enough, a family of models $\widehat{S}=(\mathbb{Z}^2,(\hat{x}^{(t)}_{\omega,e})_{e\in E(\mathbb{Z}^2)})_{t\geq 0}$, in which all coupling constants are critical outside of $\Lambda_n$, while for $e_k\in \Lambda_n$, the abstract angle $\hat{\theta}^{(t)}_{e_k,\omega}$ given by \eqref{eq:x=tan-theta} is the solution to the SDE
\begin{equation}\label{eq:interacting-model}
	\hat{\theta}^{(t)}_{e_k,\omega}=\frac{\pi}{4}+\cdot B^{(e)}_{t}(\omega)- \frac{1}{2}\int_{0}^{t}\Bigg( \frac{\cos(\hat{\theta}^{(s)}_{e_k,_{\omega}})}{\sin(\hat{\theta}^{(s)}_{e_k,_{\omega}})} -\frac{\mathbb{E}_{S_{\omega}(s)}[\varepsilon_{e_k}]}{\sin(\hat{\theta}^{(s)}_{e_k,_{\omega}})}.  
 \Bigg)ds.
\end{equation}
This model, which we call the \emph{t-weakly random interacting Ising model}, is \emph{not} i.i.d., as $\mathbb{E}_{S_{\omega}(s)}[\varepsilon_{e_k}]$ \emph{depends} on the values of \emph{all} the edges inside $\Lambda_n$. Nevertheless, adding this interacting drift term constructs a \emph{random Ising  fermion which is a local martingale at each corner}.  This is captured in the following theorem.
\begin{theo}\label{thm:near-critical-RSW-random-interacting}
Consider the $t$-weakly random Ising model on $\Lambda_n$ whose coupling constants are given by \eqref{eq:interacting-model}. Then there exist positive constants $c_1>0$ and $c_2>0 $ such that for any $0\leq t \leq c_2.\log(n)^{-2}$, one has
\begin{equation}
	\mathbf{P}\Bigg[ \mathbb{P}^{\textnormal{free}}_{\Lambda_n,\omega,t} \big( \textnormal{There exist an open circuit in } \mathrm{A}_{\frac{n}{2},n} \big) > c_1  \Bigg] \geq 1-O(\frac{1}{n^4}).
\end{equation}
Moreover, for this interacting model, there exist a large enough contant $C>0$ such that if $t_n=\log(n)^{-C}$  the analog of Theorems \ref{thm:random-SLE} and \ref{thm:random-energy-density} hold. 
\end{theo}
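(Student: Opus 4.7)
The plan is to combine the embedding deformation framework of Section \ref{sec:massive-deformation} with the martingale property specifically built into the SDE \eqref{eq:interacting-model} to upgrade the concentration estimate used in the proof of Theorem \ref{thm:near-critical-RSW-random}. First I would establish local well-posedness of \eqref{eq:interacting-model}: on the finite box $\Lambda_n$ the collection $(\hat{x}_{\omega,e}^{(t)})$ lives in a finite-dimensional space, and the map from coupling constants to the finite-volume expectations $\mathbb{E}_{S_\omega(s)}[\varepsilon_{e_k}]$ is smooth as long as the angles $\hat\theta_{e_k}^{(s)}$ remain in a compact subset of $(0,\pi/2)$. Standard McKean--Vlasov-type SDE theory then yields $\mathbf{P}$-almost sure existence and uniqueness up to the first exit time $\tau$ of the angles from a neighbourhood of $\tfrac{\pi}{4}$, and the task reduces to showing $\tau\geq t$ with $\mathbf{P}$-probability at least $1-O(n^{-4})$ for $t\le c_2\log^{-2}(n)$.

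Second, the crucial structural input is that the drift in \eqref{eq:interacting-model} is calibrated so that the fermion coordinate process $\mathcal{Y}_n(t)$ underlying the embedding becomes a continuous local martingale at every corner. Following Section \ref{sec:massive-deformation}, the It\^o differential of $\mathcal{Y}_n$ decomposes into a deterministic transport contribution (linear in the drift of $\hat\theta$) plus an It\^o correction coming from the non-linear dependence of the fermion on $\theta$; using the computation of \cite{HS-energy-Ising} relating second derivatives of the fermion to the cotangent and to the energy density $\mathbb{E}[\varepsilon_e]$, one checks that the specific choice $-\tfrac{1}{2}\bigl(\cot\hat\theta-\mathbb{E}[\varepsilon_e]/\sin\hat\theta\bigr)ds$ in \eqref{eq:interacting-model} is exactly what annihilates the drift of $\mathcal{Y}_n$. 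This is the step I expect to be the main obstacle: the cancellation must be exact at every corner simultaneously, and it is precisely this corner-wise requirement that forces the non-local term $\mathbb{E}_{S_\omega(s)}[\varepsilon_{e_k}]$ into the dynamics, turning the SDE into a mean-field equation whose solution one has to manipulate rather than solve explicitly.

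Third, with the martingale property in hand, Doob and Burkholder--Davis--Gundy inequalities take over. The predictable quadratic variation of each fermion coordinate at a fixed corner is bounded, via the explicit formulas of the $s$-embedding formalism, by a universal constant times $t$ on the event that the embedding has not yet left the \Unif\ class. A union bound over the $O(n^2)$ corners of $\Lambda_n$, combined with Gaussian tails for time-changed martingales, then yields
\begin{equation*}
\mathbf{P}\Bigl[\sup_{s\leq t}\sup_{v}\bigl|\mathcal{Y}_n(s,v)-\mathcal{Y}_n(0,v)\bigr|\ >\ C\sqrt{t\log n}\Bigr]\ \leq\ \frac{C'}{n^{4}}.
\end{equation*}
For $t\leq c_2\log^{-2}(n)$ the deviation is of order $1/\sqrt{\log n}=o(1)$, so the randomly deformed embedding still satisfies \Uniff\ for some $r_0,\theta_0$ uniform in $n$, and Theorem \ref{thm:unif-rsw} delivers the announced RSW estimate.

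Finally, for the analogs of Theorems \ref{thm:random-SLE} and \ref{thm:random-energy-density}, one needs $\mathbf{P}$-almost sure convergence of the deformed embedding, in the $s$-embedding topology of \cite{Mah23,MahPar25a,ChePar}, to the critical square-lattice embedding. Repeating the third step with $t_n=\log^{-C}(n)$ for $C$ large enough yields polylogarithmically small pointwise deviations of $\mathcal{Y}_n$, which after Borel--Cantelli become summable in $n$; the deterministic conformal-invariance scheme of the aforementioned references then applies verbatim, producing the convergence of the FK interface to $\mathrm{SLE}_{16/3}$ and the $\tfrac{1}{2\pi}\ell_\Omega(a)$ correction to the energy density.
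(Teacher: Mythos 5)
Your proposal captures the two structural pillars of the paper's argument: (a) the drift in~\eqref{eq:interacting-model} is engineered precisely so that the finite-variation part of the It\^o decomposition of the fermion propagator cancels, leaving a pure local martingale (this is Lemma~\ref{lem:SDE-embedding-drift}); and (b) concentration then comes from a martingale tail bound plus a union bound over corners, after which the box-crossing estimate follows from Theorem~\ref{thm:RSW-s-embeddings}/\ref{thm:unif-rsw} on the resulting \Unif\ embedding. This is exactly the route the paper takes (Proposition~\ref{prop:bound-T_n-interacting}). Your treatment of the SLE and energy-density statements via Borel--Cantelli at the smaller time scale $\log^{-C}(n)$ also matches the paper, which defers to Remark~\ref{rem:extension-logarithmic} for the logarithmic modification of Theorem~\ref{thm:quantitative-convergence-h}.

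However, your third step contains an arithmetic gap that, as written, does not reproduce the $\log^{-2}(n)$ window. You assert that the predictable quadratic variation of each fermion coordinate is bounded by a universal constant times $t$; this misses a factor of $\log(n)$. The bracket is computed from
\begin{equation*}
\sum_{e_k\in\Lambda_n}\Bigl|\cY^{(s)}(c_k^+)\langle\chi_{\frak p}\chi_{a_k^+}\rangle - \cY^{(s)}(a_k^+)\langle\chi_{\frak p}\chi_{c_k^+}\rangle\Bigr|^2,
\end{equation*}
and on a \Unif\ grid the decay~\eqref{eq:bound-correlator-distance} gives $\sum_{e_k}|\langle\chi_{\frak p}\chi_{e_k}\rangle|^2\asymp\log(n)/n$ (the $\ell^2$ integral $\int_{\mathcal D_1\setminus\mathcal D_{1/n}}|z|^{-2}\,dA$ diverges logarithmically). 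Thus the bracket of $\cM^{(t)}(\frak p)$ is $O(t\log(n)/n)$, not $O(t)$. The exit threshold for leaving \Unif\ is $\asymp n^{-1/2}$; plugging into~\eqref{eq:deviation-local-martingale}, a single corner deviates with probability $\exp\bigl(-C\,n^{-1}/(t\log(n)/n)\bigr)=\exp(-C/(t\log n))$, and absorbing the union bound over $O(n^2)$ corners forces $1/(t\log n)\gtrsim\log n$, hence $t\lesssim\log^{-2}(n)$. Your displayed estimate $\sup_{s,v}|\cY_n(s,v)-\cY_n(0,v)|>C\sqrt{t\log n}$ with tail $n^{-4}$ is inconsistent with this bracket: with $[\cM]\asymp t\log n$ (in normalized units) and threshold $\sqrt{t\log n}$, the exponent in~\eqref{eq:deviation-local-martingale} is $O(1)$, not $\log n$, so the tail would not decay in $n$ at all. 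Correcting the threshold to $C\sqrt{t}\log n$ (equivalently $C\sqrt{t\log^2 n}$) fixes both the tail probability and the scaling of the admissible $t$, and also replaces your claimed deviation $1/\sqrt{\log n}$ at the critical time by an $O(1)$-fraction of the fermion scale, which is what the proof of Proposition~\ref{prop:bound-T_n-interacting} actually controls by taking the constant $c$ small.
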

As it can be seen in the proofs, for times $0 \leq t \leq c_2\cdot\log(n)^{-2}$, the drift (interacting) term is, up to logarithmic corrections, of order $O(t^{\frac{3}{2}})$, while each Brownian motion is typically of order $\sqrt{t}$. Therefore, we have constructed a (near-critical) model in a random environment such that, with high probability, the edges weights typically deviate by $\asymp \log^{-1}(n)$ from the critical value, formed of some i.i.d.\ Gaussian leading component and a much smaller random correction whose contribution mainly comes from neighboring edges. From our perspective, trying to use the large scale criticality notions developed in \cite{Che20,Mah23,MahPar25a}, this weakly interacting model represents a very good candidate (e.g.\ on the torus) for a critical Ising model in a random environment (not i.i.d.\ but almost for edges far away from each other) with a small but macroscopic random deviation from the critical temperature at each edge. Moreover, this interacting model could help to hint at which random i.i.d.\ process would be the correct correction to a centred process around the critical point to remain within the critical phase.  
\subsection{Other near-critical models and novelty of the approach}

For many planar statistical mechanics models, the ultimate goal is to prove conformal invariance at criticality, in line with predictions from Conformal Field Theory, traditionally focusing on the study of correlation functions or interfaces. Such ambitious results have been achieved only for a few models, notably site percolation on the triangular lattice \cite{smirnov2001critical,smirnov2001criticalperco}, the dimer model \cite{Kenyon-dimer-conformal,kenyon2001dominos,basok2021tau,basok2025nesting,CheRam,berggren2025gaussian}, the planar Ising model \cite{Smirnov-conformal,honglersmirnov,CHI,ChelkakSmirnov_et_al,ChSmi1,Che20}, and the harmonic explorer \cite{HarmExplorer}, each proof developing some very  model specific tools.
A key breakthrough was Schramm's introduction of the SLE processes \cite{schramm2000scaling}, which provided a canonical one-parameter family of conformally invariant random curves, each parameter corresponding to a specific model. The common strategy in these convergence results involves constructing a discrete harmonic/holomorphic observable related to the model and proving its convergence as the mesh size of the discretizing grid tends to zero. Building on these results at criticality, a parallel line of research has emerged focusing on near-critical models, extending the techniques and ideas beyond the critical point. In practice, studying near-critical models involves scaling the model parameters toward their critical values at a suitable rate $\eta(\delta) \to 0$, while simultaneously sending the lattice mesh size $\delta \to 0$. The resulting scaling limit typically differs from the critical one, yet remains non-degenerate. This approach was formalized in a general framework by Makarov and Smirnov in \cite{makarov2010off}, who introduced the notion of \emph{near-critical SLE curves}—a modification of the standard SLE in which the driving term in the Loewner equation is no longer pure Brownian motion, but a perturbation thereof, reflecting the deviation from criticality. This methodology, which also encompasses correlation functions, has proven effective in a variety of models: massive loop-erased random walk \cite{CheWan,berestycki2022near}, massive dimers \cite{chhita2012height,berestycki2022near,WanPHD}, the massive harmonic explorer \cite{papon2023massive}, the Ising model with magnetic perturbation \cite{camia2014ising,papon2024interface}, the Ising model with energy perturbation \cite{DuGaPe-near-crit,park2018massive,park-iso,CIM-universality,WanPHD}, and near-critical percolation \cite{nolin2009asymmetry}.

In all these examples, identifying the correct scaling of the perturbation factor $\eta(\delta)$ typically follows one of two main approaches. The first is to choose $\eta(\delta)$ as the largest order of magnitude for which the Radon–Nikodym derivative of the near-critical measure with respect to the critical one remains under control. This allows one to leverage the well-developed theory of SLE/CLE processes and quasi-conformal mappings to deduce some potential existence, uniqueness and qualitative properties of the near-critical limit. The second approach is to identify the scaling $\eta(\delta)$ that perturbs the harmonicity or holomorphicity of the critical observable into a meaningful, non-degenerate equation—distinct from the discrete holomorphicity observed at criticality.
Even after this step, identifying the full scaling limit remains challenging, particularly in fractal domains typically generated by SLE or massive SLE processes. In any case those limit remain conformally invariant.

Two examples illustrate the subtleties of near-critical regime. First, in the energy-perturbed FK-Ising model, convergence of the martingale observable in arbitrarily rough domains is known \cite{park-iso}, as is the precompactness of the interfaces \cite{DuGaPe-near-crit,park-iso}. Yet, these results are not sufficient to fully characterise the limiting near-critical process, which is conjectured to be \emph{absolutely continuous} with respect to $\mathrm{SLE}(16/3)$ —the known scaling limit at criticality. This conjecture is supported by arguments  provided by Garban and Kupiainen in \cite{garban2025energy}. In contrast, the work of Nolin and Werner \cite{nolin2009asymmetry} shows that the near-critical percolation in the correct non-trivial and non-degenerate regime is \emph{not} absolutely continuous with respect to $\mathrm{SLE}(6)$, even though both models are supported on sets of curves with the same Hausdorff dimension $7/4$. These examples highlight the delicate and subtle nature of the local behavior in near-critical models. Fortunately, the stability of the conformal structure associated to near-critical random environments allows to still draw some interesting conclusions.

In this paper, we propose a novel approach that departs from classical combinatorial methods by leveraging the geometric interpretation of $s$-embeddings for weighted planar graphs. Rather than studying which near-critical deformations remain tractable from the probabilisitic perspective, we investigate which geometric deformations of the $s$-embedding yield non-degenerate Ising model. This shift offers two major advantages. First, it translates the problem into the realm of ordinary and stochastic differential equations, relating the Ising model to the non-degeneracy of ODE/SDE, building a new connection.  Second, and more importantly, the framework developed in \cite{CLR1,CLR2,Che20,Mah23,MahPar25a,ChePar} enables working with highly irregular grids, that vary locally and may in principle include several mesoscopic regions far from criticality. Although the present paper focuses on regular grids as a toy model, the techniques naturally extend to more general configurations, including those with localised degeneracies. A related project with Avérous \cite{AveMah25} investigates near-critical FK models with $1 \leq q \leq 4$, focusing on the stability of crossing probabilities under near-critical i.i.d.\ perturbations, typically larger than the deterministic critical window. While this enhances the results of \cite{FK_scaling_relations} for random deformations, it is limited to setups where criticality is preserved \emph{at each scale} (except for percolation, where some additional noise sensitivity argument allows to conclude for random environments with macroscopic deformations at least at large scale). In contrast, our embedding-based method captures the deformation of the entire discrete conformal structure, governing how all discrete fermionic observables evolve across scales. Notably, it can in principle accommodate local mixtures of supercritical and subcritical regions that balance out spatially to be critical on average.
\subsection{Related works and open questions towards the random bond conjecture}
From our perspective, this paper represents a first step toward extending the universality meta-principle —widely believed in the statistical mechanics community but only proven in very special cases— beyond the realm of integrable models. We conclude by outlining several applications of our approach in different contexts.
\begin{itemize}
    \item A first possible generalisation of this work concerns the convergence of massive fermionic observables in bounded domains beyond the massive Z-invariant setting, for which Park \cite{park2018massive,park-iso} developed the theory. Instead, to study directly the massive model, it is easier to deform continuously both the conformal structure and the fermions towards the massive setup, allowing to transfer along the way the improved knowledge the boundary behaviour of critical fermions to their massive counterparts. Let us note that for a \emph{generic} set of masses in $\Lambda_n$, finding the correct $s$-embedding of the massive model remained an open question until the present paper. Our construction allows to apply discrete complex analysis techniques to all massive models, at least when all the masses are small (but macroscopic) enough. Another interesting deformation is along the variety of critical doubly-periodic graphs identified in \cite{cimasoni-duminil}. In that case, almost all the terms of \eqref{eq:ODE-embedding} cancel when regrouped around some given fundamental domain, allowing a continuous exploration of the algebraic variety of  critical doubly-periodic models (see Remark \ref{rem:doubly-periodic-deformation}). Simultaneous deformation of the embedding and fermions provides some statement of universality for discrete Ising fermions, offering a new explanation for universality statements on critical periodic graphs, even before taking scaling limits. These questions are addressed in ongoing work \cite{Mah25b}.
    
    \item A similar deformation strategy applies to the dimer model, continuously deforming the associated $t$-embeddings. Here, the role of two-point fermions in \eqref{eq:ODE-embedding} is played by the inverse Kasteleyn operator, whose mismatch can be used to deform the $t$-embedding gauges when one changes the dimer weights. This enables the study of universality for periodic dimer models on the torus, as well as for massive and weakly random dimers—beyond the traditional Temperley-integrable framework. This direction is being pursued jointly with Basok and Laslier in \cite{BaLaMa25}. In particular, for both for the Ising and dimer contexts, the deformation process encodes the \emph{correct} conformal structure attached to any near-critical massive model (in particular with non-homogeneous weights), and provides some meaningful and exact discrete complex analysis framework to study the massive models.    
    \item For general critical FK models with $1\leq q \leq 4$, one can again consider near-critical random deformations to study the weakly random scaling window, focusing in particular on crossing probabilities. This makes it possible to extend the scaling relations of \cite{FK_scaling_relations} to the near-critical random setting. Concretely, if the deterministic near-critical window has size $n^{-\nu}$ (for some conjectured $\nu(q)>0$), then in the random case it becomes $n^{-\nu/3}$. To obtain this cubic-root reduction of the critical window, one additionally needs to assume conformal invariance of the critical model, which guarantees, via CLE techniques, that the model's mixing rate is sufficiently large. In that paper, special attention is given to Bernoulli percolation, where independence implies that the random critical window scales instead like a negative power of $\log(n)$. This is asymptotically much larger than any polynomial window in $n$, and thus substantially exceeds the deterministic window. Moreover, if one is only interested in large-scale properties, noise sensitivity arguments allow the use of even macroscopic deformations while still preserving asymptotic criticality. In particular, this enables a proof of Cardy's formula in a random environment. For general FK models, where no graphical representation is available, the methods rely crucially on the stability of crossing estimates at \emph{all scales}, together with the framework developed in \cite{FK_scaling_relations}. However, the work with Avérous \cite{AveMah25} does not, in its present form, capture the stability of the limiting interface process, nor can it handle spatial mixtures of locally off-critical configurations that average out critically.
    \end{itemize}	
Finally, let us mention that we hope that the deformation procedure presented in this paper offers promising avenues to approach two longstanding conjectures concerning the critical Ising model in random environments. 
 We now state these open problems and explain their potential relation to our method.

\textbf{Open Question 1:} Fix two positive coupling constants $J_1, J_2$, and assign to each edge an independent random variable (e.g.\ with probability $1/2$) taking values in $\{J_1, J_2\}$. Then with high probability with respect to the environment, there should exist some critical temperature $\beta=\beta(J_1, J_2)$ such that, at large scales, the model satisfies the strong box crossing property. The continuity of the phase transition for the random bond model was settled in \cite{aizenman1990rounding}, with no identification of the critical point. In the last decades, the study of the random bond model got some serious attention  (including many simulations by physicists \cite{cho1997criticality,shalaev1994critical,merz2002two,dotsenko1995renormalisation}). Still, to the best of our knowledge, it remains a challenge to get a rigorous understanding of what should happen a the critical point. In our framework, one could attempt to tackle this question by first sampling a fair coin at each edge, moving all the heads edges at some given positive speed, moving all tails edges at some negative speed, tuning the relative speeds such that the embedding remains within the \LipKd\ class, at least starting from some mesoscopic scale. One challenge lies in determining whether such a deformation can be carried out for a time bounded from below uniformly in the system size $n$. 
One can alternatively try to find some suitable random process to replace the naive choice of Brownian motions in the SDE \eqref{eq:SDE-embedding}, hoping to replace the continuously the naive centred deformation (in the spirit of the weakly interacting random model) by a more suited process.

Let us mention that this first conjecture looks to be in contradiction with the work \cite{dotsenko1983critical}, where it is conjectured that the magnetisation exponent is supposed to go from $\frac{1}{4}$ in the deterministic critical environment to $0$. Still, this conjecture of \cite{dotsenko1983critical} appears itself to be at least partially in contradiction with \cite{shankar1987exact} and with the annealed quadrichotomy of \cite[Section 7]{gunaratnam2024existence} and \cite{Trish}, which would ensure some strong box crossing property at the critical point of the random environment. 
We now turn to a second open problem that could similarly benefit from this approach.

\textbf{Open Question 2:} Fix $\frac12 < p < 1$ large enough, and perform some independent Bernoulli percolation on the edges of $\mathbb{Z}^2$ with parameter $p$. Then, almost surely with respect to the environment, there exists some unique infinite percolation cluster $\mathcal{C}(p)$. Consider the homogeneous Ising model on vertices of $\mathcal{C}(p)$ at temperature $\beta$. Then, with high probability with respect to the environment, there should exist some $\beta=\beta(p)$ such that, at large scales, the homogeneous model at temperature $\beta(p)$ satisfies the strong box crossing property. One potential route would to be work with $s$-embeddings attaching spins to the primal graph, and  adapt the above idea by sending the abstract angle $\theta$ corresponding to deleted edges to $0$ (effectively forbidding them in FK clusters), while tuning up the couplings on retained edges at a same positive speed. This philosophy could also be used to study the Blume-Capel model, whose critical phase has recently been studied in \cite{gunaratnam2024existence}. The goal would again be to make a continuous deformation of weights, to maintain the embedding within the \LipKd\ class, ensuring non-degeneracy of the crossing probabilities.

For both problems, the potential advantage of this \emph{quenched} deformation-based approach is that it circumvents the lack of knowledge of the critical temperature in the random environment. Instead, it navigates within the space of non-degenerated embeddings and thus critical models. An idea of the same spirit already proved to be effective \cite{giuliani2012scaling,cava2025scaling,antinucci2023energy} (in the full-plane, the half-plane and cylinders) to prove convergence statements for the critical non-nearest neighbour Ising models by deforming continuously (via some renormalisation group based techniques) the Ising weights starting from the nearest neighbour model and simultaneously moving the temperature and the emerging non-nearest neighbour coupling, while remaining within the critical phase.

Both proposed strategies still require additional probabilistic input (e.g.\ the uniqueness and deterministic nature of the critical temperature) and using some more refined statement that up to constant bounds on the two points fermions, which \emph{are knwon in a fairly general context via} \cite{MahPar25a}, we believe the present paper might  provide meaningful progress toward understanding these conjectures.

\medskip
\textbf{Acknowledgement:} This paper is the fruit of many discussions and insights shared many people. The author is grateful for all those external inputs. I would like to thank  Emile Avérous for countless discussions on percolation models in random environments while working on \cite{AveMah25}, Dmitry Krachun for viewing Lemma \ref{lem:deformation-discrete} as a differential process, Dmitry Chelkak for suggesting to feed the deformation with a continuous random process, as well as Mikhail Basok, Nikolai Bobenko, Benoit Laslier, Sung-Chul Park and Ekin Arikok for discussion around discrete complex analysis methods and potential applications to the dimer context. I would also like to thank David Cimasoni, Hugo Duminil-Copin, Ioan Manolescu and Sanjay Ramassamy for encouraging this research. The author is also grateful to Béatrice de Tilière, Christophe Garban, Trishen S. Gunaratnam, Léonie Papon, Stanislav Smirnov and Yijun Wan for enlightening discussions and useful references. This project has received funding from the Swiss National Science Foundation and the NCCR SwissMAP. art of this research was performed while all authors were visiting the Institute for Pure and Applied Mathematics (IPAM), which is supported
by the National Science Foundation (Grant No. DMS-1925919).
\section{Notations and crash intro into the s-embedding formalism}\label{sec:notations}
\setcounter{equation}{0}

We concisely recall the general construction of s-embeddings introduced in \cite[Section 3]{Che20}, along with the regularity theory of the so-called s-holomorphic functions, both derived from a complexification of the standard Kadanoff-Ceva formalism. Our notation follows precisely that of \cite{Che20, Mah23,MahPar25a} and is consistent with \cite[Section~3]{CCK}, \cite{Ch-ICM18}, and \cite{CLR1,CLR2}. As we do not provide proofs, we refer the reader to \cite[Section 2]{Che20} for further details. Chelkak’s original idea was to construct a class of embeddings associated with a given weighted abstract graph, where the weights carry a geometric interpretation, enabling the application of discrete complex analysis techniques.

\begin{figure}
\begin{minipage}{0.325\textwidth}
\includegraphics[clip, width=1.2\textwidth]{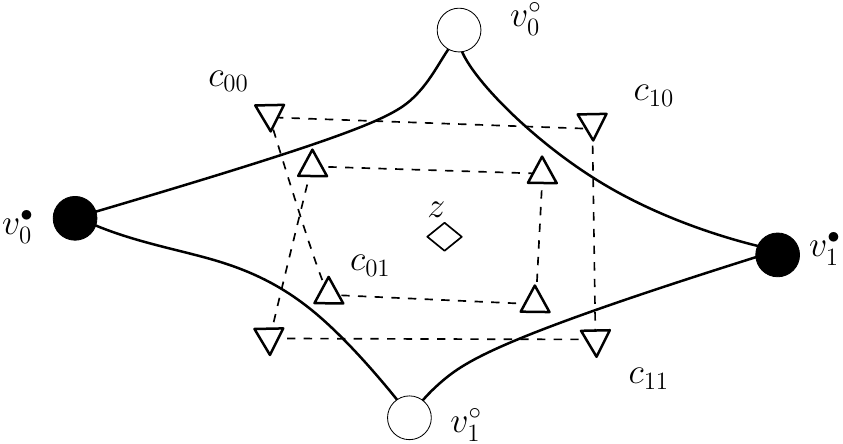}
\end{minipage}\hskip 0.10\textwidth \begin{minipage}{0.33\textwidth}
\includegraphics[clip, width=0.9\textwidth]{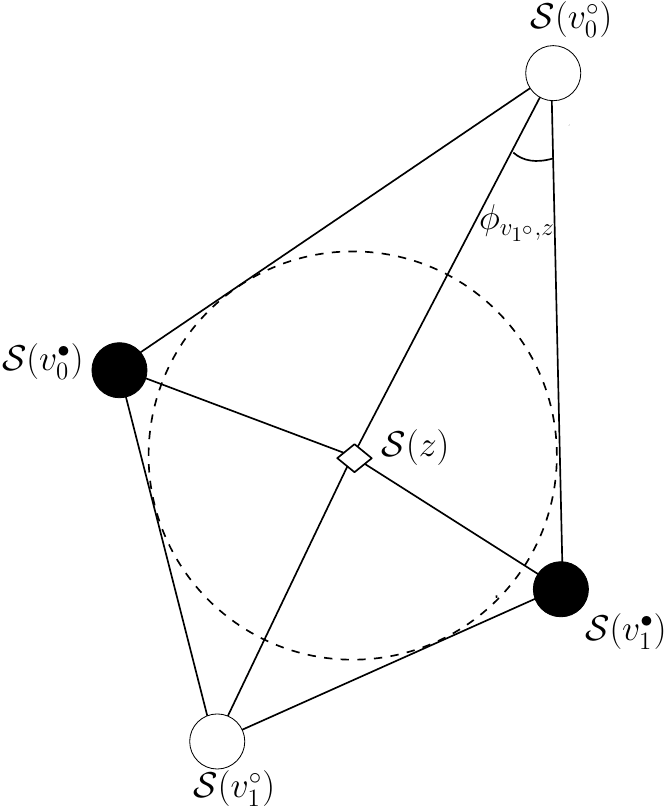}
\end{minipage}
\caption{(Left) Notation for a given quad $z \in \diamondsuit(G)$ with an arbitrary embedding in the plane. Vertices of the primal graph $G^\bullet$ are shown as black dots, while vertices of the dual graph $G^\circ$, corresponding to the faces of $G$, are represented as white dots. The so-called corners, corresponding to the edges of the bipartite graph $\Lambda(G) = G^\bullet \cup G^\circ$, are depicted as triangles. This figure illustrates a portion of the \emph{double cover} of the corner graph, branching around $z$. Corners that are neighbors \emph{in this double cover} are connected by dashed lines. (Right) A portion of the associated $s$-embedding containing the quad $\cS^{\diamondsuit}(z)$, tangent to a circle with radius $r_z$ centered at $\cS(z)$. The Ising weight of the edge between the vertices $v_0^\bullet$ and $v_1^\bullet$ can be recovered using the four angles $\phi_{v,z}$ associated with the quad $\cS^{\diamondsuit}(z)$, following the formula in \eqref{eq:theta-from-S}.}
\label{fig:graph-notations}
\end{figure}

\subsection{Notation and Kadanoff--Ceva formalism}\label{sub:notation}

Let us fix G as a planar graph, allowing multi-edges and vertices of degree 2 but not loops or vertices of degree 1, with the combinatorics of either the plane or the sphere. The graph G is considered up to homeomorphisms that preserve the cyclic order of edges around each vertex. In the spherical case, one designates a particular face of G as the outer face.

We denote by $G=G^\bullet$ the original graph, where vertices are represented by $v^\bullet\in G^\bullet$, and by $G^\circ$ its dual, where vertices $v^\circ\in G^\circ$ correspond to the faces of G. The faces of the graph $\Lambda(G):= G^\circ\cup G^\bullet$, which has a natural incidence relation as a bipartite graph, are in straightforward bijection with the edges of G. Additionally, we denote by $ \Dm(G) $ the graph dual to $\Lambda(G)$, where vertices, labeled as $ z\in\Dm(G) $, correspond to its faces. This graph is commonly referred to as the quad graph. Finally, we define $\Upsilon(G)$ as the medial graph of $\Lambda(G)$, whose vertices—called the corners of G—are in direct bijection with the edges $(v^\bullet v^\circ)$ of $\Lambda(G)$.

To ensure the full consistency of the Kadanoff-Ceva formalism, it is generally necessary to work with various double covers of the graph $\Upsilon(G)$. For relevant illustrations of these double covers, see, for example, \cite[Fig.~27]{Mercat-CMP} or \cite[Fig 3.A]{Che20}. In this paper, we denote by $\Upsilon^\times(G)$ the double cover that branches over \emph{all} the faces of $\Upsilon(G)$, meaning around each element of the type $v^\bullet \in G^\bullet, v^\circ\in G^\circ$, and $ z\in \Dm(G) $. When G is finite, this definition remains meaningful since the quantity  $\#(G^\bullet)+\#(G^\circ)+\#(\Dm(G))$ is always even.
Given a set $ \varpi=\{\vbullet{m},\vcirc{n}\}\subset \Lambda(G) $, where both integers n and m are even, we define $\Upsilon^\times_\varpi(G)$ as the double cover of $\Upsilon(G)$ that branches over all its faces \emph{except} those in $\varpi$. Similarly, we denote by $\Upsilon_\varpi(G)$ the double cover of $\Upsilon(G)$ that branches \emph{only} over the faces in $\varpi$. A function defined on any of these double covers is called a \emph{spinor} if its values at two different lifts of the same corner differ solely by a sign, i.e., by a multiplicative factor of $-1$.

In this paper, we work with the Ising model on the faces of the graph $G$, including the outer face in the disc case, which corresponds to starting with \emph{wired} boundary conditions. This statistical mechanics model generates a random assignment of $\pm1$ variables to the vertices of $G^\circ$, governed by the partition function \eqref{eq:intro-Zcirc}. The associated low-temperature expansion \cite[Section 1.2]{CCK} maps a spin configuration $\sigma : G^\circ\to\{\pm 1\}$ to a subset $C$ of edges in $G$ that separate spins of opposite sign. This mapping is, in fact, a $2$-to-$1$ correspondence, depending on the value assigned to the spin at the outer face.

One can fix an even number $n$ of vertices $\vcirc{n}\in G^\circ$ and consider a subgraph $\gamma^\circ=\gamma_{[\vcirc{n}]}\subset G^\circ$ that has odd degree only at the vertices of $\vcirc{n}$ and even degree at all other vertices of $G^\circ$. Such a configuration can be interpreted as a collection of paths on $G^\circ$ that pairwise connect the vertices in $\vcirc{n}$. Denoting
\[
x_{[\vcirc{n}]}(e)\ :=\ (-1)^{e\cdot\gamma_{[\vcirc{n}]}}\,x(e),\quad e\in E(G),
\]
where $e\cdot\gamma=0$ if the edge $e$ doesn't cross the path $\gamma$ and $e\cdot\gamma=1$ otherwise. It is possible to reconstruct the correlation formula
\begin{equation}
\label{eq:Esigma}
\textstyle \mathbb E\big[\svcirc{n}\big]\ =\ {x_{[\vcirc{n}]}(\cE(G))}\big/{x(\cE(G))},
\end{equation}
where $x(C):=\prod_{e\in C}x(e)$, $x(\cE(G)):=\sum_{c\in\cE(G)}x(C)$, and similarly for product of the kind $x_{[\vcirc{n}]}$.

If $m$ is again even and \( \vbullet{m} \in G^\bullet \), one can fix a subgraph \( \gamma^\bullet = \gamma^{[\vbullet{m}]} \subset G^\bullet \) with even degree at all the vertices of \( G^\bullet \), except those belonging to \( \vbullet{m} \). In the spirit of the Kadanoff-Ceva formalism \cite{kadanoff-ceva}, one can change the signs of the interaction constants \( J_e \mapsto -J_e \) on edges \( e \in \gamma^\bullet \), which is equivalent to replacing \( x(e) \) by \( x(e)^{-1} \) along the edges of \( \gamma^\bullet \), thereby making the model anti-ferromagnetic near \( \gamma^\bullet \). This leads to the random variable (which still depends on the choice of \( \gamma^\bullet \)):
\[
\textstyle \muvbullet{m}\ :=\ \exp\big[-2\beta\sum_{e\in\gamma^{[\vbullet{m}]}}J_e\sigma_{v^\circ_-(e)}\sigma_{v^\circ_+(e)}\,\big]\,.
\]
The domain walls representation gives that (e.g. \cite[Propositon 1.3]{CCK})
\begin{equation}
\label{eq:Emu}
\textstyle \mathbb E\big[\muvbullet{m}\big]\ =\ x(\cE^{[\vbullet{m}]}(G))\big/{x(\cE(G))},
\end{equation}
where \( \cE^{[\vbullet{m}]} \) is the set of subgraphs with even degree at all vertices, except for those in \( \vbullet{m} \), which have odd degrees. Taking the expectation in \eqref{eq:Emu}, the result no longer depends on \( \gamma^\bullet \). The key observation is that one can generalize \eqref{eq:Esigma} and \eqref{eq:Emu} to the case where both spins and disorder are present simultaneously, which now reads as (e.g. \cite[Propositon 3.3]{CCK}):\begin{equation}
\label{eq:Emusigma}
\textstyle \mathbb E\big[\muvbullet{m}\svcirc{n}\big]\ =\ x_{[\vcirc{n}]}(\cE^{[\vbullet{m}]}(G))\big/{x(\cE(G))},
\end{equation}
where the variable \( \muvbullet{m} \) retains the same definition as above. However, an additional difficulty arises for these mixed correlations. Specifically, the sign of the last expression now depends on the parity of the number of intersections between the paths \( \gamma^\circ \) and \( \gamma^\bullet \). There is no canonical way to fix this sign in \eqref{eq:Emusigma} while staying within the Cartesian product structure \( (G^\bullet)^{\times m} \times (G^\circ)^{\times n} \). To circumvent this issue, one can fix \( \cS : \Lambda(G) \to \mathbb{C} \), an arbitrarily chosen embedding of \( G \), and consider the natural double cover of \( (G^\bullet)^{\times m} \times (G^\circ)^{\times n} \), which branches exactly as the spinor \( \left[ \prod_{p=1}^m \prod_{q=1}^n (\cS(v^\bullet_p) - \cS(v^\circ_q)) \right]^{1/2} \). Following the detailed discussion in \cite[Section 2.2]{CHI-mixed}, the expressions in \eqref{eq:Emusigma} are spinors on \( \left[ \prod_{p=1}^m \prod_{q=1}^n (\cS(v^\bullet_p) - \cS(v^\circ_q)) \right]^{1/2} \). When working with mixed correlations of the form \eqref{eq:Emusigma}, the usual Kramers-Wannier duality (see again \cite[Propositon 3.3]{CCK}) implies that \( G^\bullet \) and \( G^\circ \) play equivalent roles.

Among all possible correlators of the form \eqref{eq:Emusigma}, one can focus on the special case where one of the disorders \( v^\bullet(c) \in G^\bullet \) and one of the spins \( v^\circ(c) \in G^\circ \) are chosen to be neighbors in \( \Lambda(G) \), linked by an edge identified with a corner \( c \in \Upsilon(G) \). In this case, one can formally denote the \emph{fermion} at \( c \) by
\begin{equation}
\label{eq:KC-chi-def}
\chi_c:=\mu_{v^\bullet(c)}\sigma_{v^\circ(c)},
\end{equation}
One can now use \eqref{eq:Emusigma} to construct the  \emph{Kadanoff-Ceva fermion}, in a purely combinatorial manner, setting
\begin{equation}
\label{eq:KC-fermions}
X_{\varpi}(c):=\E[\,\chi_c \mu_{v_1^\bullet}\ldots\mu_{v_{m-1}^\bullet}\sigma_{v_1^\circ}\ldots\sigma_{v_{n-1}^\circ} ].
\end{equation}
Given the above remarks, the fermionic observable \( X_\varpi(c) \) is defined up to a sign, but its definition becomes fully legitimate when working in \( \Upsilon^\times_\varpi(G) \). Around \emph{each} quad \( z = (v_0^\bullet, v_0^\circ, v_1^\bullet, v_1^\circ) \) (listing its vertices in counterclockwise order, as in \cite[Figure 3.A]{Che20} or Figure \ref{fig:graph-notations}), the Kadanoff-Ceva observables satisfy simple local linear equations, with coefficients depending only on the Ising coupling constant attached to the quad \( z \). This propagation equation was first introduced in the works of \cite{dotsenko1983critical}, \cite{perk1980quadratic}, and \cite[Section 4.3]{Mercat-CMP}. 

Let us be more concrete. Let \( \theta_z \) be the abstract angle corresponding to the parametrization in \eqref{eq:x=tan-theta} of the edge in \( G^\bullet \) attached to the quad \( z \). Then, for any triplet of corners \( c_{pq} \) (identified as \( c_{pq} = (v^\bullet_p v^\circ_q) \)), where the lifts of \( c_{pq} \), \( c_{p,1-q} \), and \( c_{1-p,q} \) to \( \Upsilon^\times_\varpi(G) \) are neighbors, one has
 \begin{equation}
\label{eq:3-terms}
X(c_{pq})=X(c_{p,1-q})\cos\theta_z+X(c_{1-p,q})\sin\theta_z,
\end{equation}
where . One can easily show that solutions to \eqref{eq:3-terms} are automatically spinors on $\Upsilon^\times_\varpi(G)$.

\medskip

To conclude this overview of Kadanoff-Ceva correlators, we recall the generalization of the Dirac spinor \( \eta_c \), which represents a special solution to the equation \eqref{eq:3-terms} \emph{on isoradial grids}. Given a fixed embedding \( \cS : \Lambda(G) \to \mathbb{C} \) of \( \Lambda(G) \) into the complex plane, we define (as in \cite{ChSmi2}):
\begin{equation} \label{eq:def-eta}
\eta_c:=\varsigma\cdot \exp\big[-\tfrac{i}{2}\arg(\cS(v^\bullet(c))-\cS(v^\circ(c)))\big],\qquad \varsigma:=e^{i\frac{\pi}{4}},
\end{equation}
where the prefactor \( \varsigma = e^{i \frac{\pi}{4}} \) is chosen for convenience. To avoid the sign ambiguity in \eqref{eq:def-eta}, one can again work on the double cover \( \Upsilon^\times(G) \). Specifically, the products \( \eta_c X_\varpi(c): \Upsilon_\varpi(G) \to \mathbb{C} \) are defined on \( \Upsilon_\varpi(G) \), which only branches over \( \varpi \). Note that we continue to use the notation in \eqref{eq:def-eta} below, even when \( \cS \) is not an isoradial grid.

\subsection{Definition of an s-embeddings}\label{sub:semb-definition}

We now present the embedding procedure introduced by Chelkak in \cite[Section 6]{Ch-ICM18} and further developed in greater detail in \cite{Che20}. First, we recall the definition of an s-embedding as given in \cite[Definition 2.1]{Che20}, which utilizes the Kadanoff-Ceva formalism. The general philosophy here is not based upon finding which weights are naturally attached to a given tilling of the plane by tangential quadrilateral but goes other way around, looking for an embedding that fits the Ising weights. The central idea is to use a solution to \eqref{eq:3-terms} in order to construct a concrete embedding associated with the weighted graph.

\begin{definition}\label{def:cS-def}
Let $(G,x)$ be a weighted planar graph with the combinatorics of the plane, and let $\cX:\Upsilon^\times(G)\to\C$ be a solution to the full system of equations \eqref{eq:3-terms} around each quad. We say that $\cS=\cS_\cX:\Lambda(G)\to\C$ is an s-embedding of $(G,x)$ associated with $\cX$ if, for each $c\in\Upsilon^\times(G)$, we have\begin{equation}
\label{eq:cS-def}
\cS(v^\bullet(c))-\cS(v^\circ(c))=(\cX(c))^2.
\end{equation}
For $z \in \Dm(G)$, the quadrilateral $\cS^\dm(z) \subset \mathbb{C}$ is the region bounded by the edges connecting the vertices $\cS(v_0^\bullet(z))$, $\cS(v_0^\circ(z))$, $\cS(v_1^\bullet(z))$, and $\cS(v_1^\circ(z))$. The s-embedding $\cS$ is called \emph{proper} if the quadrilaterals $\cS^\dm(z) = (\cS(v_0^\bullet(z)) \cS(v_0^\circ(z)) \cS(v_1^\bullet(z)) \cS(v_1^\circ(z)))$ do not overlap. It is called \emph{non-degenerate} if no quadrilateral $\cS^\dm(z)$ degenerates into a segment. No convexity condition is imposed on the quadrilaterals $\cS^\dm(z)$.
\end{definition}
Given a fixed solution $\cX $ to \eqref{eq:3-terms}, it is not clear at all that the obtained embedding $\mathcal{S}_{\cX}$ is proper, meaning that finding a solution to \eqref{eq:3-terms} that leads to a non-degenerate proper picture is a non-trivial step. This will be one of the main innovations of the present paper. Starting from a proper s-embedding for a given set of weights, it will be possible to construct a proper s-embedding for another set a weights in a differential manner. One can also extend the definition of $\cS$ fixing the position of centers of quads $\Dm(G)$, setting as in \cite[Equation (2.5)]{Che20}
\begin{equation}\label{eq:cS(z)-def}
\begin{array}{l}
\cS(v_p^\bullet(z))-\cS(z):=\cX(c_{p0})\cX(c_{p1})\cos\theta_z,\\[2pt]
\cS(v_q^\circ(z))-\cS(z):=-\cX(c_{0q})\cX(c_{1q})\sin\theta_z,
\end{array}
\end{equation}
where $c_{p0}$ and $c_{p1}$ (respectively, $c_{0q}$ and $c_{1q}$) are neighbors on $\Upsilon^\times(G)$.
The propagation equation \eqref{eq:3-terms} directly implies the consistency of both \eqref{eq:cS-def} and \eqref{eq:cS(z)-def}. From a more concrete perspective (see Figure \eqref{fig:graph-notations}), the image $\cS^\dm(z) \in \mathbb{C}$ of a combinatorial quadrilateral $z \in \diamondsuit(G)$, under the embedding $\cS$, is a quadrilateral \emph{tangent to a circle} centred at $\cS(z)$ defined in \eqref{eq:cS(z)-def}. The position of the point $\cS(z)$ corresponds to the intersection of the four bisectors of the sides of the tangential quadrilateral $\cS^\dm(z)$. The radius $r_z$ of this circle can be determined using the values of $\cX$, for instance, through \cite[Equation (2.7)]{Che20}. Let $\phi_{v,z}$ represent the half-angle of the quadrilateral $\cS^\dm(z)$ at $\cS(v)$. It is then possible to reconstruct the abstract Ising weight $\theta_z$ (in the parametrization \eqref{eq:x=tan-theta}) from the angles in the image of $\cS^\dm(z) \subset \mathbb{C}$ using the formula \cite[Equation (2.8)]{Che20}.

\begin{equation}
\label{eq:theta-from-S}
\tan\theta_z\ =\ \biggl(\frac{\sin\phi_{v_0^\bullet ,z}\sin\phi_{v_1^\bullet ,z}}{\sin\phi_{v_0^\circ ,z}\sin\phi_{v_1^\circ ,z}}\biggr)^{\!1/2}.
\end{equation}

In the $s$-embedding framework, the large-scale properties of the origami map associated with an embedding determine whether a planar graph equipped with Ising weights can be interpreted as a (near)-critical system. The definition of this framework is recalled from \cite[Definition 2.2]{Che20} (see also \cite{KLRR, CLR1} for the general definition in the dimer context).

\begin{definition}\label{def:cQ-def}
Given $\cS = \cS_\cX$, the \emph{origami} function, denoted by $\cQ = \cQ_\cX : \Lambda(G) \to \mathbb{R}$, is defined (up to a global additive constant) as a real-valued function. Its increments between two neighboring vertices $v^{\bullet}(c)$ and $v^\circ(c)$ are given by
\begin{equation}
\label{eq:cQ-def}
\cQ(v^\bullet(c))-\cQ(v^\circ(c))\ :=\ |\cX(c)|^2\,=\,|\cS(v^\bullet(c))-\cS(v^\circ(c))|\,.
\end{equation}
We will often shorten $|\cX(c)|^2=\delta_c$ the length of the edge of $\Lambda(G)$ attached to the corner $c$. The alternate sum of edge-lengths in a tangential quad vanishes, which ensures the consistent definition for $\cQ$. One can see $\cQ$ as a folding of the quadrilaterals along their diagonals (see e.g. \cite[Section 8.2]{CLR1}), which makes $\cQ$ a $1$-Lipschitz in the $\cS $ plane. 
\end{definition}

In a general tiling made by tangential quadrilaterals, especially one which is locally very irregular, one should first define some notion of \emph{scale} before talking about large scale properties. We follow here the route taken in \cite{CLR1}, using the assumption \LipKd\ on $\cQ$.
\begin{assumpintro}[\LipKd] We say that $\cS $ satisfies the assumption $\textup{Lip}(\kappa,\delta)$  for some $0\leq \kappa<1$ and $\delta >0 $ if for any $v,v' $ vertices of $\Lambda(G) $
\begin{equation}
\label{eq:LipKd}
|\cQ(v')-\cQ(v)|\le\kappa\cdot |\cS(v')-\cS (v)|\quad \text{if}\quad |\cS (v')-\cS (v)|\ge\delta.
\end{equation}
\end{assumpintro}
With the above assumption, one can define the scale of $\cS$ for the constant $\kappa<1 $.
\begin{definition}
We say that an s-embedding $\cS $ covering the open set $U\subseteq \mathbb{C}$ has a scale $\delta $ for the constant $ \kappa <1$ on $U$ if 
\begin{equation}
\delta = \delta^{\kappa} := \inf \{ \tilde{\delta} >0, \textrm{Lip}(\kappa, \tilde{\delta}) \textrm{ holds} \}.
\end{equation}
\end{definition}
In that case, one writes $\cS = \cS^{\delta} = \cS^{\delta^\kappa} $ (removing the $\kappa $ dependency in notations). In words, the scale of $\cS$ relative to the constant $\kappa$ is the distance at which $\cQ $ becomes a $\kappa$-Lipschitz function. In the present paper, all constants $O()$ are uniform on grids such that $\kappa<1$ and can be in principle written explicitly using following the proofs inside \cite{CLR1,Che20}.

\subsection{S-holomorphic functions and associated primitives}\label{sub:HF-def}
Let us now recall the notion of \emph{s-holomorphic functions}, which was generalized to $s$-embeddings in \cite{Che20}. This concept was originally introduced by Smirnov \cite[Definition 3.1]{Smirnov_Ising} for the critical square lattice and by Chelkak and Smirnov \cite[Definition~3.1]{ChSmi2} for isoradial grids. It plays a crucial role in applying discrete complex analysis techniques to the Ising and dimer models. We recall the general definition of $s$-holomorphic functions as stated in \cite[Definition 2.4]{Che20}. In what follows, $\textrm{Proj}[\cdot, \eta \mathbb{R}]$ denotes the usual projection onto the line $\eta \mathbb{R}$.

\begin{definition}\label{def:s-hol}
A function $F$ defined on a subset of $\Dm(G)$ is called $s$-holomorphic if, for each pair of adjacent quads $z, z’ \in \Dm(G)$ separated by the edge $[\cS(v^\circ(c)); \cS(v^\bullet(c))]$ of $\cS$ attached to the corner $c$, we have
\begin{equation}
\label{eq:s-hol}
\textrm{Pr}[F(z), \eta_c \mathbb{R}] = \textrm{Pr}[F(z’), \eta_c \mathbb{R}].
\end{equation}
\end{definition}

The above definition establishes a direct link between real-valued solutions to \eqref{eq:3-terms} and complex-valued $s$-holomorphic functions. This connection was first presented in \cite[Proposition 2.5]{Che20} and in \cite[Appendix]{CLR1}.
\begin{proposition}\label{prop:shol=3term} Let $\cS=\cS_\cX$ be a proper $s$-embedding and $F$ an $s$-holomorphic on $\Dm(G)$. Given $z\in\Dm(G)$, a corner \mbox{$c\in\Upsilon^\times(G)$} belonging to the quad $z$, one can define the spinor $X$ at $c \in \Upsilon^\times $ by the formula 
\begin{align}
X(c)\ &:=\ |\cS(v^\bullet(c))-\cS(v^\circ(c))|^{\frac{1}{2}}\cdot\Re[\overline{\eta}_c F(z)] \notag\\
 &=\ \Re[\overline{\varsigma}\cX(c)\cdot F(z)]\ =\ \overline{\varsigma}\cX(c)\cdot \mathrm{Proj}[F(z);\eta_c\R].
\label{eq:X-from-F}
\end{align}
The map $c\mapsto X(c) $ satisfies all three terms identities  \eqref{eq:3-terms} around the quad $z$. Conversely given $X:\Upsilon^\times(G)\to\R$ a real valued solution to \eqref{eq:3-terms}, there exists a unique s-holomorphic function $F$ on $\Dm(G)$ such that the identity \eqref{eq:X-from-F} holds.
\end{proposition}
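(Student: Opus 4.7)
The plan is to verify the correspondence in both directions by reducing it to two facts: that $\cX$ itself solves the three-term equation around every quad (which is built into the s-embedding definition via \eqref{eq:cS-def}--\eqref{eq:cS(z)-def}), and that the s-holomorphicity condition \eqref{eq:s-hol} is precisely what makes the $\eta_c\R$-projection of $F$ well-defined at each shared corner. As a preliminary, the three expressions in \eqref{eq:X-from-F} agree: $\cX(c)^{2} = \cS(v^\bullet(c))-\cS(v^\circ(c))$ yields (up to the sign absorbed into the double cover $\Upsilon^\times(G)$) the identity $\bar{\varsigma}\,\cX(c) = |\cX(c)|\cdot \overline{\eta}_c$, from which $\Re[\bar{\varsigma}\,\cX(c)\,F(z)] = |\cX(c)|\cdot\Re[\overline{\eta}_c F(z)] = |\cS(v^\bullet(c))-\cS(v^\circ(c))|^{1/2}\cdot\Re[\overline{\eta}_c F(z)]$, and the third form follows after unfolding $\mathrm{Proj}[F(z);\eta_c\R] = \Re[\overline{\eta}_c F(z)]\cdot\eta_c$ together with $\bar{\varsigma}\,\cX(c)\cdot\eta_c = |\cX(c)|$.

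For the forward direction, I would fix an s-holomorphic $F$ and define $X(c)$ through \eqref{eq:X-from-F} using the quad on either side of $c$. Well-definedness is exactly the content of \eqref{eq:s-hol}, which reads $\Re[\overline{\eta}_c F(z)] = \Re[\overline{\eta}_c F(z')]$ when $c$ is shared by $z,z'$. To derive \eqref{eq:3-terms} at a quad $z$, I would use the linearity of $X(c)=\Re[\bar{\varsigma}\,\cX(c)\,F(z)]$ in $\cX$ (with $F(z)$ fixed throughout $z$), together with the three-term identity satisfied by $\cX$ itself around $z$. Concretely, $\Re[\bar{\varsigma}\,(\cX(c_{p,1-q})\cos\theta_z + \cX(c_{1-p,q})\sin\theta_z)\,F(z)]$ equals both $X(c_{p,1-q})\cos\theta_z + X(c_{1-p,q})\sin\theta_z$ and $\Re[\bar{\varsigma}\,\cX(c_{pq})\,F(z)] = X(c_{pq})$.

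For the reverse direction, given a real solution $X$ to \eqref{eq:3-terms}, I construct $F$ quad by quad. On a fixed $z$, since $\cS$ is non-degenerate, any two corners $c,c'$ of $z$ associated with non-parallel edges yield $\eta_c,\eta_{c'}$ that span $\C$ over $\R$, so the two real equations $\Re[\bar{\varsigma}\,\cX(c)\,F(z)] = X(c)$ and $\Re[\bar{\varsigma}\,\cX(c')\,F(z)] = X(c')$ uniquely determine $F(z)\in\C$. For each of the remaining corners $c''$ of $z$, the three-term identity for $\cX$ expresses $\cX(c'')$ as the \emph{same} real linear combination of $\cX(c),\cX(c')$ as the one given by \eqref{eq:3-terms} for $X(c'')$ in terms of $X(c),X(c')$; applying $\Re[\bar{\varsigma}\,\cdot\,F(z)]$ to the $\cX$ identity then recovers $X(c'')$, so \eqref{eq:X-from-F} holds at all four corners. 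Compatibility of $F$ across adjacent quads sharing a corner $c$ is automatic since both $F(z)$ and $F(z')$ have the same $\eta_c\R$-projection by construction, which is precisely \eqref{eq:s-hol}; uniqueness of $F$ follows from the fact that two non-parallel real projections determine a complex number.

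The main bookkeeping obstacle is tracking the sign ambiguity of the spinor $\eta_c$ in \eqref{eq:def-eta} together with the choice of lift of $\cX$ to $\Upsilon^\times(G)$, and verifying that both signs cancel in $\Re[\bar{\varsigma}\,\cX(c)\,F(z)]$; this ensures that $X$ is a well-defined real-valued spinor on $\Upsilon^\times(G)$, while the constructed $F$ remains a single-valued complex function on $\Dm(G)$. Once this sign bookkeeping is handled in the preliminary step, the rest of the argument is purely linear-algebraic and carries through without any further input from the Ising weights.
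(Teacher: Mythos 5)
Your proof is correct and follows essentially the same linear-algebraic route as the one the paper cites from \cite[Proposition~2.5]{Che20}: the forward direction uses the linearity of $c\mapsto\Re[\overline{\varsigma}\cX(c)F(z)]$ for fixed $F(z)$ together with the fact that $\cX$ itself solves the propagation equation \eqref{eq:3-terms} by construction, and the reverse direction recovers $F(z)$ quad by quad from the projection conditions at two corners with non-collinear $\eta$'s. The only cosmetic difference is that you argue existence and uniqueness of $F(z)$ abstractly from two non-parallel real projections, whereas the cited source (and formula \eqref{eq:F-from-X} here) records the explicit closed-form solution of that same $2\times 2$ system.
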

When $F$ and $X$ are linked by \eqref{eq:X-from-F}, one can reconstruct the value of $F(z)$ out of the values of $X$ at any pair of corners $c_{pq}(z)\in\Upsilon^\times(G)$ and the geometry of $\cS$, e.g. using the formula \cite[Corollary 2.6]{Che20}
\begin{equation} \label{eq:F-from-X}
F(z)\ =\ -i\varsigma\cdot\frac{\overline{\cX(c_{01}(z))}\,X(c_{10}(z))-\overline{\cX(c_{10}(z))}\,X(c_{01}(z))} {\Im[\,\overline{\cX(c_{01}(z))}\,\cX(c_{10}(z))}.
\end{equation}

In the $s$-embeddings framework, the behavior of the scaling limit of $s$-holomorphic functions is determined by its local equation and boundary conditions. This can be understood through two different integration procedures at the discrete level. The first is a standard extension of the integration procedure for discrete holomorphic functions to the $s$-embeddings framework, which now accounts for the presence of the origami map. The second is a generalization of Smirnov’s primitive of the square of an $s$-holomorphic function. The former is extensively studied in \cite[Proposition 6.15]{CLR1} and will be useful for deriving the local regularity theory for discrete functions, along with their local equation in the limit. The latter, introduced by Smirnov in \cite{Smirnov_Ising} for the critical square lattice, helps identify a discrete Riemann-Hilbert boundary condition that arises in the Ising context. Let us begin with the primitive $I_{\mathbb{C}}$. Given an $s$-holomorphic function $F$ on $\diamondsuit(G)$, we can define (up to a global additive constant)
\cite[Section 2.3]{Che20}
\begin{equation}\label{eq:def-I_C}
I_{\mathbb{C}}[F]:= \int \big( \overline{\varsigma}Fd\cS + \varsigma \overline{F} d\cQ \big).
\end{equation}
Let $v_{1,2}^{\bullet}, v_{1,2}^{\circ}$ be vertices of the quad $z\in \diamondsuit(G)$. Then one has for $\star \in \{ \bullet, \circ \} $
\begin{equation}
I_{\mathbb{C}}[F](v_{2}^{\star}) - I_{\mathbb{C}}[F](v_{1}^{\star}) = \overline{\varsigma} F(z) [ \cS(v_{2}^{\star}) - \cS(v_{1}^{\star})] + \varsigma \overline{F(z)}[ \cQ(v_{2}^{\star}) - \cQ(v_{1}^{\star})].
\end{equation}
It is possible to extend to origami map to the entire complex plane (see \cite{CLR1}, \cite[Section 2.3]{Che20} or \cite[Section 2.4]{MahPar25a}) which allows to extend the definition \eqref{eq:def-I_C} to the entire complex plane. 

The notion of the primitive of the square $H_X$ can be introduced through a purely combinatorial definition, linked to the Kadanoff-Ceva formalism. This definition does not require any specific embedding into the plane, as long as one works with a spinor $X$ that satisfies \eqref{eq:3-terms}. This generalization of Smirnov’s original work is presented in \cite[Definition 2.8]{Che20}.
\begin{definition}
\label{def:HX-def} Given $X$ a spinor on $\Upsilon^\times(G)$ satisfying \eqref{eq:3-terms}, one can define the function $H_X$ up to a global additive constant on $\Lambda(G)\cup\Dm(G)$ by setting
\begin{equation}
\label{eq:HX-def}
\begin{array}{rcll}
H_X(v^\bullet_p(z))-H_X(z)&:=&X(c_{p0}(z))X(c_{p1}(z))\cos\theta_z, & p=0,1,\\[2pt]
H_X(v^\circ_q(z))-H_X(z)&:=&-X(c_{0q}(z))X(c_{1q}(z))\sin\theta_z,& q=0,1,\\[2pt]
H_X(v^\bullet_p(z))-H_X(v^\circ_q(z))&:=&(X(c_{pq}(z)))^2,
\end{array}
\end{equation}
similarly to~\eqref{eq:cS-def} and~\eqref{eq:cS(z)-def}.
\end{definition}
The consistency of the above definition follows from \eqref{eq:3-terms}. When passing to an $s$-embedding $\cS$ of the graph $(G, x)$, the correspondence between $X$ and $F$, as recalled in Proposition \ref{prop:shol=3term}, allows us to interpret $H_X$ via the $s$-holomorphic function $F$ associated with $X$. More precisely, one can define $H_X$ as in \cite[Equation (2.17)]{Che20}.\begin{equation}
\label{eq:HF-def}
H_F:=\int\Re(\overline{\varsigma}^2F^2d\cS+|F|^2d\cQ)=\int (\Im(F^2d\cS)+\Re(|F|^2d\cQ)),
\end{equation}
on $\Lambda(G) \cup \Dm(G)$. The extension of $\cQ$ described below allows us to extend $H_F$ in a piecewise affine manner to the entire plane. (It is important to note that the extension occurs on each face of the associated $t$-embedding $\cT = \cS$ — see \cite[Proposition 3.10]{CLR1}.) The following lemma establishes the relationship between the definitions \eqref{eq:HX-def} and \eqref{eq:HF-def}, proving that they are, in fact, the \emph{same} function.

\begin{lemma}{\cite[Lemma 2.9]{Che20}} Let $F$ be defined $\Dm(G)$ and $X$ be defined on $\Upsilon^\times(G)$ related by the identity~\eqref{eq:X-from-F}. Then, the functions $H_F$ and~$H_X$ coincide up to a global additive constant.
\end{lemma}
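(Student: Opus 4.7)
The plan is to show that $H_F$ and $H_X$ share the same increments along every edge of the extended structure $\Lambda(G)\cup\Dm(G)$; since both functions are defined only up to a global additive constant, this suffices. I would verify the coincidence separately on the two kinds of increments appearing in \eqref{eq:HX-def}: diagonal edges $v^\circ_q \to v^\bullet_p$ of $\Lambda(G)$ (carrying the corner $c_{pq}$), and the half-edges from $z$ to its four neighbours. In both cases, since $F$ is constant on each quad, the integral \eqref{eq:HF-def} collapses to a linear combination of the increments of $\cS$ and $\cQ$, which are given explicitly by \eqref{eq:cS-def}--\eqref{eq:cS(z)-def} and \eqref{eq:cQ-def}.

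For a diagonal edge, the collapse yields $H_F(v^\bullet_p) - H_F(v^\circ_q) = \Im\!\big[F(z)^2\,(\cS(v^\bullet_p)-\cS(v^\circ_q))\big] + |F(z)|^2 \,(\cQ(v^\bullet_p)-\cQ(v^\circ_q))$. Substituting $\cS(v^\bullet_p)-\cS(v^\circ_q) = \cX(c_{pq})^2$ and $\cQ(v^\bullet_p)-\cQ(v^\circ_q) = |\cX(c_{pq})|^2$, and setting $w := F(z)\cX(c_{pq})$, the right-hand side rewrites as $\Im(w^2)+|w|^2$. An elementary computation (expand $w=x+iy$ and use $\overline{\varsigma}=e^{-i\pi/4}$) gives the identity $\Im(w^2)+|w|^2 = 2\,\Re(\overline{\varsigma}w)^2$. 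Combined with $\Re(\overline{\varsigma}w) = X(c_{pq})$ from \eqref{eq:X-from-F}, this matches the increment $X(c_{pq})^2$ prescribed by the third line of \eqref{eq:HX-def}, up to the normalisation convention hidden in \eqref{eq:HF-def}.

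For a half-edge from $z$, the identity \eqref{eq:cS(z)-def} gives $\cS(v^\bullet_p)-\cS(z) = \cX(c_{p0})\cX(c_{p1})\cos\theta_z$, together with the analogous real identity for the folded origami $\cQ(v^\bullet_p)-\cQ(z) = \cos\theta_z\cdot|\cX(c_{p0})\cX(c_{p1})|$ (and symmetric formulas with $-\sin\theta_z$ for $v^\circ_q$). Inserting these into the definition of $H_F$ and invoking the polarisation identity $\Re(A)\Re(B) = \tfrac{1}{2}\big(\Re(AB)+\Re(A\overline{B})\big)$ with $A = \overline{\varsigma}\cX(c_{p0})F(z)$ and $B = \overline{\varsigma}\cX(c_{p1})F(z)$ produces the value $X(c_{p0})X(c_{p1})\cos\theta_z$ required by \eqref{eq:HX-def}. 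The main technical ingredient is the origami extension formula $\cQ(v^\bullet_p)-\cQ(z) = \cos\theta_z \cdot |\cX(c_{p0})\cX(c_{p1})|$: it follows from the tangential-quad structure of $\cS^\dm(z)$ and the alignment of the phases of $\cX(c_{pq})$ with the bisectors emanating from $\cS(z)$, combined with the folding interpretation of $\cQ$; this is the only geometric step in the argument, everything else being direct algebra from \eqref{eq:X-from-F}.
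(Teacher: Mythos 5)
Your overall strategy — compare increments along diagonal edges $v^\circ_q\to v^\bullet_p$ and along half-edges $z\to v^\star_k$, and reduce everything to pointwise algebra using \eqref{eq:X-from-F} — is the right one, and the diagonal-edge computation is correct (the identity $\Im(w^2)+|w|^2 = 2\Re(\overline{\varsigma}w)^2$ is fine, and the factor of $2$ is just a normalisation convention between \eqref{eq:HF-def} and \eqref{eq:HX-def} that you correctly flag). The gap is in the half-edge step: the origami extension formula you assert,
$\cQ(v^\bullet_p)-\cQ(z) = \cos\theta_z\cdot|\cX(c_{p0})\cX(c_{p1})|$,
is not correct. The consistent extension — forced by requiring that the difference $(\cQ(v^\bullet_p)-\cQ(z))-(\cQ(v^\circ_q)-\cQ(z))$ reproduce $|\cX(c_{pq})|^2$ via the three-term identity — is
\begin{equation*}
\cQ(v^\bullet_p)-\cQ(z) = \cos\theta_z\cdot\Re\!\big[\cX(c_{p0})\overline{\cX(c_{p1})}\big],\qquad
\cQ(v^\circ_q)-\cQ(z) = -\sin\theta_z\cdot\Re\!\big[\cX(c_{0q})\overline{\cX(c_{1q})}\big].
\end{equation*}
These differ from your formula by a factor $\cos\!\big(\arg\cX(c_{p0})-\arg\cX(c_{p1})\big)$, which is the cosine of the half-angle of the tangential quad $\cS^{\dm}(z)$ at $\cS(v^\bullet_p)$ and is generically strictly less than $1$ — the phases of $\cX(c_{p0})$ and $\cX(c_{p1})$ are \emph{not} aligned. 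Concretely: plugging your formula in and setting $u:=F(z)\cX(c_{p0})$, $v:=F(z)\cX(c_{p1})$, you obtain $\big(\Im(uv)+|u||v|\big)\cos\theta_z$ for the $H_F$-increment, whereas the polarisation identity requires $\big(\Im(uv)+\Re(u\bar v)\big)\cos\theta_z$; since $|u||v|\neq\Re(u\bar v)$ unless $u,v$ are parallel, the identification fails. Substituting the correct $\cQ$-extension, $|F(z)|^2(\cQ(v^\bullet_p)-\cQ(z))=\Re(u\bar v)\cos\theta_z$, and the polarisation step then closes exactly as you intended.
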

If $\cS$ is an isoradial grid, the origami map $\cQ$ is constant on both $G^\bullet$ and $G^\circ$ (since all the edges of the quads $\cS^{\diamond}(z)$ have the same length). Therefore, $H_F$ is the primitive of $\Im[F^2 d\cS]$, which recovers the original definition given in \cite[Section~3.3]{ChSmi2}.

\subsection{Regularity theory for s-holomorphic functions}\label{sub:regularity}
In this short subsection we recall in a concise way the regularity theory of s-holomorphic functions, which was developed in the dimer context in \cite[Section 6.5]{CLR1} and the Ising context in \cite[Theorem 2.18]{Che20}. That regularity theory can be summarized as some Harnack type inequality for s-holomorphic functions that controls its maximum via one of its primitives $I_{\mathbb{C}}[F]$ or $H_{F} $ defined in the previous section, except in some pathological scenario where the discrete functions blow up exponentially fast in $\delta^{-1} $.

\begin{theorem}{\cite[Theorem 6.17]{CLR1} and \cite[Theorem 2.18]{Che20} } \label{thm:F-via-HF} For each fixed $\kappa<1$, there exist constants $\gamma_0=\gamma_0(\kappa)>0$ and \mbox{$C_0=C_0(\kappa)>0$} such that the following alternative holds. Let $F$ be a s-holomorphic function defined in a ball of radius $r$ drawn over an s-embedding $\cS$ satisfying the assumption $\textup{Lip}(\kappa,\delta)$ and some $\alpha \in \mathbb{T}$. Then, provided that $r\ge\cst\cdot\delta$ for a constant depending only on $\kappa$, one has the two following alternatives:

For the integration procedure \eqref{eq:def-I_C} 
\[
\begin{array}{rcl}
\text{either}\ \max_{\{z:\cS(z)\in B(u,\frac{1}{2}r)\}}|F|&\le& C_0r^{-1}\cdot |\osc_{\{v:\cS(v)\in B(u,r)\}}I_{\alpha\R}[\overline{\varsigma}F]|,\\[4pt]
\text{or}\ \max_{\{z:\cS(z)\in B(u,\frac{3}{4}r)\}}|F|&\ge& \exp(\gamma_0r\delta^{-1})\cdot C_0r^{-1}|\osc_{\{v:\cS(v)\in B(u,r)\}}I_{\alpha\R}[\overline{\varsigma}F]|
\end{array}
\]

For the integration procedure \eqref{eq:HF-def} 
\[
\begin{array}{rcl}
\text{either}\ \max_{\{z:\cS(z)\in B(u,\frac{1}{2}r)\}}|F|^2&\le& C_0r^{-1}\cdot |\osc_{\{v:\cS(v)\in B(u,r)\}}H_{F}|,\\[4pt]
\text{or}\ \max_{\{z:\cS(z)\in B(u,\frac{3}{4}r)\}}|F|^2&\ge& \exp(\gamma_0r\delta^{-1})\cdot C_0r^{-1}|\osc_{\{v:\cS(v)\in B(u,r)\}}H_{F}|
\end{array}
\]
\end{theorem}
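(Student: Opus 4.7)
The strategy I would pursue is to reduce the statement to an a priori inequality for the Kadanoff--Ceva spinor $X$ solving the three-term identity \eqref{eq:3-terms}, and then to propagate this inequality dyadically from the mesh scale $\delta$ up to the macroscopic scale $r$. Via Proposition \ref{prop:shol=3term}, one translates $F$ into a real spinor $X$, so that both primitives acquire explicit expressions in $X$: the increments of $H_F$ are quadratic (formulas \eqref{eq:HX-def}), whereas the increments of $I_{\mathbb{C}}[F]$ are linear in $F$, hence in $X$ through \eqref{eq:F-from-X}. The inversion \eqref{eq:F-from-X} also reads $F(z)$ off the values of $X$ at the four surrounding corners, up to a factor controlled by $\Im[\,\overline{\cX(c_{01}(z))}\cX(c_{10}(z))\,]$; this is the only place where the geometric regularity of the tangential quad $\cS^{\diamond}(z)$ intervenes explicitly.

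The first core step is a one-scale Harnack alternative: on a ball $B(u,\rho)$ of radius $\rho \in [\cst\cdot\delta,\, r/2]$ drawn over an $s$-embedding satisfying \LipKd, either
\begin{equation*}
\max_{z:\,\cS(z)\in B(u,\rho/2)} |F(z)|^2 \;\leq\; \frac{C}{\rho}\,\osc_{v:\,\cS(v)\in B(u,\rho)} H_F,
\end{equation*}
or there exists a quad in $B(u,3\rho/4)$ on which $|F|^2$ already exceeds $e^{\gamma_0}$ times this ratio. The mechanism is that s-holomorphicity makes $H_F$ a discrete \emph{subharmonic} function whose local increments dominate $|F|^2$ times an edge length (a discrete Caccioppoli-type inequality), provided that the origami map $\cQ$ is $\kappa$-Lipschitz at scale $\rho$ — which is precisely what \LipKd\ guarantees. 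The identical mechanism, but linearised, applies to $I_{\mathbb{C}}[F]$ via \eqref{eq:def-I_C}, which is why the two integration procedures play interchangeable roles in the statement.

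The second step is a dyadic iteration from $\rho = r/2$ down to $\rho \asymp \delta$, spanning $\asymp \log_2(r/\delta)$ doublings. If the clean alternative holds at every scale, then a geometric sum of constants yields the first bound with a scale-independent $C_0(\kappa)$. If instead the alternative fails at some intermediate scale, the corresponding ``resonant'' discrete mode is amplified by at least $e^{\gamma_0}$ per further doubling down to scale $\delta$, producing the exponential factor $\exp(\gamma_0 r \delta^{-1})$ of the second alternative. Because the functional form of the Harnack step is the same for $I_{\mathbb{C}}[F]$ and $H_F$ (linear versus quadratic in $F$), the iteration produces the two parallel statements of the theorem with a single value of $\gamma_0(\kappa)$.

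The main obstacle, both conceptually and technically, is the local control of the denominator $\Im[\,\overline{\cX(c_{01}(z))}\cX(c_{10}(z))\,]$ appearing in \eqref{eq:F-from-X} at single-quad scale, since \LipKd\ does not give any a priori non-degeneracy of an \emph{individual} quad — it only controls averaged quantities at scales $\geq \delta$. This is exactly the point resolved in \cite{CLR1,Che20} by comparing $s$-embeddings satisfying \LipKd\ with flat $t$-embeddings constructed from the $\kappa$-Lipschitz extension of $\cQ$ to $\mathbb{C}$, allowing one to absorb isolated degenerate quads into the parasitic exponential mode rather than into the main Harnack constant. The explicit dependence of $\gamma_0(\kappa)$ and $C_0(\kappa)$ on $\kappa<1$ then follows from a careful bookkeeping through the dyadic iteration, as carried out in those references.
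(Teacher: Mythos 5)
The paper does not prove this statement — it is recalled verbatim from \cite[Theorem 6.17]{CLR1} and \cite[Theorem 2.18]{Che20}, as the theorem environment's own header already indicates — so there is no proof in the present paper to compare your sketch against; you must consult those references for the actual argument.

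On the merits of your sketch: you correctly locate the main machinery (the sub/super-harmonicity of $H_F$ on the two halves of $\Lambda(G)$, the fact that \LipKd\ controls the origami only at scales $\geq\delta$, and the obstruction of potential degeneracy of individual quads, resolved in \cite{CLR1,Che20} via comparison with a flat $t$-embedding built from the Lipschitz extension of $\cQ$). The parallel treatment of the linear primitive $I_{\mathbb{C}}[F]$ and the quadratic primitive $H_F$ is also the right viewpoint. However, the dyadic amplification mechanism you propose cannot produce the exponential factor $\exp(\gamma_0 r\delta^{-1})$: a gain of $e^{\gamma_0}$ per dyadic doubling over $\asymp\log_2(r/\delta)$ doublings yields a total factor $(r/\delta)^{\gamma_0/\ln 2}$, which is only polynomial in $r/\delta$. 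The exponent in the theorem counts \emph{lattice steps}, not dyadic scales; reproducing it requires amplification by a constant factor per $\delta$-thick annulus (equivalently, per step of the underlying random walk on the associated S-graph). In the cited references this rate comes from the maximum principle for harmonic functions on $T$-graphs together with the fact that, absent the Exp-Fat assumption, individual transition probabilities may be exponentially small — forcing either the clean Harnack bound at every step or lattice-step exponential growth. A dyadic scheme alone cannot see this and would state a substantially weaker dichotomy.
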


Under the assumption \Unif\,, which is a the simplest toy example where the couple of assumptions \LipKd\, and \ExpFatt\, hold, the second alternative of the theorems recalled above are impossible. In particular, this ensures that (see e.g. \cite[Remark 2.12]{Che20}) that if the s-holomorphic functions $F^\delta$ are uniformly bounded in an open set $U$, they form a pre-compact family (in the topology of the uniform convergence on compacts) as $\delta\to 0$. In particular, functions are $\beta(\kappa)$-H\"older (see \cite[Theorem 2.18]{Che20}) starting at a scale comparable to $\delta$. More generally, under the general assumptions \LipKd\, and \ExpFatt\,, bounded s-holomorphic functions are $\beta(\kappa)$-H\"older starting at a scale $O(\max(\delta,\rho(\delta))$.

\subsection{Constructing full-plane fermions and identifying their branching structure}\label{sub:2-point-fermion}

Let us now construct the full-plane energy Kadanoff-Ceva correlator as the limit of the same object in bounded regions. First, fix a connected box $\Lambda_R$ and consider the Ising model on $\Lambda_R$ with wired boundary conditions, meaning that a single spin is attached to the outer face. Fix a corner $
q \in \Upsilon$, whose two lifts in $\Upsilon^{\times}$ are denoted by $q^+$ and $q^-$. We denote by $u^\circ(q) \in G^\circ$ and $v^\bullet(q) \in G^\bullet$ the vertices of $\Lambda(G)$ adjacent to $q$. Now, using the formalism introduced in \eqref{eq:KC-fermions} with $\varpi = { u^\circ(q), v^\bullet(q) }$, we define
\begin{equation}\label{eq:def-KC-energy}
	X_{R}^{(q)}(c)=\langle \chi_c \chi_q \rangle^{\textrm{(w)}}_{\Lambda_R}:=\mathbb{E}^{\textrm{(w)}}\big[ \sigma_{u^{\circ}(c)}\sigma_{u^{\circ}(q)} \mu_{v^{\bullet}(c)} \mu_{v^{\bullet}(q)} \big].
\end{equation}
The correlator $c \mapsto X_{R}^{(q)}(c)$ is a spinor that satisfies the propagation equation \eqref{eq:3-terms} \emph{everywhere} in $\Upsilon^{\times}{(q)} := \Upsilon^{\times}{[u^{\circ}(q), v^{\bullet}(q)]}$, the double cover $\Upsilon^\times_{(q)}$ that branches everywhere except around $u^{\circ}(q)$ and $v^{\bullet}(q)$. As recalled in \cite[Figure 6]{CIM-universality} (see also Figure \ref{fig:identification-double-covers}), it is possible to identify the two double covers $\Upsilon^{\times}$ and $\Upsilon^\times_{(q)}$, except at the corner $q$, where the nearby connections to $q^{\pm}$ must be swapped on one of the quads containing the edge $[u^{\circ}(q),v^{\bullet}(q)]$.

More precisely, the corner $q^+ \in \Upsilon^{\times}{(q)}$  is chosen so that the branching structures of $\Upsilon^{\times}{(q)}$ and $\Upsilon^{\times}$ coincide around the quad $z^{+}_{q}$. In particular, when looking  the correlator $X^{(q)}_{R}$ as a function on the double-cover $\Upsilon^{\times} $, it satisfies the propagation equation around almost all quads, including $z_q^+$, but in general the propagation equations around the quad $z_q^-$. More precisely, around the quad $z_q^-$, each identity of the form \eqref{eq:3-terms} involving the corners $q^{\pm}$ fails, \emph{while it would be true if one had assigned the values } $X^{(q)}_{R}(q^\pm) = \mp1$ \emph{instead of the true values} $X^{(q)}_{R}(q^\pm) = \pm1$. This rather simple observation is the building block of the entire paper and will be used to deform continuously embeddings while changing continuously Ising weights.

It is standard to consider a full-plane Ising model, obtained as a subsequential limit of models on an increasing sequence of bounded domains, for a weighted planar graph $(S, (x_e))_{e\in E}$. We also fix an arbitrary embedding $S$ of $(S, (x_e))_{e\in E}$ into the plane, which is defined up to homeomorphism. Recall that, in general, for any corner $c \in \Upsilon$, the disorder random variable can be written as $\mu_{v^\bullet(c)}\mu_{v^\bullet(q)}=\prod_{e\in \gamma^\bullet_{(q,c)}}x_e^{\varepsilon_e} $, where the product is taken along a disorder line $\gamma^\bullet_{(q,c)}$ connecting $v^{\bullet}(c)$ to $v^{\bullet}(q)$, and $\varepsilon_e$ represents the energy density at edge $e$. Using the identity $x_e^{\varepsilon_e} = \frac{1}{2}\big( (x_e - x_e^{-1})\varepsilon_e  + (x_e + x_e^{-1})) $, one can expand the bounded correlator $X_{R}^{(a)}$, following \cite[Lemma 4.1]{CIM-universality}, as:
\begin{equation}\label{eq:expansion-fermion}
	X_{R}^{(q)}(c) = \sum_{\iota \in I(q,c)} a_{\iota} \mathbb{E}^{(\textrm{w})}_{\Lambda_R}[ \sigma_{A_\iota}],
\end{equation}
where the sum is finite linear combination of spin-correlations where the products $\sigma_{A_\iota} = \prod_{j\in \iota}  \sigma_j $, only involve spins along the disorder line $\gamma^\bullet_{(q,c)}$. In particular, the indices $\iota \in I(q,c)$ do not depend on $R$ (provided that $R$ is chosen sufficiently large), and the sum contains only a bounded number of terms as $R \to \infty$. Since $\mathbb{E}^{(\textrm{w})}_{\Lambda_R}[ \sigma_{A_\iota}]$ is decreasing as $R \to \infty$, the expression $X_{R}^{(q)}(c)$ has a well-defined limit $X_{S}^{(q)}(c)$ as $R$ becomes infinite. In the present paper, the strong box-crossing property recalled in Theorem \ref{thm:RSW-s-embeddings} holds at a large enough scale for all s-embedding we work with. This ensures the uniqueness of the full-plane Gibbs measure for all Ising model studied here and makes the constructions of the fermion $X_{S}^{(q)}$ independent from the wired boundary conditions used in bounded regions.

\begin{figure}
\hskip -0.10\textwidth \begin{minipage}{0.325\textwidth}
\includegraphics[clip, width=1.2\textwidth]{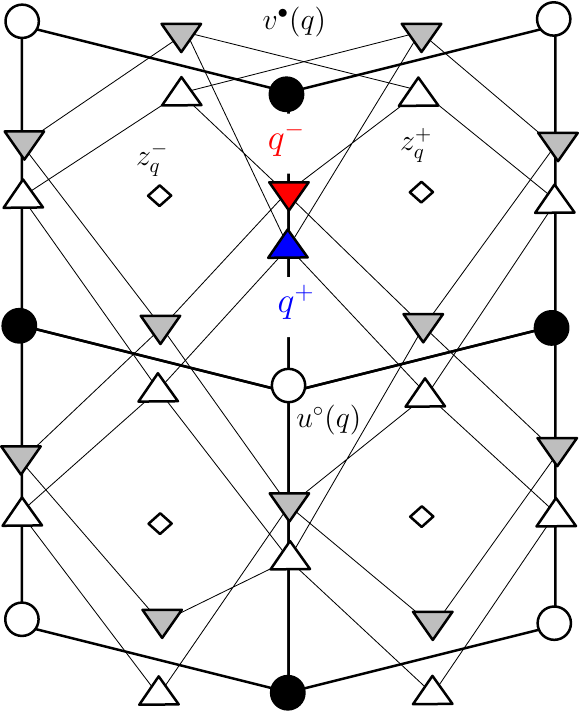}
\end{minipage} \hskip 0.15\textwidth 
\begin{minipage}{0.33\textwidth}
\includegraphics[clip, width=1.2\textwidth]{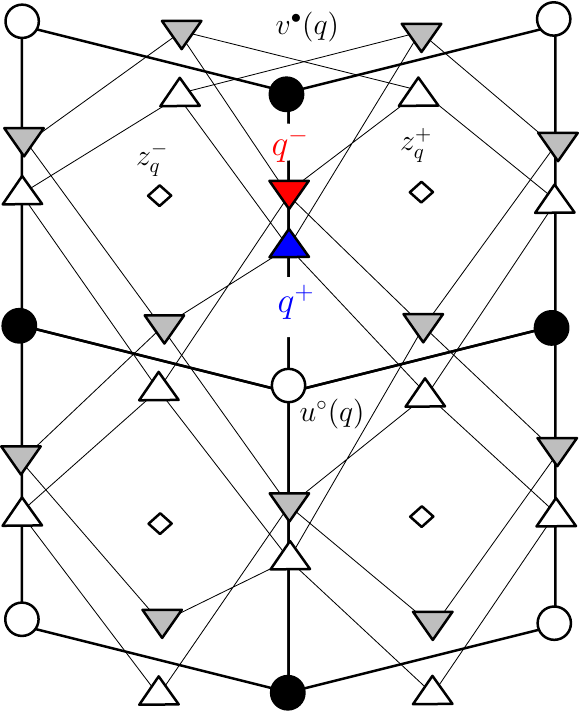}
\end{minipage}
\caption{(Left) The double cover $\Upsilon^\times$ branching around each vertex of $G^\bullet \cup G^{\circ} \cup \diamondsuit(G)$ (Right) The double cover $\Upsilon^\times_{(q)}$ that branches everywhere except around $v^{\bullet}(q)$ and $u^{\circ}(q)$. Those two double covers can be identified with each other away from $q$. The corner $q^+$ is chosen so that the two double-covers have the same branching structure around the quad $z_q^+$.  This figure is similar to \cite[Figure 6]{CIM-universality}.}
\label{fig:identification-double-covers}
\end{figure}

\subsection{Rate of growth of full-plane fermions via the geometry of the embedding}\label{sub:estimate-growth-fermion}

The key ingredient to understand in a quantitative fashion the impact of the deformation process via the ODE \eqref{eq:ODE-embedding-general} or the SDE  \eqref{eq:SDE-embedding} is to get some quantitative estimates regarding the decay of two points fermion $\langle \chi_{\frak{p}} \chi_{\frak{p}'} \rangle_{S} $ associated to the Ising model on $(S,(x_e)_{e\in S})$. The main Theorem of \cite{MahPar25a} together with its proof ensures that one can obtain sharp (up to constant) estimates on the decay of the two-points fermions via the geometry of an associated proper $s$-embedding $\cS$. In the case of a grid satisfying \Unif\, there exists $\Theta$, only depending on $r_0,\theta_0$ such that for any pair of corners $\frak{p}\neq \frak{p}'$, one has

\begin{equation}\label{eq:bound-correlator-distance}
	 |\langle \chi_{\frak{p}} \chi_{\frak{p}'}\rangle_{S}| \leq |\cY_{\cS}(\frak{p})|\cdot |\cY_{\cS}(\frak{p}') | \cdot \frac{\Theta }{|\cS(\frak{p})-\cS(\frak{p}')|}.
\end{equation}

This observation is a direct consequence of the similar estimate when $|\cS(\frak{p})-\cS(\frak{p}')| \geq C_0\cdot \delta $, obtained for the associated complexified fermion within the proof of \cite[Theorem 1.2]{MahPar25a}. Moreover, since all the angles in $\cS$ are bounded from below by $\theta_0$, the formula \eqref{eq:theta-from-S} ensures that all coupling constants in a grid satisfying \Unif\, are bounded away from $0$ and $1$, again depending only on $r_0,\theta_0$. Using the finite energy property, this allows to extend \eqref{eq:bound-correlator-distance} even in the case where $|\cS(\frak{p})-\cS(\frak{p}')| \leq C_0\cdot \delta$.

\subsection{The argument principle to prove properness of the embedding}\label{sub:argument-principle}
The deformation process presented in Section \ref{sec:deformation} is purely algebraic and doesn't provide any statement regarding properness of the deformed embedding. One of the main outputs of the present paper is the construction the $s$-embeddings $ (\cS^{(t)})_{t\geq 0}$ of massive and near-critical i.i.d. models in Sections \ref{sec:massive-deformation} and \ref{sec:iid-deformation}, via \eqref{eq:ODE-embedding}. Still, if one wants to apply embeddings techniques to deduce statements about the associated Ising models, one should check that all embeddings $ (\cS^{(t)})_{t\geq 0}$ remain \emph{proper}, at least for small enough times $t$. We explain here how one can easily check if an $s$-embedding is locally proper, meaning that all faces \emph{do not overlap}. Properness is a priori a non-trivial statement to deduce looking only at the generator $\cY$ associated to an s-embedding $\cS$. More importantly, it looks to be a very local statement that should be checked for any pair of different faces. Fortunately, there exists a standard argument principle that allows to rewrite this very local statement as a more global one, counting the winding number of a concrete curve. The argument principle we present here is very specific to the use of Ising/dimers embeddings, as its proof is based on harmonicity of $s/t$-holomorphic functions on the so-called S-graphs (see e.g. \cite[Section 4]{CLR1} or \cite[Section 2.3]{Che20} for more details.

We recall here simple facts coming from \cite[Section 4.1]{CLR2} and limit ourselves to an $s$-embedding the square lattice to simplify the presentation. All statements presented below apply verbatim in a more general context. Denote the \emph{combinatorial} box $(\Lambda^{\textrm{comb}}_n,(x_e)_{e\in \Lambda_n})$ of size $2n$ centred at the origin, equipped with the Ising weights $(x_e)_{e\in \Lambda_n})$. We denote by $S$ its standard embedding on the square lattice (meaning here that Ising weights \emph{don't} have the interpretation \eqref{eq:theta-from-S}. Denote by $\gamma^{S}_n$ the outer-boundary of $\Lambda^{\textrm{comb}}_n$ oriented in the positive direction, which is a simple curve. Let $\cS$ be an s-embedding of $(\Lambda^{\textrm{comb}}_{2n},(x_e)_{e\in \Lambda_{2n}})$. The argument principle, recalled in \cite[Section 4.1]{CLR2}, reads as follows:
\begin{center}
	If the path $ \cS(\gamma^{S}_{2n})$, oriented in the positive direction (which is not a priori a simple curve), only winds \emph{once} around the image of $\cS(\gamma^{S}_{n})$, then $\cS(\Lambda^{\textrm{comb}}_n)$ is a proper $s$-embedding.
\end{center}
This comes as a byproduct of the following facts:
\begin{itemize}
	\item All the faces of $\cS(\Lambda^{\textrm{comb}}_{2n})$  are oriented in the same direction. This is a consequence of the maximum-principle for harmonic functions on T-graphs \cite[Section 4]{CLR1} together with the fact that the image of each face $f\in \Lambda^{\textrm{abs}}_{2n}$ of the associated $t$-embedding is mapped to a convex polygon.
	\item The images of two different faces $f_1\neq f_2 \in \Lambda^{\textrm{abs}}_n $ don't overlap. Either way, if the faces $f_1,f_2$ overlap around an interior point $z\in \cS(\Lambda^{\textrm{comb}}_n)$, then one has 
\begin{equation}\nonumber
	1=\textrm{wind}(\cS(\gamma^{S}_{2n}),z)= \sum\limits_{f \in  \Lambda^{\textrm{comb}}_{2n}}\textrm{wind}(\cS(f),z) \geq \textrm{wind}(\cS(f_1),z)+\textrm{wind}(\cS(f_2),z) \geq 2,
\end{equation}
as $\cS(\gamma^{S}_{2n})$ only winds once around $z$ (first equality) and the orientation of all faces are the same, which implies that $\textrm{wind}(\cS(f),z)\geq 0$ for all faces of $f\in \Lambda_{2n}$.
\end{itemize}
Let us additionally the for each face $f$, its image $\cS(f)$ is a tangential quadrilateral, whose sum of opposing edge-length are equal. Therefore, it is straightforward that the images of the boundary segments of the face $f$ \emph{cannot} intersect except at the image of the vertices of $f$.

\section{Deforming an s-embedding using differential equations}\label{sec:deformation}
The goal of this section is to give some rigorous meaning to the overall philosophy of the present paper. Assume one starts with a given s-embedding $\cS$ of a graph $(S,(x_e)_{e\in E})$. It is tempting to say that when one moves continuously the family of Ising weights $(x^{(t)}_e)_{e\in E }$ with some continuous time parameter $t$, one should be able to construct a continuous family $\cS^{(t)}$ of $s$-embeddings of the corresponding Ising model $(S,(x^{(t)}_e)_{e\in E})$. This construction is a priori non-trivial, as finding an $s$-embedding $\cS^{(t)}$ requires finding a vector in the kernel of some linear system given by the Ising weights $(x^{(t)}_e)_{e\in E }$. It is not clear that the kernel becomes non-degenerate as time evolves, and even less clear that one can find a vector in the kernel in a continuous manner. To simplify the understanding, we work in the section in the case where $S$ is the square lattice $\mathbb{Z}^2$, equipped with a given set of weights $(x_e)_{e\in G}$. Still, the whole reasoning applies verbatim to general $s$-embeddings. The corners of the square lattice can be splitted into $4$ subsets, depending on their 'geographic' position around vertical edges. Those relative position are denoted respectively - with the natural geographic interpretation- 'north-east' (NE), 'north-west' (NW), 'south-east' (SE) and 'south-west' (SW). In what follows, the SE corners are labeled with the letter $a$, the NE corners are labeled with the letter $b$, the, the NW corners are labeled with the letter $c$ and the SW corners are labeled with the letter $d$.

\subsection{Modifying one Ising weight using Kadanoff-Ceva mismatches}\label{sub:modifying-one-weight}
Let us start with the simplest possible deformation, aiming to construct a new s-embedding $\hat{\cS}$ of a weighted graph $(S,(\hat{x}_e)_{e\in E})$ starting from an $s$-embedding $\cS$ of $(S,(x_e)_{e\in E})$, in the case where the weights $(x_e)_{e\in E}$ and $(\hat{x}_e)_{e\in E}$ only differ for one coupling constant attached to the edge $e_0$. We use here the existence of fermionic correlator that satisfy \eqref{eq:3-terms} almost everywhere on $\Upsilon^{\times}$ but around the edge $e_0$. The original s-embedding $\cS$ of $(S,(x_e)_{e\in E})$ is attached to a propagator $\cY = d\cS $ following Definition \ref{def:cS-def}, meaning that everywhere in $\Upsilon^{\times}$, the fermion $\cY$ satisfies \eqref{eq:3-terms}. From a practical stand-point, around  each quad $z_k$, whose combinatorial weight is parameterised by $x_k=\tan \frac{\theta_k}{2}$ following \eqref{eq:x=tan-theta} , one has the identities (and similarly for the other lifts in the double-cover) for the original embedding $\cY$
\begin{align*} 
\cY(a_k^+) - \cY(d_k^+) \cos(\theta_k) + \cY(b_k^+)\sin(\theta_k) &=  0 & (\textrm{A}_k), \\ 
\cY(b_k^+) - \cY(c_k^+) \cos(\theta_k) + \cY(a_k^+)\sin(\theta_k) &=  0 & (\textrm{B}_k),\\ 
\cY(c_k^+) - \cY(b_k^+) \cos(\theta_k) - \cY(d_k^+)\sin(\theta_k) &=  0 & (\textrm{C}_k),\\ 
\cY(d_k^+) - \cY(a_k^+) \cos(\theta_k) - \cY(c_k^+)\sin(\theta_k) &=  0  & (\textrm{D}_k),
\end{align*}
where the corners are labeled as in Figure \ref{fig:labeling-fermion-mismatch}. Recall that we want to change the Ising weight from $x_{e_0}=\tan \frac{\theta_0}{2}$ to $\hat{x}_{e_0}=\tan \frac{\hat{\theta}_0}{2}$ around $e_0$. Consider two full plane fermionic correlator $X^{(a_0)}_{\hat{S}}$ and $X^{(c_0)}_{\hat{S}}$, defined in Section \ref{sub:2-point-fermion}
as \emph{combinatorial objects} for the \emph{new} Ising model $(S,(\hat{x}_e)_{e\in G})$. The fermionic correlators are normalised such that for $q \in \{ a_0,c_0 \} $, one has
\begin{itemize}
\item $X^{(q)}_{\hat{S}}(q_0^+)=1$
\item $X^{(q)}_{\hat{S}} $ is a spinor that satisfies the propagation equation \eqref{eq:3-terms} \emph{everywhere} in the double-cover $\Upsilon^\times_{(q)} $ that branches everywhere except around $u^{\circ}(q)$ and $v^{\bullet}(q)$, the double cover represented on the right of Figure \ref{fig:identification-double-covers}.
\end{itemize}
For $q \in \{ a_0,c_0 \} $, the labeling of the corner $q_0^+$ around the quad  $z_0$ is made such that $z_{q_0^-}=z_0$. This means that when identifying the double-covers $\Upsilon^\times $ and $\Upsilon^\times_{(q_0)} $, the branching structures \emph{don't match} match around $z_0$ but \emph{do match} around the other quad containing $q_0$. Given the identification of double-covers and the fact that $x_k = \hat{x}_k$ for any $k\neq 0 $, then the correlator $ \frak{p} \mapsto X^{(q)}_{\hat{S}}(\frak{p})$ satisfies the propagation equation \eqref{eq:3-terms} around the quad $z_k$, and for $k\neq 0 $ one has
\begin{align*} \label{eq:original-embedding}
X^{(q)}_{\hat{S}}(a_k^+) - X^{(q)}_{\hat{S}}(d_k^+) \cos(\theta_k) + X^{(q)}_{\hat{S}}(b_k^+)\sin(\theta_k) &=  0 & (\widetilde{\textrm{A}}_k^{(q)}), \\ 
X^{(q)}_{\hat{S}}(b_k^+) - X^{(q)}_{\hat{S}}(c_k^+) \cos(\theta_k) + X^{(q)}_{\hat{S}}(a_k^+)\sin(\theta_k) &=  0 & (\widetilde{\textrm{B}}_k^{(q)}),\\ 
X^{(q)}_{\hat{S}}(c_k^+) - X^{(q)}_{\hat{S}}(b_k^+) \cos(\theta_k) - X^{(q)}_{\hat{S}}(d_k^+)\sin(\theta_k) &=  0 & (\widetilde{\textrm{C}}_k^{(q)}),\\ 
X^{(q)}_{\hat{S}}(d_k^+) - X^{(q)}_{\hat{S}}(a_k^+) \cos(\theta_k) - X^{(q)}_{\hat{S}}(c_k^+)\sin(\theta_k) &=  0  & (\widetilde{\textrm{D}}_k^{(q)}).
\end{align*}
To complete the understanding of the local relations of $X^{(q)}_{\hat{S}}$ on $\Upsilon^{\times}$, one needs to carefully look at them around the quad $z_0$. Let us be more specific, presenting more carefully the case of $q=a_0$. We know that $\frak{p}\mapsto X^{(a_0)}_{\hat{S}}(\frak{p}) $ is a spinor that satisfies the propagation equation \eqref{eq:3-terms} everywhere on the double cover $\Upsilon^\times_{(a_0)}$. As an example, this yields that
\begin{equation}
	X^{(a_0)}_{\hat{S}}(a_0^+)+X^{(a_0)}_{\hat{S}}(d_0^+)\cos (\hat{\theta}_0)-X^{(a_0)}_{\hat{S}}(b_0^+)\sin(\hat{\theta}_0)=0.
\end{equation}
Using the identification between $\Upsilon^\times_{(a_0)}$ and $\Upsilon^\times$ recalled in Figure \ref{fig:labeling-fermion-mismatch}, one can rewrite the local relations but this time on $\Upsilon^{\times}$. As an example
\begin{align*}
	&X^{(a_0)}_{\hat{S}}(a_0^+) - X^{(a_0)}_{\hat{S}}(d_0^+) \cos(\hat{\theta}_0) +X^{(a_0)}_{\hat{S}}(b_0^+)\sin(\hat{\theta}_0) \\
	&= 2X^{(a_0)}_{\hat{S}}(a_0^+) + \Big[ - 2X^{(a_0)}_{\hat{S}}(a_0^+) +X^{(a_0)}_{\hat{S}}(a_0^+)  
   - X^{(a_0)}_{\hat{S}}(d_0^+) \cos(\hat{\theta}_0) + X^{(a_0)}_{\hat{S}}(b_0^+)\sin(\hat{\theta}_0) \Big]\\
   &  = 2X^{(a_0)}_{\hat{S}}(a_0^+) - \Big[ X^{(a_0)}_{\hat{S}}(a_0^+)  
   + X^{(a_0)}_{\hat{S}}(d_0^+) \cos(\hat{\theta}_0) - X^{(a_0)}_{\hat{S}}(b_0^+)\sin(\hat{\theta}_0) \Big]\\
   & = 2X^{(a_0)}_{\hat{S}}(a_0^+) +0 = 2.
\end{align*}
More generally, when evaluating the local relations of $X^{(a_0)}_{\hat{S}}$ around the $z_0$ on $\Upsilon^{\times}$ instead of doing it on $\Upsilon^{\times}_{(a_0)}$, the propagation equation \eqref{eq:3-terms} \emph{would hold if one had} $X^{(a_0)}_{\hat{S}}(a_0^\pm)=\mp1$ instead of $X^{(a_0)}_{\hat{S}}(a_0^\pm)=\pm 1$. This allows to deduce that 
\begin{align*} 	
X^{(a_0)}_{\hat{S}}(a_0^+) - X^{(a_0)}_{\hat{S}}(d_0^+) \cos(\hat{\theta}_0) + X^{(a_0)}_{\hat{S}}(b_0^+)\sin(\hat{\theta}_0) &=  2, &(\textrm{A}_0') \\ 
X^{(a_0)}_{\hat{S}}(b_0^+) - X^{(a_0)}_{\hat{S}}(c_0^+) \cos(\hat{\theta}_0) + X^{(a_0)}_{\hat{S}}(a_0^+)\sin(\hat{\theta}_0) &=  2\sin(\hat{\theta}_0)  ,&(\textrm{B}_0') \\ 
X^{(a_0)}_{\hat{S}}(c_0^+) - X^{(a_0)}_{\hat{S}}(b_0^+) \cos(\hat{\theta}_0) - X^{(a_0)}_{\hat{S}}(d_0^+)\sin(\hat{\theta}_0) &=  0  ,&(\textrm{C}_0')\\ 
X^{(a_0)}_{\hat{S}}(d_0^+) - X^{(a_0)}_{\hat{S}}(a_0^+) \cos(\hat{\theta}_0) - X^{(a_0)}_{\hat{S}}(c_0^+)\sin(\hat{\theta}_0) &=  -2\cos(\hat{\theta}_0)  .&(\textrm{D}_0')
\end{align*}
The same computation for the fermion $X^{(c_0)}_{\hat{S}}$around $z_0$ yields
\begin{align*} 
X^{(c_0)}_{\hat{S}}(a_0^+) - X^{(c_0)}_{\hat{S}}(d_0^+) \cos(\hat{\theta}_0) + X^{(c_0)}_{\hat{S}}(b_0^+)\sin(\hat{\theta}_0) &=  0, &(\textrm{A}_0'')\\ 
X^{(c_0)}_{\hat{S}}(b_0^+) - X^{(c_0)}_{\hat{S}}(c_0^+) \cos(\hat{\theta}_0) + X^{(c_0)}_{\hat{S}}(a_0^+)\sin(\hat{\theta}_0) &=  -2\cos(\hat{\theta}_0), &(\textrm{B}_0'')\\ 
X^{(c_0)}_{\hat{S}}(c_0^+) - X^{(c_0)}_{\hat{S}}(b_0^+) \cos(\hat{\theta}_0) - X^{(c_0)}_{\hat{S}}(d_0^+)\sin(\hat{\theta}_0) &=  2, &(\textrm{C}_0'')\\ 
X^{(c_0)}_{\hat{S}}(d_0^+) - X^{(c_0)}_{\hat{S}}(a_0^+) \cos(\hat{\theta}_0) - X^{(c_0)}_{\hat{S}}(c_0^+)\sin(\hat{\theta}_0) &=  -2\sin(\hat{\theta}_0) .&(\textrm{D}_0'')
\end{align*}

\begin{figure}
\hskip -0.10\textwidth \begin{minipage}{0.325\textwidth}
\includegraphics[clip, width=1.6\textwidth]{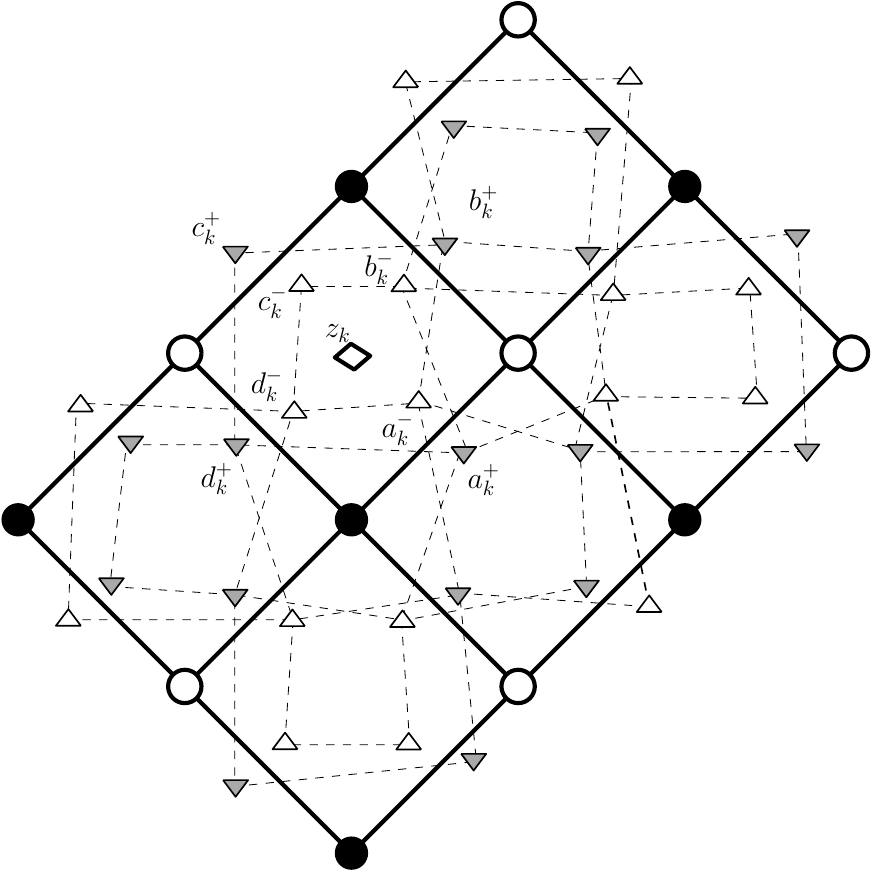}
\end{minipage} \hskip 0.15\textwidth 
\begin{minipage}{0.33\textwidth}
\includegraphics[clip, width=1.6\textwidth]{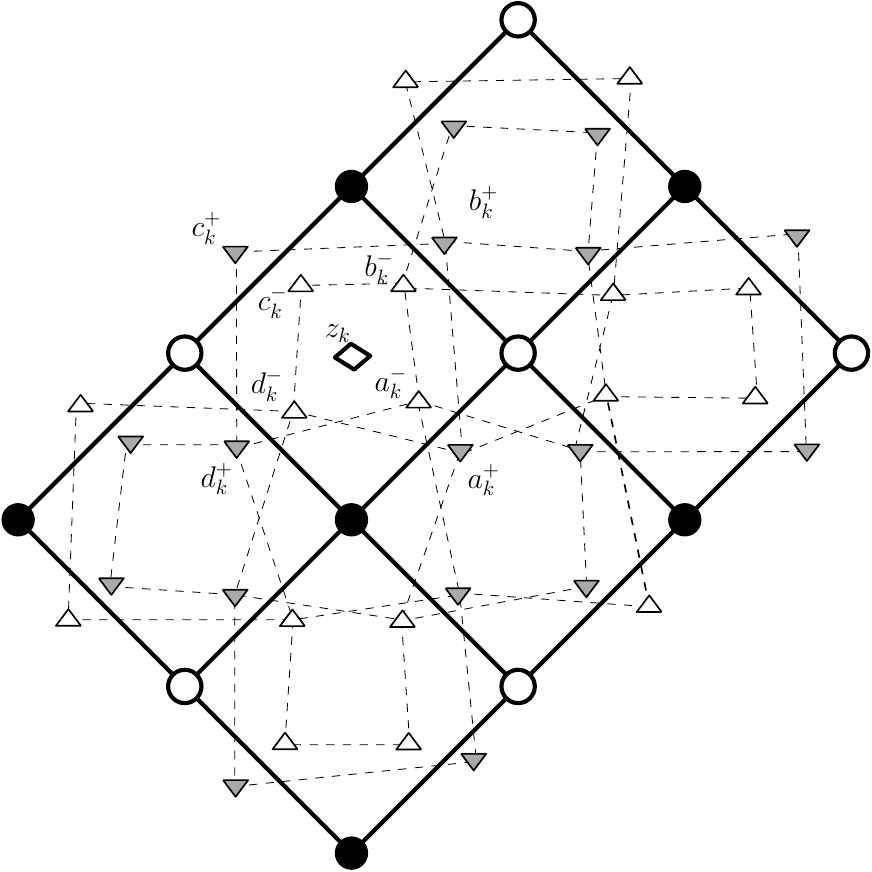}
\end{minipage}
\caption{For both pictures, dashed lines correspond to neighboring relation on double covers. The South-Eastern corners are labeled by $a$, the North-Eastern corners are labeled by $b$, the North-Western corners are labeled by $c$ and the South-Western corners are labeled by $d$. (Left) The double cover $\Upsilon^\times$ branching around each vertex of $G^\bullet \cup G^{\circ} \cup \diamondsuit(G)$ (Right) The double cover $\Upsilon^\times_{(a_k)}$ that branches everywhere except around $v^{\bullet}(a_k)$ and $u^{\circ}(a_k)$. We identify the two double covers away from the quad $z_k$ and swap two connection in the quad $z_k$.  }
\label{fig:labeling-fermion-mismatch}
\end{figure}

We now take advantage of the respective mismatches in $ X^{(a_0)}_{\hat{S}}$ and $ X^{(c_0)}_{\hat{S}}$ to construct an explicit solution to \eqref{eq:3-terms} for the weighted graph $(\hat{S},(\hat{x}_e)_{e\in E}) $. More concretely, we are looking for two complex numbers $y_{a_0}$ and  $y_{c_0}$ such that the propagator $\frak{p} \mapsto \cY(\frak{p})+y_{a_0}\cdot X^{(a_0)}_{\hat{S}}(\frak{p}) + y_{c_0}\cdot X^{(c_0)}_{\hat{S}}(\frak{p})$ satisfies all 3 terms identities \eqref{eq:3-terms} for the weights $(\hat{x}_e)_{e\in E}$. Let us start with a trivial observation for any quad $z\neq z_0$. As recalled above in the equations labeled  $\textrm{A}_k, \widetilde{\textrm{A}}_k^{(q)}, \textrm{B}_k, \widetilde{\textrm{B}}_k^{(q)} \textrm{C}_k, \widetilde{\textrm{C}}_k^{(q)} \textrm{D}_k, \widetilde{\textrm{D}}_k^{(q)}$, all three fermions $\cY$, $X^{(a_0)}_{\hat{S}}$ and $X^{(c_0)}_{\hat{S}}$ satisfy the propagation equation \eqref{eq:3-terms} around $z_k$ for the Ising weight $x_k=\hat{x}_k=\tan \frac{\theta_k}{2}$. Therefore, any linear combination of those three fermions still satisfies \eqref{eq:3-terms} around $z_k$ for the Ising weight $\tan \frac{\hat{\theta}_k}{2}$, leaving complete freedom on the the choice of the coefficients $y_{a_0}$ and $y_{c_0}$. We now prove that one \emph{can} tune the coefficients $y_{a_0}$ and $y_{c_0}$ such that $\cY+y_{a_0}\cdot X^{(a_0)}_{\hat{S}} + y_{c_0}\cdot X^{(c_0)}_{\hat{S}}$ satisfies \eqref{eq:3-terms} around the quad $z_0$ but this time with a coupling constant $\hat{x}_0= \tan \frac{\hat{\theta}_0}{2}$.

Denote by $\widehat{\textrm{A}}_0$ the analog of the equation $\textrm{A}_0$ corresponding to an abstract angle $\hat{\theta}_0 $ in \eqref{eq:x=tan-theta}, and similarly for $\widehat{\textrm{B}}_0$,$\widehat{\textrm{C}}_0$ and $\widehat{\textrm{D}}_0$. Then $\cY+y_{a_0}\cdot X^{(a_0)}_{\hat{S}} + y_{c_0}\cdot X^{(c_0)}_{\hat{S}}$ satisfies $\widehat{\textrm{A}}_0,\widehat{\textrm{B}}_0,\widehat{\textrm{C}}_0$ and $\widehat{\textrm{D}}_0$ if and only if the coefficients $y_{a_0}$ and $y_{c_0}$ are solution to the linear system 
 \begin{align}\label{eq:condition-tuning-coefficients}
    \begin{bmatrix}
2  & 0  \\
2\sin \hat{\theta}_0 &  -2\cos \hat{\theta}_0  \\
0  & 2  \\
-2\cos \hat{\theta}_0 &  -2\sin \hat{\theta}_0  
\end{bmatrix} &\times  \begin{bmatrix}
           y_{a_0} \\
           y_{c_0} \\
         \end{bmatrix}
         &= \begin{bmatrix}
           -\cY(a_0^+)+ \cY(d_0^+)\cos \hat{\theta}_0   - \cY(b_0^+) \sin  \hat{\theta}_0   \\
           -\cY(b_0^+)+ \cY(c_0^+) \cos \hat{\theta}_0   - \cY(a_0^+) \sin  \hat{\theta}_0    \\
           -\cY(c_0^+)+ \cY(b_0^+)  \cos \hat{\theta}_0   +  \cY(d_0^+) \sin  \hat{\theta}_0   \\
           -\cY(d_0^+)+ \cY(a_0^+)  \cos \hat{\theta}_0   +\cY(c_0^+) \sin  \hat{\theta}_0 
         \end{bmatrix},
 \end{align}
 where the vector on the RHS of \eqref{eq:condition-tuning-coefficients} is denoted $V$. Using the relations $\textrm{A}_0,\textrm{B}_0,\textrm{C}_0$ and $\textrm{D}_0$ that link values of $\cY$ via the abstract angle $ \theta_0$ allows to rewrite the values of the fermion at $\cY(a_0^+),\cY(b_0^+),\cY(c_0^+),\cY(d_0^+) $ as some $\pm \cos(\theta_0),\sin(\theta_0) $ linear combination of their neighbours. Factoring respectively by $ [\cos(\hat{\theta}_0) -\cos(\theta_0)] $ and $[\sin(\hat{\theta}_0) -\sin(\theta_0)] $ yields
  \begin{align}
    V &= -2\sin \frac{\hat{\theta}_0-\theta_0}{2} \begin{bmatrix}
             \cY(d_0^+) \sin \frac{\hat{\theta}_0+\theta_0}{2} + \cY(b_0^+) \cos \frac{\hat{\theta}_0+\theta_0}{2}  \\
             \cY(c_0^+) \sin \frac{\hat{\theta}_0+\theta_0}{2} + \cY(a_0^+) \cos \frac{\hat{\theta}_0+\theta_0}{2}   \\
            \cY(b_0^+) \sin \frac{\hat{\theta}_0+\theta_0}{2} - \cY(d_0^+) \cos \frac{\hat{\theta}_0+\theta_0}{2} \\
          \cY(a_0^+) \sin \frac{\hat{\theta}_0+\theta_0}{2} - \cY(c_0^+) \cos \frac{\hat{\theta}_0+\theta_0}{2}
         \end{bmatrix}.
 \end{align}
Applying the relations $\textrm{B}_0$ and $\textrm{D}_0$ allows to replace $\cY(b_0^+) $ and $\cY(d_0^+) $ terms in $V$ only involving $\cY(a_0^+) $ and $\cY(c_0^+)$ and trigonometric functions, which reads as
 \begin{align}
    V &= 2\sin \frac{\hat{\theta}_0-\theta_0}{2}  \begin{bmatrix}
           \cY(a_0^+) \sin \frac{\hat{\theta}_0-\theta_0}{2} + \cY(c_0^+) \cos \frac{\hat{\theta}_0-\theta_0}{2} \\
           \cY(a_0^+) \cos \frac{\hat{\theta}_0+\theta_0}{2} + \cY(c_0^+) \sin \frac{\hat{\theta}_0+\theta_0}{2} \\
           -\cY(a_0^+) \cos \frac{\hat{\theta}_0-\theta_0}{2} + \cY(c_0^+) \sin \frac{\hat{\theta}_0-\theta_0}{2} \\
           \cY(a_0^+) \sin \frac{\hat{\theta}_0+\theta_0}{2} - \cY(c_0^+) \cos \frac{\hat{\theta}_0+\theta_0}{2}
         \end{bmatrix}
  \end{align}
It is straightforward to see that choosing
\begin{align*}
	y_{a_0}& := \sin \frac{\hat{\theta}_0-\theta_0}{2} \big(  \cY(a_0^+) \sin \frac{\hat{\theta}_0-\theta_0}{2} + \cY(c_0^+) \cos \frac{\hat{\theta}_0-\theta_0}{2}) \\
	y_{c_0}&:=\sin \frac{\hat{\theta}_0-\theta_0}{2} \big(  -\cY(a_0^+) \cos \frac{\hat{\theta}_0-\theta_0}{2} + \cY(c_0^+) \sin \frac{\hat{\theta}_0-\theta_0}{2}),
\end{align*}
then $\cY+y_{a_0}\cdot X^{(a_0)}_{\hat{S}} + y_{c_0}\cdot X^{(c_0)}_{\hat{S}}$ indeed satisfies \eqref{eq:3-terms} around the quad $z_0$ for the coupling constant $\hat{x}_0= \tan \frac{\hat{\theta}_0}{2}$. This sequence of rather simple observations are the building block of the deformation procedure, that we precise in the following lemma.
\begin{lemma}\label{lem:deformation-discrete}
	Let $\cS$ be an $s$-embedding of $(S,(x_e)_{e\in E})$ associated to the propagator $\cY$. Consider the Ising model on $(S,(\hat{x}_e)_{e\in E})$, where the set of coupling constant only differ at an edge $e_0$. Using the parametrisation \eqref{eq:x=tan-theta}, define the abstract angles $x_{e_0}=\tan \frac{\theta_0}{2} $ and $\hat{x}_{e_0}=\tan \frac{\hat{\theta}_0}{2}$. Following the identification of double covers made in Section \ref{sub:2-point-fermion} , fix a 'south-eastern' corner $a_0^+ \in \Upsilon^\times $ such that the branching structures of the double-covers $\Upsilon^\times_{(a_0)}$ and $\Upsilon
	^{\times} $ don't coincide on $z_0$. Similarly, fix a 'north-western' corner $c_0^+ \in \Upsilon^\times $ such that the branching structures of the double-covers $\Upsilon^\times_{(c_0)}$ and $\Upsilon
	^{\times} $ don't coincide on $z_0$. Let $X^{(a_0)}_{\hat{S}}$ (respectively $X^{(c_0)}_{\hat{S}}$) the Kadanoff-Ceva fermions normalised such that $X^{(a_0)}_{\hat{S}}(a_0^+)=1$ (respectively $X^{(c_0)}_{\hat{S}}(c_0^+)=1$). Then, as long as $y_{a_0}$ and $y_{c_0}$ are chosen as in \eqref{eq:choice-mismatch-prefactor},\eqref{eq:choice-mismatch-prefactor-2}, then $\hat{\cY}:=\cY+  y_{a_0} X^{(a_0)}_{\hat{S}} + y_{c_0} X^{(c_0)}_{\hat{S}}$ is an s-embedding of $(S,(\hat{x}_e)_{e\in E}) $.	\begin{align}\label{eq:choice-mismatch-prefactor}
	y_{a_0}& := \sin \frac{\hat{\theta}_0-\theta_0}{2} \Big(  \cY(a_0^+) \sin \frac{\hat{\theta}_0-\theta_0}{2} + \cY(c_0^+) \cos \frac{\hat{\theta}_0-\theta_0}{2}\Big) \\
	y_{c_0}&:=\sin \frac{\hat{\theta}_0-\theta_0}{2} \Big(  -\cY(a_0^+) \cos \frac{\hat{\theta}_0-\theta_0}{2} + \cY(c_0^+) \sin \frac{\hat{\theta}_0-\theta_0}{2}\Big).\label{eq:choice-mismatch-prefactor-2}
\end{align}	
\end{lemma}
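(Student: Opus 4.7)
The plan is to verify directly that the candidate propagator $\hat\cY := \cY + y_{a_0} X^{(a_0)}_{\hat{S}} + y_{c_0} X^{(c_0)}_{\hat{S}}$ satisfies the full system of three-term relations \eqref{eq:3-terms} for the weights $(\hat{x}_e)_{e\in E}$. Once this is done, Definition \ref{def:cS-def} produces the associated $s$-embedding $\hat\cS$ with $d\hat\cS = (\hat\cY)^2$ in the sense of \eqref{eq:cS-def}. The argument splits into two regimes: quads $z_k$ with $k\neq 0$ (trivial) and the distinguished quad $z_0$ (the only place where algebra is needed).

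First, for every quad $z_k$ with $k\neq 0$, the weight is unchanged, i.e.\ $\hat{\theta}_k = \theta_k$, so the three-term identities at $z_k$ for the target weights coincide with the relations $\mathrm{A}_k,\mathrm{B}_k,\mathrm{C}_k,\mathrm{D}_k$ for $\cY$ and with $\widetilde{\mathrm{A}}_k^{(q)},\widetilde{\mathrm{B}}_k^{(q)},\widetilde{\mathrm{C}}_k^{(q)},\widetilde{\mathrm{D}}_k^{(q)}$ for $X^{(a_0)}_{\hat S}$ and $X^{(c_0)}_{\hat S}$, which are satisfied by construction (Section \ref{sub:2-point-fermion}, together with the identification of double covers shown in Figure \ref{fig:identification-double-covers}). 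Consequently any linear combination of the three spinors satisfies \eqref{eq:3-terms} around $z_k$, so $\hat\cY$ does.

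The substantive step is at $z_0$, where the target abstract angle is $\hat\theta_0$. Here the fermions $X^{(a_0)}_{\hat S}, X^{(c_0)}_{\hat S}$ obey the mismatched relations $(\mathrm{A}_0',\ldots,\mathrm{D}_0')$ and $(\mathrm{A}_0'',\ldots,\mathrm{D}_0'')$ recorded above, the mismatch being exactly the $2$'s and $-2\cos\hat\theta_0$, $-2\sin\hat\theta_0$ appearing on the right-hand sides. Requiring that $\hat\cY$ satisfy $\widehat{\mathrm{A}}_0,\widehat{\mathrm{B}}_0,\widehat{\mathrm{C}}_0,\widehat{\mathrm{D}}_0$ is thus equivalent to the overdetermined linear system \eqref{eq:condition-tuning-coefficients} for $(y_{a_0},y_{c_0})$. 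I would then rewrite the right-hand side vector $V$ by inserting the relations $\mathrm{A}_0,\mathrm{B}_0,\mathrm{C}_0,\mathrm{D}_0$ (valid for $\cY$ at the old angle $\theta_0$) to eliminate $\cos\hat\theta_0, \sin\hat\theta_0$ in favor of $\cos\hat\theta_0-\cos\theta_0$ and $\sin\hat\theta_0-\sin\theta_0$; the sum-to-product formulas factor out $2\sin\tfrac{\hat\theta_0-\theta_0}{2}$ from every entry, and two further applications of $\mathrm{B}_0,\mathrm{D}_0$ eliminate $\cY(b_0^+),\cY(d_0^+)$ in terms of $\cY(a_0^+),\cY(c_0^+)$, yielding the four-entry vector displayed in the computation above.

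At that point the system is manifestly rank-two and consistent: lines 1 and 3 determine $y_{a_0}$ and $y_{c_0}$ uniquely as the values \eqref{eq:choice-mismatch-prefactor}--\eqref{eq:choice-mismatch-prefactor-2}, and the remaining two equations are automatic linear combinations of these (this is where the work done to massage $V$ pays off, because the consistency of lines 2 and 4 is exactly the trigonometric identity $\cos\tfrac{\hat\theta_0+\theta_0}{2} = \cos\tfrac{\hat\theta_0-\theta_0}{2}\cos\theta_0 - \sin\tfrac{\hat\theta_0-\theta_0}{2}\sin\theta_0$ combined with the original relations for $\cY$). The main obstacle in the whole argument is bookkeeping: one has to be very careful with the signs coming from the identification between the double covers $\Upsilon^\times_{(a_0)},\Upsilon^\times_{(c_0)}$ and $\Upsilon^\times$ around $z_0$, since the propagation relations for $X^{(a_0)}_{\hat S}$ and $X^{(c_0)}_{\hat S}$ fail at $z_0$ precisely because of the swapped connections shown in Figure \ref{fig:labeling-fermion-mismatch}. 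Once the sign conventions are fixed the computation is purely trigonometric, and concluding that $\hat\cY$ defines an $s$-embedding $\hat\cS$ of $(S,(\hat x_e)_{e\in E})$ via \eqref{eq:cS-def} is immediate.
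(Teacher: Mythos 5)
Your proposal is correct and follows essentially the same route as the paper: the paper's derivation preceding the lemma statement is exactly this verification, including the reduction of the $4\times 2$ system \eqref{eq:condition-tuning-coefficients} via the relations $\mathrm{A}_0,\ldots,\mathrm{D}_0$ at the old angle, the factoring of $2\sin\tfrac{\hat\theta_0-\theta_0}{2}$, the elimination of $\cY(b_0^+),\cY(d_0^+)$ via $\mathrm{B}_0,\mathrm{D}_0$, and reading off $y_{a_0},y_{c_0}$ from the trivial rows while checking consistency of the other two by angle-addition. No gap; you have correctly identified both the trivial part (quads $z_k$, $k\neq 0$, where linear combinations of solutions to \eqref{eq:3-terms} remain solutions) and the one genuinely algebraic step at $z_0$.
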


\subsection{Differential evolution of $s$-embeddings}

One can in principle use the previous lemma in a recursive manner to change not-only one Ising weight but a finite collection of them. From that perspective, this naive recursive approach has several flaws. The first one is that the output of a recursive construction may depend on the order chosen to modify the edge weights. More importantly, one needs to have a refined understanding of the large scale behaviour of the fermions $ X^{(a_0)}_{\hat{S}}$ to understand the deviation between the embeddings. The main result of \cite{MahPar25a} provides an excellent understanding of $ X^{(a_0)}_{S}$ via the geometry of the associated $s$-embedding $\cS$, with explicit scaling factors and a complete description of its continuous counterpart. As the original data is an $s$-embedding $\cS$ of $S$ and not an $s$-embedding of $\hat{S}$, in order to get some precise understanding of the evolution of the embedding, one needs to get some additional estimates regarding the deviation between the combinatorial fermions $ X^{(a_0)}_{S}$ and $X^{(a_0)}_{\hat{S}}$. To avoid both issues, we take a more conceptual approach (first suggested by Dmitry Krachun) which consist in deforming a finite collection of weights in an infinitesimal manner with some time parameter $t$, obtaining the fermion $\cY^{(t)}$ as a solution to a first order ODE. Beyond the possibility to use the very rich literature on differential equations, this approach constructs for free solutions to \eqref{eq:3-terms} when moving continuously the weights, solving (at least in the finite volume case) jumping over one of the main barriers in the embedding setup, which is finding a correct embedding before starting the overall analysis. 

We describe now this continuous time deformation process, still working on the square lattice $S=\mathbb{Z}^2$. Once again, the following formalism works in full generality. Fix two collections $(a_k)_{k\in \Lambda_n} $ and $(c_k)_{k\in \Lambda_n} $ of SE and NW corners inside $\Lambda_n$, and choose respective lifts $(a^{+}_k)_{k\in \Lambda_n} $ and $(c^{+}_k)_{k\in \Lambda_n}$ as in Lemma \ref{lem:deformation-discrete}. For $t\geq 0$ a small enough time parameter, consider the family of Ising models $(S,(x^{(t)}_{e})_{e\in E(\mathbb{Z}^2}))_{t\geq 0}$, where the angle in \eqref{eq:x=tan-theta} for the edge $e\in E(\mathbb{Z}^2)$ is given by
\begin{equation}
	\theta^{(t)}_{e}:=\theta^{(0)}_{e} + m_e \cdot t,
\end{equation}
with a \emph{mass} parameter $m_e$. In what follows, the only input is a given $s$-embedding $\cS^{(0)} $ of $(S,(x^{(0)}_{e})_{e\in E(\mathbb{Z}^2)})$ of the model at the initial time $ t=0$. The following lemma allows to construct a family of $s$-embedding of $(S(t))_{t\geq 0}=(S,(x^{(t)}_{e})_{e\in E(\mathbb{Z}^2)})_{t\geq 0}$ as a solution to some differential equation with initial condition $\cS^{(0)}$. For the rest of this section, one denotes generic corners with the letter $\frak{p}$.
\begin{lemma}\label{lem:ODE-embedding}
Let $\cY^{(0)}:=d\cS^{(0)}$ associated to $(S,(x^{(0)}_{e})_{e\in E(\mathbb{Z}^2)})$. Assume also that the collection of masses $(m_e)_{e\in \mathbb{Z}^2}$ vanishes outside the box $\Lambda$. Consider the differential system defined on corners of $ \Upsilon^{\times} \cap  \Lambda  $, whose initial condition is given by $\cY^{(0)}$ and whose dynamic is given for any $\frak{p} \in \Lambda $ by
		\begin{equation}\label{eq:ODE-embedding}
		\frac{d \cY^{(t)}(\frak{p})}{dt}= \frac{1}{2} \sum\limits_{z_k \in \Lambda}m_k \Bigg[ \cY^{(t)}(c^+_k) \langle \chi_{\frak{p}}\chi_{a^+_k} \rangle_{S(t)} -  \cY^{(t)}(a^+_k) \langle \chi_{\frak{p}}\chi_{c^+_k} \rangle_{S(t)} \Bigg].
	\end{equation}
Then 
\begin{enumerate}
	\item There exist $T_0(\Lambda)>0$ such that the ODE \eqref{eq:ODE-embedding} is well defined on $[0;T_0(\Lambda)] $.
	\item For any time $0\leq t \leq T_0(\Lambda) $, the fermion $\cY^{(t)}$ solution to \eqref{eq:ODE-embedding} constructs an s-embedding $\cS^{(t)}$ of the Ising model $S(t)$.
\end{enumerate}
\end{lemma}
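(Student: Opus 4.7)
The system \eqref{eq:ODE-embedding} is nothing but the infinitesimal version of the discrete update rule in Lemma \ref{lem:deformation-discrete}: dividing the coefficients \eqref{eq:choice-mismatch-prefactor}--\eqref{eq:choice-mismatch-prefactor-2} by $\hat\theta_0-\theta_0$ and passing to the limit yields precisely the linear combination of fermionic observables on the right-hand side of \eqref{eq:ODE-embedding}. The plan is therefore to first establish local well-posedness and then to verify, by direct differentiation, that this specific combination is forced so that all four instances of \eqref{eq:3-terms} at every quad are preserved as $t$ evolves.

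\emph{Well-posedness.} Since the masses $(m_e)_{e\in E}$ vanish outside the finite box $\Lambda$, only finitely many pairs $(a_k^+,c_k^+)$ contribute to the sum in \eqref{eq:ODE-embedding}. The two-point Kadanoff--Ceva fermions $\langle \chi_{\frak p}\chi_{a_k^+}\rangle_{S(t)}$ and $\langle \chi_{\frak p}\chi_{c_k^+}\rangle_{S(t)}$ are well-defined full-plane observables (Section \ref{sub:2-point-fermion}); they depend smoothly on $t$ through their combinatorial dependence on $(x_e^{(t)})$ visible in the expansion \eqref{eq:expansion-fermion}. Restricting \eqref{eq:ODE-embedding} to the finite set $\{a_k^+,c_k^+:z_k\in\Lambda\}$ produces a closed, finite-dimensional, linear non-autonomous ODE with smooth (hence locally Lipschitz) coefficients, so Cauchy--Lipschitz provides existence and uniqueness on a maximal interval $[0,T_0(\Lambda))$. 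Once this finite subsystem is solved, the values of $\cY^{(t)}(\frak p)$ at any other corner follow by straightforward integration of the explicit right-hand side of \eqref{eq:ODE-embedding}, which is a bounded smooth function of $t$.

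\emph{Preservation of the three-term relations.} Fix a quad $z_k$. For $\star\in\{A,B,C,D\}$, let $L^{(t)}_{\star,k}[\cY^{(t)}]$ denote the left-hand side of the corresponding equation among $(\textrm{A}_k),(\textrm{B}_k),(\textrm{C}_k),(\textrm{D}_k)$, but with $\theta_k$ replaced by $\theta_k^{(t)}=\theta_k^{(0)}+m_k t$. By hypothesis $L^{(0)}_{\star,k}[\cY^{(0)}]=0$, so it is enough to show that $\tfrac{d}{dt}L^{(t)}_{\star,k}[\cY^{(t)}]\equiv 0$. Differentiation produces two contributions: (i) $L^{(t)}_{\star,k}[\dot\cY^{(t)}]$, and (ii) a boundary term proportional to $\dot\theta_k^{(t)}=m_k$ and to the values of $\cY^{(t)}$ at the neighbors of $z_k$. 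Plugging \eqref{eq:ODE-embedding} into (i), only the $j=k$ summand can contribute, because by construction $X^{(a_j)}_{S(t)}$ and $X^{(c_j)}_{S(t)}$ satisfy \eqref{eq:3-terms} around every quad $z_j$ with $j\neq k$. For $j=k$, the explicit mismatches at $z_k$ are exactly the right-hand sides of $(\textrm{A}_0')$--$(\textrm{D}_0')$ and $(\textrm{A}_0'')$--$(\textrm{D}_0'')$ (with $\hat\theta_0$ replaced by $\theta_k^{(t)}$). A short trigonometric identity, identical in substance to the solution of the linear system \eqref{eq:condition-tuning-coefficients}, shows that after rewriting the neighboring values of $\cY^{(t)}$ using the already-preserved equations $(\textrm{A}_k)$--$(\textrm{D}_k)$ at time $t$, the $j=k$ contribution to (i) is exactly the opposite of (ii). Hence $\tfrac{d}{dt}L^{(t)}_{\star,k}\equiv 0$ for all four letters $\star$, so the propagation equations hold throughout $[0,T_0(\Lambda))$. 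Applying Definition \ref{def:cS-def} to $\cY^{(t)}$ then produces the associated s-embedding $\cS^{(t)}$ of $S(t)$.

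\emph{Main obstacle.} The only non-routine ingredient is the algebraic cancellation in (i)+(ii): one must verify that the particular combination $\cY(c_k^+)\langle\chi_{\frak p}\chi_{a_k^+}\rangle-\cY(a_k^+)\langle\chi_{\frak p}\chi_{c_k^+}\rangle$ is uniquely forced (up to a global factor) by the requirement that all four propagation equations at $z_k$ be preserved simultaneously when $\dot\theta_k=m_k$. This is the infinitesimal counterpart of the derivation of \eqref{eq:choice-mismatch-prefactor}--\eqref{eq:choice-mismatch-prefactor-2} and explains the role of the two transverse choices $a_k^+$ and $c_k^+$, whose associated mismatches span the two-dimensional obstruction space at each quad.
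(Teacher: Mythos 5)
Your plan is structurally the same as the paper's (set up the propagation-equation defect at each quad and show it stays zero as $t$ increases), and your well-posedness argument is identical. However, there is a genuine logical gap in the core step: you assert that $\tfrac{d}{dt}L^{(t)}_{\star,k}[\cY^{(t)}]\equiv 0$, justified by ``rewriting the neighboring values of $\cY^{(t)}$ using the already-preserved equations $(\textrm{A}_k)$--$(\textrm{D}_k)$ at time $t$.'' This is circular. The validity of $(\textrm{A}_k)$--$(\textrm{D}_k)$ for $\cY^{(t)}$ at time $t>0$ is precisely what the lemma claims; it cannot be invoked to prove that the defects have vanishing derivative. Moreover, a direct computation shows the claim $\tfrac{d}{dt}L\equiv 0$ is simply false for two of the four propagation equations at each quad. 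Writing $\cT_{(t)}(\frak p):=\cY^{(t)}(\frak p)-\cos(\theta_j^{(t)})\cY^{(t)}(\frak p^+)-\sin(\theta_j^{(t)})\cY^{(t)}(\frak p^-)$ for $\frak p\in z_j$ and differentiating without assuming $\cT\equiv 0$, one finds (after the legitimate cancellations coming only from the propagation identities for the \emph{correlators} $\langle\chi_\cdot\chi_{a_j^+}\rangle$, which hold unconditionally): for $\frak p$ a SW or NE corner the derivative indeed vanishes, but for $\frak p$ a SE or NW corner one obtains $\cT'_{(t)}(\frak p)=\pm m_j\,\cT_{(t)}(\frak p')$, where $\frak p'$ is the transverse corner in the same quad. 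So the defect is not constant; it satisfies a coupled homogeneous linear system.

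The correct way to close the argument, and what the paper does, is precisely to accept this: show that the full defect vector satisfies a first-order linear ODE $\cT'_{(t)}=B\,\cT_{(t)}$ for some (constant, or more generally locally bounded) matrix $B$, note that $\cT_{(0)}=0$ because $\cS^{(0)}$ is an $s$-embedding of $S(0)$, and invoke Cauchy--Lipschitz uniqueness to conclude $\cT_{(t)}\equiv 0$ on $[0,T_0(\Lambda)]$. Your instinct that ``the $j=k$ contribution to (i) is the opposite of (ii)'' would require the identity $\cT_{(t)}(\frak p')=0$, which is part of the conclusion, not a tool. Replacing the statement ``$\tfrac{d}{dt}L\equiv 0$'' by ``$L$ satisfies a homogeneous linear ODE with zero initial data'' would repair the proof and bring it in line with the paper's Steps 1--4. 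Your final observation about the two mismatch fermions $X^{(a_k)},X^{(c_k)}$ spanning the two-dimensional obstruction space at each quad is a sound intuition and consistent with the paper's derivation of \eqref{eq:choice-mismatch-prefactor}--\eqref{eq:choice-mismatch-prefactor-2}.
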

Before diving into the proof, let us detail the intuition leading to \eqref{eq:ODE-embedding}. Using the exact expression  of the coefficients $y_{a_k}$ and $y_{c_k}$ in Lemma \ref{lem:deformation-discrete} when moving infinitesimally from one weight $\theta_k $ to $\hat{\theta}_k$, one sees right away that 
		\begin{equation}
		\frac{d \cY(\frak{p})}{d\theta_k}=\frac{m_k}{2} \Bigg( \cY(c^+_k) \langle \chi_{\frak{p}}\chi_{a^+_k} \rangle_{S} -  \cY(a^+_k) \langle \chi_{\frak{p}}\chi_{c^+_k} \rangle_{S} \Bigg),
	\end{equation}
as the correlators $ \langle \chi_{\frak{p}}\chi_{a^+_k} \rangle_{S}$ and $ \langle \chi_{\frak{p}}\chi_{a^+_k} \rangle_{S}$ are continuous with respect to $\theta_k$ (the correlators involved here correspond to the Ising model with the original weights given by $\theta_k$) . In the case where one recursively moves infinitesimally all the coupling constants in $\Lambda$ as $\theta^{(t)}_j=\theta_j+ t \cdot m_j$ for $t$ small enough, one can extend the last observation and use the continuity of fermions with respect to $t$. Thus, there exist $s$-embedding $\cS^{(t)}$ of $(S,(x^{(t)}_{e})_{e\in \mathbb{Z}^2})$ such that
\begin{equation}
	\cY^{(t)}(\frak{p})=\cY^{(0)}(\frak{p}) + \frac{t}{2} \sum\limits_{z_k \in \Lambda}m_k \Bigg[ \cY^{(0)}(c^+_k) \langle \chi_{\frak{p}}\chi_{a^+_k} \rangle_{S(0)} -  \cY^{(0)}(a^+_k) \langle \chi_{\frak{p}}\chi_{c^+_k} \rangle_{S(0)}\Bigg] + o(t),
\end{equation}
which hints that \eqref{eq:ODE-embedding} indeed constructs an appropriate $s$-embedding.

\begin{proof}[Proof of Lemma \ref{lem:ODE-embedding}]
For a finite box $\Lambda$, the first order linear ODE \eqref{eq:ODE-embedding} rewrites as $Y'(t)=A(t)Y(t)$ for some matrix $A(t)$ and a finite dimensional vector $Y$. The $\mathcal{L}^1$ norm of $A$ is bounded by $O(\sum_{e_k\in \Lambda}|m_{e_k}|)$ at any time as the fermionic correlators are trivially bounded by $1$. Therefore there exist $T_0(\Lambda)>0$ such that the solution to \eqref{eq:ODE-embedding} exists and is unique in $[0;T_0(\Lambda)]$. Let us now check that the solution to \eqref{eq:ODE-embedding} at time $t$ indeed satisfies all propagation identities \eqref{eq:3-terms} for the angles $(\theta_{e}^{(t)})_{e\in \Lambda}$. More concretely, one needs to check that for any corner $ \frak{p}$ neighbouring $ \frak{p}^{\pm}$ in $\Upsilon^{\times}$ in the quad $z_j \in \diamondsuit $, one has
\begin{equation}\label{eq:consistency-embedding}
	\cY^{(t)}(\frak{p})= \cos(\theta^{(t)}_j)\cY^{(t)}(\frak{p}^{+})+\sin(\theta^{(t)}_j)\cY^{(t)}(\frak{p}^{-}).
\end{equation}
Denote by $\cT_{(t)} $ the finite dimensional vector indexed by trios of neighboring corners $\frak{p}, \frak{p}^{\pm}\in \Upsilon^{\times} \cap \Lambda $ around a quad $z_j$ whose coordinates are given by
\begin{equation}
	\cT_{(t)}(\frak{p}) := \cY^{(t)}(\frak{p})- \cos(\theta^{(t)}_j)\cY^{(t)}(\frak{p}^{+})-\sin(\theta^{(t)}_j)\cY^{(t)}(\frak{p}^{-}).
\end{equation}
We are going to prove in \textbf{Steps 1-3} that $\cT$ satisfies some first order ODE of the form $(\cT_{(t)})'=B \times  \cT_{(t)}$ for some fixed matrix $B$. 

\textbf{Step 1: Identifying cancellations for $\cT_{(t)}'(\frak{p)}$:} by construction, the derivative of the LHS of \eqref{eq:consistency-embedding} is given by the RHS of \eqref{eq:ODE-embedding}. The derivative of the RHS of \eqref{eq:consistency-embedding} is given by the
\begin{equation}\label{eq:derrivative-embedding-RHS}
	m_j \Bigg[ -\sin(\theta^{(t)}_j)\cY^{(t)}(\frak{p}^{+}) + \cos(\theta^{(t)}_j)\cY^{(t)}(\frak{p}^{-}) \Bigg] +  \cos(\theta^{(t)}_j) \frac{d}{dt}\cY^{(t)}(\frak{p}^{+}) +\sin(\theta^{(t)}_j) \frac{d}{dt}\cY^{(t)}(\frak{p}^{-}).
\end{equation}
When computing $\cT_{(t)}'(\frak{p})$ at the coordinate corresponding to the trio $\frak{p},\frak{p}^{\pm} \in z_j$, it turns out that all terms involving quads $z_k\neq z_j$ cancel, leaving only few terms to analyse. The goal of this first step is to identify those cancellations. For the last two terms in \eqref{eq:derrivative-embedding-RHS} involving $\frac{d}{dt} \cY^{(t)}(\frak{p}^\pm)$, one has 
\begin{align*}
		\cos(\theta^{(t)}_j) \frac{d}{dt}\cY^{(t)}(\frak{p}^{+}) +\sin(\theta^{(t)}_j) \frac{d}{dt}\cY^{(t)}(\frak{p}^{-})&=\sum\limits_{z_k \in \Lambda}\frac{m_k}{2}  \cY^{(t)}(c^+_k) \langle \chi_{\frak{p}^+}\chi_{a^+_k} \rangle_{S(t)} \cos(\theta^{(t)}_j) & (\star_1)  \\
		& \quad -\sum\limits_{z_k \Lambda} \frac{m_k}{2}  \cY^{(t)}(a^+_k) \langle \chi_{\frak{p}^+}\chi_{c^+_k} \rangle_{S(t)} \cos(\theta^{(t)}_j) & (\star_2)\\
		& \quad +\sum\limits_{z_k \in  \Lambda}\frac{m_k}{2}  \cY^{(t)}(c^+_k) \langle \chi_{\frak{p}^-}\chi_{a^+_k} \rangle_{S(t)} \sin(\theta^{(t)}_j)& (\star_3)\\
		& \quad -\sum\limits_{z_k \in \Lambda} \frac{m_k}{2}  \cY^{(t)}(a^+_k) \langle \chi_{\frak{p}^-}\chi_{c^+_k} \rangle_{S(t)} \sin(\theta^{(t)}_j) & (\star_4)
\end{align*}
Fix $k\neq j$. The \emph{combinatorial correlator} $ \frak{q}\mapsto \langle \chi_{\frak{q}}\chi_{a^+_k} \rangle_{S(t)}  $ satisfies \eqref{eq:3-terms} on $\Upsilon^{\times}$ around $z_j$, with respect to the angle $\theta^{(t)}_{j} $. In particular this ensures that 
\begin{equation}\label{eq:first-matching-term-ODE}
	\langle \chi_{\frak{p}^+}\chi_{a^+_k} \rangle_{S(t)} \cos(\theta^{(t)}_j)+\langle \chi_{\frak{p}^-}\chi_{a^+_k} \rangle_{S(t)} \sin(\theta^{(t)}_j)=\langle \chi_{\frak{p}}\chi_{a^+_k} \rangle_{S(t)}.
\end{equation}
One can then multiply both sides of \eqref{eq:first-matching-term-ODE} by $m_k \cY^{(t)}(c^+_k)$, and sum all equalities corresponding to quads $z_k\neq z_j$. This ensures that
\begin{multline*}
\sum\limits_{\underset{k\neq j}{z_k \in \Lambda}}m_k  \cY^{(t)}(c^+_k) \cdot \Bigg( \langle \chi_{\frak{p}^+}\chi_{a^+_k} \rangle_{S(t)} \cos(\theta^{(t)}_j) +  \langle \chi_{\frak{p}^-}\chi_{a^+_k} \rangle_{S(t)} \sin(\theta^{(t)}_j)\Bigg)=\\ 
\sum\limits_{\underset{k\neq j}{z_k \in \Lambda}}m_k  \cY^{(t)}(c^+_k)  \langle \chi_{\frak{p}}\chi_{a^+_k} \rangle_{S(t)}. 
\end{multline*}
Omitting the contributions around the quad $z_j$ containing $\frak{p},\frak{p}^{\pm}$, the first line of the above equation corresponds to $ (\star_1 + \star_3 )$ while the second line corresponds to the $\cY^{(t)}(c_k^{+})$ terms in \eqref{eq:ODE-embedding}. Similarly one has,
\begin{multline*}
\sum\limits_{\underset{k\neq j}{z_k \in \Lambda}}m_k  \cY^{(t)}(a^+_k) \cdot \Bigg( \langle \chi_{\frak{p}^+}\chi_{c^+_k} \rangle_{S(t)} \cos(\theta^{(t)}_j) +  \langle \chi_{\frak{p}^-}\chi_{c^+_k} \rangle_{S(t)} \sin(\theta^{(t)}_j)\Bigg)=\\ 
\sum\limits_{\underset{k\neq j}{z_k \in \Lambda}}m_k  \cY^{(t)}(a_k^+)  \langle \chi_{\frak{p}}\chi_{c^+_k} \rangle_{S(t)},
\end{multline*}
where omitting once again the contributions around the quad $z_j$, the first line of the above equation corresponds to $ (\star_2 + \star_4 )$ while the second line corresponds to the $\cY^{(t)}(a_k^{+})$ terms in \eqref{eq:ODE-embedding}. All together, this means that in $\cT_{(t)}'(\frak{p})$, all terms involving quads $z_k\neq z_j$ cancel out.

\textbf{Step 2: Identifying the contributions to $\cT_{(t)}'(\frak{p)}$ around $z_j$:}
Given the cancelations identified in \textbf{Step 1}, one can analyse the remaining contributions to  $\cT_{(t)}'(\frak{p)}$ coming from corners belonging to the quad $z_j$. This reads as:
\begin{align*}
\cT_{(t)}'(\frak{p})&= +\frac{m_j}{2} \Big[ \cY^{(t)}(c^+_j) \langle \chi_{\frak{p}}\chi_{a^+_j} \rangle_{S(t)} - \cY^{(t)}(a^+_j) \langle \chi_{\frak{p}}\chi_{c^+_j} \rangle_{S(t)}  \Big]\\
& \quad - \frac{m_j}{2} \cos(\theta^{(t)}_j)\Big[ \cY^{(t)}(c^+_j) \langle \chi_{\frak{p}^+}\chi_{a^+_k} \rangle_{S(t)} 	- \cY^{(t)}(a^+_j) \langle \chi_{\frak{p}^+}\chi_{c_k} \rangle_{S(t)}  \Big]
 \\
 & \quad - \frac{m_j}{2} \sin(\theta^{(t)}_j)\Big[ \cY^{(t)}(c^+_j) \langle \chi_{\frak{p}^-}\chi_{a^+_k} \rangle_{S(t)} - \cY^{(t)}(a^+_j) \langle \chi_{\frak{p}^-}\chi_{c_k} \rangle_{S(t)} \Big]
 \\
	 & \quad -m_j \Big[\sin(\theta^{(t)}_j)\cY^{(t)}(\frak{p}^{+})-\cos(\theta^{(t)}_j)\cY^{(t)}(\frak{p}^{-})\Big], & \\
	 \end{align*}
where the first line comes from $\frac{d}{dt}\cY^{(t)}(\frak{p})$, the second line comes from the term $\cos(\theta^{(t)}_j) \frac{d}{dt}\cY^{(t)}(\frak{p}^{+})$ in \eqref{eq:derrivative-embedding-RHS}, the third line comes from the term $\sin(\theta^{(t)}_j) \frac{d}{dt}\cY^{(t)}(\frak{p}^{-})$ in \eqref{eq:derrivative-embedding-RHS} and the last line comes from the first two terms of \eqref{eq:derrivative-embedding-RHS}.

\textbf{Step 3: Computing $\cT_{(t)}'(\frak{p)}$:}
We are now ready to compute $\cT_{(t)}'(\frak{p)}$. Recall the identifications of branching structures made in Section \ref{sub:2-point-fermion} summarised in Figure \ref{fig:identification-double-covers}. One can now use equations $\textrm{A}'_j,\textrm{B}'_j,\textrm{C}'_j,\textrm{D}'_j $ and $\textrm{A}''_j,\textrm{B}''_j,\textrm{C}''_j,\textrm{D}''_j $ similar to the ones introduced for the quad $z_0$ in Section \ref{sub:2-point-fermion}. There is a natural dichotomy depending on the position SW,SE,NW or NE of the corner $\frak{p}$ around the quad $z_j$ and the two following cases appear.
\begin{itemize}
	\item \textbf{Case 1}: The corner $\frak{p} $ is belongs to SE or NW. We detail here the case where $\frak{p}=a_j^{+}$, the other cases can be treated similarly. In that case one has $\frak{p}^{+}=d_j^{+} $ and $\frak{p}^{-}=b_j^{-}$. We break the contributions of $\cT_{(t)}'(\frak{p})$ into several pieces regrouping the $\cY^{(t)}(a^+_j)$ and $\cY^{(t)}(c_j^+)$ factors. In particular one has
\begin{equation}\label{eq:dichotomy-fermion-1}
	\frac{m_j}{2}\cY^{(t)}(a^+_j)\Bigg[ - \langle \chi_{\frak{p}}\chi_{c^+_j} \rangle_{S(t)} +  \langle \chi_{\frak{p}^+}\chi_{c^+_j} \rangle_{S(t)} \cos(\theta^{(t)}_j) + \langle \chi_{\frak{p}^-}\chi_{c^+_j} \rangle_{S(t)} \sin(\theta^{(t)}_j) \Bigg]=0,
\end{equation}
\begin{multline}\label{eq:dichotomy-fermion-2}
	\frac{m_j}{2}\cY^{(t)}(c_j^+)\Bigg[\langle \chi_{\frak{p}}\chi_{a_j^+} \rangle_{S(t)} - \langle \chi_{\frak{p}^+}\chi_{a_j} \rangle_{S(t)} \cos(\theta^{(t)}_j)-\langle \chi_{\frak{p}^-}\chi_{c_j} \rangle_{S(t)} \sin(\theta^{(t)}_j)\Bigg]\\
	 =m_j\cY^{(t)}(c^+_j).
\end{multline}
where \eqref{eq:dichotomy-fermion-1} comes from $(\textrm{A}_j'') $ and \eqref{eq:dichotomy-fermion-2} comes from $(\textrm{A}_j')$.
Recalling that $\frak{p}=a_j^+$, $\frak{p}^{+}=d_j^{+} $ and $\frak{p}^{-}=b_j^{-}$, one has (adding the last line of the first equation of Step 2)
\begin{equation}
	\cT_{(t)}'(a_j^{+})=m_j \Bigg[ \cY^{(t)}(c^+_j)  + \cos(\theta_j^{(t)})\cY^{(t)}(b^-_j) - \sin(\theta_j^{(t)})\cY^{(t)}(d^+_j)  \Bigg] = -m_j\cT_{(t)}'(c_j^{+}).
\end{equation}
\item \textbf{Case 2}: The corner $\frak{p} $ is belongs to SW or NE. We detail here the case where $\frak{p}=b_j^{+}$, the other cases can be treated similarly. In that case one has $\frak{p}^{+}=c_j^{+} $ and $\frak{p}^{-}=a_j^{-}$. We break once again the contributions of $\cT_{(t)}'(\frak{p})$ into several pieces regrouping the $\cY^{(t)}(a^+_j)$ and $\cY^{(t)}(c_j^+)$ factors. In particular one has
\begin{multline}\label{eq:dichotomy-fermion-3}
	\frac{m_j}{2}\cY^{(t)}(a^+_j)\Bigg[ - \langle \chi_{\frak{p}}\chi_{c^+_j} \rangle_{S(t)} +  \langle \chi_{\frak{p}^+}\chi_{c^+_j} \rangle_{S(t)} \cos(\theta^{(t)}_j) + \langle \chi_{\frak{p}^-}\chi_{c^+_j} \rangle_{S(t)} \sin(\theta^{(t)}_j) \Bigg]\\=-m_j\cos(\theta_j^{(t)})\cY^{(t)}(a^+_j),
\end{multline}

\begin{multline}\label{eq:dichotomy-fermion-4}
	\frac{m_j}{2}\cY^{(t)}(c_j^+)\Bigg[\langle \chi_{\frak{p}}\chi_{a_j^+} \rangle_{S(t)} - \langle \chi_{\frak{p}^+}\chi_{a_j} \rangle_{S(t)} \cos(\theta^{(t)}_j)-\langle \chi_{\frak{p}^-}\chi_{c_j} \rangle_{S(t)} \sin(\theta^{(t)}_j)\Bigg]\\=m_j\sin(\theta_j^{(t)})\cY^{(t)}(c^+_j),
\end{multline}
where \eqref{eq:dichotomy-fermion-3} comes from $(\textrm{B}_j'') $ and \eqref{eq:dichotomy-fermion-4} comes from $(\textrm{B}_j')$. Recalling that $\frak{p}=b_j^+$, $\frak{p}^{+}=c_j^{+} $ and $\frak{p}^{-}=a_j^{-}$ allows to conclude that
\begin{multline}
	\cT_{(t)}'(b_j^{+})=m_j \Bigg[ +\cos(\theta_j^{(t)})\cY^{(t)}(a^+_j) +\sin(\theta_j^{(t)})\cY^{(t)}(c^+_j) \\- \sin(\theta_j^{(t)})\cY^{(t)}(\frak{p}^+) +\cos(\theta_j^{(t)})\cY^{(t)}(\frak{p}^-) \Bigg] = 0.
\end{multline}
	\end{itemize}

\textbf{Step 4: Concluding the proof}
The previous construction ensures that $(\cT_{(t)})'=B \times  \cT_{(t)}$ for some fixed matrix $B$ and $\cT_{(0)}=0$ (as $\cS^{(0)}$ is an $s$-embedding of $S(0)$). Therefore, one can apply the Cauchy-Lipchitz theorem allows to conclude the proof. 
\end{proof}

\begin{remark}\label{rem:deformation-varying-mass}
	To lighten the notations and the proof, we stated the previous Theorem assuming that all the masses $(m_e)_{e\in \Lambda}$ are constant along the deformation process. In full generality, assuming that	\begin{equation}
	\theta^{(t)}_{e}:=\theta^{(0)}_{e} +m_e(t),
\end{equation}
for some collection of $\mathcal{C}^{1}$ smooth functions $t\mapsto m_e(t)$, then Lemma \ref{lem:ODE-embedding} still holds, with a differential equation becoming \begin{equation}\label{eq:ODE-embedding-general}
		\frac{d \cY^{(t)}(\frak{p})}{dt}= \frac{1}{2} \sum\limits_{z_k \in \Lambda}m_k'(t) \Bigg[ \cY^{(t)}(c^+_k) \langle \chi_{\frak{p}}\chi_{a^+_k} \rangle_{S(t)} -  \cY^{(t)}(a^+_k) \langle \chi_{\frak{p}}\chi_{c^+_k} \rangle_{S(t)} \Bigg].
\end{equation}  
The previous proof applies almost verbatim, the only minor change is that $(\cT_{(t)})'=B(t) \times  \cT_{(t)}$	 for some time dependent matrix $B(t)$ where the coefficients $m_j$ are now replaced by $m_j'(t)$. As the Cauchy-Lipchitz theorem is still valid in that setup this allows to conclude exactly in the same way.
\end{remark}

\begin{remark}
	The deformation procedure presented in Lemma \ref{lem:ODE-embedding} provides very basic discrete-level explanations regarding the lack of holomorphicity of the massive models as highlighted by the works of Park \cite{park2018massive,Park2021Fermionic}. At time $t=0$, one can see using \eqref{eq:F-from-X} 
	\begin{equation}
		 \cY^{(0)}(c^+_k) \langle \chi_{\frak{p}}\chi_{a^+_k} \rangle_{S(0)} -  \cY^{(0)}(a^+_k) \langle \chi_{\frak{p}}\chi_{c^+_k} \rangle_{S(0)} =r_{z_k}\overline{F_{\frak{p}}(z_k)}\big(\cos(\theta_k)\sin(\theta_k)\big)^{-1},
	\end{equation}
where $F_{\frak{p}}$ is the $s$-holomorphic complexification of $\frak{q}\mapsto \langle \chi_{\frak{p}}\chi_{\frak{q}}\rangle$. The function $F_{\frak{p}}$ is \emph{discrete holomorphic} in the usual sense on the square lattice corresponding to the $s$-embedding $\cS^{(0)}$. In the uniform mass case, this observation yields that the deformation process using differential equations inserts to the conformal structure attached to the massive Ising model some \emph{anti-holomorphic} component. This hints why the conformal structure of massive models is adapted to fermions satisfying some more evolved equation than $\partial_z f=0$ in the usual Euclidian metric. 
\end{remark}

\section{The embedding of the massive-Ising model via continuous deformation}\label{sec:massive-deformation}

In this section we study in greater details the effective output of the embedding deformation induced by \eqref{eq:ODE-embedding} when passing from the critical square lattice to the so called massive regime. Beyond the results themselves, we see this section as simplified introduction to the method we will use in Section \ref{sec:iid-deformation} to treat the near-critical i.i.d. model. One works in the box $\Lambda_n$, centred at the origin. Let us start at time $t=0$ from the Ising model on the critical square lattice, with uniform critical weights $\theta_k=\frac{\pi}{4}$ for all edges $e_k\in \Lambda_n$. Let $\cS^{(0)}$ be its standard embedding onto $\mathbb{C}$, made a tilling of squares of mesh size $\frac{\sqrt{2}}{n} $. One can see the bipartite graph $\cS(\Lambda(\mathbb{Z}^2))=\cS(G^{\bullet})\cup \cS(G^{\circ})$ as an isoradial grid, where all edges have length $\frac{1}{n}$. Our goal is to move continuously each Ising weight $\theta_{k}=2\arctan(x_{e_k}) $ from $\frac{\pi}{4}$ to $\frac{\pi}{4}+\frac{m_k}{n}$ while capturing how the family of embeddings $(\cS^{(t)})_{t\leq \frac{1}{n}}$ deviates from $\cS^{(0)}$. 
In order to control this deviation along the deformation process, two main difficulties arise. The first one is to check that the embedding $\cS^{(t)} $ is proper, which allows use the two-point fermion estimate \eqref{eq:bound-correlator-distance}. The other point that requires some careful attention is to ensure that $\cS^{(t)} $ didn't deviate already too much from $\cS^{(0)} $, which allows to keep applying \eqref{eq:bound-correlator-distance} with the same pre-factor $\Theta$ attached to the some fixed in advance $r_0,\theta_0$ parameters for \Unif\, grids. Indeed, this pre-factor comes from the regularity theory of $s$-holomorphic functions presented Section \ref{sub:regularity} and could in principle degenerate along the deformation process. Once this is done, controlling the deviation of $\cS^{(t)} $  from $\cS^{(0)} $ becomes some standard Lotka–Volterra predator–prey problem that can be smoothly treated. The output of the construction is that the massive deformation via \eqref{eq:ODE-embedding} with initial condition $\cS^{(0)} $ produces after a time $t=\frac{1}{n}$ a proper s-embedding that satisfies \Uniff\ with $\delta=\frac{1}{n}$ and well chosen constants $r_0,\theta_0$, at least if all the masses are bounded by a small enough $m$. We focus on the effects of the deformation \eqref{eq:ODE-embedding} on the corners $\frak{p}$ such that $\cS^{(0)}(\frak{p})$ belongs to a square $\mathcal{D}_2$ of width 2 centred at the origin. One naturally identifies the isoradial grid of mesh size $\frac{1}{n}$ in $\mathcal{D}_2$ with $\Lambda_{2n}$. There are $O(n^2)$ different corners inside $\mathcal{D}_2$.  For two neighbouring corners $\frak{p},\frak{p}'$ inside $\Lambda_n$ belonging to the quad $z_j$, set (as long as $\cS^{(t)}$ is proper)
\begin{equation}\label{eq:M(p)(t)}
	M^{(t)}(\frak{p}):= |\cY^{(t)}(\frak{p})-\cY^{(0)}(\frak{p})|, 
\end{equation}
\begin{equation}\label{eq:arg(t)}
	\phi^{(t)}_{z_j} ( \frak{p},\frak{p}' ):=\arg \phi_{v(\frak{p},\frak{p}'),z_j}^{(t)},
\end{equation}
where $\phi_{v(\frak{p},\frak{p}'),z_j}^{(t)}$ is the \emph{geometrical} half-angle at $\cS^{(t)}(v_{\frak{p},\frak{p}'})$ along the bisector linking $\cS^{(t)}(v_{\frak{p},\frak{p}'})$ to $\cS^{(t)}(z_j)$.
Finally, set
\begin{equation}\label{eq:M(t)}
	M(t):= \max\limits_{\frak{p}\in \Lambda_{2n}}  M^{(t)}(\frak{p})
\end{equation}
In what follows, the constant $\Theta$ corresponds to the one in \eqref{eq:bound-correlator-distance} for $s$-embeddings satisfying \Uniff\, with $r_0=10$ and $\theta_0=\frac{\pi}{10}$. We are now ready to prove the next proposition, which quantifies the deviation from of $\cS^{(t)}$ to $\mathcal{S}^{(0)}$ for times $0\leq t\leq \frac{1}{n}$, as long as $m$ is chosen small enough. This will be done using standard deviation estimates for ordinary differential equations which allows to keep track of the control of the rate of growth of the full-plane fermion \eqref{eq:bound-correlator-distance} along the deformation. We will always verify the effect of the deformation of the coupling constants inside $\Lambda_{n}$ on the $s$-embedding $\cS(\Lambda_{2n})$ in order to apply the properness principle recalled in Section \ref{sub:argument-principle}.

\begin{prop}\label{prop:effective-cost-deformation}
Assume that the masses $(m_e)_{e\in \mathbb{Z}^2}$ vanish outside $\Lambda_n$ and are bounded by (some small) enough $m>0$ inside $\Lambda_n$. Then for any $0\leq t\leq \frac{1}{n}$, any corner $\frak{p}\in \Lambda_{2n}$, any $v_1\neq v_2 \in \Lambda_{2n}$ one has:
		\begin{enumerate}[label=(\roman*)]
	\item The deviation of $\cY^{(t)}$ can be bounded via
\begin{equation}
| \cY^{(t)}(\frak{p}) - \cY^{(0)}(\frak{p})| \leq  \frac{ 0.1}{n^\frac{1}{2}} \times nt
\end{equation}
\item The embedding  $\cS^{(t)}(\Lambda_{2n})$ is proper, and distances in $\cS^{(t)}$ satisfy
\begin{equation}
	0.9\leq \frac{|\cS^{(t)}(v_1)-\cS^{(t)}(v_2)|}{|\cS^{(0)}(v_1)-\cS^{(0)}(v_2)|} \leq 1.1
\end{equation}
\item The embedding $\cS^{(t)}(\Lambda_{2 n})$ satisfies \Unifff\,.\end{enumerate}
\end{prop}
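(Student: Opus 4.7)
The plan is to run a continuity/bootstrap argument on $[0,1/n]$, with the two-point fermion estimate \eqref{eq:bound-correlator-distance} as the only analytic input controlling the right-hand side of \eqref{eq:ODE-embedding}, and the winding-number criterion of Section~\ref{sub:argument-principle} as the only tool for properness. The starting observation is that the critical embedding $\cS^{(0)}$ (a flat square tiling of mesh $1/n$, all geometric angles equal to $\pi/2$) lies \emph{strictly inside} the \Unifff\ class, with room to spare in both the edge-length and angle parameters. Define
\[T^*:=\sup\bigl\{t\in[0,1/n]\,:\,\cS^{(s)}(\Lambda_{2n})\text{ is proper and satisfies \Unifff\ for all }s\in[0,t]\bigr\};\]
by the continuity of $t\mapsto\cY^{(t)}$ provided by Lemma~\ref{lem:ODE-embedding}, one has $T^*>0$.

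On $[0,T^*)$ each $\cS^{(t)}$ lies in \Unifff, so \eqref{eq:bound-correlator-distance} applies with the same universal constant $\Theta=\Theta(10,\pi/10)$ and $|\cY^{(t)}(c)|\le\sqrt{10/n}$ for every corner $c$. Inserting these bounds into \eqref{eq:ODE-embedding} together with $|m_k|\le m\cdot\mathbf{1}_{z_k\in\Lambda_n}$, the absolute value of the right-hand side is at most
\[Cm\Theta\,(10/n)^{3/2}\sum_{z_k\in\Lambda_n}\frac{1}{|\cS^{(t)}(\frak{p})-\cS^{(t)}(a_k^+)|}.\]
The sum compares to a Riemann integral over a bounded region with density $n^2$ per unit area, yielding the uniform bound $O(n^2)$. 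Hence $|d\cY^{(t)}(\frak{p})/dt|\le C''m\Theta\sqrt{n}$ uniformly in $\frak{p}\in\Lambda_{2n}$ and $t\in[0,T^*)$, and direct integration gives $M^{(t)}(\frak{p})\le C''m\Theta\sqrt{n}\cdot t$. Choosing $m$ below an explicit small threshold so that $C''m\Theta$ is much smaller than $0.1$ delivers (i).

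Closing the bootstrap is then mechanical. Bound (i) yields $|\cY^{(t)}(c)|^2=(1+\varepsilon_c)/n$ with $|\varepsilon_c|\ll 1$ uniformly, which telescopes along combinatorial paths in $\Lambda(G)$ to give (ii) with a constant strictly better than $10$. Feeding (i) into \eqref{eq:theta-from-S} and Taylor-expanding the trigonometric functions shows that every geometric half-angle $\phi_{v,z}^{(t)}$ stays within a fixed distance $>0$ of the flat value $\pi/4$, yielding (iii) with a strict improvement on $\theta_0=\pi/10$. Properness of $\cS^{(t)}(\Lambda_{2n})$ is then secured by the argument principle of Section~\ref{sub:argument-principle}: the boundary curve $\cS^{(t)}(\gamma^S_{2n})$ is $C^0$-close to its flat counterpart $\cS^{(0)}(\gamma^S_{2n})$ at scale $O(m\Theta/\sqrt{n})$, so it continues to wind exactly once around every point of $\cS^{(t)}(\Lambda_n)$. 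The bootstrap assumption is thus strictly improved on $[0,T^*)$, forcing $T^*=1/n$. The main obstacle, to my mind, is not any single estimate but the alignment of numerical constants: $m$ must be small enough that the deviation in (i) is strictly smaller than the margin between the starting parameters $(1,\pi/2)$ of $\cS^{(0)}$ and the relaxed parameters $(10,\pi/10)$ of the bootstrap region, so that each pass of the bootstrap feeds itself. The required smallness depends only on $\Theta$ and on the universal constant controlling the Riemann-sum estimate.
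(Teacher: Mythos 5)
Your proposal is correct, and it takes a genuinely cleaner route than the paper while relying on exactly the same two analytic ingredients: the two-point fermion bound \eqref{eq:bound-correlator-distance} and the winding-number criterion of Section~\ref{sub:argument-principle}. The paper's proof proceeds by a discretised recursion over $n$ consecutive intervals $[\ell/n^2,(\ell+1)/n^2]$, and within each interval runs a two-stage argument: first it bounds $M(s)=\max_{\frak p}|\cY^{(s)}(\frak p)-\cY^{(0)}(\frak p)|$ using only the trivial bound $|\langle\chi_{\frak p}\chi_{\frak q}\rangle|\le 1$ and a Gr\"onwall estimate (giving $\varphi_n(s)=O(n^2)$, hence $M\lesssim m/\sqrt{n}$ on a time window of length $1/n^2$), which suffices to re-establish \Unifff\ and properness, and only then invokes \eqref{eq:bound-correlator-distance} to upgrade $\varphi_n$ to $O(n)$ and sharpen $M$. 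Your stopping-time formulation short-circuits this: on $[0,T^*)$ you get \Unifff\ for free from the definition of $T^*$, so you can bound $|\cY^{(t)}(c)|\le(10/n)^{1/2}$ \emph{directly} from the embedding geometry rather than via the triangle inequality $|\cY^{(t)}|\le|\cY^{(0)}|+M(t)$, which makes the ODE's right-hand side manifestly $O(m\Theta\sqrt{n})$ and eliminates the need for Gr\"onwall altogether; a single integration then gives (i), and the strict improvement (ii)--(iii) closes the bootstrap by the usual continuity argument. What your approach buys is a conceptually transparent proof with no $\ell$-indexed recursion and no two-phase crude/sharp estimate; what the paper's approach buys is a version that makes the quantitative constants fully explicit step by step, which is perhaps why the author chose it. One minor point worth being explicit about when writing this up: the set of $t$ for which \Unifff\ and properness hold is closed in $[0,1/n]$ (all the defining inequalities are non-strict, and properness is preserved under $C^0$-limits via the winding-number criterion), so $T^*$ is attained, and the strict margin produced by the bootstrap at $T^*$ then extends the interval by continuity, giving the contradiction.
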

Before diving into the details of the proof, let us highlight the strategy we repeatedly use. One first trivially bounds the two-points fermions by $1$, and derives that $\cY^{(t)}$ didn't deviate too much from $ \cY^{(0)}$, at least for a small enough time $O(n^{-2})$. The control of the deviation of $\cY^{(t)}$ implies that $\cS^{(t)}(\Lambda_{2n})$ is still proper and still satisfies \Unifff\;. Therefore, one can now use the improved bounds  \eqref{eq:bound-correlator-distance} and obtain, using the Grownwall lemma, a sharp (up to constant) order of magnitude of the effective deviation between $\cY^{(t)}$ and $\cY^{(0)}$ during that $O(n^{-2})$ time span. Applying recursively this procedure $n$ times over intervals of lenght $O(n^{-2})$ allows to conclude.
\begin{proof}
The proof will be made by recursion over indices $0\leq \ell \leq n-1$, showing that $(i),(ii),(iii)$ hold for any time $t\in [\frac{\ell}{n^2};\frac{\ell+1}{n^2}]$. 

\textbf{Initialisation with} $\ell=0$. We prove here that $(i),(ii),(iii)$ hold $0\leq t \leq \frac{1}{n^2}$. The proof splits in several elementary that we highlight here.

\textbf{Step 1: Evaluating the deviation of $\cY^{(t)}$ using trivial bounds on $\langle \chi_{\frak{p}} \chi_{\frak{q}} \rangle$}
Denote first
\begin{equation}\label{eq:def_phi_n}
	\varphi_n(s):= \max\limits_{\frak{p} \in \Lambda_{2n}} \sum\limits_{z_k\in \Lambda_n} |\langle \chi_{\frak{p}} \chi_{a^+_k} \rangle_{S(s)} |	 + |\langle \chi_{\frak{p}} \chi_{c^+_k} \rangle_{S(s)}|.	
	\end{equation}
There are at most $O(n^2)$ Kadanoff-Ceva fermions $\langle \chi_{\frak{p}} \chi_{\frak{q}} \rangle_{S(s)}$ involved each sum used to define \eqref{eq:def_phi_n}. Each Kadanoff-Ceva fermion is trivially bounded by $1$, independently of the time $s$, which ensures that $\varphi_{n}(s)= O(n^2)$, for some absolute constant $O$, independent from $n$. For any corner $\frak{p} \in \Lambda_{2n}$, one can integrate the ODE \eqref{eq:ODE-embedding}, which gives
\begin{align*}
	| \cY^{(t)}(\frak{p}) - \cY^{(0)}(\frak{p})|&= \bigg| \int\limits_{0}^{t} \sum\limits_{z_k\in \Lambda_n} m_k\big(   \cY^{(s)}(c^+_k)\langle \chi_{\frak{p}} \chi_{a^+_k} \rangle_{S(s)}  	- \cY^{(s)}(a^+_k) \langle \chi_{\frak{p}} \chi_{c_k} \rangle_{S(s)} \big) ds \bigg| \\
	& \leq \int\limits_{0}^{t}  m\sum\limits_{z_k\in \Lambda_n} \big| \cY^{(0)}(c^+_k) \big|	\cdot \big| \langle \chi_{\frak{p}} \chi_{a^+_k} \rangle_{S(s)} \big| + \big|\cY^{(0)}(c^+_k) \big| \cdot \big| \langle \chi_{\frak{p}} \chi_{c^+_k} \rangle_{S(s)}\big| \\
	& \quad + \int\limits_{0}^{t} m\sum\limits_{z_k\in \Lambda_n} \big|\cY^{(s)}(c^+_k) - \cY^{(0)}(c^+_k) \big| \cdot  \big|\langle \chi_{\frak{p}} \chi_{a^+_k} \rangle_{S(s)} \big|	  \\
	& \quad + \int\limits_{0}^{t} m\sum\limits_{z_k\in \Lambda_n}  \big|\cY^{(s)}(a^+_k) - \cY^{(0)}(a^+_k) \big| \cdot  \big|\langle \chi_{\frak{p}} \chi_{c^+_k} \rangle_{S(s)}\big|,
\end{align*}
where in the above inequality we used that
\begin{itemize}
	\item For $e_k\in \Lambda_{n}$, its mass is bounded by $m$ i.e.\ $\big| m_k \big| \leq m $.
	\item The triangular inequality allows to decompose for $0\leq s \leq \frac{1}{n^2}$
\begin{align*}
	\big|\cY^{(s)}(a^+_k) \big| &= \big|\cY^{(s)}(a^+_k)- \cY^{(0)}(a^+_k) + \cY^{(0)}(a^+_k)\big|\\
	&\leq \big|\cY^{(0)}(a^+_k)\big|+ \big|\cY^{(s)}(a^+_k)- \cY^{(0)}(a^+_k)\big|.
\end{align*}
\end{itemize}
For the standard square embedding $\cS^{(0)}$ of the critical model, one has $\big|\cY^{(0)}(a^+_k) \big|=\big|\cY^{(0)}(c^+_k) \big|=n^{-\frac{1}{2}}$. Therefore, taking the maximum of the above equality over all corners $\frak{p}\in \Lambda_{2n}$ ensures that
\begin{equation}\label{eq:Grownwall}
	M(t) \leq \frac{ m\cdot K(t)}{n^{\frac{1}{2}}}+ m\int\limits_{0}^{t} M(s) \varphi_n(s) ds,
\end{equation}
with an increasing non-negative function $K(t):= \int\limits_{0}^{t} \varphi_n(s)ds$. One can now apply the standard Grownwall lemma to \eqref{eq:Grownwall} which ensures that 
\begin{equation}\label{eq:Grownwall-2}
	M(t) \leq m\cdot \frac{ K(t)}{n^{\frac{1}{2}}} \times \exp(m\cdot K(t)).
\end{equation}
Since $\varphi_{n}(s)=O(n^2)$, the function $K$ is uniformly bounded in $[0;\frac{1}{n^2}]$, uniformly in $n$. Taking $m$ small enough ensures that for any $s\in [0;\frac{1}{n^2}]$ and $\frak{p}\in \Lambda_{2n}$, one has 
	 \begin{equation}\label{eq:deformation-small-time}
|  \cY^{(s)}(\frak{p}) - \cY^{(0)}(\frak{p}) | \leq M(s) \leq  \frac{0.0001}{n^{\frac{1}{2}}}.   \end{equation}
\smallskip 
\textbf{Step 2: Verifying that $\cS^{(s)}(\Lambda_{2n})$ is proper and comparable to $\cS^{(0)}$} The chore of this second step is to prove the distance comparability $(ii)$, which allows to deduce both properness of $\cS^{(s)}(\Lambda_{2n})$ and that it satisfies \Unifff\,. For any pair of distinct vertices $v,v'\in \Lambda_{G}$, there exist an injective sequence  $(\frak{q}_j)_{j\leq d_{G}(v,v')}$  sequence of $d_{G}(v,v') \leq 4n \cdot |\cS^{(0)}(v)-  \cS^{(0)}(v')|$ neighbouring corners such that
\begin{equation}\nonumber
	\cS^{(s)}(v)-  \cS^{(s)}(v') =\sum\limits_{j=1}^{d_G(v,v')}(-1)^{j}\cY^{(s)}(\frak{q}_j)^2.
\end{equation}
Provided $m$ is chosen small enough (as in Step 1), \eqref{eq:deformation-small-time} implies that for $0\leq s\leq \frac{1}{n^2}$
	 \begin{equation}\label{eq:deformation-growth-square}
\bigg| \big( \cY^{(s)}(\frak{q}_j) \big)^2 -    \big( \cY^{(0)}(\frak{q}_j) \big)^2 \bigg| \leq \frac{0.0025}{n}.
\end{equation} 
Summing brutally the errors along the path $(\frak{q}_j)_{j\leq d_{G}(v,v')}$ ensures that 
\begin{equation}\label{eq:deformation-growth-distance}
	|\frac{\cS^{(s)}(v)-\cS^{(s)}(v')}{\cS^{(0)}(v)-\cS^{(0)}(v')} -1| \leq  \sum\limits_{j=1}^{d_G(v,v')} \frac{0.0025}{n} \leq 0.01,
\end{equation}
concluding that the distance comparability in $(ii)$ holds. This also ensures
that $\cS^{(s)}(\Lambda_{2n})$ is proper. To verify this statement, one first notes that \eqref{eq:deformation-growth-square} ensures that for any quad $z\in \diamondsuit(G)$, the orientations of the tangential quadrilaterals $\cS^{(0),\dm}(z)$ and $\cS^{(s),\dm}(z)$ are the same, as the fermion  $\cY^{(s)}$ didn't move enough from its original value to modify the orientation of the image of a tangential quad. Moreover, one use a specific choice of vertices $v,v'$ in \eqref{eq:deformation-growth-distance}. More precisely, fix first $\cS^{(s)}(v=0)=0_{\mathbb{C}}$ (i.e.\ one should fix a reference point for the embedding and we declare here the image of the origin of $\Lambda_{2n}$ to be the origin of the complex plane). One can then fix $v'\in \gamma_{\Lambda_{2n}}$, the (oriented in the positive direction) boundary  of the \emph{combinatorial} box $\Lambda_{2n}$ (defined in the context of Section \ref{sub:argument-principle}). It is clear from \eqref{eq:deformation-growth-distance} that 
\begin{equation}\nonumber
	0.99\leq \frac{|\cS^{(s)}(v')-\cS^{(s)}(0)|}{|\cS^{(0)}(v')-\cS^{(0)}(0)|} \leq 1.01,
\end{equation}
which ensures that $\cS^{(s)}(\gamma_{\Lambda_{2n}})$ only winds once around the origin of the plane. Moreover, it is a simple curve, as any self-intersection would make \eqref{eq:deformation-growth-distance} would fail. Therefore, one can apply once again the argument principle reasoning presented in Section \ref{sub:argument-principle} to conclude that $\cS^{(s)}(\Lambda_{2n})$ is indeed proper. To conclude on the item $(iii)$,  it is enough to recall that the original square lattice $\cS^{(0)}$ satisfies \Unifff\,  with a pretty fair margin (all edge-lengths are $\frac{1}{n}$ and all angles are $\frac{\pi}{2}$ in $\cS^{(0)}$) to see that the deviation \eqref{eq:deformation-small-time} implies that $\cS^{(s)}(\Lambda_{2n})$ still satisfies \Unifff\,.
\smallskip

\textbf{Step 3: Using the improved bounds \eqref{eq:bound-correlator-distance}} Once we know that all embeddings $\cS^{(s)}(\Lambda_{2n})$ are proper and satisfy \Unifff\,, it is possible, when working with the fermion $\cY^{(s)} $, to replace the trivial bounds of the form $|\langle \chi_{\frak{p}}\chi_{a_k^{+}}\rangle| \leq 1$ by the accurate one \eqref{eq:bound-correlator-distance}. Applying \eqref{eq:deformation-small-time} and the distance comparability of the item $(ii)$ proven in Step 1 ensures that
\begin{equation}\label{eq:bound-correlator-embedding-within-proof}
 	|\langle \chi_{\frak{p}} \chi_{a^+_k} \rangle_{S(s)}| \leq   \Theta \cdot \frac{|\cY^{(s)}(a^+_k) \cY^{(s)}(\frak{p})| }{|\cS^{(s)}(\frak{p})-\cS^{(s)}(a^+_k)|} \leq \Theta \cdot \frac{(1.1)^2}{n} \frac{1}{0.9\cdot|\cS^{(0)}(\frak{p})-\cS^{(0)}(a_k)|}.
 \end{equation}
Taking the maximum over corners $\frak{p}\in \Lambda_{2n}$, one has
\begin{align}\label{eq:bound-phi-optimal}
	\varphi_{n}(s)&= \max_{\frak{p}\in \Lambda_{n}} O\Bigg( \Theta \sum\limits_{z_k\in \Lambda_n} \frac{1}{n} \frac{1}{\cdot|\cS^{(0)}(\frak{p})-\cS^{(0)}(z_k)|}\Bigg)\\
	& = n\times \max_{\frak{p}\in \Lambda_{n}} O\Bigg( \Theta \sum\limits_{z_k\in \Lambda_n} \frac{1}{n^2} \frac{1}{\cdot|\cS^{(0)}(\frak{p})-\cS^{(0)}(z_k)|}\Bigg)\\
	& =n\times O\Bigg( \displaystyle\int_{\mathcal{D}_1}\frac{1}{|z-\frak{p} |}dA(z) \Bigg)\\
	&=O(n), 
\end{align}
where $O$ independent from $m$ and $n$ and we passe from the second to the third line using the approximation of the discrete sum by its continuous area integral. Once can now redo verbatim the reasoning of Step $1$ using this the improved bound $\varphi_n(s) = O(n)$ in the Grownwall lemma \eqref{eq:Grownwall-2}. Hence, provided $m$ chosen small enough, one has $M(s)\leq (0.01\cdot n^{-\frac{1}{2}})\times ns$, which concludes the proof of the initialisation.

\smallskip
\textbf{Heredity} Assume that the proposition has been proven for all $0\leq \ell'\leq \ell$. Let us prove that $(i),(ii),(iii)$ hold for $\frac{\ell}{n^2}\leq t \leq \frac{\ell+1}{n^2}$. The recurrence hypothesis ensures that
\begin{equation*}
	\Big|\cY^{(\frac{\ell}{n^2})}(\frak{p})- \cY^{(0)}(\frak{p})\Big|\leq  \frac{0.1}{n^{\frac{1}{2}}} \cdot n\cdot  \frac{\ell}{n^2} \leq \frac{0.1}{n^{\frac{1}{2}}}.
\end{equation*}
Therefore, the embedding $\cS^{(\frac{\ell}{n^2})}(\Lambda_{2n})$ satisfies \Unifff\ with a fair margin. Therefore, one can redo almost verbatim the proof of the initialisation, running the same ODE for a time $0\leq s \leq n^{-2}$, but this time with a initial condition being the fermion $\cY^{(\frac{\ell}{n^2})}$. Since the deterministic constants on the line-to-line passage (e.g.\ the deterministic coefficients $0.0025$, $0.01 $ ...) were chosen to be far from sharp, it is clear that the initial condition $|\cY^{(0)}(a_k^{+})|=|\cY^{(0)}(c_k^{+})| =n^{-\frac12}$ can be replaced by $0.9 \cdot n^{-\frac12} \leq |\cY^{(\frac{\ell}{n^2})}(a_k^{+})|, |\cY^{(\frac{\ell}{n^2})}(c_k^{+})|\leq 1.1 \cdot n^{-\frac12} $ with no incidence in the proof nor the choice of the small enough mass $m$. This allows to deduce that one can keep the same $m>0$ is in the initialisation step while still having for any $\frac{\ell}{n^2}\leq s \leq \frac{\ell+1}{n^2}$ and any $\frak{p}\in \Lambda_{2n}$ the control
\begin{equation*}
	\Big|\cY^{(s)}(\frak{p})-\cY^{(\frac{\ell}{n^2})}(\frak{p}) \Big|\leq \frac{0.1}{n^{\frac{1}{2}}}\Big(s-\frac{\ell}{n^2}\Big)\cdot n,
\end{equation*}
which allows to conclude that
\begin{equation*}
	\Big|\cY^{(s)}(\frak{p})-\cY^{(0)}(\frak{p}) \Big|\leq \frac{0.1}{n^{\frac{1}{2}}}\cdot ns.
\end{equation*}
\end{proof}

Once Proposition \ref{prop:effective-cost-deformation} ensures that existence of an $s$-embedding of the massive square lattice (for a small enough mass), one can use the technology recalled in Theorem \ref{thm:unif-rsw} together with standard monotonicity properties of the FK-Ising model to provide a new proof of Theorem \ref{thm:near-critical-RSW}.

\begin{proof}[Proof of Theorem \ref{thm:near-critical-RSW}]
Consider the massive FK-Ising model on the annulus $\textrm{A}_n:=\Lambda_n \backslash \Lambda_{\frac{n}{2}}$, with free boundary conditions in both boundaries, assuming that all the masses are bounded by some $m_0>0$ and fix $\varepsilon$ small enough such that $m_0\leq m \varepsilon^{-1}$, where the parameter $m>0$ is given in Proposition \ref{prop:effective-cost-deformation}. Fix a translate of the annulus $x+\textrm{A}_{\varepsilon n}$ centred at $x$. As all the masses in  $x+\textrm{A}_{\varepsilon n}$ are bounded by $m_0\leq m \varepsilon^{-1}$, one can apply Proposition \ref{prop:effective-cost-deformation} to see that one can construct via the ODE \eqref{eq:ODE-embedding} a proper $s$-embedding of $\cS(x+\textrm{A}_{\varepsilon n})$ equipped with the massive weights induced by those in $\Lambda_n $ that satisfies \Unifff\,. In particular, Theorem \ref{thm:RSW-s-embeddings} ensures that 
\begin{equation}\label{eq:open-circuit-bound-proof}
	\mathbb{P}^{\textrm{free}}_{x+\textrm{A}_{\varepsilon n}} \big[ \textrm{there exists an open circuit in } x+\textrm{A}_{\varepsilon n} \big] \geq  \textrm{cst}>0,
\end{equation}
for some positive constant $ \textrm{cst}=\textrm{cst}(r_0,\theta_0)$ for the parameters $(r_0,\theta_0)=(10,\pi \cdot 10^{-1})$ in Theorem \ref{thm:RSW-s-embeddings}. One can now cover $\Lambda_n$ with $O(\varepsilon^{-2})$ translates of $\Lambda_{\varepsilon n}$ such that neighbouring annuli $\Lambda_{\varepsilon n} \backslash \Lambda_{\frac{\varepsilon}{2} n}$ overlap enough each other. Applying the standard machinery of positive association for crossing events based on the FKG inequality and monotonicity (see e.g. \cite[Theorem 2.1]{FK_scaling_relations}) ensures that existence of some (uniform in $n$) lower bound on the probability of the existence of an open circuit in $\textrm{A}_n$. One can apply a similar reasoning to the dual model, which allows to conclude the proof.
\end{proof}

\begin{proof}[Proof of Theorem \ref{thm:unif-rsw}]
We only sketch here the proof, pointing out the main ideas, as it is appears to be a straightforward generalisation of our alternative proof of Theorem \ref{thm:RSW-s-embeddings} using embedding deformations. Let $\cS$ be an $s$-embedding satisfying \Uniff\, for some parameters $ \delta,r_0,\theta_0$. Fix a the approximation of square $\Lambda_{\rho}^{\delta}$ up to $4\delta$, for some large enough $\rho \geq \textrm{cst}(r_0,\theta_0) \cdot \delta$, where the constant $\textrm{cst}(r_0,\theta_0)$ only depends on $r_0,\theta_0$. Denote by $\theta_e$ the angle attached to the edge $e\in \cS$ and consider the massive deformation of the FK-Ising model where each Ising weight $\theta_e$ is replaced by $\theta_e+m_e \cdot \delta$, where the collection of masses $(m_{e})_{e\in \Lambda_{\rho}^{\delta}} $ uniformly by some small enough $m=m(r_0,\theta_0)>0$. We claim that one can generalise the control of the massive deformation presented in  Proposition \ref{prop:effective-cost-deformation} using the deformation ODE \eqref{eq:ODE-embedding}. Indeed, the only crucial point in the analysis is the bound on the rate of decay of the Kadanoff-Ceva fermions in the embedding $\cS$, which remains controlled by some bound of the form \eqref{eq:bound-correlator-distance}, where constants only depend on $r_0,\theta_0$. Therefore, provided the upper bound $m$ on the masses is small enough, one can construct proper $s$-embedding of the massive model in $\Lambda_{\rho}^{\delta}$ satisfying \Uniffff\.. Playing once again with scales and taking $ \varepsilon$ small enough, in each annulus $\Lambda_{\varepsilon \rho}^{\delta} \backslash \Lambda_{\frac{\varepsilon}{2} \rho}^{\delta} $, there exist some constant $\textrm{cst}(2r_0,\frac{\theta_0}{2}) $, only depending on $r_0,\theta_0$ such that the analog of \eqref{eq:open-circuit-bound-proof} holds. One can then conclude as in the case of the massive square lattice.
\end{proof}

\begin{remark}\label{rem:modifying-proof-small-times}
If one assumes that all the masses are bounded by $1$ and that the initial condition $\cY^{(0)}$ corresponds to the standard embedding of the critical square lattice. Adapting the proofs presented here shows that there exist $m_0>0$ small enough such that the ODE \eqref{eq:ODE-embedding} constructs proper $s$-embeddings $\cS^{(t)}(\Lambda_n)$ satisfying \Unifff\ for all instants $t\in [0;\frac{m_0}{n}]$. This means that one can keep the deformation of the entire $\cS^{(0)}(\Lambda_n)$ within the class of proper $s$-embeddings satisfying \Unifff\, if one stops the
deformation at a time $\frac{m_0}{n}$ for some small enough $m_0$.
\end{remark}

\begin{remark}\label{rem:doubly-periodic-deformation}
One can wonder if it is possible to keep running the deformation (in a deterministic setup) for a time longer than $O(n^{-1})$. In the original work of \cite{HS-energy-Ising} using the discrete exponentials of Kenyon \cite{Ken} (see also a more compact rewriting in \cite{CIM-universality}), the explicit expression of $\langle \chi_{a}\chi_{\frak{p}} \rangle_{\cS^{(0)}} $ on the critical square lattice of mesh size $\frac{1}{n}$ (and more generally on isoradial lattices) has the asymptotic   
\begin{equation}\label{eq:asympototic-critical-full-plane}
	\langle \chi_{a}\chi_{\frak{p}} \rangle_{S^{(0)}}= \frac{1}{n\pi} \cdot  \Re [\frac{\overline{\eta_a \cdot \eta_\frak{p}}}{\cS^{(0)}(\frak{p})-\cS^{(0)}(a)} ] + O(\frac{1}{n^3}\frac{1}{|\cS^{(0)}(\frak{p})-\cS^{(0)}(a)|^3}).
\end{equation}
In particular, for a generic set of masses, one expects $\varphi_n(s) \asymp n$ for all times $0\leq t\leq O(n^{-1})$ and the geometry of the embedding obtained via \eqref{eq:ODE-embedding} should start moving macroscopically at time $t\asymp n^{-1}$. Still, there exists a special class of weights where the deformation process remains within som \Unif\, class of embedding up to some $\asymp 1$ time. This special case corresponds to the critical variety of doubly-periodic graphs, derived explicitly in \cite{cimasoni-duminil}. It turns out that traveling along this critical variety 
correspond exactly to the fact that the RHS of \ref{eq:ODE-embedding-general} vanishes for almost all translates of the fundamental domain (i.e.\ all but those containing $\frak{p}$ and the boundary of $\Lambda$), allowing to modify the entire full-plane picture in a periodic fashion. When working in bounded domains discretised by $s$-embeddings corresponds to critical periodic Ising models, this allows to link Ising fermionic observable on different critical lattices even before passing to the limit, providing some \emph{universality at discrete level}. This is in sharp contrast with the main spirit of the universality proofs developed to date, which all go via computing the scaling limit and then noting that they are indeed universal. This remark will be detailed in \cite{Mah25b}
\end{remark}

\section{The embedding of the near-critical i.i.d. model via stochastic differential equations}\label{sec:iid-deformation}

\subsection{Deforming the embedding using SDE}

In this section we study in greater details the impact of the embedding deformation induced by \eqref{eq:ODE-embedding} when passing from the critical square lattice to the t-weakly random model. The analysis is quite similar to the one made in Section \ref{sec:massive-deformation}, but this time we use independent brownian motions to change continuously the coupling constants in \eqref{eq:ODE-embedding}. In particular, the deformation equation becomes a Stochastic Differential Equation (SDE) instead of a standard ODE, taking now into account the corrections terms appearing in the Ito Lemma. One works once again in the box $\Lambda_{2n}$, centred at the origin and starts at time $t=0$ from a uniform critical model $\theta_k=\frac{\pi}{4}$ with its canonical embedding $\cS^{(0)}$ made of squares of mesh size $\sqrt{2} \cdot\frac{1}{n}$. Each Ising weight moves from $\frac{\pi}{4}$ to $\frac{\pi}{4}+B^{(e_k)}_t$, where the Brownian motions $B^{(e_k)}_t$ are independent from each other. The author is grateful to Dmitry Chelkak for suggesting to use Brownian motions as a continuous deformation process instead of using a collection of centred i.i.d. $\pm1$ sorted at time $t=0$. We still work on the square lattice $S=\mathbb{Z}^2$ and keep the notations of Section \ref{sec:deformation}, with the corners $(a^{+}_k)_{k\in \Lambda_n} $ and $(c^{+}_k)_{k\in \Lambda_n}$ as in Lemma \ref{lem:deformation-discrete}. 
Under the probability measure $\mathbf{P}$, fix a collection of independent standard Brownian motions $\omega\mapsto ((B_t^{(e)}(\omega))_{t\geq 0})_{e\in E(\mathbb{Z}^2)}$ and a collection a variances $(\sigma_{e}^2)_{e\in E(\mathbb{Z}^2)}$.
For $t\geq 0$ a small enough time parameter (which depends on $\omega$), consider the family of Ising models $(S,(x^{(t)}_{\omega,e})_{e\in E(\mathbb{Z}^2)},(\sigma_e)_{e\in E(\mathbb{Z}^2)})_{t\geq 0}$, where the angle at the edge $e\in E(\mathbb{Z}^2)$ in \eqref{eq:x=tan-theta} is given by
\begin{equation}
	\theta^{(t)}_{e,\omega}:=\theta^{(0)}_{e} + \sigma_{e}\cdot B^{(e)}_{t}(\omega).
\end{equation}
The following lemma allows to construct $\mathbf{P}$-almost surely a family of $s$-embeddings of the Ising model $(S_{\omega}(t))_{t\geq 0}=(S,(x^{(t)}_{\omega,e})_{e\in \mathbb{Z}^2})_{t\geq 0}$ as a solution to some SDE with initial condition $\cS^{(0)}$. 
\begin{lemma}\label{lem:SDE-embedding}
Assume that the collection of variances $(\sigma_e^2)_{e\in E(\mathbb{Z}^2)}$ vanishes outside of some finite box $\Lambda$. Consider the stochastic differential system defined on corners of $ \Upsilon^{\times} \cap  \Lambda  $, whose initial condition is given by $\cY^{(0)}$ and whose dynamic is given for any $\frak{p} \in \Lambda $ by $ \cY^{(s)}_{\omega}(\frak{p}):=\cY(\frak{p},\omega,s)$
\begin{align}\label{eq:SDE-embedding}
		d \cY^{(s)}_{\omega} (\frak{p})=& \frac{1}{2} \sum\limits_{z_k \in \Lambda} \sigma_k\Bigg[ \cY^{(s)}_{\omega}(c^+_k) \langle \chi_{\frak{p}}\chi_{a^+_k} \rangle_{S_{\omega}(s)} -  \cY^{(s)}_{\omega}(a^+_k) \langle \chi_{\frak{p}}\chi_{c^+_k} \rangle_{S_{\omega}(s)} \Bigg] dB^{(e_k)}_s \nonumber \\
		& + \frac{1}{4} \sum\limits_{z_k \in \Lambda}   \sigma_k^2\Bigg( \frac{\cos(\theta^{(s)}_{e_k,_{\omega}})}{\sin(\theta^{(s)}_{e_k,_{\omega}})} -\frac{\mathbb{E}_{S_{\omega}(s)}[\varepsilon_{e_k}]}{\sin(\theta^{(s)}_{e_k,_{\omega}})}  
 \Bigg)\times \cdots \nonumber \\
 & \cdots \times \Bigg[ \cY^{(s)}_{\omega}(c_k^{+})\langle \chi_{\frak{p}}\chi_{a_k^{+}} \rangle_{S_{\omega}(s)} - \cY^{(s)}_{\omega}(a_k^{+})\langle \chi_{\frak{p}}\chi_{c_k^{+}} \rangle_{S_{\omega}(s)} ds  \Bigg].\end{align}
Then $\mathbf{P}$-almost surely :
\begin{enumerate}
	\item There exist $T_0(\Lambda,\omega)>0$ such that there exist a solution to the SDE \eqref{eq:SDE-embedding} on $[0;T_0(\Lambda,\omega)] $.
	\item For any time $0\leq s \leq T_0 (\Lambda,\omega)$, the propagator $\cY_{\omega}^{(s)} $ (seen as the solution to \eqref{eq:SDE-embedding}) is an s-embedding $\cS^{(s)}_{\omega}$ of the Ising model $S_{\omega}(s)$.
\end{enumerate}
\end{lemma}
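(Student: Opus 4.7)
The plan is to follow the blueprint of Lemma 4.1, replacing the linear ODE argument by its Itô stochastic analogue, with the extra drift term in \eqref{eq:SDE-embedding} serving precisely to absorb the additional quadratic variation contributions that appear when the deformation is driven by Brownian motions rather than by smooth paths.

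\textbf{Local existence (item (1)).} On a compact set of angles $(\theta^{(s)}_{e_k,\omega})_{e_k \in \Lambda}$ bounded away from the degenerate values $0$ and $\tfrac{\pi}{2}$, the coefficients of the SDE \eqref{eq:SDE-embedding} are smooth functions of $(\cY^{(s)}_\omega, \theta^{(s)}_{\cdot,\omega})$. Indeed, the finite-volume correlators $\langle \chi_\mathfrak{p} \chi_{a_k^+}\rangle_{S_\omega(s)}$, $\langle \chi_\mathfrak{p}\chi_{c_k^+}\rangle_{S_\omega(s)}$ and the energy density $\mathbb{E}_{S_\omega(s)}[\varepsilon_{e_k}]$ are all rational functions of the Ising weights via \eqref{eq:Emusigma}, hence locally Lipschitz in the angles on the compact parameter set; and the factor $1/\sin(\theta^{(s)}_{e_k,\omega})$ stays bounded there. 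Standard SDE theory (e.g.\ the usual Cauchy-Lipschitz type statement in the stochastic setting) then produces, $\mathbf{P}$-almost surely, a unique strong solution up to the first exit time $T_0(\Lambda, \omega) > 0$ of the collection of angles from this compact set.

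\textbf{Propagation equations (item (2)).} For each triplet $\mathfrak{p}, \mathfrak{p}^\pm$ of neighbouring corners on a quad $z_j \in \diamondsuit$, define the defect
\begin{equation*}
\cT^{(t)}_\omega(\mathfrak{p}) := \cY^{(t)}_\omega(\mathfrak{p}) - \cos(\theta^{(t)}_{e_j,\omega})\, \cY^{(t)}_\omega(\mathfrak{p}^+) - \sin(\theta^{(t)}_{e_j,\omega})\, \cY^{(t)}_\omega(\mathfrak{p}^-).
\end{equation*}
Applying Itô's formula yields three groups of contributions to $d\cT^{(t)}_\omega(\mathfrak{p})$: the pure stochastic part, the explicit drift written in \eqref{eq:SDE-embedding}, and the Itô corrections arising both from the expansions $d\cos(\theta^{(t)}_{e_j,\omega}) = -\sin(\theta) \sigma_j dB^{(e_j)}_t - \tfrac12 \cos(\theta)\sigma_j^2 dt$, $d\sin(\theta^{(t)}_{e_j,\omega}) = \cos(\theta) \sigma_j dB^{(e_j)}_t - \tfrac12 \sin(\theta)\sigma_j^2 dt$, and from the cross variations $d\theta^{(t)}_{e_j,\omega} \cdot d\cY^{(t)}_\omega(\mathfrak{p}^\pm)$. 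For the stochastic part, a verbatim repetition of \textbf{Steps 1--3} of the proof of Lemma 4.1 (applied quad-by-quad to each Brownian differential $dB^{(e_k)}_t$, using the sign-swap identities analogous to $(A'_0), \ldots, (D'_0)$ and $(A''_0), \ldots, (D''_0)$ at $z_j$) shows that the coefficient of each $dB^{(e_k)}_t$ in $d\cT^{(t)}_\omega(\mathfrak{p})$ is a linear combination of entries of $\cT^{(t)}_\omega$.

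\textbf{Cancellation of the drift.} The heart of the argument is then to show that the total drift of $d\cT^{(t)}_\omega(\mathfrak{p})$, i.e.\ the sum of (a) the explicit drift of \eqref{eq:SDE-embedding}, (b) the $-\tfrac12 \cos(\theta)\sigma_j^2 dt$ and $-\tfrac12 \sin(\theta)\sigma_j^2 dt$ terms produced by the Itô expansions of $\cos(\theta^{(t)}_{e_j,\omega}), \sin(\theta^{(t)}_{e_j,\omega})$, and (c) the cross-variation terms $d\theta^{(t)}_{e_k,\omega} \cdot d\cY^{(t)}_\omega(\mathfrak{p}^\pm)$, collapses into a linear combination of entries of $\cT^{(t)}_\omega$ as well. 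The scalar factor
\begin{equation*}
\frac{\cos(\theta^{(s)}_{e_k,\omega})}{\sin(\theta^{(s)}_{e_k,\omega})} - \frac{\mathbb{E}_{S_\omega(s)}[\varepsilon_{e_k}]}{\sin(\theta^{(s)}_{e_k,\omega})}
\end{equation*}
in \eqref{eq:SDE-embedding} is chosen precisely to enforce this: the $\cos(\theta)/\sin(\theta)$ piece absorbs the corrections from the trigonometric Itô expansion, while the $\mathbb{E}_{S_\omega(s)}[\varepsilon_{e_k}]/\sin(\theta)$ piece compensates the cross variations $d\theta^{(t)}_{e_k,\omega}\cdot d\cY^{(t)}_\omega(\mathfrak{p}^\pm)$, through the combinatorial identity expressing the energy density $\mathbb{E}_{S_\omega(s)}[\varepsilon_{e_k}]$ in terms of the two-point fermion $\langle \chi_{a_k^+}\chi_{c_k^+}\rangle_{S_\omega(s)}$ around the quad $z_k$. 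The main obstacle is carrying out this Itô bookkeeping: one must check that, for each pair of edges $(e_j, e_k)$, the three groups (a), (b), (c) recombine with the correct signs, and that (as in \textbf{Step 1} of Lemma 4.1) all off-quad contributions cancel against one another thanks to the three-term identity \eqref{eq:3-terms} satisfied by the fermions $\langle \chi_\cdot \chi_{a_k^+}\rangle$ and $\langle \chi_\cdot \chi_{c_k^+}\rangle$. Once this verification is complete, $\cT^{(t)}_\omega$ satisfies a linear SDE of the form $d\cT^{(t)}_\omega = A(t,\omega) \cT^{(t)}_\omega dt + \sum_k C_k(t,\omega) \cT^{(t)}_\omega dB^{(e_k)}_t$ with locally bounded adapted coefficients and initial condition $\cT^{(0)}_\omega \equiv 0$; uniqueness for linear SDEs then yields $\cT^{(t)}_\omega \equiv 0$ on $[0, T_0(\Lambda, \omega)]$, so that \eqref{eq:3-terms} holds everywhere and $\cY^{(t)}_\omega = d\cS^{(t)}_\omega$ is an s-embedding of $S_\omega(t)$.
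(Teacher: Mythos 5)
Your proposal takes a genuinely different route from the paper's proof, and it is the very route the paper explicitly acknowledges as an alternative and then declines to take. Having established local existence by standard SDE theory (identically to the paper), you propose to apply It\^o's formula directly to the defect process $\cT^{(t)}_{\omega}(\frak{p}) := \cY^{(t)}_{\omega}(\frak{p}) - \cos(\theta^{(t)}_{e_j,\omega})\cY^{(t)}_{\omega}(\frak{p}^{+}) - \sin(\theta^{(t)}_{e_j,\omega})\cY^{(t)}_{\omega}(\frak{p}^{-})$, track the It\^o corrections from $d\cos$, $d\sin$, and the cross-variations $d\theta_{e_j}\cdot d\cY(\frak{p}^{\pm})$, and show that the explicit drift in \eqref{eq:SDE-embedding} is precisely calibrated to collapse all of these into a homogeneous linear SDE $d\cT = A\,\cT\,dt + \sum_k C_k\,\cT\,dB^{(e_k)}$ with $\cT^{(0)}\equiv 0$, forcing $\cT\equiv 0$. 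The paper instead approximates each Brownian trajectory by a piecewise-linear path $\widetilde{B}^{(e_k)}_{\cdot,L}$, invokes Lemma~\ref{lem:ODE-embedding} for each $L$ to obtain genuine $s$-embeddings along the piecewise-smooth driving paths, Taylor-expands the increments $\cY^{(s_{j+1})}_{\omega,L}-\cY^{(s_j)}_{\omega,L}$ to order $(dB)^2$, and then passes to the limit $L\to\infty$ following the proof scheme of the It\^o formula; the propagation identity \eqref{eq:3-terms} survives the limit by almost-sure continuity of the trajectories. Your direct approach is conceptually the cleanest stochastic analogue of the proof of Lemma~\ref{lem:ODE-embedding} and avoids a convergence argument, but it front-loads a delicate quad-by-quad It\^o bookkeeping (particularly the recombination of the cross-variation terms with the $\mathbb{E}_{S_\omega(s)}[\varepsilon_{e_k}]/\sin(\theta)$ piece of the drift via the identity \eqref{eq:energy-density-local-fermionic-relation}); as written you name this verification as ``the main obstacle'' without carrying it out, so the hardest step of your route is left open. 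The paper's discretisation approach buys the ability to reuse Lemma~\ref{lem:ODE-embedding} as a black box so that the It\^o corrections are absorbed implicitly into the limit passage, at the cost of having to control the $L^2$ error $O(L^{-3/2})$ and justify the subsequential-limit argument; the appendix computations (\S\ref{app:derivative-embedding}, in particular \eqref{eq:second-derivative-fermion-for-Ito-formula} and \eqref{eq:g_{k,r}}) are exactly the algebra your route would need to make explicit.
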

As we did in the deterministic case, let us derive once again the intuition behind Lemma \ref{lem:SDE-embedding}. The easiest example to guess which SDE constructs a correct family of $s$-embeddings is the case where all the variances in $\Lambda$ vanish, except for the one at $e_0$ which we assume to be unitary. One can now apply the discrete deformation Lemma \ref{lem:deformation-discrete} with $\hat{\theta}_0-\theta_0=dB_t$ for some small enough time $t$, taking the formal identity $(dB_t)^2=dt$. Differentiating the fermions $\langle \chi_{\frak{p}} \chi_{\frak{q}} \rangle$ with respect to their coupling constants (see Appendix \ref{app:derivative-fermions}) one gets  for $\frak{p}\in \Lambda $
\begin{align*}
	y_{a_0}&=\frac{1}{2}\cY(c_0^+)dB_t + \frac{1}{4}\cY(a_0^+)dt + o(dt) \\
	y_{c_0}&=-\frac{1}{2}\cY(a_0^+)dB_t + \frac{1}{4}\cY(c_0^+)dt  + o(dt) \\
	\langle \chi_{\frak{p}}\chi_{a^+_k} \rangle_{S(dB_t)} &=\langle \chi_{\frak{p}}\chi_{a^+_k} \rangle_{S(0)} + \frac{1}{2\sin(\theta_0)} \Big( \langle \chi_{\frak{p}}\chi_{a^+_k} \rangle_{S(0)} \mathbb{E}_{S(0)}[\varepsilon_{e_0}] - \langle \chi_{\frak{p}}\chi_{a^+_k}\varepsilon_{e_0} \rangle_{S(0)} \Big)dB_t + o(dB_t)\\
	\langle \chi_{\frak{p}}\chi_{c^+_k} \rangle_{S(dB_t)} &=\langle \chi_{\frak{p}}\chi_{c^+_k} \rangle_{S(0)} + \frac{1}{2\sin(\theta_0)} \Big( \langle \chi_{\frak{p}}\chi_{c^+_k} \rangle_{S(0)} \mathbb{E}_{S(0)}[\varepsilon_{e_0}] - \langle \chi_{\frak{p}}\chi_{c^+_k}\varepsilon_{e_0} \rangle_{S(0)} \Big)dB_t + o(dB_t).
	&\end{align*}
Plugging into the local identities around the quad $z_0$ for $\frak{q}\mapsto \langle \chi_{\frak{p}}\chi_{\frak{q}} \rangle_{S(0)}$ (see Appendix \ref{app:derivative-embedding}) all together ensures that provided $t$ is small enough, there exist an $s$-embedding of the Ising model $S(dB_t)$ that satisfies 
\begin{align*}
	\cY^{(dB_t)}(\frak{p})&=\cY^{(0)}(\frak{p}) + \frac12\Bigg[ \cY^{(0)}(c^+_0) \langle \chi_{\frak{p}}\chi_{a^+_0} \rangle_{S(0)} -  \cY^{(0)}(a^+_0) \langle \chi_{\frak{p}}\chi_{c^+_0} \rangle_{S(0)} \Bigg] dB_t\\
	& +\frac{1}{4}\Bigg( \frac{\cos(\theta_{0})}{\sin(\theta_{0})} -\frac{\mathbb{E}_{S(0)}[\varepsilon_{e_0}]}{\sin(\theta_{0})}  
 \Bigg)\Bigg[ \cY^{(0)}(c_0^{+})\langle \chi_{\frak{p}}\chi_{a_0^{+}} \rangle_{S(0)} - \cY^{(0)}(a_0^{+})\langle \chi_{\frak{p}}\chi_{c_0^{+}} \rangle_{S(0)} \Bigg] dt + o(dt).
\end{align*}
When modifying multiple edge-weights, the standard composition rules and the independence of the Brownian motions hints at the SDE of Lemma \ref{lem:SDE-embedding}. One could in principle follow closely the spirit of the proof of Lemma \ref{lem:ODE-embedding} to prove that the local embedding relation in the stochastic environment satisfies itself some simple SDE and vanishes at time $t=0$. Instead, we prefer use the result of Lemma \ref{lem:ODE-embedding} when approximating Brownian motions by piecewise linear functions 
\begin{proof}[Proof of Lemma \ref{lem:SDE-embedding}]
Define the completed filtration $\mathcal{F}^{\Lambda}_t:=\sigma (B^{(e_k)}_s, 0\leq s \leq t,  e_k\in \Lambda_n) $. The functions of the kind $\langle \chi_{\frak{p}}\chi_{\frak{q}} \rangle_{S(s)} $ are Lipchitz in each coupling constants coordinates and all $\mathcal{F}_s$ measurable. Therefore, standard SDE theory ensures the existence of a strong solution $\cY^{(s)}$ to SDE \eqref{eq:SDE-embedding} with $\mathbf{P}$-almost sure continuous trajectories up to some positive time $T_0(\Lambda,\omega)$. For $\omega$ in a set of $\mathbf{P}$-probability $1$,  the associated continuous trajectory is the continuous function in a neighbourhood of $0^+ $ denoted by
\begin{equation}
	s\mapsto \cY^{(s)}_{\omega}(\frak{p}):=\cY(\frak{p},\omega,s).
\end{equation}
Let us now pass to the proof of the local 3 terms identity \eqref{eq:3-terms} in the stochastic environment. Let us state once again that the chore difficulty of the proof is to check that   $\mathbf{P}$-almost surely, the solution $ \cY^{(s)}_{\omega}$ to \eqref{eq:SDE-embedding} indeed provides a family of $s$-embedding $(\cS^{(s)}_{\omega})_{0\leq s \leq T_0(\Lambda,\omega)}$ of the Ising model s $(S_{\omega}(s))_{0\leq s \leq T_0(\Lambda,\omega)}$, which is done by approximating piecewise linearly Brownian trajectories. 

\textbf{Toy example: modifying only one Ising weight.} 

In order to illustrate the strategy and introduce the reasoning in a lighter fashion, let us first work out a toy example where all the variances inside $\Lambda$ vanish except the one $\sigma_0$ at the edge $e_0$. Then $\mathbf{P}$-almost surely, the Brownian trajectory $s \mapsto B^{(e_0)}_{s}(\omega)$ is a continuous function and therefore bounded by $\frac{\pi}{8}$ on some non-trivial interval $[0;T(\omega)]$. This defines a family of full-plane Ising models $(S_{\omega}(s))_{0\leq s \leq T(\omega)}$ where the angle parameters are given by $\theta_k^{(s)}(\omega):=\frac{\pi}{4} + \mathds{1}_{k=0} \cdot B^{(e_0)}_{s}(\omega)$.

\textbf{Step 1: Linear approximation of the Brownian motion.} Fix $L\in \mathbb{N}^{\star}$. One can split $[0;T(\omega)]$ into $L$ consecutive segments $[s_j,s_{j+1}]$ of width $T(\omega).L^{-1}$ and define the piecewise-affine interpolation $s\mapsto \widetilde{B}^{(e_0)}_{s,L}(\omega)$ of $s\mapsto B^{(e_0)}_{s}(\omega) $ on the points $(s_j)_{0\leq j \leq L}$. The local slope of the piecewise-affine function $\widetilde{B}^{(e_0)}_{s,L}$ in the segment $[s_j,s_{j+1}]$ is denoted by $m_{0}^{(s_j)}(\omega):=L^{-1}\big(B^{(e_0)}_{s_{j+1}}(\omega)-B^{(e_0)}_{s_{j}}(\omega)\big)$. One can also define the piecewise-affine approximation of the Ising angles by setting $\widetilde{\theta}_{k,L}^{(s)}(\omega):=\frac{\pi}{4} + \mathds{1}_{k=0} \cdot \widetilde{B}^{(e_0)}_{s,L}(\omega)$, together with the family of Ising models $(\widetilde{S}_{\omega,L}(s))_{0\leq s\leq T(\omega)}$. Since angles $\widetilde{\theta}_{k,L}^{(s)}(\omega)$ are piecewise linear, one can apply right away \eqref{eq:ODE-embedding-general}, there exist a family propagators $(\cY^{(s)}_{\omega,L})_{0\leq s \leq T(\omega)}$ associated to the Ising models $(\widetilde{S}_{\omega,L})_{0\leq s\leq T(\omega)}$. For $\frak{p}\in \Lambda$, $0\leq t\leq T(\omega)$ and $j^{[t]}= \left\lfloor t\cdot L\cdot T(\omega)^{-1} \right\rfloor $, one has
\begin{equation}\nonumber
	\cY^{(t)}_{\omega,L}(\frak{p})-\cY^{(0)}(\frak{p}) = \Big( \sum\limits_{j=0}^{j^{[t]}-1}\cY^{(s_{j+1})}_{\omega,L}(\frak{p})-\cY^{(s_{j})}_{\omega,L}(\frak{p}) \Big) + \cY^{(t)}_{\omega,L}(\frak{p})-\cY^{(s_{j^{[t]}})}_{\omega,L}(\frak{p})
\end{equation}
For each $0\leq j\leq j^{[t]}-1$, one can write
\begin{align*}
	\cY^{(s_{j+1})}_{\omega,L}(\frak{p})-\cY^{(s_{j})}_{\omega,L}(\frak{p})&=\int\limits_{s_j}^{s_{j+1}} \frac{d}{ds} \cY^{(s)}_{\omega,L}(\frak{p}) ds\\
&= \frac{B^{(e_0)}_{s_{j+1}}(\omega)-B^{(e_0)}_{s_{j}}(\omega)}{2}  \times \Big( \cY^{(s_j)}_{\omega,L}(c_0^{+})\langle \chi_{\frak{p}}\chi_{a_0^{+}} \rangle_{S_{\omega}(s_j)} - \cY^{(s_j)}_{\omega,L}(a_0^{+})\langle \chi_{\frak{p}}\chi_{c_0^{+}} \rangle_{S_{\omega}(s_j)} \Big) \\
	&\quad +  \frac{\Big(B^{(e_0)}_{s_{j+1}}(\omega)-B^{(e_0)}_{s_{j}}(\omega)\Big)^2}{4}  \times  \Bigg( \frac{\cos(\theta^{(s_j)}_{0})}{\sin(\theta^{(s_j)}_{0})} -\frac{\mathbb{E}_{S_{\omega}(s_j)}[\varepsilon_{e_0}]}{\sin(\theta^{(s_j)}_{0})} \Big) \Bigg)\times \cdots  \\
	& \quad \cdots \times   \Bigg(  \cY^{(s_j)}_{\omega,L}(c_0^{+})\langle \chi_{\frak{p}}\chi_{a_0^{+}} \rangle_{S_{\omega}(s_j)} - \cY^{(s_j)}_{\omega,L}(a_0^{+})\langle \chi_{\frak{p}}\chi_{c_0^{+}} \rangle_{S_{\omega}(s_j)}\Bigg) \\
	&\quad +O(L^{-\frac{3}{2}}).
\end{align*}
In the above computation, when passing from the first to the second line, one uses the first two terms in the expansion of $\frac{d}{ds} \cY^{(s)}_{\omega,L}(\frak{p})$, which are respectively given in the case where of the masses are constant inside $\Lambda$ by \eqref{eq:ODE-embedding} and \eqref{eq:second-derivative-fermion-for-Ito-formula}. It is not hard to check the announced error $O(L^{-\frac{3}{2}})$, which is uniform when computed in $\mathcal{L}^2$ norm.

\textbf{Step 2: Identifying the limiting process}. The trajectories of the Brownian motion and of $s\mapsto \cY^{(s)}_{\omega,L}$ are almost surely equicontinuous while the Kadanoff-Ceva correlator and the energy density at $e_0 $ are Lipschitz with respect to angle $\theta_0$. Therefore, one can repeat the main steps of the proof of the Ito formula \cite{le2016brownian}. In particular all the variables are uniformly bounded in $\mathcal{L}^2$ uniformly in $L$, and any sub-sequential limit $\cY^{(t)}$ limit of the process $\Big( (\cY^{(t)}_{\omega,L}\Big)_{L\geq 1}$ satisfies almost surely for any $\frak{p}\in \Lambda$
\begin{align}
	\cY^{(t)}(\frak{p})-\cY^{(0)}(\frak{p})=&\frac{1}{2}\int_{0}^{t}\sigma_0\Bigg[ \cY^{(s)}(c^+_0) \langle \chi_{\frak{p}}\chi_{a^+_0} \rangle_{S_{\omega}(s)} -  \cY^{(s)}(a^+_0) \langle \chi_{\frak{p}}\chi_{c^+_0} \rangle_{S_{\omega}(s)} \Bigg] dB_s \nonumber\\
	& \quad + \frac{1}{4} \int_{0}^{t} \sigma_0^2\Bigg( \frac{\cos(\theta^{(s)}_{0})}{\sin(\theta^{(s)}_{0})} -\frac{\mathbb{E}_{S_{\omega}(s)}[\varepsilon_{e_0}]}{\sin(\theta^{(s)}_{0})}  \Bigg) \times \cdots \nonumber\\
& \quad  \cdots \times  \Bigg( \cY^{(s)}(c_0^{+})\langle \chi_{\frak{p}}\chi_{a_0^{+}} \rangle_{S_{\omega}(s)} - \cY^{(s)}(a_0^{+})\langle \chi_{\frak{p}}\chi_{c_0^{+}} \rangle_{S_{\omega}(s)} \Bigg) ds.\label{eq:fermionic-SDE-embedding-within-proof}
\end{align}
Before passing to the limit, $\mathbf{P}$-almost surely, the construction ensures that at the points $s_j$ (which depend on $L$), for any $0\leq s\leq T(\omega)$, the fermionic identities \eqref{eq:3-terms} are satisfied by $\cY^{(s)}_{L,\omega}$ for the Ising model $S_\omega(s)$ (this Ising model coincides with $\widetilde{S}_\omega(s)$ on the points $s_j$). As a counterpart to this information for the discretised $\cY^{(s)}_{L,\omega}$, one can use the almost sure continuity of the Brownian trajectory and the almost sure continuity of the trajectories of the solutions of \eqref{eq:fermionic-SDE-embedding-within-proof}. All together, this ensures that, $\mathbf{P}$-almost surely, for any $0\leq s \leq T(\omega)$, the fermion $\cY^{(s)}_{\omega}$ satisfies all the identities \eqref{eq:3-terms} for the Ising model $S_{\omega}(s)$, as a counterpart made.

\textbf{General case: modifying all the coupling constants in $\Lambda$.}

For any edge $e_k\in \Lambda$, $\mathbf{P}$-almost surely the trajectory $s \mapsto B^{(e_k)}_{s}(\omega)$ is a continuous function and is bounded by $\frac{\pi}{8}$ on some non-trivial interval $0\leq s\leq T(\Lambda,\omega)$, where $T(\omega,\Lambda)$ is universal among edges in $\Lambda$. This allows to construct once again the family of Ising models $(S_{\omega}(s))_{0\leq s \leq T(\omega)}$ whose parameters are given by $\theta_k^{(s)}(\omega)=\frac{\pi}{4} + B^{(e_k)}_{s}(\omega)$ inside $\Lambda$. Splitting $[0;T(\Lambda,\omega)]$ into $L$ consecutive even segments $[s_j,s_{j+1}]$, one defines the piecewise-affine approximations $s\mapsto \widetilde{B}^{(e_k)}_{s,L}(\omega)$ that interpolates $s\mapsto B^{(e_k)}_{s}(\omega) $ on the points $(s_j)_{0\leq j \leq L}$ and the angles $\widetilde{\theta}_{k,L}^{(s)}(\omega)=\frac{\pi}{4} +  \widetilde{B}^{(e_k)}_{s,L}(\omega)$. One can then interpolate the Ising model $(S_{\omega}(s))_{0\leq s \leq T(\omega)}$ by $(\widetilde{S}_{\omega,L}(s))_{0\leq s\leq T(\Lambda,\omega)}$ on the points $s_j$. As the functions $\widetilde{\theta}_{k,L}^{(s)}$ are piecewise-linear, Lemma \ref{lem:ODE-embedding} constructs a family of fermions $(\cY^{(s)}_{\omega,L})_{0\leq s \leq T(\Lambda,\omega)}$ that satisfy \eqref{eq:3-terms} for $(\widetilde{S}_{\omega,L})_{0\leq s\leq T(\Lambda,\omega)}$. For $\frak{p}\in \Lambda$, $0\leq t\leq T(\omega)$ and $j^{[t]}= \left\lfloor t\cdot L\cdot T(\Lambda,\omega)^{-1} \right\rfloor $, one has this time
\begin{align*}
	\cY^{(s_{j+1})}_{\omega,L}(\frak{p})-\cY^{(s_{j})}_{\omega,L}(\frak{p})&= \sum\limits_{z_k\in \Lambda} \Big(\frac{B^{(e_k)}_{s_{j+1}}(\omega)-B^{(e_k)}_{s_{j}}(\omega)}{2} \Big) \times \cdots \\
	& \quad \quad \quad  \cdots \times  \Big( \cY^{(s_j)}_{\omega,L}(c_k^{+})\langle \chi_{\frak{p}}\chi_{a_k^{+}} \rangle_{S_{\omega}(s_j)} - \cY^{(s_j)}_{\omega,L}(a_k^{+})\langle \chi_{\frak{p}}\chi_{c_k^{+}} \rangle_{S_{\omega}(s_j)} \Big) \\
	&\quad +\sum\limits_{z_k\in \Lambda_n}  \frac{\Big( B^{(e_k)}_{s_{j+1}}(\omega)-B^{(e_k)}_{s_{j}}(\omega)\Big)^2}{4}  \times  \Bigg( \frac{\cos(\theta^{(s)}_{k})}{\sin(\theta^{(s)}_{k})} -\frac{\mathbb{E}_{S_{\omega}(s_j)}[\varepsilon_{e_k}]}{\sin(\theta^{(s)}_{k})}  \Bigg)\times \cdots  \\
	& \quad \cdots \times   \Bigg(  \cY^{(s_j)}_{\omega,L}(c_k^{+})\langle \chi_{\frak{p}}\chi_{a_k^{+}} \rangle_{S_{\omega}(s_j)} - \cY^{(s_j)}_{\omega,L}(a_k^{+})\langle \chi_{\frak{p}}\chi_{c_k^{+}} \rangle_{S_{\omega}(s_j)}\Bigg) \\
& + \sum\limits_{z_k\neq z_r \in \Lambda} \Bigg(B^{(e_k)}_{s_{j+1}}(\omega)-B^{(e_k)}_{s_{j}}(\omega)\Bigg) \Bigg(B^{(e_r)}_{s_{j+1}}(\omega)-B^{(e_r)}_{s_{j}}(\omega)\Bigg) \times \cdots \\
&  \quad \cdots \times g^{(\frak{p})}_{k,r}(\omega)\\
	&\quad +O(L^{-\frac{3}{2}})
\end{align*}
where $O()$ is uniform in $\mathcal{L}^2$ norm and the continuous function  $g^{(\frak{p})}_{k,r}$, are given in \eqref{eq:g_{k,r}} for the Ising model $S_{\omega}(s_j)$. Again, the Brownian trajectories and $s\mapsto \cY^{(s)}_{\omega,L}$ are almost surely equicontinuous, the Kadanoff-Ceva correlators and the energy densities are Lipschitz with respect the angle parameters in $\Lambda$. One can mimic this time the proof of the multi-dimensional Ito formula \cite{le2016brownian}. All the variables $\big( \cY^{(t)}_{\omega,L}\big)_{L\geq 1}$ are uniformly bounded in $\mathcal{L}^2$ and any sub-sequential limit of the processes $\big( \cY^{(t)}_{\omega,L}\big)_{L\geq 1}$ as $L\to \infty $ satisfies almost surely
\begin{align*}
	\cY^{(t)}(\frak{p})-\cY^{(0)}(\frak{p})=&\frac{1}{2}\int_{0}^{t} \sum\limits_{z_k\in \Lambda}\sigma_k\Bigg[ \cY^{(s)}(c^+_k) \langle \chi_{\frak{p}}\chi_{a^+_k} \rangle_{S_{\omega}(s)} -  \cY^{(s)}(a^+_k) \langle \chi_{\frak{p}}\chi_{c^+_k} \rangle_{S_{\omega}(s)} \Bigg] dB^{(e_k)}_s\\
	& \quad + \frac{1}{4}\int_{0}^{t} \sum\limits_{z_k\in \Lambda}  \sigma_k^2\Bigg( \frac{\cos(\theta^{(s)}_{k})}{\sin(\theta^{(s)}_{k})} -\frac{\mathbb{E}_{S_{\omega}(s)}[\varepsilon_{e_k}]}{\sin(\theta^{(s)}_{k})}  \Bigg) \times \cdots \\
& \quad  \cdots \times  \Bigg( \cY^{(s)}(c_k^{+})\langle \chi_{\frak{p}}\chi_{a_k^{+}} \rangle_{S_{\omega}(s)} - \cY^{(s)}(a_k^{+})\langle \chi_{\frak{p}}\chi_{c_k^{+}} \rangle_{S_{\omega}(s)} \Bigg) ds.
\end{align*}
Comparing the discrete expansion at a fixed $L$, the off-diagonal terms $z_k\neq z_r$ containing terms of the form $\big(B^{(e_k)}_{s_{j+1}}(\omega)-B^{(e_k)}_{s_{j}}(\omega)\big) \big( B^{(e_r)}_{s_{j+1}}(\omega)-B^{(e_r)}_{s_{j}}(\omega)\big) $ disappear in the limit, as $\mathrm{d}\big[ B^{(e_k)},B^{(e_r)}\big]_s=0$ for $k\neq r$ since the Brownian motions are independent. The fact that $\mathbf{P}$-almost surely, $\cY^{(t)}_{\omega}$ is indeed an $s$-embedding of $S_{\omega}(t)$, can be checked exactly as for the toy example.
\end{proof}

\subsection{Some non-optimal bound when deforming via Brownian motions}\label{sub:non-optimal-deformation}

We are now in a position to use the construction of Lemma \ref{lem:SDE-embedding} to derive quantitative bounds on the deviation of an $s$-embedding from its initial value when the coupling constants are perturbed by independent Brownian motions. Once the orders of magnitude of the coefficients in the SDE \eqref{eq:SDE-embedding} are understood, it requires only relatively simple computations involving Brownian motion and local martingales to determine the maximal time during which, with high probability, the embedding can evolve while remaining within some \Unif\, class of Ising models. Before diving into the proof, let us compare this setting with the deterministic massive case studied in Section \ref{sec:massive-deformation}. Let $Y^{(t)}_n$ denote the vector containing all coordinates $\cY^{(t)}(\frak{p})$ of the fermion evaluated at the corners $\frak{p} \in \Lambda_n$. The differential equation \eqref{eq:ODE-embedding-general} can be rewritten as a matrix system of the form $(Y^{(t)}_n)' = A_n(t) Y^{(t)}_n$. For a generic set of bounded masses, the (sharp up to constants) bound \eqref{eq:bound-correlator-distance} implies that $\|A(0)\|$ is of order $n$. Therefore, standard ODE heuristics suggest that for times $t = o(\|A(0)\|^{-1}) = o(n^{-1})$, we should have $Y^{(t)}_n = Y^{(0)}_n \times (1 + o(1))$, while for $t = \|A(0)\|^{-1}$, the vectors $Y^{(t)}_n$ and $Y^{(0)}_n$ should remain comparable up to a constant factor. However, this analogy is misleading in the random case. For each realization $\omega$, the process $\cY^{(s)}_{\omega}(\frak{p})$ is the sum of a local martingale and a finite-variation process. The local martingale contribution (the right-hand side of the first line of the SDE in Lemma \ref{lem:SDE-embedding}) is a stochastic generalization of \eqref{eq:ODE-embedding-general}, where $ds$ is replaced by $dB^{(e)}_s$ terms. Provided the system remains within a \Unif-like setup, the quadratic variation (bracket) of the local martingale at time $s$ is of order $(s \log(n))^{\frac{1}{2}}$. Attempting to apply a reasoning similar to the deterministic case would suggest that the embedding remains stable up to time $\log(n)^{-\frac{1}{2}}$, which would correspond to a much larger admissible deviation for the coupling constants than that stated in Theorem \ref{thm:near-critical-RSW-random}. This discrepancy arises because the leading-order contribution to the deformation of $\cY^{(s)}_{\omega}(\frak{p})$ comes from the second term in Itô’s formula (i.e., the right-hand side of the second line of the SDE in Lemma \ref{lem:SDE-embedding}).  After careful analysis, this term is found to be of order $n^{-\frac{1}{2}} \cdot n s^{\frac{3}{2}}$ at time $s$, up to some $\log(n)$ corrections, which provides the correct order of magnitude for the standard deviation in the random setting, and thus offers insight to the appropriate near-critical random window. This argument is further supported by external analysis presented in \cite{AveMah25}, which we will be recalled. In the proofs, we will replicate many arguments from the deterministic case, this time carefully ensuring that all estimates hold with high probability. Let us begin by breaking down how this control is obtained, starting with the Doob decomposition of the fermion.
 \begin{itemize}
 	\item Consider the fermion $(\cY^{(s)}(\frak p ))_{\frak{p}\in \Lambda_n}$ as a strong solution to SDE of Lemma \ref{lem:SDE-embedding}. For every $\frak{p}\in \Lambda_n$ one can make the Doob decomposition of the process as $\cY^{(s)}(\frak{p}):= \cM^{(s)}(\frak{p})+\cA^{(s)}(\frak{p})$ where 
 \begin{align*}
 	\cM^{(t)}(\frak{p})&:=\frac{1}{2}\int_{0}^{t} \sum\limits_{z_k\in \Lambda_n}\sigma_k\Bigg[ \cY^{(s)}(c^+_k) \langle \chi_{\frak{p}}\chi_{a^+_k} \rangle_{S(s)} -  \cY^{(s)}(a^+_k) \langle \chi_{\frak{p}}\chi_{c^+_k} \rangle_{S(s)} \Bigg] dB^{(e_k)}_s,\\
 	& :=\cM^{(s)}_{\mathbb{R}}(\frak{p}) + i \cM^{(s)}_{i\mathbb{R}}(\frak{p}) \\
 	\cA^{(t)}(\frak{p})&:= \frac{1}{4}\int_{0}^{t} \sum\limits_{z_k\in \Lambda}  \sigma_k^2\Bigg( \frac{\cos(\theta^{(s)}_{k})}{\sin(\theta^{(s)}_{k})} -\frac{\mathbb{E}_{S(s)}[\varepsilon_{e_k}]}{\sin(\theta^{(s)}_{k})} \Big) \Bigg) \times \cdots\\
 	&\quad  \cdots \times  \Bigg( \cY^{(s)}(c_k^{+})\langle \chi_{\frak{p}}\chi_{a_k^{+}} \rangle_{S(s)} - \cY^{(s)}(a_k^{+})\langle \chi_{\frak{p}}\chi_{c_k^{+}} \rangle_{S(s)} \Bigg) ds\\
 	& :=\cA^{(s)}_{\mathbb{R}}(\frak{p}) + i \cA^{(s)}_{i\mathbb{R}}(\frak{p}).
 \end{align*}
 The process $\cM^{(t)}(\frak{p})$ is a local martingale with  $\cM^{(0)}(\frak{p})=0$ and $\cA^{(t)}(\frak{p})$ is a $ \mathcal{F}_t$ predictable finite-variation process. Their respective real and imaginary parts $\cM^{(t)}_{\mathbb{R}}(\frak{p}),\cM^{(t)}_{i\mathbb{R}}(\frak{p})$ and $\cA^{(t)}_{\mathbb{R}}(\frak{p}),\cA^{(t)}_{i\mathbb{R}}(\frak{p})$ have the same properties. The brackets of $\cM^{(t)}_{\mathbb{R}}(\frak{p})$ and $\cM^{(s)}_{i\mathbb{R}}(\frak{p})$ are denoted respectively $[\cM_{\mathbb{R}}(\frak{p})]^{(t)}$ and $[\cM_{i\mathbb{R}}(\frak{p})]^{(t)}$, with an infinitesimal increment given by
 \begin{align*}
 	d[\cM_{\mathbb{R}}(\frak{p})]^{(t)}&= \frac{1}{4} \sum\limits_{z_k\in \Lambda_n}\sigma_k^2 \Bigg( \cY^{(s)}_{\mathbb{R}}(c^+_k) \langle \chi_{\frak{p}}\chi_{a^+_k} \rangle_{S(s)} -  \cY^{(s)}_{\mathbb{R}}(a^+_k) \langle \chi_{\frak{p}}\chi_{c^+_k} \rangle_{S(s)} \Bigg)^2,\\
 	d[\cM_{i\mathbb{R}}(\frak{p})]^{(t)}&= \frac{1}{4} \sum\limits_{z_k\in \Lambda_n}\sigma_k^2 \Bigg( \cY_{i\mathbb{R}}^{(s)}(c^+_k) \langle \chi_{\frak{p}}\chi_{a^+_k} \rangle_{S(s)} -  \cY_{i\mathbb{R}}^{(s)}(a^+_k) \langle \chi_{\frak{p}}\chi_{c^+_k} \rangle_{S(s)} \Bigg)^2,
 \end{align*}
where $\cY$ naturally splits between its real and imaginary parts $\cY_{\mathbb{R}}$ and $\cY_{i\mathbb{R}}$.
 \end{itemize}
 
This allows us to define the stopping time $\mathbf{T}_{\Lambda_n}$ for the filtration $(\mathcal{F}_s)_{s \geq 0}$, given $\mathbf{P}$-almost surely by
\begin{equation}
	\mathbf{T}_{\Lambda_n}(\omega) := \inf_{t \geq 0} \left\{ \cS_{\omega}^{(t)}(\Lambda_n) \notin \Unifff \right\} \wedge \inf_{t \geq 0} \left\{ \exists e \in \Lambda_n,\ |B^{(e)}_{t}| \geq \frac{\pi}{20} \right\}.
\end{equation}
In words, the stopping time $\mathbf{T}_{\Lambda_n}$ is the first moment when, somewhere in $\Lambda_n$, the geometry of the embedding becomes sufficiently distorted from the original square lattice $\cS^{(0)}$, due to the evolution of the coupling constants driven by Brownian motion. Indeed, if any coupling constant deviates macroscopically from the critical value $\frac{\pi}{4}$, the geometric reconstruction of the Ising weight via \eqref{eq:x=tan-theta} implies that the corresponding Brownian motion has already moved macroscopically away from $0$. We now state a (non-optimal) lower bound on the stopping time $\mathbf{T}_{\Lambda_n}$, valid at least with high $\mathbf{P}$-probability. In particular, it ensures that, with high probability, all Brownian motions $B^{(e_k)}_s$ can be run up to time $t \asymp n^{-1}$. This allows us to construct, with high probability, an embedding whose coupling constants are i.i.d.\ and typically deviate from $\frac{\pi}{4}$ by $\asymp n^{-\frac{1}{2}}$ at each edge in $\Lambda_n$, while still remaining within a \Unif\, class of embeddings—and thus within the space of (near-)critical Ising models. If, on the other hand, one were to replace the deviations $B^{(e_k)}_s$ by $|B^{(e_k)}_s|$, the resulting model would be off-critical by a significant margin. The following provides the first (non-optimal) lower bound on $\mathbf{T}_{\Lambda_n}$: \begin{proposition}\label{prop:bound-T_n-non-optimal}
 	Assume that all the variances in $\Lambda_n$ are unitary and all the other variances vanish. Then there exist some small enough universal constant $c_{1,2}>0$ such that
 	\begin{equation}
 		\mathbf{P}\Big[ \mathbf{T}_{\Lambda_n}\leq \frac{c_1}{n} \Big] \leq c_2\exp(-n^{c_2}). 
 	\end{equation}
 \end{proposition}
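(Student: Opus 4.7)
The plan is to transpose the deterministic bootstrap of Proposition \ref{prop:effective-cost-deformation} to the stochastic setting, using the stopping time $\mathbf{T}_{\Lambda_n}$ itself as the device that makes the fermionic bound \eqref{eq:bound-correlator-distance} available all along the deformation. By its very definition, for every $s\leq\mathbf{T}_{\Lambda_n}(\omega)$ the embedding $\cS^{(s)}_{\omega}(\Lambda_n)$ belongs to \Unifff, so \eqref{eq:bound-correlator-distance} can be applied to all Kadanoff--Ceva fermions appearing in the SDE \eqref{eq:SDE-embedding} with constants depending only on $r_0=10,\theta_0=\pi/10$, and the comparability $|\cY^{(s)}(a_k^+)|,|\cY^{(s)}(c_k^+)|\asymp n^{-1/2}$ holds. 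I would then work throughout with the stopped processes $\cM^{(s\wedge\mathbf{T}_{\Lambda_n})}$ and $\cA^{(s\wedge\mathbf{T}_{\Lambda_n})}$ coming from the Doob decomposition of $\cY^{(s)}$, controlling them separately.

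For the predictable drift part $\cA^{(s\wedge\mathbf{T}_{\Lambda_n})}$, the argument leading to \eqref{eq:bound-phi-optimal} applies verbatim: the prefactor $\cos\theta/\sin\theta-\mathbb{E}[\varepsilon]/\sin\theta$ remains bounded in the near-critical regime and the sum $\sum_{z_k}|\langle\chi_{\frak p}\chi_{a_k^+}\rangle_{S_\omega(s)}|$ is of order $n$, yielding $|\cA^{(s\wedge\mathbf{T}_{\Lambda_n})}(\frak p)|\leq Cs\sqrt{n}$, which is at most $0.01\,n^{-1/2}$ for $s\leq c_1/n$ with $c_1$ small. For the local martingale part, using $|\langle\chi\chi\rangle|^{2}\leq\Theta^{2}/(n^{2}|\cS^{(0)}(\frak p)-\cS^{(0)}(z_k)|^{2})$ and $|\cY^{(s)}|^{2}\lesssim n^{-1}$, the bracket is bounded by
\begin{equation*}
[\cM_{\mathbb{R}}(\frak p)]^{(s\wedge\mathbf{T}_{\Lambda_n})}\ \leq\ C\int_{0}^{s}\sum_{z_k\in\Lambda_n}\frac{1}{n^{3}|\cS^{(0)}(\frak p)-\cS^{(0)}(z_k)|^{2}}\,du\ \leq\ C's\frac{\log n}{n},
\end{equation*}
the logarithm arising from the approximation of the discrete sum by $\int|z|^{-2}\,dA$ over an annulus of inner radius $n^{-1}$ and outer radius of order $1$.

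The Dambis--Dubins--Schwarz representation of a continuous local martingale as a time-changed Brownian motion then yields the Gaussian tail $\mathbf{P}[\sup_{u\leq s\wedge\mathbf{T}_{\Lambda_n}}|\cM^{(u)}(\frak p)|\geq\lambda]\leq 4\exp(-\lambda^{2}n/(2C's\log n))$. Choosing $\lambda=0.01\,n^{-1/2}$ and $s=c_1/n$ produces a single-corner bound of order $\exp(-c\,n/\log n)$; a union bound over the $O(n^{2})$ corners of $\Lambda_{2n}$ leaves probability at most $\exp(-n^{c_2})$ for some small $c_2>0$, provided $c_1$ is small. An analogous Gaussian tail applied to each $B^{(e)}_{s}$ with variance $c_1/n$, union-bounded over the $O(n^{2})$ edges of $\Lambda_n$, stays within the same budget. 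On the complement of these exceptional events, one has $|\cY^{(s)}(\frak p)-\cY^{(0)}(\frak p)|\leq 0.1\,n^{-1/2}$ uniformly in $\frak p\in\Lambda_{2n}$ and in $s\leq(c_1/n)\wedge\mathbf{T}_{\Lambda_n}$, together with $\sup_{e}|B^{(e)}_{s}|<\pi/20$, and the distance-comparability plus argument-principle arguments of Step~2 of the proof of Proposition~\ref{prop:effective-cost-deformation} then show that $\cS^{(s)}_{\omega}(\Lambda_n)$ is proper and still satisfies \Unifff, which forces $\mathbf{T}_{\Lambda_n}(\omega)\geq c_1/n$ on this event. The main obstacle, and the very reason for introducing $\mathbf{T}_{\Lambda_n}$, is the circular character of the estimate: \Unif-regularity is needed to bound both $[\cM]$ and $\cA$ via \eqref{eq:bound-correlator-distance}, while these bounds are exactly what ensures the preservation of \Unif-regularity; stopping at $\mathbf{T}_{\Lambda_n}$ is what closes this loop rigorously.
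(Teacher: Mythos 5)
Your proposal is correct and follows essentially the same route as the paper: Doob decomposition of the stopped fermion, drift bounded by $O(sn^{1/2})$ using \eqref{eq:bound-correlator-distance}, bracket bounded by $O(s\log(n)/n)$ via the same $\int r^{-2}\,dA$ computation, a Gaussian-type tail for the stopped local martingale, a union bound over the $O(n^{2})$ corners, and a separate union bound to rule out any $B^{(e)}$ reaching $\pi/20$. The only cosmetic differences are that you invoke Dambis--Dubins--Schwarz where the paper directly cites the concentration inequality \eqref{eq:deviation-local-martingale}, and you frame the conclusion as ``on the good event the deviation stays below $0.1\,n^{-1/2}$, hence \Unifff\ persists, hence $\mathbf{T}_{\Lambda_n}>c_1/n$'' whereas the paper argues by contrapositive (``if $\mathbf{T}_{\Lambda_n}\leq c/n$ then some coordinate of $\cM$ must have moved by $\geq (50)^{-1}n^{-1/2}$''); these are equivalent. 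One small point to watch: $\cM^{(s)}(\frak p)$ is complex, so you should apply DDS and the tail bound separately to $\cM_{\mathbb{R}}$ and $\cM_{i\mathbb{R}}$, as the paper does.
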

The proof is quite similar to the arguments presented in Section \ref{sec:massive-deformation}, relying on the interplay between the geometry of the embeddings—which controls the order of magnitude of the two-point Kadanoff–Ceva fermions—and the resulting control on the infinitesimal deviation of the embedding. To streamline the exposition, we will omit details regarding the properness of $\cS^{(s)}_{\omega}$, the distance comparability with $\cS^{(0)}$, and high-probability estimates. These aspects follow directly from the fact that the flow of the SDE \eqref{eq:SDE-embedding} constructs $s$-embeddings in the \Unifff\, class, which permits the application of the bound on the growth rate of correlators \eqref{eq:bound-correlator-distance}. All such arguments can be reproduced verbatim from the proofs in Section \ref{sec:massive-deformation}. Before proceeding with the proof, let us recall some basic facts about local martingales and Brownian motion computations. Let $M^{(t)}$ be a real-valued local martingale with $M^{(0)} = 0$, and let $[M]^{(t)}$ denote its bracket under some probability measure $\mathbb{P}$. Then there exists a universal constant $C > 0$ such that, for any $x, y > 0$,
 \begin{equation}\label{eq:deviation-local-martingale}
 	\mathbb{P}[\sup_{t\geq 0} | M^{(t)} | \geq x  \cap  |[M]^{\infty}|\leq y] \leq \exp(-C\frac{x^2}{y}).
 \end{equation}
 
\begin{proof}[Proof of Proposition \ref{prop:bound-T_n-non-optimal}]
	Let us first state the simple event dichotomy
\begin{align*}
	\Big\{ \mathbf{T}_{\Lambda_n}\leq \frac{c}{n} \Big\} &=  \Big\{ \mathbf{T}_{\Lambda_n}\leq \frac{c}{n} \Big\} \bigcap \Big\{ \forall 0\leq  t\leq \mathbf{T}_{\Lambda_n} \forall e\in \Lambda_n, |B^{(e)}_{t}| \leq \frac{\pi}{20}  \Big\} \\
	& \quad \bigcup \Big\{ \mathbf{T}_{\Lambda_n}\leq \frac{c}{n} \Big\} \bigcap \Big\{ \exists t\leq \mathbf{T}_{\Lambda_n} \exists e\in \Lambda_n, |B^{(e)}_{t}| \geq \frac{\pi}{20}  \Big\}.
\end{align*}	

\textbf{Step 1: Exclude the case where at least one coupling constant drifted too fast}
One can easily see that the event of the second line of the above equation implies that the event
\begin{equation*}
	\Big\{\exists 0\leq  t\leq \frac{c}{n}, \exists e\in \Lambda_n, |B^{(e)}_{t}| \geq \frac{\pi}{20}  \Big\},
\end{equation*}
holds, which happens with a probability  that decays stretch exponentially fast in $n$ (with at some speed depending on $c$ small enough which will be fixed later). Therefore, it remains to estimate the probability of the event of the first line.
 
\textbf{Step 2: Evaluate the order of magnitude of $\cA^{\mathbf{T}_{\Lambda_n}}(\frak{p})$} 
 
Let us work assuming the event $\{\forall 0\leq t\leq \mathbf{T}_{\Lambda_n} , \forall e\in \Lambda_n, |B^{(e)}_{t}| \leq \frac{\pi}{20}  \}$ holds. Then for any corner $\frak{p}\in \Lambda_n$ and any $0 \leq s\leq  \mathbf{T}_{\Lambda_n} $, one can easily upper bound $ |\cot(\theta^{(s)}_{k})) -\mathbb{E}_{S(s)}[\varepsilon_{e_k}] \cdot \sin^{-1}(\theta^{(s)}_{k}))|$ by some universal constant (this is why one requires that the angles didn't deviate too much from their original value $\frac{\pi}{4}$). On the other hand, repeating verbatim the computations of Section \ref{sec:massive-deformation} implies that for any $0\leq s\leq  \mathbf{T}_{\Lambda_n} $
 \begin{equation}
 	 \Bigg| \cY^{(s)}(c_k^{+})\langle \chi_{\frak{p}}\chi_{a_k^{+}} \rangle_{S(s)} - \cY^{(s)}(a_k^{+})\langle \chi_{\frak{p}}\chi_{c_k^{+}} \rangle_{S(s)} \Bigg|=O(\frac{1}{n^{\frac{1}{2}}} \times \frac{1}{n}\times \frac{1}{|\cS^{(0)}(a_k)-\cS^{(0)}(\frak{p}) |}).
 \end{equation}
Therefore summing over all the corners $a_k,c_k\in \Lambda_n $ and integrating up to time $\mathbf{T}_{\Lambda_n} $ one gets that
\begin{equation}
	\cA^{\mathbf{T}_{\Lambda_n}}(\frak{p})= \int_{0}^{\mathbf{T}_{\Lambda_n}} \sum_{z_k\in \Lambda_n}O(\frac{1}{n^{\frac{1}{2}}} \times \frac{1}{n}\times \frac{1}{|\cS^{(0)}(a_k)-\cS^{(0)}(\frak{p}) |}) =O(\frac{1}{n^\frac12} \times n\times \mathbf{T}_{\Lambda_n}).
\end{equation}
Therefore, on the event 
\begin{equation*}
	\mathcal{B}_{n,c}:=\Big\{ \mathbf{T}_{\Lambda_n}\leq \frac{c}{n} \Big\} \cap \Big\{ \forall 0\leq  t\leq \mathbf{T}_{\Lambda_n} \forall e\in \Lambda_n, |B^{(e)}_{t}| \leq \frac{\pi}{20}  \Big\},
\end{equation*}
one has for every $\frak{p}\in \Lambda_n $ that $\cA^{\mathbf{T}_{\Lambda_n}}(\frak{p})=O\Big(\frac{c}{n^{\frac12}}\Big)$.

\textbf{Step 3: Evaluate the order of magnitude of $\cM^{\mathbf{T}_{\Lambda_n}}(\frak{p})$} 

Recall that $\mathbf{T}_{\Lambda_n}$ corresponds to the first time the fermion constructed via Lemma \ref{lem:SDE-embedding} deviates macroscopically enough from $\cY^{(0)}$. The embedding $\cS^{\mathbf{T}_{\Lambda_n}}(\Lambda_n)$ doesn't satisfy \Unifff\,, while $|\Re[\cY^{(0)}(\frak{p})]|,|\Im[\cY^{(0)}(\frak{p})]| \geq \cos(\frac{3\pi}{8})$. Therefore, by almost-sure continuity of the solutions $s\mapsto \cY^{(s)}_{\omega}(\frak{p}) $, there exist at least one corner $\frak{p}\in \Lambda_n$ such that 
\begin{align*}
	|\cY_{\mathbb{R}}^{(\mathbf{T}_{\Lambda_n})}(\frak{p}) -\cY_{\mathbb{R}}^{(0)}(\frak{p})| &=|\cM_{\mathbb{R}}^{(\mathbf{T}_{\Lambda_n})}(\frak{p})+\cA_{\mathbb{R}}^{(\mathbf{T}_{\Lambda_n})}(\frak{p})| \geq \frac{1}{50n^{\frac{1}{2}}} \textrm{ or }\\
	|\cY_{i\mathbb{R}}^{(\mathbf{T}_{\Lambda_n})}(\frak{p}) -\cY_{i\mathbb{R}}^{(0)}(\frak{p})| &=|\cM_{i\mathbb{R}}^{(\mathbf{T}_{\Lambda_n})}(\frak{p})+\cA_{i\mathbb{R}}^{(\mathbf{T}_{\Lambda_n})}(\frak{p})| \geq \frac{1}{50n^{\frac{1}{2}}}.
\end{align*}
In particular, provided $c$ is chosen small enough (but independent from $n$), on the event $\mathcal{B}_{n,c}$, there exist at least one corner $\frak{p}\in \Lambda_n$ such that
\begin{align*}
\Big|\cM_{\mathbb{R}}^{(\mathbf{T}_{\Lambda_n})}(\frak{p})\Big| \geq \frac{1}{4n^{\frac{1}{2}}} &\textrm{ or } \\
\Big|\cM_{i\mathbb{R}}^{(\mathbf{T}_{\Lambda_n})}(\frak{p})\Big| \geq \frac{1}{4n^{\frac{1}{2}}}.
\end{align*}
Moreover, for any $\frak{p}\in \Lambda_n$, $(\cM_{\mathbb{R}}^{(t\wedge \mathbf{T}_{\Lambda_n})}(\frak{p}))_{t\geq 0} $ is a local martingale and 
\begin{equation}
	[\cM_{\mathbb{R}}^{(t\wedge \mathbf{T}_{\Lambda_n})}(\frak{p})]^{\infty}=\int_{0}^{t\wedge \mathbf{T}_{\Lambda_n}} \frac{1}{4} \sum\limits_{z_k\in \Lambda_n} \Bigg( \cY_{\mathbb{R}}^{(s)}(c^+_k) \langle \chi_{\frak{p}}\chi_{a^+_k} \rangle_{S(s)} -  \cY_{\mathbb{R}}^{(s)}(a^+_k) \langle \chi_{\frak{p}}\chi_{c^+_k} \rangle_{S(s)} \Bigg)^2 ds.
\end{equation}
Since the embeddings  $(\cS^{(s)}(\Lambda_n))_{0\leq s\leq t\wedge \mathbf{T}_{\Lambda_n} }$ satisfy almost surely \Unifff\, one has for any $0\leq s\leq t\wedge \mathbf{T}_{\Lambda_n}$
\begin{equation}
	\sum\limits_{a^+_k\in \Lambda_n} |\langle \chi_{\frak{p}} \chi_{a^+_k} \rangle_{S(s)}|^2=O\bigg( \sum\limits_{a_k\in \Lambda_{n}} \frac{1}{n^2} \frac{1}{\cdot|\cS^{(0)}(\frak{p})-\cS^{(0)}(a_k^{+})|^2}\bigg) =O(\log(n)),
\end{equation}
as
\begin{equation}	
\sum\limits_{a^+_k\in \Lambda_n} \frac{1}{n^2} \frac{1}{\cdot|\cS^{(0)}(\frak{p})-\cS^{(0)}(a^+_k)|^2}=O\big( \displaystyle\int_{\mathcal{D}_1 \backslash \mathcal{D}_{\frac{1}{n}}}\frac{1}{|z-\frak{p}|^2}dA(z) \big)=O(\log(n)).
\end{equation}
while for any $0\leq s\leq t\wedge \mathbf{T}_{\Lambda_n}$ and any $c_{k}^+\in \Lambda_n$ one has $\cY^{(s)}(c_{k}^+)=O(n^{-\frac12})$. Using a similar reasoning for the other terms involved in the sum ensures that 
\begin{equation}
	[\cM_{\mathbb{R}}^{(t\wedge \mathbf{T}_{\Lambda_n})}(\frak{p})]^{\infty}=\int_{0}^{t\wedge \mathbf{T}_{\Lambda_n}} O(\frac{\log(n)}{n})ds= O(\mathbf{T}_{\Lambda_n}\frac{\log(n)}{n}).
\end{equation}
The equation upper bound also holds for $[\cM_{i\mathbb{R}}^{(t\wedge \mathbf{T}_{\Lambda_n})}(\frak{p})]^{\infty}$.

\textbf{Step 4: Conclude using large deviation principles for $\cM^{\mathbf{T}_{\Lambda_n}}(\frak{p})$} 
One can now conclude using the previous estimates. Taking some union bounds over the $O(n^2)$ corners $ \frak{p} \in \Lambda_n$ one gets
\begin{align*}
	\mathbf{P}\Bigg[\mathcal{B}_{n,c}\Bigg]&\leq O(n^2)\times \sup\limits_{\frak{p}\in \Lambda_n} \mathbf{P}\Bigg[ \big\{ \mathbf{T}_{\Lambda_n}\leq \frac{c}{n} \big\}\bigcap \big\{ \Big|\sup_{t\geq 0}\cM_{\mathbb{R}}^{(t\wedge \mathbf{T}_{\Lambda_n} )}(\frak{p})\big| \geq \frac{1}{50n^{\frac{1}{2}}} \cdots \\
	&  \quad  \quad \quad\quad \quad \quad \quad \cdots  \bigcap \Big|[\cM_{\mathbb{R}}^{(t\wedge \mathbf{T}_{\Lambda_n})}(\frak{p})]^{\infty}\big| \leq O(\mathbf{T}_{\Lambda_n}\frac{\log(n)}{n})\Bigg]\\
&\quad + O(n^2)\times \sup\limits_{\frak{p}\in \Lambda_n} \mathbf{P}\Bigg[ \big\{ \mathbf{T}_{\Lambda_n}\leq \frac{c}{n} \big\}\bigcap \big\{ \Big|\sup_{t\geq 0}\cM_{i\mathbb{R}}^{(t\wedge \mathbf{T}_{\Lambda_n} )}(\frak{p})\big| \geq \frac{1}{50n^{\frac{1}{2}}} \cdots \\
	&  \quad  \quad \quad\quad \quad \quad \quad \cdots  \bigcap \Big|[\cM_{i\mathbb{R}}^{(t\wedge \mathbf{T}_{\Lambda_n})}(\frak{p})]^{\infty}\big| \leq O(\mathbf{T}_{\Lambda_n}\frac{\log(n)}{n})\Bigg].
\end{align*}
One can now see that
\begin{equation}\nonumber
	\mathbf{P}\Bigg[ \big|\sup_{t\geq 0}\cM_{\mathbb{R}}^{(t\wedge \mathbf{T}_{\Lambda_n} )}(\frak{p})\Big| \geq \frac{1}{50n^{\frac{1}{2}}} \bigcap \Big|[\cM_{\mathbb{R}}^{t\wedge (\mathbf{T}_{\Lambda_n})}(\frak{p})]^{\infty}\Big| \leq O(\frac{c\log(n)}{n^2})\Bigg]
\end{equation}
decays stretch exponentially fast in $n$ using \eqref{eq:deviation-local-martingale}, as long as $c$ has been chosen small enough in Step $2$. A similar reasoning concerning $\cM_{i\mathbb{R}}^{(t\wedge \mathbf{T}_{\Lambda_n} )}(\frak{p})$ concludes the proof.
\end{proof}

\subsection{Improving the lower bound of $\mathbf{T}_{\Lambda_n} $ when running \eqref{eq:SDE-embedding}.}
The goal of this section is to derive the optimal order of magnitude by which one can perturb the coupling constants in a Brownian fashion such that the stochastic flow of $s$-embeddings remains within the class of (near-)critical lattices. We show that, up to logarithmic corrections, the SDE \eqref{eq:SDE-embedding} can be run up to a time of order $\asymp n^{-\frac{1}{3}}$ while staying within a \Unif\, class of embeddings—thus transforming the deterministic critical window of size $n^{-1}$ into its cube root in the weakly random setting. The notion of a near-critical window in a random environment is informal and not rigorously defined here, but in our context, it refers to the largest standard deviation of the coupling constants for which \emph{the entire conformal structure remains stable}. In particular, at every scale, crossing probabilities in annuli should remain uniformly bounded away from $0$ and $1$. This does not exclude the possibility that the SDE \eqref{eq:SDE-embedding} may average the conformal structure at some mesoscopic scale (i.e., polynomial in $n$ inside the box $\Lambda_n$), with fluctuations in the origami map of polynomial order within $\Lambda_n$. From a technical perspective, all the tools—such as criticality, precompactness, and the asymptotic behavior of the two-point fermion—developed in \cite{MahPar25a, Mah23, Che20} remain applicable far beyond the \Unif\, setup. This raises the question of whether there exists a broader random scaling window that only governs macroscopic events in $\Lambda_n$, rather than at every scale. Coming back to our hands-on problem, let us now state the following proposition, which serves as the main input for proving Theorem \ref{thm:near-critical-RSW-random}.
\begin{proposition}\label{prop:bound-T_n-optimal}
 	Assume that all the variances in $\Lambda_n$ are unitary and all the other variances vanish. Then there exist some small enough universal constants $c_{1}>0$ such that
 	\begin{equation}
 		\mathbf{P}\Big[ \mathbf{T}_{\Lambda_n}\leq \frac{c_1}{n^\frac{2}{3}\log^{\frac{1}{3}}(n)} \Big] \leq O(\frac{1}{n^4}). 
 	\end{equation}
 \end{proposition}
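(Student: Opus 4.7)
The plan is to revisit Proposition~\ref{prop:bound-T_n-non-optimal} and exploit a cancellation that occurs at the critical square lattice. Since $\theta_k^{(0)}=\tfrac{\pi}{4}$ and the full-plane critical energy density equals $\mathbb{E}_{S(0)}[\varepsilon_{e_k}]=\tfrac{\sqrt{2}}{2}$, the Itô drift coefficient
\[
\frac{\cos\theta_k^{(s)}}{\sin\theta_k^{(s)}}-\frac{\mathbb{E}_{S_\omega(s)}[\varepsilon_{e_k}]}{\sin\theta_k^{(s)}}
\]
vanishes exactly at $s=0$, so the $O(1)$ bound that was used in Step~2 of Proposition~\ref{prop:bound-T_n-non-optimal} is wasteful by a factor $\sqrt{s\log n}$. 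Feeding this refined estimate into the Grönwall computation produces an extra factor $\mathbf{T}_{\Lambda_n}^{1/2}$ in the drift bound, replacing the deterministic threshold $n^{-1}$ by its cube root $n^{-2/3}$ up to a logarithmic correction.

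\textbf{Key steps.} Set $\tau_n:=c_1 n^{-2/3}\log^{-1/3}(n)$ and introduce the good event
\[
\cG_n\ :=\ \bigcap_{e\in\Lambda_n}\Big\{\sup_{0\le s\le\tau_n}|B^{(e)}_{s}|\le C_0\sqrt{\tau_n\log n}\Big\},
\]
so that $\mathbf{P}[\cG_n^c]=O(n^{-4})$ by a Gaussian maximal inequality and a union bound over the $O(n^2)$ edges. On $\cG_n\cap\{\mathbf{T}_{\Lambda_n}\le\tau_n\}$, the stopping time definition ensures $\cS^{(s)}_\omega\in\Unifff$ for every $s\le\mathbf{T}_{\Lambda_n}$, so \eqref{eq:bound-correlator-distance} holds with uniform constants and $|\theta_k^{(s)}-\tfrac{\pi}{4}|=O(\sqrt{s\log n})$. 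First-order Taylor expansion then gives $|\cos\theta_k^{(s)}-\tfrac{1}{\sqrt{2}}|=O(\sqrt{s\log n})$. To control $|\mathbb{E}_{S_\omega(s)}[\varepsilon_{e_k}]-\tfrac{\sqrt{2}}{2}|$, one views it as a smooth functional of the independent Gaussians $(B^{(e_j)}_s)_{e_j\in\Lambda_n}$: the sensitivities $\partial_{\theta_j}\mathbb{E}[\varepsilon_{e_k}]$ are (up to constants) the critical energy--energy correlations, which decay like $d_{jk}^{-2}$ by \eqref{eq:expansion-fermion} and \eqref{eq:bound-correlator-distance}. Since $\sum_j d_{jk}^{-4}=O(1)$, the functional has $\ell^2$-gradient bounded uniformly in $n$, and Gaussian (Borell-type) concentration, together with a union bound over $k$, yields
\[
\sup_{k\in\Lambda_n}\ \Big|\mathbb{E}_{S_\omega(s)}[\varepsilon_{e_k}]-\tfrac{\sqrt{2}}{2}\Big|\ \le\ C\sqrt{s\log n},
\]
on a subset of $\cG_n$ whose complement still has probability $O(n^{-4})$. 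Combining both estimates produces the refined coefficient bound $\big|\cot\theta_k^{(s)}-\mathbb{E}_{S_\omega(s)}[\varepsilon_{e_k}]/\sin\theta_k^{(s)}\big|=O(\sqrt{s\log n})$, uniformly in $k$.

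\textbf{Combining the estimates.} Inserting the refined coefficient into the drift integral and repeating verbatim the Riemann-sum estimate $\sum_{z_k\in\Lambda_n}|\cY^{(s)}(c_k^+)|\cdot|\langle\chi_\frak{p}\chi_{a_k^+}\rangle_{S_\omega(s)}|=O(n^{1/2})$ from Step~2 of Proposition~\ref{prop:bound-T_n-non-optimal} gives
\[
|\cA^{(\mathbf{T}_{\Lambda_n})}(\frak{p})|\ \le\ \int_0^{\mathbf{T}_{\Lambda_n}}O\big(\sqrt{s\log n}\cdot n^{1/2}\big)\,ds\ =\ O\big(\sqrt{\log n}\cdot n^{1/2}\cdot\mathbf{T}_{\Lambda_n}^{3/2}\big).
\]
For $\mathbf{T}_{\Lambda_n}\le\tau_n$ and $c_1$ small enough this is bounded by $\tfrac{1}{100\,n^{1/2}}$. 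Therefore the jump of size $\asymp n^{-1/2}$ that defines $\mathbf{T}_{\Lambda_n}$ must come from the local martingale piece: $|\cM^{(\mathbf{T}_{\Lambda_n})}(\frak{p})|\ge\tfrac{1}{100\,n^{1/2}}$ at some corner $\frak{p}\in\Lambda_{2n}$. The bracket is controlled as in Step~3 of Proposition~\ref{prop:bound-T_n-non-optimal}, namely $[\cM]^{(\mathbf{T}_{\Lambda_n})}=O(\mathbf{T}_{\Lambda_n}\log n/n)=O(\log^{2/3}n/n^{5/3})$, so \eqref{eq:deviation-local-martingale} gives a failure probability at most $\exp(-c\,n^{2/3}/\log^{2/3}n)$ per corner, and the union bound over the $O(n^2)$ corners is absorbed with enormous margin compared to $n^{-4}$.

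\textbf{Main obstacle.} The delicate step is the uniform-in-$k$ control $|\mathbb{E}_{S_\omega(s)}[\varepsilon_{e_k}]-\tfrac{\sqrt{2}}{2}|=O(\sqrt{s\log n})$. A naive pointwise triangle inequality only yields $O(\sqrt{s}\log^{3/2}n)$, which would degrade the final window to $n^{-2/3}\log^{-1}n$; recovering the correct $\log^{-1/3}$ power requires genuine Gaussian concentration using the square-summability of $\partial_{\theta_j}\mathbb{E}[\varepsilon_{e_k}]$ in two dimensions. Moreover the derivative estimate itself rests on the $\Unifff$ structure of $\cS^{(s)}_\omega$, which is exactly what $\mathbf{T}_{\Lambda_n}$ guarantees; the whole argument must therefore be run as a bootstrap combined with the stopping time $\mathbf{T}_{\Lambda_n}$ in order to avoid circular reasoning.
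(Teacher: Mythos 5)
Your proposal correctly identifies the crucial ingredient that the paper uses: the It\^o drift coefficient $\cot\theta_k^{(s)}-\mathbb{E}_{S_\omega(s)}[\varepsilon_{e_k}]/\sin\theta_k^{(s)}$ vanishes exactly at $s=0$, so the crude $O(1)$ bound from Step~2 of Proposition~\ref{prop:bound-T_n-non-optimal} wastes a factor $\sqrt{s\log n}$, and re-running the Gr\"onwall estimate with this refinement yields the $n^{-2/3}$ threshold. The downstream steps (excluding large Brownian excursions, controlling $\cA^{(\mathbf{T}_{\Lambda_n})}$, and the martingale large-deviation bound for $\cM^{(\mathbf{T}_{\Lambda_n})}$) are carried out essentially as in the paper. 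Where you diverge from the paper is in the technical mechanism behind the refined energy-density estimate $\sup_k|\mathbb{E}_{S_\omega(s)}[\varepsilon_{e_k}]-\tfrac{\sqrt2}{2}|=O(\sqrt{s\log n})$: the paper applies It\^o's formula to the \emph{process} $s\mapsto\mathbb{E}_{S(s)}[\varepsilon_{e_k}]$, splitting it into a local martingale with bracket $O(s)$ and a finite-variation drift of order $O(s\log n)$, and then invokes the maximal inequality \eqref{eq:deviation-local-martingale}; you instead view $\mathbb{E}_{S_\omega(s)}[\varepsilon_{e_k}]$ at a \emph{fixed} time $s$ as a Lipschitz function of the independent Gaussians $(B^{(e_j)}_s)_j$ and invoke Borell--TIS concentration.

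Your route can in principle give the right order, but it carries two burdens that the paper's choice sidesteps, and you acknowledge them without resolving them. First, Borell--TIS is pointwise in $s$, so promoting the bound to one that holds uniformly over $s\in[0,\mathbf{T}_{\Lambda_n}]$ requires an additional chaining or dyadic union bound over time scales; the martingale maximal inequality \eqref{eq:deviation-local-martingale} delivers the time-uniform statement automatically. Second, and more seriously, the claimed $\ell^2$-gradient bound $\sum_j(\partial_{\theta_j}\mathbb{E}[\varepsilon_{e_k}])^2=O(1)$ relies on the energy--energy decay $\partial_{\theta_j}\mathbb{E}[\varepsilon_{e_k}]=O(n^{-2}|\cS^{(0)}(e_k)-\cS^{(0)}(e_j)|^{-2})$, which only holds on the event $\{s<\mathbf{T}_{\Lambda_n}\}$, i.e.\ while $\cS^{(s)}_\omega$ still satisfies \Unifff\,; off this event the crude derivative bound gives an $\ell^2$-gradient potentially as large as $O(n)$. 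Borell--TIS, however, requires a \emph{global} Lipschitz constant in the Gaussian space, so one cannot simply restrict the functional to the good event: one needs a genuine truncation or extension argument to replace $\mathbb{E}_{S_\omega(s)}[\varepsilon_{e_k}]$ by a globally Lipschitz surrogate agreeing with it on $\{s<\mathbf{T}_{\Lambda_n}\}$, and your ``bootstrap combined with the stopping time'' remark is not a construction of such a surrogate. The paper's It\^o approach avoids this entirely: stopping the local martingale at $\mathbf{T}_{\Lambda_n}$ preserves the martingale structure and the bracket estimate, so no modification of the functional is needed. Until you fill in these two points, there is a genuine gap in the proposal, even though the high-level strategy and the final orders of magnitude match the paper.
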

 \begin{proof}
This proof closely follows the argument of Proposition \ref{prop:bound-T_n-non-optimal}, and we retain exactly the same notations. We highlight only the main additional ingredient that allows the SDE \eqref{eq:SDE-embedding} to be run for a longer time. Since the bound \eqref{eq:bound-correlator-distance} is sharp up to a constant, it is not difficult to see that nearly all the estimates used in the proof of Proposition \ref{prop:bound-T_n-non-optimal} are optimal up to constant factors. However, there remains room for improvement. In that proof proof, we crudely bounded
\[
\left|\cot(\theta^{(s)}_{k}) - \mathbb{E}_{S(s)}[\varepsilon_{e_k}] \cdot \sin^{-1}(\theta^{(s)}_{k})\right|
\]
by a uniform constant. At time $t = 0$, for the homogeneous critical square lattice, we have for every edge $e_k \in \Lambda_n$:
\begin{equation}
	\cot(\theta^{(0)}_{k})=1 \textrm{ and } \frac{\mathbb{E}_{S(0)}[\varepsilon_{e_k}]}{\sin(\theta^{(0)}_{k})}=\frac{\frac{\sqrt{2}}{2}}{\frac{\sqrt{2}}{2}}=1,
\end{equation}
as $\theta^{(0)}_{k}=\frac{\pi}{4}$ and it is well known \cite{mccoy-wu-book} that 
$\mathbb{E}_{S(0)}[\varepsilon_{e_k}]=\frac{\sqrt{2}}{2} $. Therefore, for edge edge $e_k \in \Lambda_n$, the random process $ |\cot(\theta^{(s)}_{k})) -\mathbb{E}_{S(s)}[\varepsilon_{e_k}] \cdot \sin^{-1}(\theta^{(s)}_{k}))|$ vanishes at time $t=0$. The improvement we use lies into showing that this brutal constant bound can be replaced by some functions growing as $\sqrt{s}$ at time $s$, allowing to keep the geometry of the embedding constructed via \eqref{eq:SDE-embedding} for a larger amount of time. Let us detail this result.

\textbf{Step 0: Evaluate accurately $ |\cot(\theta^{(s)}_{k})) -\mathbb{E}_{S(s)}[\varepsilon_{e_k}] \cdot \sin^{-1}(\theta^{(s)}_{k}))|$}

One can apply the Ito formula to $\mathbb{E}_{S(s)}[\varepsilon_{e_{k}}]$ and deduce that
\begin{equation*}
	\mathrm{d} \Big(\mathbb{E}_{S(s)}[\varepsilon_{e_{k}}]\Big)=\sum\limits_{e_r \in \Lambda_n} \frac{\partial}{\partial_{\theta_r}}\mathbb{E}_{S(s)}[\varepsilon_{e_{k}}] dB^{(e_r)}_s + \frac{1}{2} \sum\limits_{e_r \in \Lambda_n} \frac{\partial^2}{\partial_{\theta_r}^2}\mathbb{E}_{S(s)}[\varepsilon_{e_{k}}] ds.
\end{equation*}
The correlation computations recalled in Appendix \ref{app:derivative-fermions} allow to deduce some (sharp up to constant) estimates on the energy density as long as all the Ising angles didn't deviate too much from $\frac{\pi}{4}$, which read as
\begin{align*}
	\frac{\partial}{\partial_{\theta_r}}\mathbb{E}_{S(s)}[\varepsilon_{e_{k}}]&=O\Bigg( \mathbb{E}_{S(s)}[\varepsilon_{e_{k}}]\mathbb{E}_{S(s)}[\varepsilon_{e_{r}}]-\mathbb{E}_{S(s)}[\varepsilon_{e_{k}}\varepsilon_{e_{r}}] \Bigg),\\
	\frac{\partial^2}{\partial_{\theta_r}^2}\mathbb{E}_{S(s)}[\varepsilon_{e_{k}}]&=O\Bigg( \mathbb{E}_{S(s)}[\varepsilon_{e_{k}}]\mathbb{E}_{S(s)}[\varepsilon_{e_{r}}]-\mathbb{E}_{S(s)}[\varepsilon_{e_{k}}\varepsilon_{e_{r}}] \Bigg).
\end{align*}
One can now use \cite[Theorem 1.3]{MahPar25a} that ensures that for any $s$-embedding that satisfies $\Unifff$ (and whose distances are comparable up to some universal constant to those in $\cS^{(0)}$) one has
\begin{equation}
	\Bigg|\mathbb{E}_{S(s)}[\sigma_{e_{k}}]\mathbb{E}_{S(s)}[\sigma_{e_{r}}]-\mathbb{E}_{S(s)}[\sigma_{e_{k}}\sigma_{e_{r}}] \Bigg| =O\Big( \frac{1}{n^2}\frac{1}{\big|\cS^{(0)}(e_k)-\cS^{(0)}(e_r)\big|^2}\Big).
\end{equation}
Note that this last bound could have also been derived using the Pfaffian structure of Ising fermions together with \eqref{eq:bound-correlator-distance}.
 This ensures for any $0\leq s \leq \mathbf{T}_{\Lambda_n}$ one has (again taking continuous analogs of the associated discrete integral) 
 \begin{align*}
 	\sum\limits_{e_r \in \Lambda_n} \Bigg(\frac{\partial}{\partial_{\theta_r}}\mathbb{E}_{S(s)}[\varepsilon_{e_{k}}]\Bigg)^2&=O \Bigg(\sum\limits_{e_r \in \Lambda_n}  \frac{1}{n^4}\frac{1}{\big|\cS^{(0)}(e_k)-\cS^{(0)}(e_r)\big|^4}\Bigg)\\
 	&=O\Bigg( \frac{1}{n^2}\int_{\mathcal{D}_1 \backslash \mathcal{D}_{\frac{1}{n}}} \frac{dA(z)}{|z-\frak{p}|^4} \Bigg)=O(1),\\
 	\sum\limits_{e_r \in \Lambda_n} \frac{\partial^2}{\partial_{\theta_r}^2}\mathbb{E}_{S(s)}[\varepsilon_{e_{k}}]&=\Bigg(\sum\limits_{e_r \in \Lambda_n}  \frac{1}{n^2}\frac{1}{\big|\cS^{(0)}(e_k)-\cS^{(0)}(e_r)\big|^2}\Bigg)=O(\log(n)).
 \end{align*}
Denote the event
\begin{equation}
	\mathcal{J}_{n,C}:=  \Bigg\{ \sup_{e_k\in \Lambda_n} \sup_{0\leq s \leq (\mathbf{T}_{\Lambda_n}\wedge n^{-\frac{2}{3}})} \Big| \cot(\theta^{(s)}_{k})) - \frac{\mathbb{E}_{S(s)}[\varepsilon_{e_k}]}{\sin^(\theta^{(s)}_{k}))} \Bigg|\geq C\sqrt{s\log(n)}  \Bigg\}.
\end{equation}
For each edge $e_k$ and as long as $0\leq s \leq  \mathbf{T}_{\Lambda_n}\wedge n^{-\frac{2}{3}}$, the random process $\mathbb{E}_{S(s)}[\varepsilon_{e_{k}}]-\frac{\sqrt{2}}{2}$ is the sum of a local martingale with $O(1)$ bracket and a finite variation process bounded by $O(\log(n) s) $ at time s. Since both functions $\theta_{k}\mapsto \sin^{-1}(\theta_k)$ $\theta_{k}\mapsto \cot(\theta_k)$ are smooth around $\frac{\pi}{4}$, it is not hard to see that the large deviation estimate \eqref{eq:deviation-local-martingale} ensures that there exist a large enough constant $C$ such that
 \begin{equation}
 	\mathbf{P}\Bigg[ \mathcal{J}_{n,C} \Bigg]=O(\frac{1}{n^4}).
 \end{equation}

 \textbf{Step 1: Exclude the case where at least one Brownian motions drifted too fast} One can apply verbatim the same proof as in Step 1 of Proposition \ref{prop:bound-T_n-non-optimal} and only focus on the event where all the Brownian motions attached to the edges $e\in \Lambda_n$ remain bounded by $\frac{\pi}{20}$ for all instants $0\leq t\leq \mathbf{T}_{\Lambda_n}$.
 
 \textbf{Step 2: Evaluate the order of magnitude of $\cA^{\mathbf{T}_{\Lambda_n}}(\frak{p})$} 
On the event
 \begin{equation*}
 	\Bigg\{\forall 0\leq t\leq \mathbf{T}_{\Lambda_n} , \forall e\in \Lambda_n, |B^{(e)}_{t}| \leq \frac{\pi}{20}  \Bigg\} \cap \overline{\mathcal{J}_{n,C}} \Bigg\},
 \end{equation*}
for any corner $\frak{p}\in \Lambda_n$ and any $0 \leq s\leq  \mathbf{T}_{\Lambda_n} $, one can repeat the computation of Step 2 of the proof of \ref{prop:bound-T_n-non-optimal} while replacing the universal constant used to bound $ |\cot(\theta^{(s)}_{k})) -\mathbb{E}_{S(s)}[\varepsilon_{e_k}] \cdot \sin^{-1}(\theta^{(s)}_{k}))|$ by $C\sqrt{s\log(n)} $. In that case, integrating up to time  $\mathbf{T}_{\Lambda_n} $  one gets for any $\frak{p}\in \Lambda_n$
\begin{align*}
	\cA^{\mathbf{T}_{\Lambda_n}}(\frak{p})&=O\Bigg (\int_{0}^{\mathbf{T}_{\Lambda_n}}\sqrt{s\log(n)} \sum\limits_{a_k\in \Lambda_n}\frac{1}{n^{\frac{1}{2}}} \times \frac{1}{n}\times \frac{1}{|\cS^{(0)}(a_k)-\cS^{(0)}(\frak{p}) |} ds\Bigg)\\
	&=O\Bigg(\frac{1}{n^{\frac{1}{2}}} \cdot n \cdot \log(n)^{\frac{1}{2}} \cdot \mathbf{T}_{\Lambda_n}^{\frac{3}{2}} \Bigg).
\end{align*}
Set
\begin{equation*}
	\widetilde{\mathcal{B}}_{n,c}:=\Big\{ \mathbf{T}_{\Lambda_n}\leq \frac{c}{(n\log^{\frac{1}{2}}(n)\big)^{\frac{2}{3}}} \Big\} \cap \Big\{ \forall 0\leq  t\leq \mathbf{T}_{\Lambda_n} \forall e\in \Lambda_n, |B^{(e)}_{t}| \leq \frac{\pi}{20}  \Big\}.
\end{equation*}
On the event $\widetilde{\mathcal{B}}_{n,c}$, it is clear that once again, for every corner $\frak{p}\in \Lambda_n $ one has $|\cA^{\mathbf{T}_{\Lambda_n}}(\frak{p})|=O(c\cdot n^{-\frac{1}{2}})$, where $O$ is independent from $c$ and $n$.

 \textbf{Step 3: Concluding using large deviations for $\cM^{\mathbf{T}_{\Lambda_n}}(\frak{p})$} 

One can redo verbatim Step $3$ of the proof of Proposition \ref{prop:bound-T_n-non-optimal} and deduce the existence there exist at leas one corner $\frak{p}\in \Lambda_n$ such that
\begin{equation*}
	|\cM_{\mathbb{R}}^{(\mathbf{T}_{\Lambda_n})}(\frak{p})| \geq \frac{1}{50n^{\frac{1}{2}}} \quad \textrm{ or } \quad |\cM_{i\mathbb{R}}^{(\mathbf{T}_{\Lambda_n})}(\frak{p})| \geq \frac{1}{50n^{\frac{1}{2}}}.
\end{equation*} 
Once again, for every corner $\frak{p}\in \Lambda_n$, $(\cM^{(t\wedge \mathbf{T}_{\Lambda_n})}(\frak{p}))_{t\geq 0} $ is a local martingale and one still has
\begin{equation*}
	[\cM_{\mathbb{R}}^{(t\wedge \mathbf{T}_{\Lambda_n})}(\frak{p})]^{\infty}= O(\mathbf{T}_{\Lambda_n}\frac{\log(n)}{n}),  \textrm{ and }[\cM_{i\mathbb{R}}^{(t\wedge \mathbf{T}_{\Lambda_n})}(\frak{p})]^{\infty}= O(\mathbf{T}_{\Lambda_n}\frac{\log(n)}{n}).
\end{equation*}
Using once again some union bound over corners $\frak{p}\in \Lambda_n$, one can write this time

\begin{align*}
	\mathbf{P}\Bigg[\widetilde{\mathcal{B}}_{n,c}  \cap \overline{ \mathcal{J}_{n,C}} \Bigg]&\leq O(n^2)\times \sup\limits_{\frak{p}\in \Lambda_n} \mathbf{P}\Bigg[ \big\{ \mathbf{T}_{\Lambda_n}\leq \frac{c}{(n\log^{\frac{1}{2}}(n)\big)^{\frac{2}{3}}} \big\}\bigcap \big\{ \Big|\sup_{t\geq 0}\cM_{\mathbb{R}}^{(t\wedge \mathbf{T}_{\Lambda_n} )}(\frak{p})\big| \geq \frac{1}{50n^{\frac{1}{2}}} \cdots \\
	&  \quad  \quad \quad\quad \quad \quad \quad \cdots  \bigcap \Big|[\cM_{\mathbb{R}}^{(t\wedge \mathbf{T}_{\Lambda_n})}(\frak{p})]^{\infty}\big| \leq O(\mathbf{T}_{\Lambda_n}\frac{\log(n)}{n})\Bigg]\\
&\quad + O(n^2)\times \sup\limits_{\frak{p}\in \Lambda_n} \mathbf{P}\Bigg[ \big\{ \mathbf{T}_{\Lambda_n}\leq \frac{c}{(n\log^{\frac{1}{2}}(n)\big)^{\frac{2}{3}}} \big\}\bigcap \big\{ \Big|\sup_{t\geq 0}\cM_{i\mathbb{R}}^{(t\wedge \mathbf{T}_{\Lambda_n} )}(\frak{p})\big| \geq \frac{1}{50n^{\frac{1}{2}}} \cdots \\
	&  \quad  \quad \quad\quad \quad \quad \quad \cdots  \bigcap \Big|[\cM_{i\mathbb{R}}^{(t\wedge \mathbf{T}_{\Lambda_n})}(\frak{p})]^{\infty}\big| \leq O(\mathbf{T}_{\Lambda_n}\frac{\log(n)}{n})\Bigg].
\end{align*}
It is enough to see that 
\begin{equation}
	\mathbf{P}\Bigg[\Big|\sup_{t\geq 0}\cM_{\mathbb{R}}^{(t\wedge \mathbf{T}_{\Lambda_n} )}(\frak{p})\Big| \geq \frac{1}{50n^{\frac{1}{2}}} \bigcap \Big|[\cM_{\mathbb{R}}^{(t\wedge \mathbf{T}_{\Lambda_n})}(\frak{p})]^{\infty}\Big| \leq O(\frac{c\log(n)^{\frac{1}{2}}}{n^{\frac{5}{3}}})\Bigg]
\end{equation}
decays again stretch exponentially fast in $n$ by \eqref{eq:deviation-local-martingale} as long as $c$ is chosen small enough. The same large deviation estimate for $\cM_{i\mathbb{R}}^{(t\wedge \mathbf{T}_{\Lambda_n} )}(\frak{p})$ allows to conclude the proof.
 \end{proof}
 
 \begin{proof}[Proof of Theorem \ref{thm:near-critical-RSW-random}]
 Once Proposition \ref{prop:bound-T_n-optimal} is proven it is enough to see that as long as with probability $\mathbf{P}$ at least $O(n^{-4})$, the family of $s$-embeddings $s\mapsto (\cS^{(t)}_{\omega}(\Lambda_{n}))_{0\leq t \leq t_n} $ belongs to \Unifff\,, with $t_n = c_1(n\log^{\frac{1}{2}}(n))^{-\frac{2}{3}}$ where $c_1>0$ comes from Proposition \ref{prop:bound-T_n-optimal}. It is clear that 
\begin{equation*}
	\theta^{(t)}_{e} \overset{(d)}{=} \frac{\pi}{4} + \sqrt{t}\cdot \mathcal{N}_e(0,1),
\end{equation*}
where the standard Gaussians $\mathcal{N}_e(0,1)$ are independent. This concludes the proof.
 \end{proof}

\subsection{A discussion on the optimality of the scaling window in random environment}\label{sub:optimality}
In this section, we discuss the optimality of the near-critical scaling window $O(n^{-\frac{1}{3}})$ in a random environment when only tracking down some average self-duality, which is the key tool to prove the criticality of the deterministic model. Recall that we used deformation by a Brownian motion only to simplify computations, but similar results hold (using the Skorokhod embedding Theorem, see e.g.\ \cite{AveMah25}) for a general set of random angles $(\theta_k)_{k\in \Lambda_n} $ centred around $\frac{\pi}{4}$, assuming their tail is not too degenerated. Let us start by recalling one of the main results of \cite{AveMah25}, which states that for Bernoulli percolation on the square lattice, the model in random environment remains in the critical phase as long as the random bond environment $(\mathbf{p}_e)_{e\in E(\mathbb{Z}^2)} $ are centred around the critical value. More precisely, assuming that all bond parameters are independent, satisfy $\mathbf{E}[\mathbf{p}_e]=\frac{1}{2}$ and do not degenerate toward $0$ or $1$, the strong box crossing property analog to Theorem \ref{thm:RSW-s-embeddings} holds at large scale with high $\mathbf{P}$-probability. In particular, it is not even necessary to scale the standard deviation of the random environment to $0$ as $\Lambda_n \to \mathbb{Z}^2$ to keep a critical random environment. This provides an exact classifications of critical random environments (where the edge-independence of bond percolation helps as it makes the annealed model \emph{exactly} critical). As discussed in \cite[Section 4.3]{AveMah25}, one could in principle hope for a similar treatment condition holds for generic value of FK-percolation model (when $1\leq q \leq 4$), at least in the near critical regime, as the criticality of all those models is also derived via self-duality arguments \cite{beffara2012self}. In the present context,  this would imply for the FK Ising model that there exist at least one smooth function $f$ such that the natural condition on the (near-critical) random environment to remain in the critical phase (with high $\mathbf{P}$-probability) writes as $\mathbf{E}[f(\theta_e)]=0 $. 

Let us first discuss how self-duality constrains the function $f$. First of all, the random environment shouldn't favour neither the primal nor the dual model (whose weights are given by $\theta_e^{\star}=\frac{\pi}{2} -\theta_e$). For $x\geq 0$ small enough, plugging to the condition the self-dual variable $\theta:=\frac{1}{2}\delta_{\frac{\pi}{4}+x} + \frac{1}{2}\delta_{\frac{\pi}{4}-x}$ ensures that
\begin{equation*}
	f(x+\frac{\pi}{4})=-f(\frac{\pi}{4}-x),
\end{equation*}
meaning that the function $f$ has to be odd near $\frac{\pi}{4}$. Therefore, all the even coefficients of the power series expansion of $f$ have to vanish. Conversely, it is clear that for any self-dual near-critical distribution, written as $\theta_e = \frac{\pi}{4} + X^{(e)}$ where $X^{(e)}$ is small and symmetric around $0$, \emph{any} choice of anti-symmetric function $f$ leads to $\mathbf{E}[f(\theta_e)]=0 $. Fixing the value of $f'(\frac{\pi}{4})$ (which amounts to multiply the function $f$ by some overall constant), the self-duality doesn't a priori constrain any other odd derivative of $f$ at $\frac{\pi}{4}$ starting from $f^{(3)}(\frac{\pi}{4})$.

 We claim that a for a \emph{generic} choice of function $f$ satisfying the self-duality criticality condition, one cannot expect to work with random variables $X^{(e)}=X^{(e)}_n$ whose standard deviation $\sigma_n$ satisfies $\sigma_n^3 \gg n^{-1}$ while remaining within the critical phase. Assume the opposite, and fix a generic function $f$ and a symmetric i.i.d.\ random variables $X^{(e)}_n$ whose variance is $ \sigma_n^3 \gg n^{-1}$, 
 the model remains in the critical phase. Denote such distribution by $(\mathbf{p}_e)_{e\in \Lambda_n}$ and define the random variables 
$\widehat{X}^{(e)}_n := X^{(e)}_n + \sigma_n^3$. If it not hard to see that there exist $\alpha_n \in \mathbb{R}$ (which \emph{depends} on the specific law of $\mathbf{p}^{n}_e$ but doesn't degenerate as $n\to \infty$)
such that the function $\widehat{f}(x)=f(x)-\alpha_n x^3$ is still odd and $\mathbf{E}[\widehat{f}(\frac{\pi}{4}+ \widehat{X}^{(e)}_n)]=0$. Then, by the assumption that generic odd functions around the critical point leave random environment with standard deviation $\sigma_n^3 \gg n^{-1}$ still critical, the strong box crossing property would also hold for the distribution $\frac{\pi}{4}+ \widehat{X}^{(e)}_n$.

This is not possible, as at each edge $\widehat{X}^{(e)}$ and $X^{(e)}_n$ differ by some \emph{deterministic parameter} $\sigma_n^3 \gg n^{-1}$, therefore the main results of \cite{FK_scaling_relations} ensure that both models cannot be simultaneously within the critical phase, as one could first sort the variables $X^{(e)}_n$, and then modify continuously all the weights by $\sigma_n^3$, going way beyond the near-critical scaling window, where all the relevant probabilistic properties remain the same up to multiplicative constant.

  \subsection{A weakly random interacting model with a logarithmic critical window}
In this section, we present an Ising model in a weakly random environment that exhibits a much larger critical window than the $\asymp n^{-\frac{1}{3}}$ case. This does \emph{not} contradict the discussion in Section \ref{sub:optimality}, as the model we consider here is \emph{not} built from i.i.d.\ components near the critical point. Instead, at each edge $e$, the random coupling constant is given by the sum of a scaled (with $n$) independent Gaussian variable $\mathcal{N}_e$, and an additional random drift term. With high $\mathbf{P}$-probability, this drift is much smaller in magnitude than the Gaussian term, but crucially, it depends on the values of \emph{all} other edges in the box. We adopt the notation of Section \ref{sub:non-optimal-deformation}, and let $\cY^{(0)}$ denote the fermion associated with the standard embedding of the critical square lattice. Consider a family $(B^{(e)}_t)_{e\in \Lambda_n}$ of independent standard Brownian motions under some probability measure $\mathbf{P}$. For a sufficiently small time parameter $t \geq 0$ (depending on the realization $\omega$), define the family of Ising models $\widehat{S} = (\mathbb{Z}^2, \left(\hat{x}^{(t)}_{\omega,e}\right)_{e\in E(\mathbb{Z}^2)}))_{t \geq 0}$, where all the angles outside $\Lambda_n$ are fixed at the critical value $\frac{\pi}{4}$, while for $e_k \in \Lambda_n$, the angle $\hat{\theta}^{(t)}_{e_k,\omega}$ in \eqref{eq:x=tan-theta} is defined as the strong solution to the SDE
\begin{equation}\label{eq:weakly-random-interacting}
	\hat{\theta}^{(t)}_{e_k,\omega} = \frac{\pi}{4} + B^{(e)}_{t}(\omega) - \frac{1}{2}\int_{0}^{t} \left( 
	\frac{\cos\left(\hat{\theta}^{(s)}_{e_k,\omega}\right)}{\sin\left(\hat{\theta}^{(s)}_{e_k,\omega}\right)} 
	- \frac{\mathbb{E}_{S_{\omega}(s)}[\varepsilon_{e_k}]}{\sin\left(\hat{\theta}^{(s)}_{e_k,\omega}\right)} 
	\right) ds.
\end{equation}
This model is \emph{not} i.i.d., as the term $\mathbb{E}_{S_{\omega}(s)}[\varepsilon_{e_k}]$ \emph{depends} on all the edges within $\Lambda_n$. Still, it is not hard to see that the bond parameters are almost independent in space, meaning that largest contribution to the correction drift at a given edge comes from the influence of the edges which are close in space. The choice of this particular drift ensures that it \emph{exactly cancels} the finite-variation term from Lemma \ref{lem:SDE-embedding}, creating a random $s$-embedding where each coordinate of the fermion is \emph{exactly a local martingale}. As observed, with high $\mathbf{P}$-probability, the finite variation in the Doob decomposition of solutions to \eqref{eq:SDE-embedding} dominates the evolution. Therefore, adding a drift that cancels this contribution guarantees that only the local martingale part of the process remains. This allows us to run the associated SDE for a significantly longer time while remaining within some \Unif\, class of embeddings—and hence, within the realm of (near)-critical Ising models. The following lemma formalizes this intuition.
\begin{lemma}\label{lem:SDE-embedding-drift}
 Consider the Stochastic Differential System defined on corners of $ \Upsilon^{\times} \cap  \Lambda  $, whose initial condition is given by $\cY^{(0)}$ and whose dynamic is given for any $\frak{p} \in \Lambda $ by $ \widehat{\cY}^{(s)}_{\omega}(\frak{p}):=\widehat{\cY}(\frak{p},\omega,s)$
 \begin{equation}\label{eq:SDE-embedding-drift}
 	d \widehat{\cY}^{(s)}_{\omega} (\frak{p})= \frac{1}{2} \sum\limits_{z_k \in \Lambda} \sigma_k\Bigg[ \widehat{\cY}^{(s)}_{\omega}(c^+_k) \langle \chi_{\frak{p}}\chi_{a^+_k} \rangle_{S_{\omega}(s)} -  \widehat{\cY}^{(s)}_{\omega}(a^+_k) \langle \chi_{\frak{p}}\chi_{c^+_k} \rangle_{S_{\omega}(s)} \Bigg] dB^{(e_k)}_s 
 \end{equation}

Then $\mathbf{P}$-almost surely :
\begin{enumerate}
	\item There exist $T_0(\Lambda,\omega)>0$ such that there exist a strong solution to the SDE \eqref{eq:SDE-embedding} on $[0;T_0(\Lambda,\omega)] $.
	\item For any time $0\leq s \leq T_0 (\Lambda,\omega)$, the propagator $\widehat{\cY}^{(s)}_{\omega} $ the strong solution to \eqref{eq:SDE-embedding} is an s-embedding $\widehat{\cS}^{(s)}_{\omega}$ of the Ising model $\widehat{S}_{\omega}(s)$.
\end{enumerate}
\end{lemma}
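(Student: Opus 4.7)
The strategy is to mirror the proof of Lemma \ref{lem:SDE-embedding}, the new point being that the drift engineered into \eqref{eq:weakly-random-interacting} is precisely the one that kills the Itô correction in the evolution of the fermion, leaving a purely martingale SDE \eqref{eq:SDE-embedding-drift}. I would proceed in four steps.

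\textbf{Step 1 (local solvability of the angle SDE).} First I would show that the coupled system \eqref{eq:weakly-random-interacting} for the finite-dimensional vector of angles $(\hat\theta^{(t)}_{e_k,\omega})_{e_k\in\Lambda_n}$ admits a strong solution up to some $\mathbf{P}$-a.s.\ positive stopping time $T_0(\Lambda,\omega)$. The only delicate point is that $\theta\mapsto\mathbb{E}_{S_\omega(s)}[\varepsilon_{e_k}]/\sin(\hat\theta^{(s)}_{e_k})$ must be locally Lipschitz in the angle vector; this follows from the smooth dependence of spin and energy correlators on the coupling constants inside $\Lambda_n$ (Appendix \ref{app:derivative-fermions}) combined with the uniqueness of the full-plane Gibbs measure under \Unif-type crossing estimates (Section \ref{sub:2-point-fermion}), as long as all angles stay bounded away from $0$ and $\pi$. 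Standard Cauchy-Lipschitz theory for SDEs then yields the desired local solution.

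\textbf{Step 2 (constructing $\widehat\cY$ by piecewise-linear approximation).} Given the angles, I would reproduce the scheme of Lemma \ref{lem:SDE-embedding}. For each $L\geq 1$, partition $[0,T_0(\Lambda,\omega)]$ into subintervals of length $T_0/L$ and replace each Brownian trajectory $B^{(e_k)}_s$ by its piecewise-linear interpolant $\widetilde B^{(e_k)}_{s,L}$. Plugging into \eqref{eq:weakly-random-interacting} produces piecewise-$\mathcal{C}^1$ angles $\hat\theta^{(s)}_{e_k,\omega,L}$, so Lemma \ref{lem:ODE-embedding} (in the time-varying-masses formulation of Remark \ref{rem:deformation-varying-mass}) delivers an s-embedding fermion $\widehat\cY^{(s)}_{\omega,L}$ of $\widehat S_{\omega,L}(s)$. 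On each subinterval $[s_j,s_{j+1}]$, I would Taylor expand
\[
\widehat\cY^{(s_{j+1})}_{\omega,L}(\frak{p})-\widehat\cY^{(s_j)}_{\omega,L}(\frak{p})=\sum_k\frac{\partial\widehat\cY}{\partial\theta_k}\Delta\hat\theta_{e_k}+\frac12\sum_{k,r}\frac{\partial^2\widehat\cY}{\partial\theta_k\partial\theta_r}\Delta\hat\theta_{e_k}\Delta\hat\theta_{e_r}+O(L^{-3/2}),
\]
where $\Delta\hat\theta_{e_k}=\Delta B^{(e_k)}-\tfrac12(\cot\hat\theta_{e_k}-\mathbb{E}[\varepsilon_{e_k}]/\sin\hat\theta_{e_k})\Delta s+O(\Delta s^{3/2})$.

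\textbf{Step 3 (key algebraic cancellation).} In the limit $L\to\infty$, the off-diagonal $\Delta B^{(e_k)}\Delta B^{(e_r)}$ terms vanish ($k\neq r$) by independence of the Brownian motions, while $(\Delta B^{(e_k)})^2\to ds$ produces the Itô correction
\[
\frac14\sum_k\frac{\partial^2\widehat\cY}{\partial\theta_k^2}\,ds,
\]
which by the explicit derivative formulas of Appendix \ref{app:derivative-fermions} equals exactly $+\tfrac14\sum_k(\cot\hat\theta_{e_k}-\mathbb{E}[\varepsilon_{e_k}]/\sin\hat\theta_{e_k})\cdot\partial_{\theta_k}\widehat\cY\,ds$. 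On the other hand, the drift contribution from $\Delta\hat\theta_{e_k}$ in the first-order term produces $-\tfrac12\sum_k(\cot\hat\theta_{e_k}-\mathbb{E}[\varepsilon_{e_k}]/\sin\hat\theta_{e_k})\cdot\partial_{\theta_k}\widehat\cY\,ds\cdot\tfrac12$, and the two $ds$-contributions cancel exactly. Only the martingale part remains, giving \eqref{eq:SDE-embedding-drift}. Passage to the limit using uniform $\mathcal{L}^2$ bounds and $\mathbf{P}$-a.s.\ continuity (exactly as in Lemma \ref{lem:SDE-embedding}) yields both existence of the strong solution on $[0,T_0(\Lambda,\omega)]$ and the fact that it preserves the three-term identities \eqref{eq:3-terms}, so it is indeed an s-embedding of $\widehat S_\omega(s)$.

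\textbf{Main obstacle.} The delicate step is Step~3, namely pinning down the exact algebraic identification $\tfrac12\partial_{\theta_k}^2\widehat\cY=\tfrac12(\cot\hat\theta_{e_k}-\mathbb{E}[\varepsilon_{e_k}]/\sin\hat\theta_{e_k})\partial_{\theta_k}\widehat\cY$. This requires differentiating twice the expression \eqref{eq:ODE-embedding-general} for $\partial_{\theta_k}\widehat\cY$, rewriting the mixed Kadanoff--Ceva correlators $\langle\chi_{\frak p}\chi_{a_k^+}\varepsilon_{e_k}\rangle$ in terms of $\langle\chi_{\frak p}\chi_{a_k^+}\rangle$ via the propagation equations around $z_k$, and is handled by the fermionic calculus of Appendix \ref{app:derivative-fermions}. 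Once this identity is established, the rest of the proof is a verbatim repetition of the arguments of Lemma \ref{lem:SDE-embedding}.
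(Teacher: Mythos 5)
Your proposal follows essentially the same route as the paper's proof: piecewise-linear interpolation of the Brownian drivers, application of Lemma \ref{lem:ODE-embedding} (Remark \ref{rem:deformation-varying-mass}) on each sub-interval, and passage to the limit where the It\^o correction from $(\Delta B^{(e_k)})^2$ exactly cancels the contribution of the drift built into \eqref{eq:weakly-random-interacting}, leaving the martingale SDE \eqref{eq:SDE-embedding-drift}. Two small remarks: your Step~1, addressing solvability of the coupled angle SDE \eqref{eq:weakly-random-interacting} itself (Lipschitz dependence of $\mathbb{E}_{S_\omega(s)}[\varepsilon_{e_k}]$ on the angle vector), is actually a useful point that the paper's telegraphic proof leaves implicit; and in Step~3 the prefactors are stated inconsistently (the It\^o term should carry $\tfrac12\sum_k\partial_{\theta_k}^2\widehat\cY\,ds$, not $\tfrac14$, while the drift contribution from $\Delta\hat\theta_{e_k}$ should not have a trailing extra factor $\tfrac12$) — though once you substitute the identity $\partial_{\theta_k}^2\widehat\cY=(\cot\hat\theta_{e_k}-\mathbb{E}[\varepsilon_{e_k}]/\sin\hat\theta_{e_k})\,\partial_{\theta_k}\widehat\cY$ from \eqref{eq:second-derivative-embedding-formula}, both sides do land on $\pm\tfrac14$ of the bracketed quantity and the cancellation goes through as intended.
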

\begin{proof}
	We do not put many details in this proof as it is very similar to the proof of Lemma \ref{lem:SDE-embedding}. One can redo the approximation setting of the proof of Lemma \ref{lem:SDE-embedding} by interpolating again the Brownian motion in a piecewise linear manner on the points $s_j$. One gets this time a discrete fermion $\widehat{\cY}_{\omega,L} $ constructed from the solution of Lemma \ref{lem:ODE-embedding} of the discretised solution to \eqref{eq:weakly-random-interacting} which reads as 
\begin{align*}
	\widehat{\cY}^{(s_{j+1})}_{\omega,L}(\frak{p})-\widehat{\cY}^{(s_{j})}_{\omega,L}(\frak{p})&=\int\limits_{s_j}^{s_{j+1}} \frac{d}{ds} \widehat{\cY}^{(s)}_{\omega,L}(\frak{p}) ds\\
&= \frac{B^{(e_0)}_{s_{j+1}}(\omega)-B^{(e_0)}_{s_{j}}(\omega)}{2}  \times \Big( \cY^{(s_j)}_{\omega,L}(c_0^{+})\langle \chi_{\frak{p}}\chi_{a_0^{+}} \rangle_{S_{\omega}(s_j)} - \cY^{(s_j)}_{\omega,L}(a_0^{+})\langle \chi_{\frak{p}}\chi_{c_0^{+}} \rangle_{S_{\omega}(s_j)} \Big) \\
	&\quad +  \frac{\Big(B^{(e_0)}_{s_{j+1}}(\omega)-B^{(e_0)}_{s_{j}}(\omega)\Big)^2}{4}  \times  \Big( \frac{\cos(\theta^{(s_j)}_{0})}{\sin(\theta^{(s_j)}_{0})} -\frac{\mathbb{E}_{S_{\omega}(s_j)}[\varepsilon_{e_0}]}{\sin(\theta^{(s_j)}_{0})}  \Big)\times \cdots  \\
	& \quad \cdots \times   \Big(  \cY^{(s_j)}_{\omega,L}(c_0^{+})\langle \chi_{\frak{p}}\chi_{a_0^{+}} \rangle_{S_{\omega}(s_j)} - \cY^{(s_j)}_{\omega,L}(a_0^{+})\langle \chi_{\frak{p}}\chi_{c_0^{+}} \rangle_{S_{\omega}(s_j)}\Big) \\
	&-\frac{1}{4L}\Big( \frac{\cos(\theta^{(s_j)}_{0})}{\sin(\theta^{(s_j)}_{0})} -\frac{\mathbb{E}_{S_{\omega}(s_j)}[\varepsilon_{e_0}]}{\sin(\theta^{(s_j)}_{0})}  \Big)\Big(  \cY^{(s_j)}_{\omega,L}(c_0^{+})\langle \chi_{\frak{p}}\chi_{a_0^{+}} \rangle_{S_{\omega}(s_j)} - \cY^{(s_j)}_{\omega,L}(a_0^{+})\langle \chi_{\frak{p}}\chi_{c_0^{+}} \rangle_{S_{\omega}(s_j)}\Big)\\
	&\quad +O(L^{-\frac{3}{2}}).
\end{align*}
Passing to the limit $L\to \infty$, any subsequential limit of the discretisation process satisfies
\begin{align*} 
		d \widehat{\cY}^{(s)}_{\omega} (\frak{p})=& \frac{1}{2} \sum\limits_{z_k \in \Lambda} \Bigg[ \widehat{\cY}^{(s)}_{\omega}(c^+_k) \langle \chi_{\frak{p}}\chi_{a^+_k} \rangle_{S_{\omega}(s)} -  \widehat{\cY}^{(s)}_{\omega}(a^+_k) \langle \chi_{\frak{p}}\chi_{c^+_k} \rangle_{S_{\omega}(s)} \Bigg] dB^{(e_k)}_s \nonumber \\
		& + \frac{1}{4} \sum\limits_{z_k \in \Lambda}   \Big( \frac{\cos(\theta^{(s)}_{e_k,_{\omega}})}{\sin(\theta^{(s)}_{e_k,_{\omega}})} -\frac{\mathbb{E}_{S_{\omega}(s)}[\varepsilon_{e_k}]}{\sin(\theta^{(s)}_{e_k,_{\omega}})}  
 \Big)\times \cdots \nonumber \\
 & \cdots \times \Bigg[ \widehat{\cY}^{(s)}_{\omega}(c_k^{+})\langle \chi_{\frak{p}}\chi_{a_k^{+}} \rangle_{S_{\omega}(s)} - \widehat{\cY}^{(s)}_{\omega}(a_k^{+})\langle \chi_{\frak{p}}\chi_{c_k^{+}} \rangle_{S_{\omega}(s)} \Bigg]ds  .\\
 & - \frac{1}{4} \sum\limits_{z_k \in \Lambda}\Big( \frac{\cos(\theta^{(s)}_{e_k,_{\omega}})}{\sin(\theta^{(s)}_{e_k,_{\omega}})} -\frac{\mathbb{E}_{S_{\omega}(s)}[\varepsilon_{e_k}]}{\sin(\theta^{(s)}_{e_k,_{\omega}})}  
 \Big) \times  \cdots \\
 & \quad \cdots \times \Bigg[ \widehat{\cY}^{(s)}_{\omega}(c_k^{+})\langle \chi_{\frak{p}}\chi_{a_k^{+}} \rangle_{S_{\omega}(s)} - \widehat{\cY}^{(s)}_{\omega}(a_k^{+})\langle \chi_{\frak{p}}\chi_{c_k^{+}} \rangle_{S_{\omega}(s)} \Bigg]ds,  
 \end{align*}
which ensures that any subsequential limit of the discretised process satisfies  \eqref{eq:SDE-embedding-drift}. We leave as an exercice to check that indeed $\widehat{\cY}^{(s)}_{\omega}$ is $\mathbf{P}$-almost surely an $s$-embedding for the Ising model $\widehat{S}_{\omega}(t)$ as long as $0\leq t \leq T(\omega,\Lambda)$, exactly as in Lemma \ref{lem:SDE-embedding}.
	\end{proof}

We are now ready to generalise Proposition \ref{prop:bound-T_n-optimal} to this weakly random interacting model. This reads 
	
\begin{proposition}\label{prop:bound-T_n-interacting}
 	Assume that the coupling constants of the edges inside $\Lambda_n$ are given \eqref{eq:weakly-random-interacting}. Then there exist some small enough universal constant $c>0$ such that
 	\begin{equation}
 		\mathbf{P}\Big[ \mathbf{T}_{\Lambda_n}\leq \frac{c}{\log^2(n)} \Big] \leq O(\frac{1}{n^4}). 
 	\end{equation}
 \end{proposition}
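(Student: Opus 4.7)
The main strategy is to exploit the crucial structural feature introduced by the interacting drift in \eqref{eq:weakly-random-interacting}: by the very design of the drift term in that SDE together with Lemma \ref{lem:SDE-embedding-drift}, the fermion $\widehat{\cY}^{(s)}_{\omega}(\mathfrak{p})$ evolves as a \emph{pure local martingale}, with no finite-variation counterpart in its Doob decomposition. Concretely, for each corner $\mathfrak{p}\in\Lambda_n$, write $\widehat{\cY}^{(s)}(\mathfrak{p}) = \widehat{\cM}^{(s)}_\mathbb{R}(\mathfrak{p}) + i \widehat{\cM}^{(s)}_{i\mathbb{R}}(\mathfrak{p})$, where the real and imaginary parts are local martingales vanishing at $s=0$. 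Compared to the i.i.d.\ case of Propositions \ref{prop:bound-T_n-non-optimal}--\ref{prop:bound-T_n-optimal}, there is no term $\cA^{(s)}$ to analyse and we do not need the refined $O(\sqrt{s\log n})$ estimate on $|\cot \theta^{(s)}_k - \mathbb{E}_{S(s)}[\varepsilon_{e_k}]/\sin \theta^{(s)}_k|$ that cost us a $\log^{1/2}(n)$ factor there.

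Following the stopping time argument used in Propositions \ref{prop:bound-T_n-non-optimal}--\ref{prop:bound-T_n-optimal}, the first step is to discard realisations where some Brownian motion $B^{(e)}_t$ exits $[-\tfrac{\pi}{20},\tfrac{\pi}{20}]$ before time $c/\log^2 n$. By reflection principle and a union bound over $O(n^2)$ edges, the probability of such an exit is bounded by $n^2\exp(-C \log^2 n)$, which decays faster than any polynomial. On the complementary event, the angles stay within a neighbourhood of $\pi/4$, so that on the event $\{t\le \mathbf{T}_{\Lambda_n}\}$, the embedding $\widehat{\cS}^{(t)}_\omega(\Lambda_{2n})$ belongs to \Unifff\, and the two-point fermion bound \eqref{eq:bound-correlator-distance} applies. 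Exactly as in Step 3 of the proof of Proposition \ref{prop:bound-T_n-non-optimal}, this yields for every $\mathfrak{p}\in\Lambda_n$ the bracket estimate
\begin{equation}
[\widehat{\cM}_\mathbb{R}(\mathfrak{p})]^{t\wedge \mathbf{T}_{\Lambda_n}} = O\!\left(\frac{\log n}{n}\, (t\wedge\mathbf{T}_{\Lambda_n})\right),
\end{equation}
and similarly for the imaginary part.

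Now, as in Step 3 of Proposition \ref{prop:bound-T_n-non-optimal}, almost sure continuity of $s\mapsto \widehat{\cY}^{(s)}_\omega(\mathfrak{p})$ together with the definition of $\mathbf{T}_{\Lambda_n}$ implies that on the event $\{\mathbf{T}_{\Lambda_n}\le c/\log^2 n\}\cap\{\text{Brownian motions stay bounded}\}$, there must exist some corner $\mathfrak{p}\in\Lambda_n$ with $|\widehat{\cM}^{\mathbf{T}_{\Lambda_n}}_\mathbb{R}(\mathfrak{p})| \ge 1/(50\sqrt{n})$ or $|\widehat{\cM}^{\mathbf{T}_{\Lambda_n}}_{i\mathbb{R}}(\mathfrak{p})| \ge 1/(50\sqrt{n})$. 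Plugging $x = 1/(50\sqrt{n})$ and $y = O(c/(n\log n))$ into the Dubins--Schwarz type estimate \eqref{eq:deviation-local-martingale}, each such deviation has probability at most
\begin{equation}
\exp\!\left(-C\,\frac{1/n}{c/(n\log n)}\right) = \exp\!\left(-\frac{C\log n}{c}\right) = n^{-C/c}.
\end{equation}
Choosing $c$ small enough that $C/c \ge 6$ and performing a union bound over the $O(n^2)$ corners $\mathfrak{p}\in\Lambda_n$ and over real/imaginary parts gives the desired $O(n^{-4})$ bound.

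The main technical point, and the only place where one must proceed with care, is that the bracket estimate above uses \eqref{eq:bound-correlator-distance} which in turn requires the embedding to already lie in \Unifff\,. This circularity is resolved exactly as in Section \ref{sec:massive-deformation}: one runs an inductive argument on a fine grid of times $s_j = j/n^2$ (or a dyadic schedule), showing that with high probability the martingale deviation on each sub-interval is much smaller than the margin of the \Unif\, class, so the embedding never exits \Unifff\, before time $c/\log^2 n$. This inductive step carries over from the deterministic case of Proposition \ref{prop:effective-cost-deformation} essentially verbatim, with the only change being that the deterministic bound $M(t) \lesssim t\cdot n/\sqrt{n}$ is replaced by the high-probability martingale bound $\sup_{t\le c/\log^2 n}|\widehat{\cM}^{(t)}(\mathfrak{p})| \lesssim \sqrt{\log(n)/n \cdot c/\log^2 n} = \sqrt{c}/(\sqrt{n}\log^{1/2}(n))$, which is much smaller than $1/\sqrt{n}$ for small $c$, and hence keeps the embedding safely inside \Unifff.
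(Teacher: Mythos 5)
Your proof is correct and follows essentially the same route as the paper: exploit the fact that the drift in \eqref{eq:weakly-random-interacting} is designed to kill the finite-variation part, so $\widehat{\cY}^{(s)}(\frak{p})$ is a pure local martingale; discard the event that some Brownian motion leaves $[-\tfrac{\pi}{20},\tfrac{\pi}{20}]$ before $c/\log^2 n$; obtain the bracket bound $O(\log(n)/n \cdot (t\wedge \mathbf{T}_{\Lambda_n}))$ from \eqref{eq:bound-correlator-distance} on the stopped process; force a macroscopic fermion deviation at the stopping time; and conclude with \eqref{eq:deviation-local-martingale} plus a union bound over the $O(n^2)$ corners. The computation $x^2/y \asymp \log(n)/c$ giving $n^{-C/c}$ matches the paper.

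One small clarification on your final paragraph about circularity. You propose to resolve it by a fresh inductive argument over a fine grid of times, as in the deterministic Proposition \ref{prop:effective-cost-deformation}. This is not what the paper does, and is in fact unnecessary: the stopping time $\mathbf{T}_{\Lambda_n}$ is by definition the first exit time of the \Unifff\, class (joined with the first Brownian exit of $[-\tfrac{\pi}{20},\tfrac{\pi}{20}]$), so for every $s < \mathbf{T}_{\Lambda_n}$ the bound \eqref{eq:bound-correlator-distance} is available by construction, and the bracket estimate on $[\widehat{\cM}_{\mathbb{R}}(\frak{p})]^{t\wedge \mathbf{T}_{\Lambda_n}}$ is automatic with no bootstrapping. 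The martingale deviation inequality \eqref{eq:deviation-local-martingale} is then applied directly to the stopped martingale, exactly as in Step 3 of Proposition \ref{prop:bound-T_n-non-optimal}. The inductive scheme belongs to the deterministic proof of Proposition \ref{prop:effective-cost-deformation}, where no stopping time is introduced; here the stopping time replaces it cleanly, so you should remove the last paragraph or simply point to the definition of $\mathbf{T}_{\Lambda_n}$.
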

 \begin{proof}
 	We keep the same strategy and notation as in the proof of Proposition \ref{prop:bound-T_n-optimal}. Note that Step $0$ and $2$ in the present context are trivial in the present context as the finite variation processes $(\mathcal{A}_{t}(\frak{p}))_{\frak{p}\in \Lambda_n} $ is identically $0$. Moreover, very similarly to Step 1 of the proof of \ref{prop:bound-T_n-optimal}, one can see that provided $c>0$ is chosen small enough one has 
 \begin{equation*}
 \mathbf{P}\Big[ \exists 0\leq  t\leq \frac{c}{\log^2(n)}, \exists e\in \Lambda_n, |B^{(e)}_{t}| \geq \frac{\pi}{20}  \Big]= O(\frac{1}{n^4}).	
 \end{equation*}
One can again redo verbatim Step $3$ of the proof of Proposition \ref{prop:bound-T_n-non-optimal} and prove there exist $\frak{p}\in \Lambda_n$ such that
\begin{equation*}
	|\cM_{\mathbb{R}}^{(\mathbf{T}_{\Lambda_n})}(\frak{p})| \geq \frac{1}{50n^{\frac{1}{2}}} \quad \textrm{ or } \quad |\cM_{i\mathbb{R}}^{(\mathbf{T}_{\Lambda_n})}(\frak{p})| \geq \frac{1}{50n^{\frac{1}{2}}}, 
\end{equation*} 
while for every corner $\frak{p}\in \Lambda_n$ one has 
\begin{equation*}
	[\cM_{\mathbb{R}}^{(t\wedge \mathbf{T}_{\Lambda_n})}(\frak{p})]^{\infty}= O(\mathbf{T}_{\Lambda_n}\frac{\log(n)}{n}) \textrm{ and } [\cM_{i\mathbb{R}}^{(t\wedge \mathbf{T}_{\Lambda_n})}(\frak{p})]^{\infty}= O(\mathbf{T}_{\Lambda_n}\frac{\log(n)}{n})
\end{equation*}
Set this time
\begin{equation*}
	\widehat{\mathcal{B}}_{n,c}:=\Big\{ \mathbf{T}_{\Lambda_n}\leq \frac{c}{\log^2(n)} \Big\} \cap \Big\{ \forall 0\leq  t\leq \mathbf{T}_{\Lambda_n} \forall e\in \Lambda_n, |B^{(e)}_{t}| \leq \frac{\pi}{20}  \Big\}.
\end{equation*}
Using once again some union bound over corners $\frak{p}\in \Lambda_n$, one can write this time
\begin{align*}
	\mathbf{P}\Bigg[\widehat{\mathcal{B}}_{n,c}  \cap \overline{ \mathcal{J}_{n,C}} \Bigg] & \leq  O(n^2)\sup\limits_{\frak{p}\in \Lambda_n} \mathbf{P}\Bigg[ \big\{ \mathbf{T}_{\Lambda_n}\leq \frac{c}{\log^2(n)} \big\}\cap \big\{ \big|\sup_{t\geq 0}\cM_{\mathbb{R}}^{(t\wedge \mathbf{T}_{\Lambda_n} )}(\frak{p})\big| \geq \frac{1}{50n^{\frac{1}{2}}} \big\} \cdots  \\
	&  \quad \cdots \cap \big\{  \big|[\cM_{\mathbb{R}}^{(t\wedge \mathbf{T}_{\Lambda_n})}(\frak{p})]^{\infty}\big| \leq O(\mathbf{T}_{\Lambda_n}\frac{\log(n)}{n})\big\} \Bigg]\\
 & \quad +  O(n^2)\sup\limits_{\frak{p}\in \Lambda_n} \mathbf{P}\Bigg[ \big\{ \mathbf{T}_{\Lambda_n}\leq \frac{c}{\log^2(n)} \big\}\cap \big\{ \big|\sup_{t\geq 0}\cM_{i\mathbb{R}}^{(t\wedge \mathbf{T}_{\Lambda_n} )}(\frak{p})\big| \geq \frac{1}{50n^{\frac{1}{2}}} \big\} \cdots  \\
	&  \quad \cdots \cap \big\{  \big|[\cM_{i\mathbb{R}}^{(t\wedge \mathbf{T}_{\Lambda_n})}(\frak{p})]^{\infty}\big| \leq O(\mathbf{T}_{\Lambda_n}\frac{\log(n)}{n})\big\} \Bigg].
\end{align*}
To conclude, it is enough to see (and similarly for $\cM_{i\mathbb{R}}^{(t\wedge \mathbf{T}_{\Lambda_n})}(\frak{p})$) that
\begin{equation}
	\mathbf{P}\Bigg[\Big|\sup_{t\geq 0}\cM_{\mathbb{R}}^{(t\wedge \mathbf{T}_{\Lambda_n} )}(\frak{p})\Big| \geq \frac{1}{50n^{\frac{1}{2}}} \bigcap \Big|[\cM_{\mathbb{R}}^{(\mathbf{T}_{\Lambda_n})}(\frak{p})]^{\infty}\Big| \leq O(\frac{c}{n\log(n)})\Bigg]
\end{equation}
decays at most as $O(n^{-6})$ (using again \eqref{eq:deviation-local-martingale}) provided $c$ is chosen small enough. This concludes the proof. 	
 \end{proof}
 
 We are now in position to prove the first point of Theorem \ref{thm:near-critical-RSW-random-interacting}. 
 \begin{proof}[Proof of the crossing estimate in Theorem \ref{thm:near-critical-RSW-random-interacting}]
 One can use Proposition \ref{prop:bound-T_n-interacting} to repeat the proof of Theorem \ref{thm:near-critical-RSW-random} via Proposition \ref{prop:bound-T_n-optimal} to prove the crossing estimate announced in Theorem \ref{thm:near-critical-RSW-random-interacting}.
 \end{proof}

\section{Conformal invariance of the weakly random models}\label{sec:random-fermions}

In this section, we prove that the $t_n$-weakly random Ising model and the $t_n$-weakly random interacting Ising models are conformally invariant in the limit, respectively for $t_n=n^{-(\frac{1}{3}+\alpha)}$ -with any fixed $\alpha>0$- for the former and $t_n=\log^{-C}(n)$ -with $C>0$ large enough- for the latter. As already recalled in the introduction, Smirnov's breakthrough work \cite{Smirnov-conformal} was followed by many others proving rigorously conformal invariance of the critical model. This includes the proof of convergence for FK interfaces to the chordal SLE(16/3), which encodes the more refined details on the local behaviour of the boundary separating primal and dual clusters. For the massive model, which is not conformally invariant in the limit, the convergence of the FK interfaces remains an open question, despite some excellent understanding of the correlations functions in bounded domains and in the full-plane (\cite{CIM-universality,park-iso,park2018massive}) confirming predictions of \cite{sato1979holonomic}. In the present section, we show that randomness with respect to the environment averages well enough to preserve the $\textrm{SLE}(16/3)$ limit of the FK interfaces and the conformal covariance of the second term in the energy density, almost surely with respect to the environment. In our framework, proving the convergence of FK interfaces for the models in random environment is the biggest additional challenge compared to the existing literature. The proof goes in two step. First, one should establish pre-compactness of discrete curves, which follows directly from Theorem \eqref{thm:near-critical-RSW-random} via \cite{KemSmi1,KemSmi2,aizenman1999holder}. Then, and it is the most challenging part, one should show that the FK-Ising observable converges in arbitrary rough domains to their continuous counterparts. Smirnov first achieved this task on the square lattice \cite{Smirnov-conformal}, later generalized by Chelkak and Smirnov to isoradial lattices \cite{ChSmi2}, and by Park in the massive case \cite{park-iso}. A key technique in these proofs is showing that the primitive $H_F$ of the FK observable $F$, which satisfies Dirichlet boundary conditions and is almost harmonic at the discrete level, converges to its natural continuous counterpart. In order to study FK-Ising observables in a weakly random environment, the original square lattice $ \cS^{(0)}$ is not an adapted to discrete complex analysis techniques. Therefore, we first prove that FK interfaces whose trace is \emph{drawn over the s-embedding naturally associated to the weakly random environment} indeed converge to SLE(16/3) and then use the fact that the s-embedding of the random environment remains close to the square lattice one. We use here a simplified version of one of the key inputs coming from \cite[Section 4]{Che20}, which ensures that it is enough to find a discrete differential operator associated to the $s$-embedding $\cS$ that approximates well enough the continuous Laplacian. The proof given below follows most of the steps of \cite[Section 4]{Che20} and \cite[Chapter 4]{richards2021convergence}, which we prove to remain valid on our \emph{almost square lattice}. We do not claim bringing many novelties in this section, but only prove that the stability of conformal structure of the $s$-embeddings obtained by a random deformation allows to prove the stability of the associated scaling limit.

Fix once for all some $\alpha>0$. Let $\cS^\delta$ be a proper s-embedding, covering the square $[-1;1]^2$. Fix a discrete simply $\Omega^\delta$, approximating in the Carathéodory sense $\Omega$, together with two corners $a^\delta,b^{\delta}\in \partial \Omega^\delta$, approximating respectively   two marked boundary points $a, b \in \partial \Omega$, considered as prime ends. Consider the FK-Ising model on $(\Omega_{\delta}, a^{(\delta)}, b^{(\delta)})$ with wired boundary conditions along the arc $(a^{(\delta)} b^{(\delta)})^{\circ}$ and free boundary conditions along the arc $(b^{(\delta)} a^{(\delta)})^{\bullet}$. One can define the Kadanoff-Ceva FK correlator in $(\Omega^{\delta},a^\delta,b^\delta)$ by setting for any corner $\frak{p}\in \Omega^\delta $
\begin{equation}\nonumber
	X_{\Omega^\delta}(\frak{p}):=\langle \chi_{\frak{p}}\sigma_{(a^\delta b^\delta)^\circ}\mu_{(b^\delta a^{\delta})^\bullet} \rangle_{\Omega^\delta},
\end{equation}
where the underlying Ising model is the one induced by weight on $\cS^\delta$.  In the case of the $t$-weakly random weights, given for a realisation $\omega$, consider the associated $s$-embedding $\cS^{(t)}_{\omega}$ and the associated FK-observable
\begin{equation}
	X^{(t)}_{(\Omega^{\delta_n},\omega)}(\frak{p}):=\langle \chi_{\frak{p}}\sigma_{(a^\delta b^\delta)^\circ}\mu_{(b^\delta a^{\delta})^\bullet} \rangle_{(\Omega^{\delta_n},\omega)},
\end{equation}
where the Ising model is given by the weakly random weights in $\Omega^{\delta_n} $ for the realisation $\omega$. It is clear that $X^{(0)}_{(\Omega^{\delta_n},\omega)}(\frak{p})=X^{(0)}_{\Omega^{\delta_n}}(\frak{p})$ corresponds to the standard FK observable on the critical square lattice. One can associate to $X^{(t)}_{(\Omega^{\delta_n},\omega)}$ some $s$-holomorphic function $F^{(t)}_{\delta_n,\omega}$ via \eqref{eq:X-from-F} on $\cS^{(t)}_{\omega}$. Before diving into precise statements, let us recall some specific estimate associated to the FK observable. Using the maximum principle together with its boundary jump, a purely combinatorial observation ensures (see e.g.\ \cite[Corollary 2.12]{Che20}) that one can chose the additive constant in the definition of $H_{F^{(t)}_{\delta_n,\omega}} \in [0;1]$ everywhere inside $(\Omega^{\delta_n},a^{\delta_n},b^{\delta_n})$. Moreover, assuming that we are working with a proper $s$-embedding which satisfies \Unifff\, at time $t$, the regularity theory recalled in Section \ref{sub:regularity} ensures that
\begin{equation}\label{eq:boundary-behaviour-fermion}
	\Bigg| F^{(t)}_{\delta_n,\omega}(z) \Bigg| = O\Bigg( \Big(\textrm{dist}(z,\partial \Omega^{\delta_n}_{\omega}\Big)^{-\frac{1}{2}} \Bigg).
\end{equation}

In order to run a simplified version of the methods introduced in \cite[Section 4]{Che20}, we use the fact that $s$-holomorphic functions on the embedding $\cS^{(t)}$ are \emph{Lipchitz} with high $\mathbf{P}$-probability, improving the statement made at the end of Section \ref{sub:regularity} which asserted that $s$-holomorphic functions on an $s$-embedding satisfying \Unifff\ are $\beta>0$ Holder at each scale. At time $t=0$, the embedding $\cS^{(0)}$ is the critical square lattice and it is proven in \cite[Section 3.5]{ChSmi2} (see also \cite[Proposition 4.6]{park-iso}) that $s$-holomorphic functions are Lipchitz. The following Lemma, whose proof is sketched in the Appendix, follows the spirit of many computations already made. In particular, it allows to transfer the Lipchitzness on $\cS^{(0)}$ to all embeddings $(\cS^{(t)}(\Lambda_{n}))_{0\leq t \leq t^{(\alpha)}_n}$ where $t^{(\alpha)}_n:= n^{-(\frac{2}{3}+2\alpha)}  $.
\begin{lemma}\label{lem:Lipchitzness-random-environement}
	Let $F^{(t)}$ be an $s$-holomorphic function on the $s$-embedding $\cS^{(t)}(\Lambda_{n})$ obtained via Lemma \ref{lem:SDE-embedding}. Then, with $\mathbf{P}$-probability at least $1-O(n^{-4})$, there exist a constant $C(\omega)>0$ such that for any $0\leq t \leq t^{(\alpha)}_n$, the function $F^{(t)}$ is Lipchitz at every scale, meaning that for any  $z_{1,2}\in B(u,d)$ one has 
	\begin{equation*}
		\big| F^{(t)}(z_1) -F^{(t)}(z_2) \big| \leq C(\omega) \Bigg( \frac{\big| z_1-z_2 \big| }{d} \Bigg) \max_{z\in B(u,d)} \big| F^{(t)}(z) \big|.
	\end{equation*}
\end{lemma}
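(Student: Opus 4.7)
The plan is to bootstrap from the known Lipschitz regularity of $s$-holomorphic functions on the standard critical square lattice $\cS^{(0)}$, established in \cite[Section 3.5]{ChSmi2} (see also \cite[Proposition 4.6]{park-iso}), and transfer this property to the random perturbations $\cS^{(t)}_\omega$ by exploiting the quantitative closeness of the two embeddings on a high-probability event for times $0 \leq t \leq t_n^{(\alpha)} = n^{-(\frac{2}{3}+2\alpha)}$.

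First, I would sharpen the conclusion of Proposition \ref{prop:bound-T_n-optimal} to show that, on an event $\mathcal{E}_n$ of $\mathbf{P}$-probability at least $1 - O(n^{-4})$, one has uniformly in $0 \leq t \leq t_n^{(\alpha)}$ and $\frak{p} \in \Lambda_{2n}$ the strengthened bound $|\cY^{(t)}(\frak{p}) - \cY^{(0)}(\frak{p})| = O(n^{-\frac{1}{2}-\beta(\alpha)})$ for some $\beta(\alpha) > 0$. This follows directly from the Doob decomposition exploited in the proof of Proposition \ref{prop:bound-T_n-optimal}: the bracket of the local martingale part at time $t_n^{(\alpha)}$ is $O(t_n^{(\alpha)} \log n / n)$, giving a maximal deviation of order $n^{-\frac{5}{6}-\alpha}\log^{\frac{1}{2}}n$ by Doob's maximal inequality combined with \eqref{eq:deviation-local-martingale}, while the finite-variation part contributes at most $O\bigl((t_n^{(\alpha)})^{\frac{3}{2}}\cdot n^{\frac{1}{2}}\log^{\frac{1}{2}}n\bigr) = O(n^{-\frac{1}{2}-3\alpha}\log^{\frac{1}{2}}n)$ using the Step 0 analysis from the proof of Proposition \ref{prop:bound-T_n-optimal}.

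Second, I would use the sharp asymptotic for the two-point Kadanoff--Ceva fermion $\langle\chi_{\frak{p}}\chi_{\frak{q}}\rangle_{S_\omega(t)}$. On $\cS^{(0)}$ the expansion \eqref{eq:asympototic-critical-full-plane} provides the leading order $\tfrac{1}{n\pi}\Re[\overline{\eta_a\eta_{\frak{p}}}/(\cS^{(0)}(\frak{p})-\cS^{(0)}(a))]$, which is the discrete Cauchy kernel driving the Lipschitz estimate in the isoradial case. Using the closeness estimate from Step~1, together with the convergence theorem of \cite{MahPar25a} applied to the sequence of embeddings $\cS^{(t)}_\omega$ (which satisfy \Unifff\, on $\mathcal{E}_n$), the two-point fermion on $\cS^{(t)}_\omega$ inherits the same leading order with corrections of relative size $O(n^{-\beta(\alpha)})$. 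Plugging this into the discrete Cauchy-type representation formula for $s$-holomorphic functions in a ball — expressing $F^{(t)}(z)$ at an interior point of $B(u,d)$ as a finite combination of boundary values weighted by the two-point fermion kernel — yields a Lipschitz gradient bound at every scale $d \gg \delta_n$, with a constant $C(\omega)$ that is finite on $\mathcal{E}_n$.

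The main obstacle, and the step genuinely exploiting the smallness regime $t \leq t_n^{(\alpha)}$ rather than a generic \Unif\, assumption, is establishing this sharp Cauchy-like asymptotic for $\langle\chi_{\frak{p}}\chi_{\frak{q}}\rangle_{S_\omega(t)}$ uniformly down to microscopic distances $|\cS^{(t)}(\frak{p})-\cS^{(t)}(\frak{q})|\asymp\delta_n$, since under the generic \Unif\, assumption Theorem \ref{thm:F-via-HF} only provides H\"older regularity with exponent $\beta(\kappa)<1$. At the smallest edge scale $d \asymp \delta_n$, I would instead close the argument by direct use of the local three-term propagation \eqref{eq:3-terms}: the bounded-angle and comparable-edge-length properties of \Unifff\, force $|F^{(t)}(z) - F^{(t)}(z')| = O(1)\cdot\max_{B(u,d)}|F^{(t)}|$ on adjacent quads $z,z'$, which is consistent with the required ratio $|z_1-z_2|/d$ at scale $d \asymp \delta_n$ and completes the Lipschitz bound at every scale.
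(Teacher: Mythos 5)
The sharpening in your Step 1 is correct: decomposing $\cY^{(t)}-\cY^{(0)}$ into a local martingale with bracket $O(t\log n/n)$ and a drift of size $O(n^{1/2}\log^{1/2}n\cdot t^{3/2})$, and evaluating at $t=t_n^{(\alpha)}=n^{-(2/3+2\alpha)}$, does give a polynomially improved deviation $|\cY^{(t)}(\frak p)-\cY^{(0)}(\frak p)|=O(n^{-1/2-\beta(\alpha)})$ on a high-probability event. This matches the paper's quantitative input.

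The genuine gap is in Step 2. You propose to plug the sharp Cauchy-kernel asymptotics \eqref{eq:asympototic-critical-full-plane} into a discrete Cauchy-type representation formula on $\cS^{(t)}_\omega$, claiming that the two-point fermion on the deformed embedding ``inherits'' these asymptotics because $\cS^{(t)}$ is $L^\infty$-close to $\cS^{(0)}$. This does not follow. The reference \cite{MahPar25a}, under a generic \Unif\ assumption, yields only the \emph{upper bound} \eqref{eq:bound-correlator-distance} on the two-point fermion, not the leading-order identification with the continuous Cauchy kernel; the sharp asymptotics \eqref{eq:asympototic-critical-full-plane} rely on isoradiality/integrability of $\cS^{(0)}$ and do not transfer to a non-isoradial perturbation merely because the embedding coordinates moved by $o(\delta_n)$. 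Green-function-type kernels are not stable under small $L^\infty$ perturbations of the lattice in the sense you need: the correction to the kernel at separation $r$ is governed by the cumulative effect of the perturbation over $O((r/\delta_n)^2)$ quads, not by the pointwise displacement of a single vertex. Moreover, even granting such a kernel, the argument is at risk of circularity, since the two-point fermion is itself an $s$-holomorphic function and establishing its Lipschitz-sharp behaviour is essentially the statement you are trying to prove. Finally, your closing remark handles only the single microscopic scale $d\asymp\delta_n$ via the three-term relation; it does not control the intermediate scales $\delta_n\ll d\ll 1$, which is exactly where \Unif\ alone gives only $\beta(\kappa)$-H\"older regularity with $\beta(\kappa)<1$.

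The paper's route avoids this entirely: it uses the fact (from \cite[Section 2.5]{Che20}) that $\Re[F^{(t)}]$ is harmonic for the backward random walk on the S-graph $\cS^{(t)}-i\cQ^{(t)}$, whose transition weights at $t=0$ are exactly the uniform nearest-neighbour weights $\tfrac14$ on the square lattice. The Lipschitz estimate for $s$-holomorphic functions then reduces to the gradient bound $|\hm^{(t),z''}_{B(z,R)}(z)-\hm^{(t),z''}_{B(z,R)}(z')|=O(\delta^2R^{-2})$ for harmonic measure, known at $t=0$ by \cite[Prop.\ 2.7]{ChSmi1}. Because the transition weights depend explicitly and smoothly on the local geometry of $\cS^{(t)}$, one can differentiate this quantity in $t$, apply Gr\"onwall (with high $\mathbf{P}$-probability, in the same spirit as \eqref{eq:Grownwall}--\eqref{eq:Grownwall-2}), and conclude that the harmonic-measure gradient bound degrades by at most a multiplicative constant for $t\le t_n^{(\alpha)}$. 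This argument requires only control of the \emph{local} weights, never sharp asymptotics of a global kernel, and so it closes where your Step 2 does not. If you want to salvage your approach, you would have to replace ``inherits the leading order'' with an actual proof that the two-point fermion on $\cS^{(t)}$ has a Cauchy-kernel leading order with quantitative error — which, when you trace through it, amounts to establishing the very harmonic-measure stability the paper proves directly.
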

The key innovation of \cite{Che20} is the understanding that constructing a good approximation of the standard continuous Laplacian on an $s$-embedding is enough to prove that discrete fermions converge to their natural continuous counterparts. When the lattice is the critical square lattice, the naive discretisation of the continuous Laplacian by the discrete one turns out to be sufficient.
Since our near-critical weakly random model is defined on an s-embedding that is \emph{almost} a critical square lattice, we work with the so-called $s$-Laplacian, adapted to the embedding $\cS$ while also approximating well the standard discrete Laplacian on $\cS^{(0)}$. We keep exactly the notations  \cite[Sections 3 and 4]{Che20}. This imples that
\begin{itemize}
	\item There exist an operator $\Delta_S$ defined in \cite[Lemma 3.7]{Che20} for a function $H $ one $\cS(\Lambda(G))$, given for $v^{\bullet} \in G^{\bullet} $ and $v^{\circ} \in G^{\circ} $ by
\begin{align*}
	\Delta_\cS [H ](v^{\bullet})&=\sum\limits_{v_k^{\bullet} \sim  v^{\bullet}} a_{ v^{\bullet} v_k^{\bullet} }\big( H(v_k^{\bullet})-H(v^{\bullet}) \big) + \sum\limits_{v_k^{\circ} \sim v^{\bullet}} b_{v^{\bullet} v_{k}^{\circ}}\big( H(v_k^{\circ})-H(v^{\bullet}) \big),\\
	 \Delta_\cS [H ](v^{\circ})&= \sum\limits_{v_k^{\circ} \sim v^{\bullet}} b_{v^{\circ} v_{k}^{\bullet}}\big( H(v_k^{\bullet})-H(v^{\circ}) \big) - \sum\limits_{v_k^{\circ} \sim  v^{\circ}} a_{ v^{\circ} v_k^{\circ} }\big( H(v_k^{\circ})-H(v_k^{\circ}) \big).
\end{align*}
where the coefficients $a_{vv'}$ are symmetric and positive, while the coefficients $b_{vv'}$ are also symmetric and real. In particular one has $a_{ v^{\bullet}_0 v_1^{\bullet} } = r_z^{-1}\sin^{2}(\theta_z) $ and $a_{ v^{\circ}_0 v_1^{\circ} } = r_z^{-1}\cos^{2}(\theta_z) $ for que quad $z=(v_{0}^{\bullet}v_{0}^{\circ}v_{1}^{\bullet}v_{1}^{\circ})$. 
 	\item Some special attention is given to $ \Delta_{\cS^{(0)}}$
 , since it corresponds on the isoradial embedding of mesh size $\delta$ (up to some explicit $\delta^{-1}$ and sign factors) to usual Laplacian $\Delta^{\textrm{sq}} $ for the standard random walk.
Following \cite{ChSmi1}), set for $v\in \Lambda^\delta(G)$ 
\begin{equation}
	\Delta^{\textrm{sq}}[H](v) := \frac{1}{4\delta^2} \sum\limits_{v_k \sim v} \Big( H(v_k)-H(v) \Big).
\end{equation}
When $\cS^{(0)}$ is seen as an isoradial lattice of mesh size $ \delta$, the coefficients $b_{vv'}$ vanish, while
\begin{equation}
	\Delta_{\cS^{(0)}} [H ](v)=\pm_{v} \delta \Delta^{\textrm{sq}}[H](v),
\end{equation}
where $\pm_{v} :=+ $ if $v\in G^{\bullet} $ and $\pm_v :=- $ if $v\in G^{\circ}$.\end{itemize}
 In particular, we are going to use the fact (proved in \cite{ChSmi1}) that $ \Delta^{\textrm{sq}}$ provides a very accurate approximation of the standard continuous Laplacian $\Delta=\partial_{xx}+\partial_{yy}$ to deduce that $\Delta_S$ also provides an very accurate (up to the sign) approximation of $\Delta$ on the associated $s$-embedding. 
 We show that the naturally defined discrete differential operators on the $s$-embeddings $(\cS^{(t)})_{0\leq t \leq t_n^{(\alpha)}} $ approximate very well those on the square lattice, which themselves approximate very well the continuous ones. Let us state that, with a slightly more refined analysis, one could in principle take $\alpha=\alpha(n)$ going to $0$ slowly enough, creating some logarithmic corrections in the analysis. Still, deforming up to \emph{any} power below $-\frac{2}{3}$ allows to lighten the writing of the proofs.
 \begin{prop}\label{prop:Laplacian-approximation-local}
	Consider an s-embedding $\cS^{(t)}(\Lambda_{n})$ constructed by Lemma \ref{lem:SDE-embedding} and a real function $\phi$. Then, with $\mathbf{P}$-probability at least $1-O(n^{-4})$, there exist a constant $O=O(\omega,\alpha)>0$ such that for any $0\leq t \leq t^{(\alpha)}_n$,
\begin{equation}\label{eq:expnasion-laplacian}
		[\Delta_{\cS^{(t)}}\phi](v)= \pm_{v} \delta \big[ \Delta\phi + O(\delta^{1+\alpha} \sup|D^{(2)}\phi| ) + O(\delta^{2+\alpha} \sup|D^{(3)}\phi |) \big],
\end{equation}
where the supremum are taken in a disc of size $10\delta$ around $v$.
\end{prop}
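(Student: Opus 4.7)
The plan is to split the discrete operator as $\Delta_{\cS^{(t)}} = \Delta_{\cS^{(0)}} + (\Delta_{\cS^{(t)}} - \Delta_{\cS^{(0)}})$ and control each summand separately. For the first term, $\cS^{(0)}$ is the critical square lattice of mesh $\delta = \sqrt{2}/n$, and the identification $\Delta_{\cS^{(0)}} = \pm_v \delta\, \Delta^{\mathrm{sq}}$ recalled just before the statement, combined with a classical Taylor expansion on the square lattice (see \cite{ChSmi1}), yields the stronger expansion
\begin{equation*}
[\Delta_{\cS^{(0)}}\phi](v) = \pm_v \delta \big[\Delta\phi(v) + O(\delta^2 \sup|D^{(3)}\phi|)\big],
\end{equation*}
in which the potential $|D^{(2)}\phi|$-error vanishes by full symmetry of the lattice. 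This already disposes of both error terms in the claim when $t=0$, and accounts for the $|D^{(3)}\phi|$-contribution for general $t$.

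The remaining task is to bound the operator difference $[\Delta_{\cS^{(t)}} - \Delta_{\cS^{(0)}}]\phi(v)$. By the explicit formulas for the coefficients $a_{vv'}$ and $b_{vv'}$ recalled from \cite[Lemma 3.7]{Che20}, this difference is a finite sum over the immediate neighborhood of $v$ of terms linear in the perturbations $\Delta a = a^{(t)} - a^{(0)}$, $\Delta b = b^{(t)} - b^{(0)}$, and in the positional deviations $\cS^{(t)}(v_k) - \cS^{(0)}(v_k)$. A crucial structural input is that every $s$-Laplacian annihilates affine functions (in particular the coordinates of the embedding it is built from), so the first-order Taylor contribution of $\phi$ around $v$ vanishes identically for each embedding. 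Consequently the difference couples only to the second-order Taylor term $\tfrac{1}{2}(w-v)^T D^{(2)}\phi(v)(w-v)$, with a cubic remainder of the announced order $\delta^3 \sup|D^{(3)}\phi|$.

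The quantitative input then comes from the high-probability deformation control. By Proposition \ref{prop:bound-T_n-optimal} together with standard Gaussian concentration and a union bound across the $O(n^2)$ edges and quads of $\Lambda_n$, one obtains an event of $\mathbf{P}$-probability at least $1 - O(n^{-4})$ on which, uniformly for $0 \le t \le t_n^{(\alpha)}$, the embedding $\cS^{(t)}$ belongs to \Unifff, every angle satisfies $|\theta_z^{(t)} - \tfrac{\pi}{4}| \le C\sqrt{t_n^{(\alpha)} \log n}$, and the Doob decomposition of Lemma \ref{lem:SDE-embedding} provides matching local control on $|\cY^{(t)}(\frak{p}) - \cY^{(0)}(\frak{p})|$ for each corner $\frak{p}$ in the $10\delta$-neighborhood of $v$. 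Substituting into the explicit $O(\delta^{-1})$-sized coefficients and $O(\delta^2)$-sized edge squares, converting the typical perturbation size $\sqrt{t_n^{(\alpha)}}$ into a power of $\delta$, and absorbing the logarithmic factor into the strict inequality in the exponent $\alpha$ produces the announced error $O(\delta^{1+\alpha} \sup|D^{(2)}\phi|)$.

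The main obstacle will be avoiding naive triangle-inequality overestimates when expanding the weighted second-moment tensor $\sum_k a^{(t)}_{vv_k}(v_k^{(t)} - v^{(t)})^{\otimes 2}$: a direct term-by-term bound spends a factor $\delta^{-1}$ on the coefficient normalization that cannot be recovered by the $O(\delta^2)$ smallness of the edge-squared factor, and produces an error slightly worse than stated. The resolution is to use the identity $\Delta_{\cS^{(t)}}[\cS^{(t)}] = 0$ to absorb the leading piece of the perturbation, so that only a second-order residue in the local deformation size contributes to the final bound. Propagating this local control uniformly across the disc $B(v, 10\delta)$ is then handled by an equicontinuity argument in the spirit of Lemma \ref{lem:Lipchitzness-random-environement}, applied to the difference of the two embeddings.
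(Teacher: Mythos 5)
Your decomposition $\Delta_{\cS^{(t)}} = \Delta_{\cS^{(0)}} + (\Delta_{\cS^{(t)}} - \Delta_{\cS^{(0)}})$ is a valid repackaging of the paper's argument (which instead Taylor-expands $\phi$ directly in the $\cS^{(t)}$ coordinates and applies $\Delta_{\cS^{(t)}}$ term-by-term), and you correctly locate the two key structural inputs: the annihilation of affine functions by every $s$-Laplacian, and the fact that $\cS^{(t)}$ is a $1+o(\delta^{\alpha})$ multiplicative perturbation of the exact square lattice on the high-probability event coming from Proposition \ref{prop:bound-T_n-optimal}. Both routes lead to the same estimate and the bookkeeping of coefficients $a_{vv'},b_{vv'}$ and positions is comparable.

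However, the resolution you propose for what you correctly flag as the main obstacle is the wrong identity. You invoke $\Delta_{\cS^{(t)}}[\cS^{(t)}]=0$ to "absorb the leading piece of the perturbation" of the weighted second-moment tensor $\sum_k a^{(t)}_{vv_k}(v_k^{(t)}-v^{(t)})^{\otimes 2}$. But the first-moment identity only kills the \emph{linear} contribution of the Taylor expansion; it gives no cancellation in the quadratic tensor. What actually replaces the crude $O(\delta)$ bound on $\Delta_{\cS^{(t)}}[(\cS^{(t)}-\cS^{(t)}(v))^{2}]$ with the improved $O(\delta^{1+\alpha})$ is the \emph{exact} vanishing $\Delta_{\cS^{(0)}}[(\cS^{(0)}-\cS^{(0)}(v))^{2}]=0$ on the critical square lattice (a consequence of its symmetry, equivalently of the factorisation $\Delta_{\cS}=-4\overline{\partial}_{\omega}\partial_{\cS}$ of \cite[Proposition 3.7]{Che20}), together with the $1+o(\delta^{\alpha})$ closeness of coefficients and positions. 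Without comparing the second-moment tensor to its vanishing counterpart at $t=0$, the naive estimate really does produce only $O(\delta)$ and the exponent gain is lost; citing $\Delta_{\cS^{(t)}}[\cS^{(t)}]=0$ does not recover it. Minor secondary point: your claimed remainder $O(\delta^{2}\sup|D^{(3)}\phi|)$ for $\Delta_{\cS^{(0)}}$ on the square lattice should be $O(\delta^{2}\sup|D^{(4)}\phi|)$ — the third-order Taylor term cancels by antisymmetry, so the leading remainder genuinely involves the fourth derivative, as recalled in the paper from \cite{ChSmi1}.
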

\begin{proof}
	Let us start with the intuition behind this statement. For $v\in G^{\bullet}$, one has
		\begin{equation}
		\Delta^{\textrm{sq}}[H](v)= \Delta\phi + O(\delta^2 \sup|D^{(4)}\phi |).
	\end{equation}
This comes from the fact that, on the standard square lattice, when expanding $\phi(v_k)$ via the Taylor expansion around of $\phi$ around $z$, the contribution of the terms of the form $\big(\cS^{(0)}(v_k)-\cS^{(0)}(v) \big)^{\ell =0,1,2,3}$ (together with associated conjugate and absolute values) exactly cancel. This can be seen in \cite[Lemma 2.2]{ChSmi1}, whose result is even better when working exactly on the square lattice rather than simply an isoradial graph. Let us discuss how to obtain $O(\delta^{1+\alpha} \sup|D^{(2)}\phi| ) $ for the almost square lattice $\cS^{(t)}$. Recall that it was proven in \cite[Proposition 3.12 ]{Che20} that $\Delta_{\cS^{(t)}}[1]=\Delta_{\cS^{(t)}}[\cS^{(t)}]=\Delta_{\cS^{(t)}} [\overline{\cS}^{(t)}]=0 $. Moreover, one can write for $v_k \sim v $ and a real function $\phi$
\begin{align*}
	\phi(v_k)&=\phi(v)+ (\cS^{(t)}(v_k)-\cS^{(t)}(v))\partial\phi(v)+ (\overline{\cS}^{(t)}(v_k)-\overline{\cS}^{(t)}(v)) \overline{\partial} \phi(v) \\
	&+C_{\phi}(\cS^{(t)}(v_k)-\cS^{(t)}(v))^2+C^{'}_{\phi}(\overline{\cS}^{(t)}(v_k)-\overline{\cS}^{(t)}(v))^2+ C^{''}_{\phi}|\cS^{(t)}(v_k)-\cS^{(t)}(v)|^2 \\
	&+ O(\delta^3\sup |D^{(3)}\phi |),
\end{align*}
where the real coefficients $C_{\phi},C^{'}_{\phi}$ and $C^{''}_{\phi}$ are bounded by $\sup |D^{(2)}\phi |$. As recalled in the proof of \cite[Proposition 3.12]{Che20} $\cS$ only satisfies some \Unif\ one has $\Delta_{\cS}[\cS^2]=\Delta_{S}[\big( \cS(\cdot) - \cS(z) \big)^2] = O(\delta)$ only using very crude bounds $O(\delta^{-1}) $ on the coefficients $a_{vv'}$ and $b_{vv'} $. If one works with $\cS^{(0)}$, then $\Delta_{\cS^{(0)}}[(\cS^{(0)})^2] =0$ by the discussion recalled above and the link to $\Delta^{\textrm{sq}}$. Using that the local geometry of $\cS^{(t)}$ is almost a square lattice up to an $1+o(\delta^{\alpha})$ multiplicative factor everywhere, one can directly see the the improvement to the crude bound $\Delta_{\cS^{ts)}}[(\cS^{(t)})^2]=O(\delta)$ is by an $O(\delta^{1+\alpha})$ multiplicative factor. This can easily be carried out carefully using e.g.\ the factorisation $\Delta_{S}=-4\overline{\partial}_\omega \partial_{\cS} $ defined in \cite[Proposition 3.7]{Che20} (here $\omega$ is kept for the sake of notations consistency with \cite{Che20} and is not related to the $\mathbf{P}$-randomness). The last terms involving the third derivatives of $\phi $ can be treated similarly, as the contribution $\big(\cS^{(0)}(v_k)-\cS^{(0)}(v) \big)^{3}$ cancel on the square lattice. The analysis is exactly the same for $v\in G^{\circ}$.
\end{proof}

We now prove that, already at discrete level, the function $H_{F^{(t)}_{\delta_n,\omega}}$ is (quantitatively) close to its continuous harmonic continuation, except maybe in a $\delta^{1-\eta}$ layer close to the boundary of a domain, following closely a simplified version \cite[Theorem 4.1]{Che20} and \cite[Chapter 4]{richards2021convergence}. Denote by $\Omega^{\delta}_{\textrm{int}(\eta)} $ the main connected component of the $\delta^{1-\eta}$ interior of $\Omega^{\delta}$.

\begin{theo}\label{thm:quantitative-convergence-h}
	Consider an s-embedding $\cS^{(t)}_{\omega}=\cS^{\delta_n}$ constructed by Lemma \ref{lem:SDE-embedding} and $(\Omega^{\delta},a^{\delta},b^{\delta},\omega,t) \subset \mathbb{C}$ a discrete bounded simply connected domain drawn over $\cS^{\delta_n}$, equipped with Dobrushin boundary conditions. For a realisation $\omega$ under the probability measure $\mathbf{P}$, set $F^{(t)}_{\delta_n,\omega}$ the $s$-holomorphic function associated to the FK-Dobrushin observable $X^{(t)}_{(\Omega^{\delta_n},\omega)}$. Denote by $h^{(t)}_{\mathrm{int}(\eta),\omega} $ the harmonic continuation of $H_{F^{(t)}_{\delta_n,\omega}}$ in $\Omega^{\delta}_{\mathrm{int}(\eta)} $. Then, with $\mathbf{P}$-probability at least $1-O(n^{-4})$, there exist an exponent $\gamma=\gamma(\alpha)>0$ and constant $O=O(\alpha,\omega)>0$ such that for any $0\leq t \leq t^{(\alpha)}_n$ and uniformly in $\Omega^{\delta}_{\mathrm{int}(\eta)} $, one has
	\begin{equation}
		|H_{F^{(t)}_{\delta_n,\omega}} -h^{(t)}_{\mathrm{int}(\eta),\omega} | = O(\delta_n^{\gamma}).
	\end{equation}
\end{theo}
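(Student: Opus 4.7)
The proof would follow the blueprint of \cite[Theorem 4.1]{Che20} and \cite[Chapter 4]{richards2021convergence}, with Proposition \ref{prop:Laplacian-approximation-local} and Lemma \ref{lem:Lipchitzness-random-environement} playing the role of the key new inputs adapted to the random embedding. All arguments are carried out on the $\mathbf{P}$-probability $\geq 1-O(n^{-4})$ event on which the embeddings $\cS^{(t)}_\omega$ satisfy \Unifff\ and both Lemma \ref{lem:Lipchitzness-random-environement} and Proposition \ref{prop:Laplacian-approximation-local} apply uniformly for $0 \leq t \leq t^{(\alpha)}_n$.

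The first step is to decompose $H_F := H_{F^{(t)}_{\delta_n,\omega}}$ into its restrictions $H^\bullet$ on $G^\bullet$ and $H^\circ$ on $G^\circ$. A direct quad-by-quad computation (as in \cite[Proposition 2.10]{Che20}) shows that $H^\bullet$ is $\Delta_{\cS^{(t)}_\omega}$-subharmonic and $H^\circ$ is $\Delta_{\cS^{(t)}_\omega}$-superharmonic. The defining identity $H^\bullet(v^\bullet) - H^\circ(v^\circ) = |F(z)|^2 \delta_c$, combined with the boundary decay \eqref{eq:boundary-behaviour-fermion} and $\delta_c \asymp \delta_n$, yields
\[
|H^\bullet - H^\circ|\ =\ O\!\big(\delta_n / \dist(\cdot, \partial\Omega^\delta)\big)\ =\ O(\delta_n^\eta) \qquad \text{on } \Omega^\delta_{\mathrm{int}(\eta)}.
\]

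The second step is to compare with the continuous harmonic extension $h := h^{(t)}_{\mathrm{int}(\eta),\omega}$. Since $\|h\|_\infty \leq 1$ and $\Delta h = 0$, standard interior estimates for harmonic functions give $|D^{(k)} h(z)| = O(\rho^{-k})$ at any point $z$ at distance $\rho$ from $\partial \Omega^\delta_{\mathrm{int}(\eta)}$. Feeding this into Proposition \ref{prop:Laplacian-approximation-local} produces $|\Delta_{\cS^{(t)}_\omega} h(v)| = O(\delta_n^{2+\alpha}/\rho^2)$ at every vertex at distance $\rho \geq 10\delta_n$ from the boundary. Setting $u^\bullet := H^\bullet - h$, Step 1 gives $|u^\bullet| = O(\delta_n^\eta)$ on $\partial \Omega^\delta_{\mathrm{int}(\eta)}$, while the subharmonicity of $H^\bullet$ yields $\Delta_{\cS^{(t)}_\omega} u^\bullet \geq -O(\delta_n^{2+\alpha}/\rho^2)$ in the bulk. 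A discrete maximum principle implemented via the Green's function of $\Delta_{\cS^{(t)}_\omega}$ on $\Omega^\delta_{\mathrm{int}(\eta)}$ then converts these two bounds into $u^\bullet \leq O(\delta_n^\gamma)$ for some explicit $\gamma = \gamma(\alpha) > 0$. An identical argument applied to $u^\circ := H^\circ - h$ using the superharmonicity of $H^\circ$, combined with Step 1, concludes the proof.

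The main obstacle is to carry out the dyadic iteration delicately enough to produce a strictly positive exponent $\gamma(\alpha)$: the bulk Laplacian error $O(\delta_n^{2+\alpha}/\rho^2)$ degenerates near $\partial \Omega^\delta_{\mathrm{int}(\eta)}$, so one must iterate the discrete maximum principle on dyadic shells of width $\rho_k \asymp 2^{-k}$ down to the cutoff $\rho \asymp \delta_n^{1-\eta}$, balancing the boundary oscillation $O(\delta_n^\eta)$ against the integrated Laplacian defect. A standard Beurling-type estimate for the discrete $s$-Laplacian Green's function, available in our \Unifff\ setting via the regularity theory recalled in Section \ref{sub:regularity}, closes the telescoping sum and fixes an admissible $\gamma = \gamma(\alpha) > 0$.
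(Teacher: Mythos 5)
You take a genuinely different route from the paper. The paper's proof, following \cite[Section 4.2]{Che20}, works with a mollified version $\widetilde{H}_F(u)=\int\phi(w,u)H_F(w)\,dA(w)$ and establishes a pointwise bound on its \emph{continuous} Laplacian $|\Delta\widetilde{H}_F(u)|=O(\delta^{\gamma}d_u^{-2+\gamma})$. Proposition~\ref{prop:Laplacian-approximation-local} enters there only to compare $\Delta$ and $\Delta_{\cS}$ on the \emph{smooth test function} $\phi(\cdot,u)$; an integration by parts then shifts $\Delta_{\cS}$ onto $H_F$ and the Lipschitz estimate of Lemma~\ref{lem:Lipchitzness-random-environement} controls $\Delta_{\cS}[\pm_v H_F]$. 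The maximum principle is invoked only in the continuum, never at the discrete level. You instead feed the harmonic extension $h$ directly into Proposition~\ref{prop:Laplacian-approximation-local} and close with a \emph{discrete} maximum principle for $\Delta_{\cS^{(t)}_\omega}$, closer in spirit to the classical Chelkak--Smirnov argument on isoradial grids.

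The gap is precisely there. The operator $\Delta_{\cS}$ of Proposition~\ref{prop:Laplacian-approximation-local} is defined on all of $\Lambda(G)$ and mixes $G^\bullet$ and $G^\circ$ values: at a circle vertex the $a$-coefficients enter with a minus sign, and the cross-coefficients $b_{vv'}$ are only real, not nonnegative (they vanish on $\cS^{(0)}$ but are nonzero, of indefinite sign, on the deformed grid $\cS^{(t)}$). Consequently $\Delta_{\cS}$ is not a maximum-principle operator in the standard sense, and the sub/super-harmonicity of $H^\bullet$, $H^\circ$ in \cite[Proposition 2.10]{Che20} is stated for separate positive-weight random walks on $G^\bullet$ and $G^\circ$ (resp.\ for the $S$-graph walks used in the sketch of Lemma~\ref{lem:Lipchitzness-random-environement}), not for $\Delta_{\cS}$. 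To make your scheme rigorous you would need either an analogue of Proposition~\ref{prop:Laplacian-approximation-local} for those positive-weight Laplacians, plus a verification that their maximum principle survives the random deformation, or a control on the $b$-coefficients strong enough to salvage an approximate maximum principle for $\Delta_{\cS}$ itself. Neither is carried out in the paper, and the mollification strategy is adopted precisely to bypass this obstacle. Your final dyadic iteration, balancing the boundary oscillation $O(\delta_n^\eta)$ against the integrated bulk defect, is sound and mirrors the concluding step of \cite[Theorem 4.1]{Che20}.
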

We follow closely the proof of \cite{Che20} the lighten the derivation and mostly highlight the differences with our proof. Fix a non-negative symmetric function $\phi_0 \in C^{\infty}_0 (\mathbb{C}) $ which vanishes outside of $\mathbb{D}(0,\frac{1}{2})$ with $\int_\mathbb{C} \phi_0(u)dA(u)=1 $. Let $0<\varepsilon \ll \eta $ some small parameter to be later. Set 
\begin{equation}
	d_u:= \textrm{dist}(u, \partial \Omega^{\delta})  \textrm{ and } \rho_u:= \delta^{\varepsilon} \textrm{crad}(u, \partial \Omega^{\delta}) \asymp \delta^{\varepsilon}d_u \gg \delta  \textrm{ for } u\in \Omega^{\delta}_{\textrm{int}(\eta)},
	\end{equation} 
where $\textrm{crad}(u, \partial \Omega^{\delta})$ is the conformal radius of $u$ in the domain $\Omega^{\delta}$. The map $u\mapsto \textrm{crad}(u, \partial \Omega^{\delta}) $ is smooth its gradient is uniformly bounded and its second derivative is bounded by $O(d_u^{-1})$. This allows to define the running mollifier $\phi(w,u)$ by setting
\begin{equation}
	\phi(w,u):=\rho_u^{-2}\phi_0(\rho_u^{-1}(w-u))  \textrm{ for } u\in \Omega^{\delta}_{\textrm{int}(\eta)},
\end{equation}
and the mollified approximation of $H_{F^{(t)}_{\delta_n,\omega}}=H_{F^{(t)}_\omega}$ by setting
\begin{equation}
	\widetilde{H}_{F^{(t)}_\omega}(u):= \int\limits_{B(u,\rho_u)} \phi(w,u)H_{F^{(t)}_\omega}(w)dA(w),
\end{equation}
where $H_{F^{(t)}_\omega}$ is continued in a piecewise linear manner from $\Lambda(G) \cup \diamondsuit(G) $ in $\Omega^{\delta}_{\textrm{int}(\eta)}$. Below, we will prove the following key estimate
\begin{equation}\label{eq:key-estimate}
	|\Delta \widetilde{H}_{F^{(t)}_\omega}(u)| = O(\delta^{\gamma} d_u^{-2+\gamma}),
\end{equation}
which can be seen as a optimisation (that we will keep true step after step) of the estimate
\begin{equation}\label{eq:key-estimate-Che20}
	|\widetilde{H}_{F^{(t)}_\omega}(u)| = O(\delta^{p} d_u^{-2-q}) + O(\delta^{1-s} d_u^{-3}), 
\end{equation}
where $p>q(1-\eta) $ and $s<\eta$. Once this is done, one can conclude exactly as for \cite[Theorem 4.1]{Che20}

\begin{proof}[\textbf{Proof of the key estimate} \eqref{eq:key-estimate}]
We work here on the event where all the $s$-embeddings $(\cS^{(t)})_{0\leq t \leq t_n^{(\alpha)}}$ obtained via Lemma \ref{lem:SDE-embedding} satisfy \Unifff\,, the fermions encoding the embedding are close enough to the square lattice and satisfy the Lipchitzness of Lemma \ref{lem:Lipchitzness-random-environement}.
	As the mollifier $\phi(\cdot,u)$ vanishes near the boundary of $B(u,\rho_u)$ one has for $u\in \Omega^{\delta}_{\textrm{int}(\eta)}$
\begin{equation}
	\Delta \widetilde{H}_{F^{(t)}_\omega}(u)=\int\limits_{B(u,\rho_u)} \big(\Delta_u\phi(w,u)\big) H_{F^{(t)}_\omega}(w)dA(w) .
\end{equation}

\textbf{Step 1: Repeat Step 1 in \cite[Section 4.2]{Che20}}: Replace $\Delta_u\phi(w,u)$ by $\Delta_w\phi(w,u) $. Since the $s$-holomorphic functions are Lipchitz and the mollifier is symmetric  one has, exactly as in \cite[Section 4.2]{Che20}
\begin{equation}
	\Delta \widetilde{H}_{F^{(t)}_\omega}(u)=\int\limits_{B(u,\rho_u)} \big(\Delta_w\phi(w,u)\big) H_{F^{(t)}_\omega}(w)dA(w)  +O(\delta^{\varepsilon} \cdot \delta^{2\varepsilon} \cdot \rho_u^{-2} ).
	\end{equation}
	This keeps \eqref{eq:key-estimate-Che20} true with $p=\varepsilon$ and $q=0 $.
	
\medskip
 \textbf{Step 2: Replacing the integral by a discrete approximation}
	Consider for each $v\in \Omega^{\delta}_{\textrm{int}(\eta)} \cap \Lambda(G)$ the quadrilateral $\square_{v}^{(t)}$, which is 'almost' a square of width $ \sqrt{2}^{-1}\delta$ whose extremal vertices are the four quads $z_{1,2,3,4} $ neighbouring $v$. For each $w \in \square_{v} $, one gets via simple computations that
	\begin{equation}
		\Delta_w\phi(w,u)\cdot H_{F^{(t)}_\omega}(w)= \pm_{v} \big[ \Delta_{v}\phi(v,u)  + O(\frac{\delta}{\rho_u^5}) \big] \cdot \big[ H_{F^{(t)}_\omega}(v) + O(\frac{\delta}{d_u}) \big].
	\end{equation}
	Thus one has
	\begin{align*}
		\int\limits_{B(u,\rho_u)} \big(\Delta_w\phi(w,u)\big) H_{F^{(t)}_\omega}(w)dA(w)&=\sum\limits_{v \in B(u,\rho_u) }  \Delta_{v}\phi(v,u)  H_{F^{(t)}_\omega}(v)\textrm{Area}(\square_{v}^{(t)})\\
		&  \quad + O(\frac{\delta}{\rho_u^3}) + O(\frac{\delta}{d_u \times \rho_u^2}) + O(\frac{\delta^{2-3\varepsilon}}{d_u^4}),	\end{align*}
where in the error $\frac{\delta}{\rho_u^3}$ comes from integrating the error $\frac{\delta}{\rho_u^5} $ over a ball of area $\rho_u^2$ and bounding $|H_{F^{(t)}_\omega}|$ by $1$, the error $\frac{\delta}{d_u \times \rho_u^2}$ comes from the trivial bound $|\Delta_{v}\phi(v,u)|=\rho_u^{-4}$ integrated over a ball of area $\rho_u^2$ and the last error is the integrated version of the product of the errors $\frac{\delta}{\rho_u^5}$ and $\frac{\delta}{d_u}$. This keeps \eqref{eq:key-estimate-Che20} true with $s=3\varepsilon $ and $p>1-\eta $. Moreover, it is not hard to see that if $\varepsilon $ is chosen small enough compared to $\alpha$, one can replace all the terms $\textrm{Area}(\square_{v}^{(t)})$ by $\frac{1}{2}\delta^{2}$ (using that $\textrm{Area}(\square_{v}^{(t)})=\frac{1}{2}\delta^{2}+O(\delta^{2+\alpha})$) while keeping the estimate \eqref{eq:key-estimate-Che20} true with $s=3\varepsilon $ and $p>1-\eta $ .

\medskip
 \textbf{Step 3: Replace $\Delta_{v}\phi(v,u)$ by $\Delta_{\cS^{(t)}}[\phi(\cdot,u)]$ and integrate by parts}
One can now use Proposition \ref{prop:Laplacian-approximation-local} writing 
		\begin{equation}
		\Delta_{v}\phi(v,u)=\frac{\pm_v}{\delta} \Bigg[
		\Delta_{\cS^{(t)}}[\phi_u](v) + O( \frac{\delta^{1 + \alpha}}{\rho_u^4} ) + O(\frac{\delta^{2+\alpha}}{\rho_u^5}) \Bigg].
	\end{equation}
Summing over the vertices $v\in B(u,\rho_u) $ allows to include the correction terms (provided $\varepsilon$ is small enough with respect to $\alpha$), in the error $O$, compatible with \eqref{eq:key-estimate-Che20} and allows to rewrite
\begin{align*}
	\sum\limits_{v \in B(u,\rho_u) } \delta^2 \Delta_{v}\phi(v,u)  H_{F^{(t)}_\omega}(v) &=\sum\limits_{v \in B(u,\rho_u) } \delta \Delta_{\cS^{(t)}}[\phi_u](v) \Big(\pm_{v}H_{F^{(t)}_\omega}(v)\Big)\\
	& = \sum\limits_{v \in B(u,\rho_u) } \delta \Delta_{\cS^{(t)}}\Big[\pm_{v}H_{F^{(t)}_\omega}(v)\Big]\phi_u(v).  \\
\end{align*}

 \textbf{Step 4: Conclude}
 We are now in position to conclude. Recalling \cite[(4.14) in Remark 4.2]{Che20}, the fact that the $s$-holomorphic functions are Lipschitz ensures that 
 \begin{equation*}
 	H_{F^{(t)}_\omega}(v)=O(\frac{\delta^2}{d_u^3}),
 \end{equation*}
 which ensures that
 \begin{equation*}
 	\sum\limits_{v \in B(u,\rho_u) } \delta \Delta_{\cS^{(t)}}\Big[\pm_{v}H_{F^{(t)}_\omega}(v)\Big]\phi_u(v)= O(\frac{\delta}{d_u^3}).
 \end{equation*}
 This concludes the proof. Note that only some $\frac{1}{2}+ \varepsilon $ Holder exponent for $s$-holomorphic functions in Lemma \ref{lem:Lipchitzness-random-environement} would have been enough to conclude.
\end{proof}
\begin{remark}\label{rem:extension-logarithmic}
	As already mentioned, one could have replace $\alpha$ by $\alpha(n)$ going to $0$ at a logarithmic speed and still have obtained (modifying continuous statement \cite[Lemma A.2]{Che20}) some bound going to $0$ as $\delta \to 0 $ replacing the polynomial bound $ O(\delta^{\gamma})$ (which depends on $\alpha$) in the statement of Theorem \ref{thm:quantitative-convergence-h}. This would have allowed to conclude similarly for the convergence of SLE curves.
	\end{remark}

We are now in position to prove Theorem \ref{thm:random-SLE}
\begin{proof}[Proof of Theorem \ref{thm:random-SLE}]
This proof will be a simple gathering of all the facts already proven. To simplify the reading, fix some $\alpha >0 $ and write $t_n=t_n^{(\alpha)} $. Consider a realisation $\omega$ obtained under $\mathbf{P}$. Then with probability at least $1-O(n^{-4})$, on the box $\Lambda_n$, one has
\begin{itemize}
	\item The $s$-embedding $\cS_{\omega}^{(t_n)}(\Lambda_n)$ is proper and satisfies $\Unifff$ by Proposition \ref{prop:bound-T_n-optimal}.
	\item The FK-Ising model on $\cS_{\omega}^{(t_n)}(\Lambda_n)$ satisfies the strong box crossing at each scale by Theorem \ref{thm:RSW-s-embeddings} .
	\item There exist $O=O(\omega)$ and $\gamma=\gamma(\omega,\alpha)>0$ such that
\begin{equation*}
|H_{F^{(t_n)}_{\delta_n,\omega}} -h^{(t_n)}_{\mathrm{int}(\eta),\omega} | = O(n^{-\gamma}).	
\end{equation*}
\end{itemize}
One can now apply the Borel-Cantelli lemma, which ensures that there exist a set $A^{(\alpha)}$ of measure $1$ for $\mathbf{P}$ such that for any $\omega \in A^{(\alpha)}$, there exist $n_0(\omega)$ such that the three properties listed above hold for any $n\geq n_0(\omega)$. Fix a simply connected domain $(\Omega,a,b) \subset [-\frac{1}{2};\frac{1}{2}]^2$ with two marked boundary points $a,b\in \partial \Omega$, seen as prime-ends, approximated in the Carathéodory sense by $(\Omega_{\delta_n}, a^{(\delta_n)}, b^{(\delta_n)})\subset \cS^{(t_n)}$ with the associated Dobrushin boundary conditions. For any $\omega \in A^{(\alpha)} $ $n\geq n_0$, the first two properties ensures that the family of curves $\big(\widetilde{\gamma}^{(\delta_n)}_{\omega,t_n}\big)_{n\geq 1} $ \emph{drawn on the embedding } $\cS^{(t_n)}(\Lambda_n)$ is pre-compact. Moreover, one can reproduce verbatim the proof of \cite[Section 4.3]{Che20}, which proves that, for any $\omega \in A^{(\alpha)}$, the family of complexified FK-observables $(F^{(t_n)}_{\delta_n,\omega})_{n\geq 1}$ converge to a function $f_\Omega$ as $n\to \infty$, uniformly on compacts of $\Omega$. The function is given by $f_\Omega(z)= \sqrt{\Phi'(z)}$, where $\Phi : \Omega \to \mathbb{R}\times ]0,1[ $ is the uniformisation of $\Omega$ to the strip that sends $a,b$ to respectively $\pm \infty$.	 It is a classical fact (see \cite{karrila2018limits,KemSmi1,KemSmi2}) that the pre-compactness of the family of curves $\big(\widetilde{\gamma}^{(\delta_n)}_{\omega,t_n}\big)_{n\geq 1} $ already discussed together with the convergence of the associated FK observables ensure that the curves $\big(\widetilde{\gamma}^{(\delta_n)}_{\omega,t_n}\big)_{n\geq 1} $ converge to the chordal $\textrm{SLE}(16/3,\Omega,a,b)$. Moreover, for any $\omega \in A^{(\alpha)}$, when choosing a commun reference point to $\cS^{(0)}(\Lambda_n) \subset \delta_n \mathbb{Z}^2$ and $\cS^{(t_n)}(\Lambda_n)$, the distances are all multiplied by some $1+o_{n\to \infty}(1)$ factor (where $o_{n\to \infty}(1)$ depends on $\omega$). There, one can trivially couple the curves $\widetilde{\gamma}^{(\delta_n)}_{\omega,t_n} \in \cS^{(t_n)}(\Lambda_n)$ and $\gamma^{(\delta_n)}_{\omega,t_n} \in \delta_n \mathbb{Z}^2$ to have exactly the same exploration process. Using this coupling between the curves, the curves $\big(\widetilde{\gamma}^{(\delta_n)}_{\omega,t_n}\big)_{n\geq 1} $ and $\gamma^{(\delta_n)}_{\omega,t_n} \in \delta_n \mathbb{Z}^2$ are drawn at an $o_{n\to \infty}(1)$ distance from each other as $n\to \infty$. Recalling the topology for the convergence of curves \cite{papon2024interface} and using the convergence of the interfaces  $\big(\widetilde{\gamma}^{(\delta_n)}_{\omega,t_n}\big)_{n\geq 1} $  allows to conclude for $\big(\gamma^{(\delta_n)}_{\omega,t_n}\big)_{n\geq 1} $.
\end{proof} 

\begin{proof}[Proof of Theorem \ref{thm:random-energy-density}]
	Once we know that with high probability $\cS^{(t_n^{(\alpha)}}$ and $\cS^{(0)}$ are very close to each other, one can apply verbatim \cite{MahPar25a}. Note that an easy computation coming from the study of the stochastic derivatives (see e.g.\ Step $0$ of the proof of Proposition \ref{prop:bound-T_n-optimal}) that for $\mathbb{E}_{S(t_n^{(\alpha)})}[\varepsilon_{e_k}]=\mathbb{E}_{S(0)}[\varepsilon_{e_k}] + \asymp \sqrt{t_n^{(\alpha)}}$, where $\asymp$ is taken up to logarithmic correction. In particular the deviation of is much larger than the $n$ scaling factor coming from the fermion when $\alpha$ is small enough.
\end{proof}

\begin{proof}[Proof of the second part of Theorem \ref{thm:near-critical-RSW-random-interacting}]
We do not provide a proof here, only a sketch. Assume that $t_n:=\log(n)^{-\beta} $. As mentioned in Remark \ref{rem:extension-logarithmic}, we claim that if $\beta$ is chosen large enough, one can redo the proof of the Laplacian estimate of Proposition \ref{prop:Laplacian-approximation-local} and Theorem \ref{thm:quantitative-convergence-h}, with an exponent $\alpha=\alpha(n)\to 0$ producing a correction decaying logarithmically fast to $0$ (with a large enough exponent). Modifying the proof of \cite[Lemma A.2]{Che20}), one can replace the RHS $O(\delta^{\gamma}_n)$ in the statement of Theorem \ref{thm:quantitative-convergence-h} by some quantity that goes to $0$ as $\delta_n\to 0$. This allows to apply verbatim the proof of chordal SLE converges presented above. Finally, the convergence of the energy density is once again a direct application of \cite{MahPar25a}.	
\end{proof}

\appendix

\section{Appendix}

We first briefly discuss how to prove Lemma \ref{lem:Lipchitzness-random-environement}, which states that one provided that the $s$-embeddings are obtained from the SDE of Lemma \ref{lem:SDE-embedding}, as long as all coordinates of the embedding $\cS^{(s)}$ are close to those of $\cS^{(0)}$, then one can keep the Lipchitzness of $s$-holomorphic functions the embedding $\cS^{(s)}$ similar to the one on $\cS^{(0)}$ which corresponds to the critical square lattice where the result is known. 

\begin{proof}[Sketch of the proof of Lemma \ref{lem:Lipchitzness-random-environement}]
Let $F^{(s)}$ an $s$-holomorphic function on $\cS^{(z)}$.  Recall from \cite[Section 2.5]{Che20} that $\Re[F^{(s)}]$ is \emph{harmonic} for the backward random walk on the $s$-graph $\cS^{(s)}-i\cQ^{(s)}$, for the weights recalled in \cite[Proposition 2.16 and (2.20)]{Che20}. At time $s=0$, this random walk can be seen as the standard random walk on vertices of the square lattice with $\frac{1}{4}$ transitions to the neighbours (see e.g.\ \cite[Section 3.2.2]{CHI}). The underlying Laplacian is denoted by $\Delta^{(0)}$, with no additional $\delta^2$ scaling compared to the one used Proposition \ref{prop:Laplacian-approximation-local}.
We claim that first that provided $\cS^{(s)}$ remains close enough to $\cS^{(0)}$, this random remains can still be identified random walk on the square lattice, with weights (depending on the local geometry) that are almost $\frac{1}{4}$ everywhere. We follow the formalism of the statement \cite[Proposition 38]{park2018massive}. Fix two neighbouring quads $z,z'$ (at a distance $O(\delta)$ from each other and $z''\in \partial B(z,R)$ for $R\geq \text{cst}\cdot \delta$. Denote $\textrm{hm}^{(s),z''}_{B(z,R)}(z)$ the harmonic measure of the point $z''$ seen from $z$ for the walk on  $\cS^{(s)}-i\cQ^{(s)}$, that is the unique harmonic function inside $B(z,R)$ whose boundary value along $\partial B(z,R) $ is $0$ except at $z''$ where it is one. For the critical square lattice corresponding to $\cS^{(0)}$, we it is proven in \cite[Proposition 2.7]{ChSmi1} that $|\textrm{hm}^{(s),z''}_{B(z,R)}(z)-\textrm{hm}^{(s),z''}_{B(z,R)}(z')|=O(\delta^2R^{-2})$. The transition weights of $\textrm{hm}^{(s),z''}_{B(z,R)}$ on the square lattice $\cS^{(0)}$ vary continuously with time, with an explicit dependence on the geometry of the embedding recalled in \cite[Equation (2.20)]{Che20}. Computing stochastic derivatives in time of $\textrm{hm}^{(s),z''}_{B(z,R)}(z)-\textrm{hm}^{(s),z''}_{B(z,R)}(z')$, applying the Gronwall lemma (with high probability with respect to the random environment) similarly to \eqref{eq:Grownwall}, it not hard to see that $\sup_{z\sim z' \in B(z,R/2) }|\textrm{hm}^{(s),z''}_{B(z,R)}(z)-\textrm{hm}^{(s),z''}_{B(z,R)}(z')|$ doesn't get multiplied by more than an absolute multiplicative constant (compared to its original value at time $s=0$) for $0\leq s \leq n^{-\frac{2}{3}+\alpha}$, uniformly in $z\sim z' $ in $B(z,R/2)$. Note that to run the proof of Section \ref{sec:random-fermions}, one only needs a weaker statement and prove that harmonic functions on S-graphs are $(\frac{1}{2} +\varepsilon)$-Holder.
\end{proof}

\subsection{Derivatives of correlations and Kadanoff-Ceva fermions}\label{app:derivative-fermions}
We regroup in this section the computations appearing when differentiating Kadanoff-Ceva fermions with respect to the associated coupling constants. Recall that, given two corners $\frak{p},\frak{q}\in \Upsilon^{\times}$, the two point fermions can be written as  $\langle \chi_\frak{p} \chi_\frak{q} \rangle_S=\mathbb{E}[ \prod\limits_{e\in \gamma} (x_e)^{\varepsilon_e} \sigma_{u^\circ(\frak{p})} \sigma_{u^\circ(\frak{q})}]$ where $\gamma $ is a disorder path linking $v^\bullet(\frak{p}) $ to $v^\bullet(\frak{q})$. This formula allows to compute the derivatives of $\langle \chi_\frak{p} \chi_\frak{q} \rangle_S$ with respect to the Ising weight $x_{e_r}=\tan \frac{\theta_{e_r}}{2}$. The obtained formula in fact depends on whether both corners $\frak{p},\frak{q} $ belong to the quad attached to the edge $x_{e_r}$ or not. It is fairly known statement (see \cite{mccoy-wu-book}) that given a finite set $A$ and setting $\sigma_A:=\prod_{u\in A} \sigma_u$, one has
\begin{align}\label{eq:derivative-spin-correlation}
	\frac{\partial}{\partial x_{e_r}} \mathbb{E}[\sigma_{A}]  &= \frac{1}{2x_{e_r}} \big(\mathbb{E}[\sigma_{A}]\mathbb{E}[\varepsilon_{e_r}] -\mathbb{E}[\sigma_{A}\varepsilon_{e_r}]\big).
\end{align}
Similarly to the construction presented in Section \ref{sub:2-point-fermion}, one can use the identity $x_{e_k}^{\varepsilon_{e_k}} =\frac{1}{2} \big( (x_{e_k}+x_{e_k}^{-1})\varepsilon_{e_k} + (x_{e_k}-x_{e_k}^{-1})  \big)$ to expand the two point fermion as in \eqref{eq:expansion-fermion} to rewrite it as a linear combination of spins correlation. When differentiating with respect to the coupling constant $x_{e_r}$, some dichotomy appears, depending on the possibility to connect $\frak{p}$ to $\frak{q}$ using a disorder path that avoids the edge $e_r$. More precisely, one gets:
\begin{itemize}
	\item If the two corners $\frak{p},\frak{q}$ don't belong \emph{simultaneously} to the quad $z_r$ attached to the edge $e_r$, one can expand $\langle \chi_\frak{p} \chi_\frak{q} \rangle_S$ using a disorder path linking $\frak{p},\frak{q}$ and avoiding the edge $e_r$. Therefore, one can differentiate all the obtained Ising correlations with respect to $x_{e_r}$ using \eqref{eq:derivative-spin-correlation} and then sum them back together, which implies directly that 
\begin{equation}
		\frac{\partial}{\partial x_{e_r}} \langle \chi_\frak{p} \chi_\frak{q} \rangle = \frac{1}{2x_{e_r}} \big( \langle \chi_\frak{p} \chi_\frak{q} \rangle \mathbb{E}[\varepsilon_{e_r}]-  \langle \chi_\frak{p} \chi_\frak{q}\varepsilon_{e_r} \rangle  \big),
\end{equation}
as all the coefficients in the correlation expansion of the products of the form $x_{e_k}^{\varepsilon_{e_k}} =\frac{1}{2} \big( (x_{e_k}+x_{e_k}^{-1})\varepsilon_{e_k} + (x_{e_k}-x_{e_k}^{-1})  \big)$ present in the disorder line from $v^{\bullet}(\frak{p})$ to $v^{\bullet}(\frak{q})$ do \emph{not} depend on the coupling constant $x_{e_r}$ (and therefore one only needs to differentiate the Ising correlations). This allows to rewrite
\begin{align}\label{eq:derivative-fermion-away}
	\frac{\partial}{\partial \theta_{e_r}} \langle \chi_\frak{p} \chi_\frak{q} \rangle &=  \frac{1}{4}(x_{e_r}+x_{e_r}^{-1})\cdot \Big( \langle \chi_\frak{p} \chi_\frak{q} \rangle \mathbb{E}[\varepsilon_{e_r}]-  \langle \chi_\frak{p} \chi_\frak{q}\varepsilon_{e_r} \rangle \Big) \\
	& = \frac{1}{2\sin(\theta_{e_r})}\big( \langle \chi_\frak{p} \chi_\frak{q} \rangle \mathbb{E}[\varepsilon_{e_r}]-  \langle \chi_\frak{p} \chi_\frak{q}\varepsilon_{e_r} \rangle  \big)
\end{align}

	\item There is a slight twist in the previous reasoning in the case where both corners $\frak{p},\frak{q}$ belong to the edge $e_r$, as one can take the simplest disorder being either empty or only containing the edge $e_r$. When differentiating, the coefficients in $(x_{e_r})^{\pm1}$ now also depend on $e_r$. We prefer to use here another approach, with a disorder path from $\frak{p}$ to $\frak{q}$ that avoids the edge $e_r$. The tradeoff of this approach is that the parity of the number of crossings (between primal and dual paths) now changes when using this new path avoids $e_r$, meaning that a path with the same parity of crossing will connect $\frak{p}$ to $\frak{q}^{\star}$, the other corner of $\Upsilon^{\times}$ having the same planar projection as $\frak{q}$. Using the spinor identity of Kadanoff-Ceva fermions leads to 
 \begin{equation}\label{eq:derivative-fermion-close}
		\frac{\partial}{\partial \theta_{e_r}} \langle \chi_\frak{p} \chi_\frak{q} \rangle = -\frac{\partial}{\partial \theta_{e_r}} \langle \chi_\frak{p} \chi_{\frak{q}^\star} \rangle = - \frac{1}{2\sin(\theta_{e_r})}\big( \langle \chi_\frak{p} \chi_\frak{q} \rangle \mathbb{E}[\varepsilon_{e_r}]-  \langle \chi_\frak{p} \chi_\frak{q}\varepsilon_{e_r} \rangle  \big).
\end{equation}
\end{itemize}
As a special case of correlation functions, one can enhance \eqref{eq:derivative-spin-correlation} and deduce that for any edge $e_k$ one has
\begin{align*}
	\frac{\partial}{\partial_{\theta_r}}\mathbb{E}[\varepsilon_{e_{k}}]&=\frac{1}{2\sin(\theta_{e_r})}\Bigg( \mathbb{E}[\varepsilon_{e_{k}}]\mathbb{E}_{S(s)}[\varepsilon_{e_{r}}]-\mathbb{E}[\varepsilon_{e_{k}}\varepsilon_{e_{r}}] \Bigg)\\
	\frac{\partial^2}{\partial_{\theta_r}^2}\mathbb{E}[\varepsilon_{e_{k}}]&=-\frac{\cos(\theta_{e_r})}{2\sin^2(\theta_{e_r})}\Bigg( \mathbb{E}[\varepsilon_{e_{k}}]\mathbb{E}[\varepsilon_{e_{r}}]-\mathbb{E}[\varepsilon_{e_{k}}\varepsilon_{e_{r}}] \Bigg)\\
	&\quad +\frac{1}{2\sin(\theta_{e_r})}\Bigg[ \frac{1}{2\sin(\theta_{e_r})} \cdot  \Bigg( \mathbb{E}[\varepsilon_{e_{k}}]\mathbb{E}[\varepsilon_{e_{r}}]-\mathbb{E}[\varepsilon_{e_{k}}\varepsilon_{e_{r}}] \Bigg)\cdot \mathbb{E}[\varepsilon_{e_{r}}]   \\
	& \quad + \frac{1}{2\sin(\theta_{e_r})} \mathbb{E}[\varepsilon_{e_{k}}] \cdot \Bigg( \mathbb{E}[\varepsilon_{e_{r}}]^2-1 \Bigg) \\
	& \quad - \frac{1}{2\sin(\theta_{e_r})} \Bigg( \mathbb{E}[\varepsilon_{e_{k}}\varepsilon_{e_{r}}] \mathbb{E}[\varepsilon_{e_{r}}] - \mathbb{E}[\varepsilon_{e_{k}}] \Bigg)\\
	&=\Bigg( \mathbb{E}[\varepsilon_{e_{k}}]\mathbb{E}[\varepsilon_{e_{r}}]-\mathbb{E}[\varepsilon_{e_{k}}\varepsilon_{e_{r}}] \Bigg) \cdot \Bigg[  -\frac{\cos(\theta_{e_r})}{2\sin^2(\theta_{e_r})}+ \frac{1}{4\sin^2(\theta_{e_r})}\mathbb{E}[\varepsilon_{e_{r}}]\\
	&\quad + \frac{1}{2\sin(\theta_{e_r})} \mathbb{E}[\varepsilon_{e_{r}}] \Bigg].
\end{align*}

Let us now compute second order derivatives of Kadanoff-Ceva correlator, only focusing on differentiating twice with respect to the angle $\theta_{e_r}$. Assume first that $\frak{p}$ and $\frak{q}$ don't belong to the same quad $z_r$. In that case, one has. Let us differentiate \eqref{eq:derivative-fermion-away} with respect to $\theta_{e_r}$. One has
\begin{align*}
\frac{\partial^2}{\partial \theta^2_{e_r}} \langle \chi_\frak{p} \chi_\frak{q} \rangle	& = -\frac{-\cos(\theta_{e_r})}{2\sin^2(\theta_{e_r})}\Big( \langle \chi_\frak{p} \chi_\frak{q} \rangle \mathbb{E}[\varepsilon_{e_r}]-  \langle \chi_\frak{p} \chi_\frak{q}\varepsilon_{e_r} \rangle  \Big) \\
&\quad + \frac{1}{2\sin(\theta_{e_r})}\Bigg[ \frac{1}{2\sin(\theta_{e_r})}\Big( \langle \chi_\frak{p} \chi_\frak{q} \rangle \mathbb{E}[\varepsilon_{e_r}]-  \langle \chi_\frak{p} \chi_\frak{q}\varepsilon_{e_r} \rangle  \Big)\cdot \mathbb{E}[\varepsilon_{e_r}]\\
& \quad + \langle \chi_\frak{p} \chi_\frak{q} \rangle \cdot \frac{1}{2\sin(\theta_{e_r})}\Big( \mathbb{E}[\varepsilon_{e_{r}}]^2 -1\Big)\\
& \quad - \frac{1}{2\sin(\theta_{e_r})} \cdot \Big( \langle \chi_\frak{p} \chi_\frak{q}\varepsilon_{e_r} \rangle \mathbb{E}[\varepsilon_{e_{r}}]-\langle \chi_\frak{p} \chi_\frak{q} \rangle\Big)\\
&= \frac{1}{2\sin(\theta_{e_r})}\Bigg[ \langle \chi_\frak{p} \chi_\frak{q} \rangle \mathbb{E}[\varepsilon_{e_r}]-  \langle \chi_\frak{p} \chi_\frak{q}\varepsilon_{e_r} \rangle  \Bigg]\cdot \Bigg[ \frac{-\cos(\theta_r)}{\sin(\theta_r)}  +  \frac{\mathbb{E}[\varepsilon_{e_{r}}]}{\sin(\theta_r)}  \Bigg],
\end{align*} 
using that $\varepsilon_{e_r}^2 =1$. Therefore one has
\begin{equation}\nonumber
	\frac{\partial^2}{\partial \theta^2_{e_r}} \langle \chi_\frak{p} \chi_\frak{q} \rangle=  \frac{1}{2\sin(\theta_{e_r})}\Bigg[ \langle \chi_\frak{p} \chi_\frak{q} \rangle \mathbb{E}[\varepsilon_{e_r}]-  \langle \chi_\frak{p} \chi_\frak{q}\varepsilon_{e_r} \rangle  \Bigg]\cdot \Bigg[ \frac{-\cos(\theta_r)}{\sin(\theta_r)}  +  \frac{\mathbb{E}[\varepsilon_{e_{r}}]}{\sin(\theta_r)}  \Bigg].
\end{equation}
When $\frak{p},\frak{q}\in z_r$ one can use again the trick of changing the disorder line avoiding the edge $e_r$ up to a sign flip which ensures that 
\begin{equation}\nonumber
	\frac{\partial^2}{\partial \theta^2_{e_r}} \langle \chi_\frak{p} \chi_\frak{q} \rangle= - \frac{1}{2\sin(\theta_{e_r})}\Bigg[ \langle \chi_\frak{p} \chi_\frak{q} \rangle \mathbb{E}[\varepsilon_{e_r}]-  \langle \chi_\frak{p} \chi_\frak{q}\varepsilon_{e_r} \rangle  \Bigg]\cdot  \Bigg[ \frac{-\cos(\theta_r)}{\sin(\theta_r)}  +  \frac{\mathbb{E}[\varepsilon_{e_{r}}]}{\sin(\theta_r)}  \Bigg].
\end{equation}

We now use those first and second order derivatives to prove Lemma \ref{lem:Lipchitzness-random-environement}, that states that as long as the deformation of the $s$-embedding doesn't degenerate too much from the critical square lattice, all $s$-holomorphic function remain Lipschitz.

\subsection{Second order derivatives of embedding fermions}\label{app:derivative-embedding}
The goal of the second part of the Appendix is to present some formal computations of second order derivatives (differentiating twice with respect to the same coupling constant) when running the embedding ODE \eqref{eq:ODE-embedding} with constant masses. In particular, we remain at formal level and only focus on how local relations factor out. Fix a set of masses $(m_k)_{k\in \Lambda_n}$ and smooth function $\cY=\cY(\theta_1,\ldots,\theta_{|\Lambda_n|}):E(\Lambda_n) \to (\mathbb{C})^{\Upsilon^\times \cap \Lambda_n}$ such that 
\begin{equation}\label{eq:first-derivative-Y}
		\frac{d \cY(\frak{p})}{d\theta_k}(\theta_1,\ldots,\theta_{|\Lambda_n|})=\frac{m_k}{2} \Big( \cY(c^+_k) \langle \chi_{\frak{p}}\chi_{a^+_k} \rangle_{S} -  \cY(a^+_k) \langle \chi_{\frak{p}}\chi_{c^+_k} \rangle_{S} \Big),
\end{equation}
where $S=S(\theta_1,\ldots,\theta_{|\Lambda_n|})$ is the full-plane Ising model with critical coupling constant $\frac{\pi}{4}$ outside of $\Lambda_n$ and coupling constants given by $\theta_1,\ldots,\theta_{|\Lambda_n|}$ inside $\Lambda_n$. To lighten notations, we skip the $S$ subscript for the Kadanoff-Ceva correlators written below, but they are all implicitly evaluated for the Ising models with weights induced by $S$.
We are going to prove that for any $\frak{p}\in \Lambda_n $ one has
\begin{equation}\label{eq:second-derivative-embedding-formula}
	\frac{d^2 \cY(\frak{p})}{d^2\theta_k}= \frac{m_k^2}{2} \Bigg( \frac{\cos(\theta_{e_k})}{\sin(\theta_{e_k})} -\frac{\mathbb{E}[\varepsilon_{e_k}]}{\sin(\theta_{e_k})}  
 \Bigg)  \Bigg[ \cY(c_k^{+})\langle \chi_{\frak{p}}\chi_{a_k^{+}} \rangle - \cY(a_k^{+})\langle \chi_{\frak{p}}\chi_{c_k^{+}} \rangle  \Bigg].
\end{equation}
Let us compute $\frac{d^2 \cY(\frak{p})}{d^2\theta_k}$, by differentiating \eqref{eq:first-derivative-Y} with respect to $\theta_k$. One has
\begin{align}\label{eq:second-derivative-Y}
	\frac{d^2 \cY(\frak{p})}{d^2\theta_k}&= \frac{m_k}{2}\Bigg[ \frac{d\cY(c_k^{+})}{d\theta_k} \langle \chi_{\frak{p}}\chi_{a_k^{+}} \rangle + \cY(c_k^{+}) \frac{d}{d\theta_k} \langle \chi_{\frak{p}}\chi_{a_k^{+}} \rangle \\
	& \quad - \frac{d\cY(a_k^{+})}{d\theta_k} \langle \chi_{\frak{p}}\chi_{c_k^{+}} \rangle - \cY(a_k^{+}) \frac{d}{d\theta_k} \langle \chi_{\frak{p}}\chi_{c_k^{+}} \rangle\Bigg]\nonumber
\end{align}
Recall that derivative formulae for Kadanoff-Ceva fermions depend a priori on whether the two associated corners belong or not the same quad. One should therefore treat separately two cases, depending on whether $\frak{p} \not \in  z_k $ or $\frak{p} \in  z_k$.

\textbf{Case A: $\frak{p} \not \in  z_k $}. In that case one can plug \eqref{eq:derivative-fermion-away} into \eqref{eq:second-derivative-Y} and write
\begin{align*}
	\frac{d^2 \cY(\frak{p})}{d^2\theta_k}&= \frac{m_k}{2}\Bigg[ \frac{m_k}{2}\Big( \cY(c_k^{+}) \langle \chi_{c_k^{+}}\chi_{a_k^{+}} \rangle - \cY(a_k^{+}) \Big) \langle \chi_{\frak{p}}\chi_{a_k^{+}} \rangle \\
	& \quad + \frac{m_k}{2\sin(\theta_{e_k})}\cY(c_k^{+}) \Big( \langle \chi_{\frak{p}}\chi_{a_k^{+}} \rangle \mathbb{E}[\varepsilon_{e_k}] - \langle \chi_{\frak{p}}\chi_{a_k^{+}} \varepsilon_{e_k} \rangle  \Big) \Bigg]\\
	& \quad - \frac{m_k}{2}\Bigg[ +\frac{m_k}{2}\Big( \cY(c_k^{+})- \cY(a_k^{+}) \langle \chi_{a_k^{+}}\chi_{c_k^{+}} \rangle \Big) \langle \chi_{\frak{p}}\chi_{c_k^{+}} \rangle \\
	& \quad +\frac{m_k}{2\sin(\theta_{e_k})}\cY(a_k^{+}) \Big( \langle \chi_{\frak{p}}\chi_{c_k^{+}} \rangle \mathbb{E}[\varepsilon_{e_k}] - \langle \chi_{\frak{p}}\chi_{c_k^{+}} \varepsilon_{e_k} \rangle  \Big) \Bigg].
\end{align*}
One can then regroup together the $\cY(c_k^{+})$ and $\cY(a_k^{+})$ terms and rewrite $\frac{d^2 \cY(\frak{p})}{d^2\theta_k}$ as
\begin{align*}
	\frac{m_k^2}{4}\cY(c_k^{+})\Bigg[ \langle \chi_{c_k^{+}}\chi_{a_k^{+}} \rangle  \langle \chi_{\frak{p}}\chi_{a_k^{+}} \rangle + \frac{1}{\sin(\theta_{e_k})}\Big( \langle \chi_{\frak{p}}\chi_{a_k^{+}} \rangle \mathbb{E}[\varepsilon_{e_k}] - \langle \chi_{\frak{p}}\chi_{a_k^{+}} \varepsilon_{e_k} \rangle  \Big) - \langle \chi_{\frak{p}}\chi_{c_k^{+}} \rangle \Bigg] \\
+\frac{m_k^2}{4}\cY(a_k^{+})\Bigg[ -\langle \chi_{\frak{p}}\chi_{a_k^{+}}\rangle+   \langle \chi_{a_k^{+}}\chi_{c_k^{+}} \rangle  \langle \chi_{\frak{p}}\chi_{c_k^{+}} \rangle   -  \frac{1}{\sin(\theta_{e_k})} \Big( \langle \chi_{\frak{p}}\chi_{c_k^{+}} \rangle \mathbb{E}[\varepsilon_{e_k}] - \langle \chi_{\frak{p}}\chi_{c_k^{+}} \varepsilon_{e_k} \rangle  \Big)  \Bigg].
\end{align*}
This is where some additional simplifications appear. First note that $\varepsilon_{e_k}=\chi_{d_k^{+}}\chi_{a_k^{+}}$. Moreover, when can look at the 3 terms identity \eqref{eq:3-terms} for the fermion $\frak{q}\mapsto \langle \chi_{\frak{p}}\chi_{\frak{q}}\rangle$ around the quad $z_k$ when $ \frak{p}\not \in z_k$ is fixed. More specifically an analog of $(\widetilde{\mathrm{D}}_k^{(\frak{p})})$ when moving the second argument around $z_k$ ensures that
\begin{equation}
	-\frac{1}{\sin(\theta_{e_k})}\langle \chi_{\frak{p}}\chi_{a_k^{+}} \varepsilon_{e_k} \rangle  - \langle \chi_{\frak{p}}\chi_{c_k^{+}}\rangle = \frac{\cos(\theta_{e_k})}{\sin(\theta_{e_k})}\langle \chi_{\frak{p}}\chi_{a_k^{+}}\rangle.
\end{equation}
Similarly writing $\varepsilon_{e_k}=\chi_{b_k^{+}}\chi_{c_k^{+}}$ together with an analog of relation $(\widetilde{\mathrm{B}}_k^{(\frak{p})})$ when moving the second fermion around $z_{k}$ ensures that
\begin{equation}
	\frac{1}{\sin(\theta_{e_k})}\langle \chi_{\frak{p}}\chi_{c_k^{+}} \varepsilon_{e_k} \rangle  - \langle \chi_{\frak{p}}\chi_{a_k^{+}}\rangle = -\frac{\cos(\theta_{e_k})}{\sin(\theta_{e_k})}\langle \chi_{\frak{p}}\chi_{c_k^{+}}\rangle.
\end{equation}
which allows to rewrite $\frac{d^2 \cY(\frak{p})}{d^2\theta_k}$ as
\begin{align*}
	\frac{m_k^2}{4}\cY(c_k^{+})\langle \chi_{\frak{p}}\chi_{a_k^{+}} \rangle \Big[ \langle \chi_{c_k^{+}}\chi_{a_k^{+}} \rangle  - \frac{\mathbb{E}[\varepsilon_{e_k}]}{\sin(\theta_{e_k})} + \frac{\cos(\theta_{e_k})}{\sin(\theta_{e_k})} \Big] \\
	+\frac{m_k^2}{4}\cY(a_k^{+})\langle \chi_{\frak{p}}\chi_{c_k^{+}} \rangle\Big[ \langle \chi_{a_k^{+}}\chi_{c_k^{+}} \rangle + \frac{\mathbb{E}[\varepsilon_{e_k}]}{\sin(\theta_{e_k})}      - \frac{\cos(\theta_{e_k})}{\sin(\theta_{e_k})}  \Big]
\end{align*}
One can conclude using the three term identity around the quad $z_k$ (which holds here on $\Upsilon^{\times}_{(a_k)}$) and deduce that
\begin{equation}\label{eq:energy-density-local-fermionic-relation}
	\langle \chi_{c_k^{+}}\chi_{a_k^{+}} \rangle=-\langle \chi_{a_k^{+}}\chi_{c_k^{+}} \rangle = -\frac{\mathbb{E}[\varepsilon_{e_k}]}{\sin(\theta_{e_k})} + \frac{\cos(\theta_{e_k})}{\sin(\theta_{e_k})}
\end{equation} 
This allows to conclude that, when $\frak{p} \not\in z_{k}$ \eqref{eq:second-derivative-embedding-formula} indeed holds. 	
		
\textbf{Case B: $\frak{p}  \in  z_k $}. In that case one can plug \eqref{eq:derivative-fermion-close} into \eqref{eq:second-derivative-Y}. Changing the sign of the correlator derivative allows to rewrite 
\begin{align*}
	\frac{d^2 \cY(\frak{p})}{d^2\theta_k}&=\frac{m_k}{2}\Bigg[ \frac{m_k}{2}\Big( \cY(c_k^{+}) \langle \chi_{c_k^{+}}\chi_{a_k^{+}} \rangle - \cY(a_k^{+}) \Big) \langle \chi_{\frak{p}}\chi_{a_k^{+}} \rangle \\
	& \quad -\frac{m_k}{2\sin(\theta_{e_k})}\cY(c_k^{+}) \Big( \langle \chi_{\frak{p}}\chi_{a_k^{+}} \rangle \mathbb{E}[\varepsilon_{e_k}] - \langle \chi_{\frak{p}}\chi_{a_k^{+}} \varepsilon_{e_k} \rangle  \Big) \Bigg]\\
	& \quad -\frac{m_k}{2}\Bigg[ +\frac{m_k}{2}\Big( \cY(c_k^{+})- \cY(a_k^{+}) \langle \chi_{a_k^{+}}\chi_{c_k^{+}} \rangle \Big) \langle \chi_{\frak{p}}\chi_{c_k^{+}} \rangle \\
	& \quad -\frac{m_k}{2\sin(\theta_{e_k})}\cY(a_k^{+}) \Big( \langle \chi_{\frak{p}}\chi_{c_k^{+}} \rangle \mathbb{E}[\varepsilon_{e_k}] - \langle \chi_{\frak{p}}\chi_{c_k^{+}} \varepsilon_{e_k} \rangle  \Big) \Bigg].
\end{align*}
One regroups once again the $\cY(c_k^{+})$ and $\cY(a_k^{+})$ terms together and rewrite $\frac{d^2 \cY(\frak{p})}{d^2\theta_k}$ as
\begin{align*}
	\frac{m_k^2}{4}\cY(c_k^{+})\Bigg[ \langle \chi_{c_k^{+}}\chi_{a_k^{+}} \rangle  \langle \chi_{\frak{p}}\chi_{a_k^{+}} \rangle - \frac{1}{\sin(\theta_{e_k})}\Big( \langle \chi_{\frak{p}}\chi_{a_k^{+}} \rangle \mathbb{E}[\varepsilon_{e_k}] - \langle \chi_{\frak{p}}\chi_{a_k^{+}} \varepsilon_{e_k} \rangle  \Big) - \langle \chi_{\frak{p}}\chi_{c_k^{+}} \rangle \Bigg] \\
+\frac{m_k^2}{4}\cY(a_k^{+})\Bigg[ -\langle \chi_{\frak{p}}\chi_{a_k^{+}}\rangle+   \langle \chi_{a_k^{+}}\chi_{c_k^{+}} \rangle  \langle \chi_{\frak{p}}\chi_{c_k^{+}} \rangle   +  \frac{1}{\sin(\theta_{e_k})} \Big( \langle \chi_{\frak{p}}\chi_{c_k^{+}} \rangle \mathbb{E}[\varepsilon_{e_k}] - \langle \chi_{\frak{p}}\chi_{c_k^{+}} \varepsilon_{e_k} \rangle  \Big)  \Bigg].
\end{align*}
Similarly to the case where $\frak{p}\not\in z_k$, one can use the local relations for Kadanoff-Ceva correlators near their singularities. Let us work out the case $\frak{p}=a_k^{+}$ in details, the others can be treated exactly similarly. In that case one can rewrite $\frac{d^2 \cY(a_k^{+})}{d^2\theta_k}$ as
\begin{multline*}
	\frac{m_k^2}{4}\cY(c_k^{+})\Bigg[ \langle \chi_{c_k^{+}}\chi_{a_k^{+}} \rangle   - \langle \chi_{a_k^{+}}\chi_{c_k^{+}} \rangle \Bigg]+ \\
\frac{m_k^2}{4}\cY(a_k^{+})\Bigg[ -1 +   \langle \chi_{a_k^{+}}\chi_{c_k^{+}} \rangle^2   +  \frac{1}{\sin(\theta_{e_k})} \Big( \langle \chi_{a_k^{+}}\chi_{c_k^{+}} \rangle \mathbb{E}[\varepsilon_{e_k}] - \langle \chi_{d_k^{+}}\chi_{c_k^{+}} \rangle  \Big)  \Bigg],
\end{multline*}
where we used once again the identities $\langle \chi_{a_k^{+}}\chi_{a_k^{+}} \rangle = 1$, $\varepsilon_{e_k}=\chi_{d_k^{+}}\chi_{a_k^{+}}$ and the anti-symmetry of the fermions when exchanging their arguments. Using this time the local relations around $z_k$ given by $(\textrm{D}_{k}'')$ and the local energy density identity \eqref{eq:energy-density-local-fermionic-relation} ensures that
\begin{equation}
	\frac{d^2 \cY(a_k^{+})}{d^2\theta_k}= \frac{m_k^2}{2} \Bigg( \frac{\cos(\theta_{e_k})}{\sin(\theta_{e_k})} -\frac{\mathbb{E}[\varepsilon_{e_k}]}{\sin(\theta_{e_k})}  
 \Bigg)  \Bigg[ \cY(c_k^{+})\langle \chi_{a_k^{+}}\chi_{a_k^{+}} \rangle - \cY(a_k^{+})\langle \chi_{a_k^{+}}\chi_{c_k^{+}} \rangle  \Bigg].
\end{equation}	
The main application of these formulae is some explicit formulae for the second order derivative of the fermion encoding the embedding when constructed as a solution to \eqref{eq:ODE-embedding}. More precisely, assume that $\cY$ is a solution to \eqref{eq:ODE-embedding} in $\Lambda_n$, with all the masses in $\Lambda_n $ given by the collection $(m_k)_{k\in \Lambda_n}$. Then for $\frak{p}\in \Lambda_n$, one can easily see that
\begin{align}\label{eq:second-derivative-fermion-for-Ito-formula}
	\frac{d^2}{dt^2}\cY^{(t)}(\frak{p})&= \sum\limits_{e_k\in \Lambda_n} \frac{m_k^2}{2} \Bigg( \frac{\cos(\theta^{(t)}_{e_k})}{\sin(\theta^{(t)}_{e_k})} -\frac{\mathbb{E}_{S(t)}[\varepsilon_{e_k}]}{\sin(\theta^{(t)}_{e_k})}  
 \Bigg)  \Bigg[ \cY^{(t)}(c_k^{+})\langle \chi_{\frak{p}}\chi_{a_k^{+}} \rangle_{S(t)} - \cY^{(t)}(a_k^{+})\langle \chi_{\frak{p}}\chi_{c_k^{+}} \rangle_{S(t)}  \Bigg]\\
 &\quad + \sum\limits_{e_k\neq e_r\in \Lambda_n} m_k\cdot m_r \cdot g^{(\frak{p})}_{k,r}(t),
\end{align}
where $g^{(\frak{p})}_{k,r}(t)$ is given by (using the Pfaffian rules and voluntarily not written in a more-simplified manner as the involved contributions will disappear when one plugs it in the proof of Lemma \ref{lem:SDE-embedding} due to independence of Brownian motions at different edges $e_k\neq e_r$) 
\begin{align}
	g^{(\frak{p})}_{k,r}(t)&=\frac{1}{4} \langle \chi_{\frak{p}}\chi_{a_k^+} \rangle_{S(t)} \Big( \cY^{(t)}(c_r^+)\langle \chi_{c_k^+}\chi_{a_r^+} \rangle_{S(t)} - \cY^{(t)}(a_r^+)\langle \chi_{c_k^+}\chi_{c_r^+} \rangle_{S(t)} \Big)\nonumber \\
	& \quad +\cY^{(t)}(c_k^+) \cdot  \frac{1}{2\sin(\theta^{(t)}_{e_r})}\Big( \langle \chi_\frak{p} \chi_{a_k^+} \rangle_{S(t)} \mathbb{E}_{S(t)}[\varepsilon_{e_r}]-  \langle \chi_\frak{p} \chi_{a_k^+}\varepsilon_{e_r} \rangle_{S(t)}  \Big) \nonumber\\
	&\quad -\frac{1}{4} \langle \chi_{\frak{p}}\chi_{c_k^+} \rangle_{S(t)} \Big( \cY^{(t)}(c_r^+)\langle \chi_{a_k^+}\chi_{a_r^+} \rangle_{S(t)} - \cY^{(t)}(a_r^+)\langle \chi_{a_k^+}\chi_{c_r^+} \rangle_{S(t)} \Big) \nonumber\\
	& \quad -\cY^{(t)}(a_k^+) \cdot  \frac{1}{2\sin(\theta^{(t)}_{e_r})}\Big( \langle \chi_\frak{p} \chi_{c_k^+} \rangle_{S(t)} \mathbb{E}_{S(t)}[\varepsilon_{e_r}]-  \langle \chi_\frak{p} \chi_{c_k^+}\varepsilon_{e_r} \rangle_{S(t)}  \Big)\nonumber\\
	& = \quad  \frac{1}{4}\cY^{(t)}(c_r^+)\Big( \langle \chi_{\frak{p}}\chi_{a_r^+} \varepsilon_{e_k} \rangle_{S(t)} - \langle \chi_{\frak{p}}\chi_{a_r^+} \rangle_{S(t)} \mathbb{E}_{S(t)}[\varepsilon_{e_k}] \Big)\nonumber\\
	& \quad -\frac{1}{4}\cY^{(t)}(a_r^+)\Big( \langle \chi_{\frak{p}}\chi_{c_r^+} \varepsilon_{e_k} \rangle_{S(t)} - \langle \chi_{\frak{p}}\chi_{c_r^+} \rangle_{S(t)} \mathbb{E}_{S(t)}[\varepsilon_{e_k}] \Big)\nonumber\\
	&  \quad + \frac{\cY^{(t)}(c_k^+)}{2\sin(\theta^{(t)}_{e_r})}\Big( \langle \chi_\frak{p} \chi_{a_k^+} \rangle_{S(t)} \mathbb{E}_{S(t)}[\varepsilon_{e_r}]-  \langle \chi_\frak{p} \chi_{a_k^+}\varepsilon_{e_r} \rangle_{S(t)}  \Big)\nonumber\\
	& \quad - \frac{\cY^{(t)}(a_k^+)}{2\sin(\theta^{(t)}_{e_r})}\Bigg( \langle \chi_\frak{p} \chi_{c_k^+} \rangle_{S(t)} \mathbb{E}_{S(t)}[\varepsilon_{e_r}]-  \langle \chi_\frak{p} \chi_{c_k^+}\varepsilon_{e_r} \rangle_{S(t)}  \Big).\label{eq:g_{k,r}}
\end{align}

\printbibliography

@article{ising1925beitrag,
  title={Beitrag zur theorie des ferromagnetismus},
  author={Ising, Ernst},
  journal={Zeitschrift f{\"u}r Physik},
  volume={31},
  number={1},
  pages={253--258},
  year={1925},
  publisher={Springer}
}

@article{Mah23,
  title={Crossing estimates on general s-embeddings},
  author={Mahfouf, R{\'e}my},
  journal={arXiv preprint arXiv:2309.08470},
  year={2023}
}

@article{BaLaMa25,
  title={Embedding deformations for the dimer model},
  author={Basok, Mikhail and Laslier, Benoit and Mahfouf, R{\'e}my},
  journal={In preparation},
  year={2025+}
}

@article{aizenman1990rounding,
  title={Rounding effects of quenched randomness on first-order phase transitions},
  author={Aizenman, Michael and Wehr, Jan},
  journal={Communications in mathematical physics},
  volume={130},
  number={3},
  pages={489--528},
  year={1990},
  publisher={Springer}
}

@article{cho1997criticality,
  title={Criticality in the two-dimensional random-bond Ising model},
  author={Cho, Sora and Fisher, Matthew PA},
  journal={Physical Review B},
  volume={55},
  number={2},
  pages={1025},
  year={1997},
  publisher={APS}
}

@article{dotsenko1995renormalisation,
  title={Renormalisation-group calculation of correlation functions for the 2D random bond Ising and Potts models},
  author={Dotsenko, Vladimir and Picco, Marco and Pujol, Pierre},
  journal={Nuclear Physics B},
  volume={455},
  number={3},
  pages={701--723},
  year={1995},
  publisher={Elsevier}
}

@article{shankar1987exact,
  title={Exact critical behavior of a random bond two-dimensional Ising model},
  author={Shankar, R},
  journal={Physical review letters},
  volume={58},
  number={23},
  pages={2466},
  year={1987},
  publisher={APS}
}

@article{gunaratnam2024existence,
  title={Existence of a tricritical point for the Blume--Capel model on ℤd},
  author={Gunaratnam, Trishen S and Krachun, Dmitrii and Panagiotis, Christoforos},
  journal={Probability and Mathematical Physics},
  volume={5},
  number={3},
  pages={785--845},
  year={2024},
  publisher={Mathematical Sciences Publishers}
}

@article{dotsenko1983critical,
  title={Critical behaviour of the phase transition in the 2D Ising model with impurities},
  author={Dotsenko, Viktor S and Dotsenko, Vladimir S},
  journal={Advances in Physics},
  volume={32},
  number={2},
  pages={129--172},
  year={1983},
  publisher={Taylor \& Francis}
}

@article{shalaev1994critical,
  title={Critical behavior of the two-dimensional Ising model with random bonds},
  author={Shalaev, BN},
  journal={Physics reports},
  volume={237},
  number={3},
  pages={129--188},
  year={1994},
  publisher={Elsevier}
}

@article{merz2002two,
  title={Two-dimensional random-bond Ising model, free fermions, and the network model},
  author={Merz, F and Chalker, JT},
  journal={Physical Review B},
  volume={65},
  number={5},
  pages={054425},
  year={2002},
  publisher={APS}
}

@article{Mah25b,
  title={Convergence of Ising fermions in critical doubly periodic graphs via embedding deformation},
  author={Mahfouf, R{\'e}my},
  journal={In preparation},
  year={2025}
}

@article{Trish,
  title={Private Communications},
  author={Trishen S. Gunaratnam},
  year={2025}
}

@article{Zam,
  title={Infinite conformal symmetry of critical fluctuations in two dimensions},
  author={Belavin, Alexander A and Polyakov, Alexander M and Zamolodchikov, Alexander B},
  journal={Journal of Statistical Physics},
  volume={34},
  pages={763--774},
  year={1984},
  publisher={Springer}
}

@article{giuliani2012scaling,
  title={The scaling limit of the energy correlations in non-integrable Ising models},
  author={Giuliani, Alessandro and Greenblatt, Rafael L and Mastropietro, Vieri},
  journal={Journal of mathematical physics},
  volume={53},
  number={9},
  year={2012},
  publisher={AIP Publishing}
}

@article{cava2025scaling,
  title={The scaling limit of boundary spin correlations in non-integrable Ising models},
  author={Cava, Giulia and Giuliani, Alessandro and Greenblatt, Rafael Leon},
  journal={Journal of Mathematical Physics},
  volume={66},
  number={2},
  year={2025},
  publisher={AIP Publishing}
}

@article{antinucci2023energy,
  title={Energy correlations of non-integrable Ising models: the scaling limit in the cylinder},
  author={Antinucci, Giovanni and Giuliani, Alessandro and Greenblatt, Rafael L},
  journal={Communications in Mathematical Physics},
  volume={397},
  number={1},
  pages={393--483},
  year={2023},
  publisher={Springer}
}

@article{DCHN,
  title={Connection probabilities and RSW-type bounds for the two-dimensional FK Ising model},
  author={Duminil-Copin, Hugo and Hongler, Cl{\'e}ment and Nolin, Pierre},
  journal={Communications on pure and applied mathematics},
  volume={64},
  number={9},
  pages={1165--1198},
  year={2011},
  publisher={Wiley Online Library}
}

@inproceedings{FK_scaling_relations,
  title        = {Planar random-cluster model: scaling relations},
  author       = {Duminil-Copin, Hugo and Manolescu, Ioan},
  booktitle    = {Forum of Mathematics, Pi},
  volume       = {10},
  pages        = {e23},
  year         = {2022},
  organization = {Cambridge University Press}
}

@article {beffara-duminil,
    AUTHOR = {Beffara, V. and Duminil-Copin, H.},
     TITLE = {Smirnov's fermionic observable away from criticality},
   JOURNAL = {Ann. Probab.},
  FJOURNAL = {The Annals of Probability},
    VOLUME = {40},
      YEAR = {2012},
    NUMBER = {6},
     PAGES = {2667--2689},
      ISSN = {0091-1798},
   MRCLASS = {82B20 (60K35 82B26)},
  MRNUMBER = {3050513},
MRREVIEWER = {Rongfeng Sun},
       DOI = {10.1214/11-AOP689},
       URL = {https://doi.org/10.1214/11-AOP689},
}

@article{Che20,
  title={Ising model and s-embeddings of planar graphs},
  author={Chelkak, Dmitry},
  journal={arXiv preprint arXiv:2006.14559},
  year={2020}
}

@article{CIM-universality,
  title={Universality of spin correlations in the Ising model on isoradial graphs},
  author={Chelkak, Dmitry and Izyurov, Konstantin and Mahfouf, Remy},
  journal={arXiv preprint arXiv:2104.12858},
  year={2021}
}

@article{CHI-mixed,
  title={Correlations of primary fields in the critical Ising model},
  author={Chelkak, Dmitry and Hongler, Cl{\'e}ment and Izyurov, Konstantin},
  journal={arXiv preprint arXiv:2103.10263},
  year={2021}
}

@article {Kenyon-dimer-conformal,
    AUTHOR = {Kenyon, Richard},
     TITLE = {Conformal invariance of domino tiling},
   JOURNAL = {Ann. Probab.},
  FJOURNAL = {The Annals of Probability},
    VOLUME = {28},
      YEAR = {2000},
    NUMBER = {2},
     PAGES = {759--795},
      ISSN = {0091-1798},
     CODEN = {APBYAE},
   MRCLASS = {52C20 (05B45 05B50 82B41)},
  MRNUMBER = {1782431},
MRREVIEWER = {Mihail N. Kolountzakis},
       DOI = {10.1214/aop/1019160260},
       URL = {http://dx.doi.org/10.1214/aop/1019160260},
}

@article{fortuin1972random,
  title={On the random-cluster model II. The percolation model},
  author={Fortuin, Cornelis Marius},
  journal={Physica},
  volume={58},
  number={3},
  pages={393--418},
  year={1972},
  publisher={Elsevier}
}

@article{ChePar,
  title={Convergence of FK Ising observables on general s-embedding with smooth limiting surface},
  author={Dmitry Chelkak and Sung Chul Park},
  year={In preparation}
}

@article{MahPar25a,
  title={ Convergence of the energy density on general s-embeddings},
  author={Rémy Mahfouf and Sung Chul Park},
  year={In preparation}
}

@article {HS-energy-Ising,
    AUTHOR = {Hongler, Cl{\'e}ment and Smirnov, Stanislav},
     TITLE = {The energy density in the planar {I}sing model},
   JOURNAL = {Acta Math.},
  FJOURNAL = {Acta Mathematica},
    VOLUME = {211},
      YEAR = {2013},
    NUMBER = {2},
     PAGES = {191--225},
      ISSN = {0001-5962},
   MRCLASS = {82B20},
  MRNUMBER = {3143889},
MRREVIEWER = {Xifeng Su},
       DOI = {10.1007/s11511-013-0102-1},
       URL = {http://dx.doi.org/10.1007/s11511-013-0102-1},
}

@article {Smirnov-conformal,
    AUTHOR = {Smirnov, Stanislav},
     TITLE = {Conformal invariance in random cluster models. {I}.
              {H}olomorphic fermions in the {I}sing model},
   JOURNAL = {Ann. of Math. (2)},
  FJOURNAL = {Annals of Mathematics. Second Series},
    VOLUME = {172},
      YEAR = {2010},
    NUMBER = {2},
     PAGES = {1435--1467},
      ISSN = {0003-486X},
     CODEN = {ANMAAH},
   MRCLASS = {60K35 (30G25 60J67 81T40 82B20)},
  MRNUMBER = {2680496},
MRREVIEWER = {Roland M. Friedrich},
       DOI = {10.4007/annals.2010.172.1441},
       URL = {http://dx.doi.org/10.4007/annals.2010.172.1441},
}

@article{ChelkakSmirnov_et_al,
    AUTHOR = {Chelkak, Dmitry and Duminil-Copin, Hugo and Hongler, Cl\'{e}ment
              and Kemppainen, Antti and Smirnov, Stanislav},
     TITLE = {Convergence of {I}sing interfaces to {S}chramm's {SLE} curves},
   JOURNAL = {C. R. Math. Acad. Sci. Paris},
  FJOURNAL = {Comptes Rendus Math\'{e}matique. Acad\'{e}mie des Sciences. Paris},
    VOLUME = {352},
      YEAR = {2014},
    NUMBER = {2},
     PAGES = {157--161},
      ISSN = {1631-073X},
   MRCLASS = {82B20},
  MRNUMBER = {3151886},
       DOI = {10.1016/j.crma.2013.12.002},
       URL = {https://doi.org/10.1016/j.crma.2013.12.002},
}

@article{HarmExplorer,
author = {Oded Schramm and Scott Sheffield},
title = {{Harmonic explorer and its convergence to SLE4}},
volume = {33},
journal = {The Annals of Probability},
number = {6},
publisher = {Institute of Mathematical Statistics},
pages = {2127 -- 2148},
keywords = {harmonic explorer, Scaling limit, SLE, SLE_4},
year = {2005},
doi = {10.1214/009117905000000477},
URL = {https://doi.org/10.1214/009117905000000477}
}

@inproceedings{makarov2010off,
  title={Off-critical lattice models and massive SLEs},
  author={Makarov, Nikolai and Smirnov, Stanislav},
  booktitle={XVIth International Congress On Mathematical Physics: (With DVD-ROM)},
  pages={362--371},
  year={2010},
  organization={World Scientific}
}

@article{sato1979holonomic,
  title={Holonomic quantum fields. IV},
  author={Sato, Mikio and Miwa, Tetsuji and Jimbo, Michio},
  journal={Publications of the Research Institute for Mathematical Sciences},
  volume={15},
  number={3},
  pages={871--972},
  year={1979},
  publisher={Research Institute forMathematical Sciences}
}

@article{CheRam,
  title={Fluctuations in the Aztec diamonds via a space-like maximal surface in Minkowski 3-space},
  author={Chelkak, Dmitry and Ramassamy, Sanjay},
  journal={arXiv preprint arXiv:2002.07540},
  year={2020}
}

@article{mccoy1977painleve,
  title={Painlev{\'e} functions of the third kind},
  author={McCoy, Barry M and Tracy, Craig A and Wu, Tai Tsun},
  journal={Journal of Mathematical Physics},
  volume={18},
  number={5},
  pages={1058--1092},
  year={1977},
  publisher={American Institute of Physics}
}

@article{Zam2,
  title={Exact solutions of conformal field theory in two dimensions and critical phenomena},
  author={Zamolodchikov, AB},
  journal={Reviews in Mathematical Physics},
  volume={1},
  number={02n03},
  pages={197--234},
  year={1989},
  publisher={World Scientific}
}

@article{DuGaPe-near-crit,
    AUTHOR = {Duminil-Copin, Hugo and Garban, Christophe and Pete, G\'{a}bor},
     TITLE = {The near-critical planar {FK}-{I}sing model},
   JOURNAL = {Comm. Math. Phys.},
  FJOURNAL = {Communications in Mathematical Physics},
    VOLUME = {326},
      YEAR = {2014},
    NUMBER = {1},
     PAGES = {1--35},
      ISSN = {0010-3616},
   MRCLASS = {81T10 (81P16 81Q10 81Q12 81Q15 81Q30 81Q35 81R12)},
  MRNUMBER = {3162481},
MRREVIEWER = {Rapha\"{e}l Cerf},
       DOI = {10.1007/s00220-013-1857-0},
       URL = {https://doi.org/10.1007/s00220-013-1857-0},
}

@phdthesis{WanPHD,
  title={Statistical mechanics models via the lens of potential theory},
  author={Wan, Yijun},
  year={2022},
  school={Universit{\'e} Paris-Saclay}
}

@article{chhita2012height,
  title={The height fluctuations of an off-critical dimer model on the square grid},
  author={Chhita, Sunil},
  journal={Journal of statistical physics},
  volume={148},
  number={1},
  pages={67--88},
  year={2012},
  publisher={Springer}
}

@phdthesis{MahPHD,
  title={Analyse du mod{\`e}le d'Ising planaire au-del{\`a} du cadre critique Z-invariant},
  author={Mahfouf, R{\'e}my},
  year={2022},
  school={Universit{\'e} Paris-Saclay}
}

@article{papon2023massive,
  title={Massive SLE $ \_4 $ and the scaling limit of the massive harmonic explorer},
  author={Papon, L{\'e}onie},
  journal={arXiv preprint arXiv:2307.11509},
  year={2023}
}

@article{camia2014ising,
  title={The Ising magnetization exponent on Z 2 is 1/15},
  author={Camia, Federico and Garban, Christophe and Newman, Charles M},
  journal={Probability Theory and Related Fields},
  volume={160},
  number={1},
  pages={175--187},
  year={2014},
  publisher={Springer}
}

@article{papon2024interface,
  title={Interface scaling limit for the critical planar Ising model perturbed by a magnetic field},
  author={Papon, L{\'e}onie},
  journal={arXiv preprint arXiv:2411.16452},
  year={2024}
}

@article{HonglerSmirnov,
    AUTHOR = {Hongler, Cl\'{e}ment and Smirnov, Stanislav},
     TITLE = {The energy density in the planar {I}sing model},
   JOURNAL = {Acta Math.},
  FJOURNAL = {Acta Mathematica},
    VOLUME = {211},
      YEAR = {2013},
    NUMBER = {2},
     PAGES = {191--225},
      ISSN = {0001-5962},
   MRCLASS = {82B20},
  MRNUMBER = {3143889},
MRREVIEWER = {Xifeng Su},
       DOI = {10.1007/s11511-013-0102-1},
       URL = {https://doi.org/10.1007/s11511-013-0102-1},
}

@article{nolin2009asymmetry,
  title={Asymmetry of near-critical percolation interfaces},
  author={Nolin, Pierre and Werner, Wendelin},
  journal={Journal of the American Mathematical Society},
  volume={22},
  number={3},
  pages={797--819},
  year={2009}
}

@article{garban2025energy,
  title={Energy field of critical Ising model and examples of singular fields in QFT},
  author={Garban, Christophe and Kupiainen, Antti},
  journal={arXiv preprint arXiv:2502.02554},
  year={2025}
}

@article{park2018massive,
       author = {{Park}, S.~C.},
        title = {Massive Scaling Limit of the {I}sing Model: Subcritical Analysis and Isomonodromy},
      journal = {arXiv e-prints},
     keywords = {Mathematics - Probability, Mathematical Physics},
         year = 2018,
        month = nov,
          eid = {arXiv:1811.06636},
        pages = {arXiv:1811.06636},
archivePrefix = {arXiv},
       eprint = {1811.06636},
 primaryClass = {math.PR},
       adsurl = {https://ui.adsabs.harvard.edu/abs/2018arXiv181106636P},
      adsnote = {Provided by the SAO/NASA Astrophysics Data System}
}

@article{Park2021Fermionic,
       author = {{Park}, S.~C.},
        title = {Convergence of Fermionic Observables in the Massive Planar {FK}-{I}sing Model},
      journal = {arXiv e-prints},
     keywords = {Mathematics - Probability, Mathematical Physics},
         year = 2021,
        month = mar,
          eid = {arXiv:2103.04649},
        pages = {arXiv:2103.04649},
archivePrefix = {arXiv},
       eprint = {2103.04649},
 primaryClass = {math.PR},
       adsurl = {https://ui.adsabs.harvard.edu/abs/2021arXiv210304649P},
      adsnote = {Provided by the SAO/NASA Astrophysics Data System}
}

@article{Smirnov_Ising,
    AUTHOR = {Smirnov, Stanislav},
     TITLE = {Conformal invariance in random cluster models. {I}.
              {H}olomorphic fermions in the {I}sing model},
   JOURNAL = {Ann. of Math. (2)},
  FJOURNAL = {Annals of Mathematics. Second Series},
    VOLUME = {172},
      YEAR = {2010},
    NUMBER = {2},
     PAGES = {1435--1467},
      ISSN = {0003-486X},
   MRCLASS = {60K35 (30G25 60J67 81T40 82B20)},
  MRNUMBER = {2680496},
MRREVIEWER = {Roland M. Friedrich},
       DOI = {10.4007/annals.2010.172.1441},
       URL = {https://doi.org/10.4007/annals.2010.172.1441},
}

@article{AveMah25,
  title={The near-critical FK-percolation},
  author={Avérous, Emile and Mahfouf, Rémy},
  journal={ArXiv e-print}
}

@article{Ken,
  title={The asymptotic determinant of the discrete Laplacian},
  author={Kenyon, Richard},
  year={2000}
}

@book{mccoy-wu-book,
    AUTHOR = {McCoy, Barry M. and Wu, Tai Tsun},
     TITLE = {The two-dimensional {I}sing model},
   EDITION = {Second},
      NOTE = {Corrected reprint of [ MR3618829], with a new preface and a
              new chapter (Chapter XVII)},
 PUBLISHER = {Dover Publications, Inc., Mineola, NY},
      YEAR = {2014},
     PAGES = {xvi+454},
      ISBN = {978-0-486-49335-0; 0-486-49335-0},
   MRCLASS = {82B20 (82C20)},
  MRNUMBER = {3838431},
}

@article {mercat-CMP,
    AUTHOR = {Mercat, Christian},
     TITLE = {Discrete {R}iemann surfaces and the {I}sing model},
   JOURNAL = {Comm. Math. Phys.},
  FJOURNAL = {Communications in Mathematical Physics},
    VOLUME = {218},
      YEAR = {2001},
    NUMBER = {1},
     PAGES = {177--216},
      ISSN = {0010-3616},
     CODEN = {CMPHAY},
   MRCLASS = {82B20 (39A12 58J90)},
  MRNUMBER = {1824204 (2002c:82019)},
MRREVIEWER = {Richard Kenyon},
       DOI = {10.1007/s002200000348},
       URL = {http://dx.doi.org/10.1007/s002200000348},
}

@book {friedli-velenik-book,
    AUTHOR = {Friedli, S. and Velenik, Y.},
     TITLE = {Statistical mechanics of lattice systems},
      NOTE = {A concrete mathematical introduction},
 PUBLISHER = {Cambridge University Press, Cambridge},
      YEAR = {2018},
     PAGES = {xix+622},
      ISBN = {978-1-107-18482-4},
   MRCLASS = {82-01 (82B05)},
  MRNUMBER = {3752129},
}

@article {cimasoni-duminil,
    AUTHOR = {Cimasoni, David and Duminil-Copin, Hugo},
     TITLE = {The critical temperature for the {I}sing model on planar
              doubly periodic graphs},
   JOURNAL = {Electron. J. Probab.},
  FJOURNAL = {Electronic Journal of Probability},
    VOLUME = {18},
      YEAR = {2013},
     PAGES = {no. 44, 18},
      ISSN = {1083-6489},
   MRCLASS = {82B20 (82B27)},
  MRNUMBER = {3040554},
MRREVIEWER = {Sven Bachmann},
       DOI = {10.1214/EJP.v18-2352},
       URL = {https://doi.org/10.1214/EJP.v18-2352},
}

@ARTICLE{KLRR,
       author = {{Kenyon}, Richard and {Lam}, Wai Yeung and {Ramassamy}, Sanjay and
         {Russkikh}, Marianna},
        title = "{Dimers and Circle patterns}",
      journal = {arXiv e-prints},
     keywords = {Mathematical Physics, Mathematics - Complex Variables, Mathematics - Dynamical Systems, Mathematics - Geometric Topology, 52C26 82B20},
         year = "2018",
        month = "Oct",
          eid = {arXiv:1810.05616},
        pages = {arXiv:1810.05616},
archivePrefix = {arXiv},
       eprint = {1810.05616},
 primaryClass = {math-ph},
       adsurl = {https://ui.adsabs.harvard.edu/abs/2018arXiv181005616K},
      adsnote = {Provided by the SAO/NASA Astrophysics Data System}
}

@article{beffara2012self,
  title={The self-dual point of the two-dimensional random-cluster model is critical for q greater than 1},
  author={Beffara, Vincent and Duminil-Copin, Hugo},
  journal={Probability Theory and Related Fields},
  volume={153},
  number={3},
  pages={511--542},
  year={2012},
  publisher={Springer}
}

@article{aizenman1999holder,
  title={H{\"o}lder regularity and dimension bounds for random curves},
  author={Aizenman, M and Burchard, A},
  journal={Duke Mathematical Journal},
  volume={99},
  number={3},
  pages={419--453},
  year={1999},
  publisher={Duke University Press}
}

@book{richards2021convergence,
  title={Convergence rates of random discrete model curves approaching SLE curves in the scaling limit},
  author={Richards, Larissa},
  year={2021},
  publisher={University of Toronto (Canada)}
}

@article{karrila2018limits,
  title={Limits of conformal images and conformal images of limits for planar random curves},
  author={Karrila, Alex M},
  journal={arXiv preprint arXiv:1810.05608},
  year={2018}
}

@article{CCK,
  title={Revisiting the combinatorics of the 2D Ising model},
  author={Chelkak, Dmitry and Cimasoni, David and Kassel, Adrien},
  journal={Annales de l’Institut Henri Poincar{\'e} D},
  volume={4},
  number={3},
  pages={309--385},
  year={2017}
}

@article{schramm2000scaling,
  title={Scaling limits of loop-erased random walks and uniform spanning trees},
  author={Schramm, Oded},
  journal={Israel Journal of Mathematics},
  volume={118},
  number={1},
  pages={221--288},
  year={2000},
  publisher={Springer}
}

@article{smirnov2001critical,
  title={Critical exponents for two-dimensional percolation},
  author={Smirnov, Stanislav and Werner, Wendelin},
  journal={arXiv preprint math/0109120},
  year={2001}
}

@article{kenyon2001dominos,
  title={Dominos and the Gaussian free field},
  author={Kenyon, Richard},
  journal={Annals of probability},
  pages={1128--1137},
  year={2001},
  publisher={JSTOR}
}

@article{smirnov2001criticalperco,
  title={Critical percolation in the plane: conformal invariance, Cardy's formula, scaling limits},
  author={Smirnov, Stanislav},
  journal={Comptes Rendus de l'Acad{\'e}mie des Sciences-Series I-Mathematics},
  volume={333},
  number={3},
  pages={239--244},
  year={2001},
  publisher={Elsevier}
}

@article{CheIzy13,
  title={Holomorphic spinor observables in the critical Ising model},
  author={Chelkak, Dmitry and Izyurov, Konstantin},
  journal={Communications in Mathematical Physics},
  volume={322},
  number={2},
  pages={303--332},
  year={2013},
  publisher={Springer}
}

@phdthesis{Izy-phd,
  title={Holomorphic spinor observables and interfaces in the critical Ising mode Ph. D},
  author={Izyurov, Konstantin},
  year={2011},
  school={thesis}
}

@article{KemSmi1,
  title={Random curves, scaling limits and Loewner evolutions},
  author={Kemppainen, Antti and Smirnov, Stanislav},
  journal={The Annals of Probability},
  volume={45},
  number={2},
  pages={698--779},
  year={2017},
  publisher={Institute of Mathematical Statistics}
}

@article{KemSmi2,
  title={Conformal invariance in random cluster models. II. Full scaling limit as a branching SLE},
  author={Kemppainen, Antti and Smirnov, Stanislav},
  journal={arXiv preprint arXiv:1609.08527},
  year={2016}
}

@article{Smi-ICM06,
  title={Towards conformal invariance of 2D lattice models},
  author={Smirnov, Stanislav},
  journal={ In {\em International {C}ongress of {M}athematicians. {V}ol. {II}},
  pages 1421--1451. Eur. Math. Soc., Z\"urich},
  year={2006}
}

@article{CLR1,
  title={Dimer model and holomorphic functions on t-embeddings of planar graphs},
  author={Chelkak, Dmitry and Laslier, Beno{\^\i}t and Russkikh, Marianna},
  journal={arXiv preprint arXiv:2001.11871},
  year={2020}
}

@article{CLR2,
  title={Bipartite dimer model: perfect t-embeddings and Lorentz-minimal surfaces},
  author={Chelkak, Dmitry and Laslier, Beno{\^\i}t and Russkikh, Marianna},
  journal={arXiv preprint arXiv:2109.06272},
  year={2021}
}

@article{park-iso,
  title={Convergence of fermionic observables in the massive planar FK-Ising model},
  author={Park, SC},
  journal={arXiv preprint arXiv:2103.04649},
  year={2021}
}

@article{FK,
  title={On the random-cluster model: I. Introduction and relation to other models},
  author={Fortuin, Cornelius Marius and Kasteleyn, Piet W},
  journal={Physica},
  volume={57},
  number={4},
  pages={536--564},
  year={1972},
  publisher={Elsevier}
}

@article{edwards1988generalization,
  title={Generalization of the fortuin-kasteleyn-swendsen-wang representation and monte carlo algorithm},
  author={Edwards, Robert G and Sokal, Alan D},
  journal={Physical review D},
  volume={38},
  number={6},
  pages={2009},
  year={1988},
  publisher={APS}
}

@article{onsager1944crystal,
  title={Crystal statistics. I. A two-dimensional model with an order-disorder transition},
  author={Onsager, Lars},
  journal={Physical Review},
  volume={65},
  number={3-4},
  pages={117},
  year={1944},
  publisher={APS}
}

@incollection{mccoy2013two,
  title={The two-dimensional Ising model},
  author={McCoy, Barry M and Wu, Tai Tsun},
  booktitle={The Two-Dimensional Ising Model},
  year={2013},
  publisher={Harvard University Press}
}

@book{palmer2007planar,
  title={Planar Ising Correlations},
  author={Palmer, John},
  volume={49},
  year={2007},
  publisher={Springer Science \& Business Media}
}

@book{le2016brownian,
  title={Brownian motion, martingales, and stochastic calculus},
  author={Le Gall, Jean-Fran{\c{c}}ois},
  year={2016},
  publisher={Springer}
}

@article{ChSmi1,
  title={Discrete complex analysis on isoradial graphs},
  author={Chelkak, Dmitry and Smirnov, Stanislav},
  journal={Advances in Mathematics},
  volume={228},
  number={3},
  pages={1590--1630},
  year={2011},
  publisher={Elsevier}
}

@inproceedings {Ch-ICM18,
    AUTHOR = {Chelkak, Dmitry},
     TITLE = {Planar {I}sing model at criticality: state-of-the-art and
              perspectives},
 BOOKTITLE = {Proceedings of the {I}nternational {C}ongress of
              {M}athematicians---{R}io de {J}aneiro 2018. {V}ol. {IV}.
              {I}nvited lectures},
     PAGES = {2801--2828},
 PUBLISHER = {World Sci. Publ., Hackensack, NJ},
      YEAR = {2018},
   MRCLASS = {82C20 (30G25 35Q82 60J67)},
  MRNUMBER = {3966512},
}

@article{perk1980quadratic,
  title={Quadratic identities for Ising model correlations},
  author={Perk, Jacques HH},
  journal={Physics Letters A},
  volume={79},
  number={1},
  pages={3--5},
  year={1980},
  publisher={Elsevier}
}

@article {kadanoff-ceva,
    AUTHOR = {Kadanoff, Leo P. and Ceva, Horacio},
     TITLE = {Determination of an operator algebra for the two-dimensional
              {I}sing model},
   JOURNAL = {Phys. Rev. B (3)},
  FJOURNAL = {Physical Review. B. Condensed Matter. Third Series},
    VOLUME = {3},
      YEAR = {1971},
     PAGES = {3918--3939},
      ISSN = {0163-1829},
   MRCLASS = {82.62},
  MRNUMBER = {389111},
MRREVIEWER = {P. Caldirola},
}

@article {CHI,
    AUTHOR = {Chelkak, Dmitry and Hongler, Cl\'ement and Izyurov, Konstantin},
     TITLE = {Conformal invariance of spin correlations in the planar
              {I}sing model},
   JOURNAL = {Ann. of Math. (2)},
  FJOURNAL = {Annals of Mathematics. Second Series},
    VOLUME = {181},
      YEAR = {2015},
    NUMBER = {3},
     PAGES = {1087--1138},
      ISSN = {0003-486X},
   MRCLASS = {82B20},
  MRNUMBER = {3296821},
MRREVIEWER = {Adnene Besbes},
       DOI = {10.4007/annals.2015.181.3.5},
       URL = {http://dx.doi.org/10.4007/annals.2015.181.3.5},
}

@article {ChSmi2,
    AUTHOR = {Chelkak, Dmitry and Smirnov, Stanislav},
     TITLE = {Universality in the 2{D} {I}sing model and conformal
              invariance of fermionic observables},
   JOURNAL = {Invent. Math.},
  FJOURNAL = {Inventiones Mathematicae},
    VOLUME = {189},
      YEAR = {2012},
    NUMBER = {3},
     PAGES = {515--580},
      ISSN = {0020-9910},
     CODEN = {INVMBH},
   MRCLASS = {82B20 (30C20 30C35 60J67 60K35)},
  MRNUMBER = {2957303},
MRREVIEWER = {Ben Dyhr},
       DOI = {10.1007/s00222-011-0371-2},
       URL = {http://dx.doi.org/10.1007/s00222-011-0371-2},
}

@article{CheWan,
author = {Dmitry Chelkak and Yijun Wan},
title = {{On the convergence of massive loop-erased random walks to massive SLE(2) curves}},
volume = {26},
journal = {Electronic Journal of Probability},
number = {none},
publisher = {Institute of Mathematical Statistics and Bernoulli Society},
pages = {1 -- 35},
keywords = {loop-erased random walks, massive SLE curves},
year = {2021},
doi = {10.1214/21-EJP615},
URL = {https://doi.org/10.1214/21-EJP615}
}

@article{berestycki2022near,
  title={Near-critical dimers and massive SLE},
  author={Berestycki, Nathana{\"e}l and Haunschmid-Sibitz, Levi},
  journal={arXiv preprint arXiv:2203.15717},
  year={2022}
}

@article{basok2021tau,
  title={Tau-functions {\`a} la Dub{\'e}dat and probabilities of cylindrical events for double-dimers and CLE (4)},
  author={Basok, Mikhail and Chelkak, Dmitry},
  journal={Journal of the European Mathematical Society},
  volume={23},
  number={8},
  pages={2787--2832},
  year={2021}
}

@article{basok2025nesting,
  title={Nesting of double-dimer loops: local fluctuations and convergence to the nesting field of CLE (4)},
  author={Basok, Mikhail and Izyurov, Konstantin},
  journal={arXiv preprint arXiv:2501.01574},
  year={2025}
}

@article{berggren2025gaussian,
  title={Gaussian Free Field and Discrete Gaussians in Periodic Dimer Models},
  author={Berggren, Tomas and Nicoletti, Matthew},
  journal={arXiv preprint arXiv:2502.07241},
  year={2025}
}

@article{duminil-parafermions,
  title={Parafermionic observables and their applications to planar statistical physics models},
  author={Duminil-Copin, Hugo},
  journal={Ensaios Matematicos},
  volume={25},
  pages={1--371},
  year={2013}
}

\end{document}